\documentclass[11pt]{article}
\usepackage{amssymb, amsthm, amsmath, amscd}
\setlength{\topmargin}{-45pt} \setlength{\evensidemargin}{0cm}
\setlength{\oddsidemargin}{0cm} \setlength{\textheight}{23.7cm}
\setlength{\textwidth}{16cm}

 \usepackage[usenames,dvipsnames]{color}

\newtheorem{thm}{Theorem}[section]
\newtheorem{lem}[thm]{Lemma}
\newtheorem{prop}[thm]{Proposition}
\newtheorem{cor}[thm]{Corollary}

\theoremstyle{definition}\newtheorem{df}[thm]{Definition}
\theoremstyle{definition}\newtheorem{rem}[thm]{Remark}
\theoremstyle{definition}

\renewcommand{\phi}{\varphi}

\newcommand{\Z}{\mathbb{Z}}

\newcommand{\C}{\mathbb{C}}
\newcommand{\T}{\mathbb{T}}
\newcommand{\U}{\mathcal{U}}

\newcommand{\Aff}{\operatorname{Aff}}

\newcommand{\hm}{homomorphism}
\newcommand{\dt}{\delta}
\newcommand{\ep}{\epsilon}

\newcommand{\andeqn}{\,\,\,{\rm and}\,\,\,}
\newcommand{\rforal}{\,\,\,{\rm for\,\,\,all}\,\,\,}
\newcommand{\CA}{$C^*$-algebra}
\newcommand{\SCA}{$C^*$-subalgebra}

\newcommand{\af}{{\alpha}}

\newcommand{\dist}{{\rm dist}}

\newcommand{\beq}{\begin{eqnarray}}
\newcommand{\eneq}{\end{eqnarray}}
\newcommand{\tforal}{\,\,\,\text{for\,\,\,all}\,\,\,}
\newcommand{\tand}{\,\,\,\text{and}\,\,\,}

\title{Distance between  unitary  orbits of normal elements
in simple \CA s of real rank zero }
\author{Shanwen Hu and Huaxin Lin\footnote{corresponding author: hlin@uoregon.edu}
 }
\date{}

\begin{document}

\maketitle

\begin{abstract}
Let $x, y$ be two normal elements in a unital simple \CA\, $A.$  We introduce a  function  $D_c(x, y)$ and  show that in a unital simple
AF-algebra there is a constant $1>C>0$ such that
$$
C\cdot D_c(x, y)\le {\rm dist}({\cal U}(x),{\cal U}(y))\le D_c(x,y),
$$
where ${\cal U}(x)$ and ${\cal U}(y)$ are the closures of the unitary orbits of $x$ and of $y,$ respectively.
We also generalize this to unital simple \CA s with real rank zero, stable rank one
and weakly unperforated $K_0$-group.
More complicated estimates are given in the presence of
non-trivial $K_1$-information.

\end{abstract}

\section{Introduction}

Let $H$ be a Hilbert space and let $B(H)$ be the \CA\, of all bounded
operators. The study of normal operators in $B(H)$ has a long
history. It has been an interesting and important problem to
determine when two normal operators are unitarily equivalent in a
subalgebra $A$ of $B(H).$  Any detailed account of  history will
inevitably involve an enormous amount of literature. We will choose to
limit ourselves to the immediate concerns of this paper. We study the distance between
unitary orbits of normal elements. Some of the {pioneer works on  this
subject have  been made by Ken Davidson}  in \cite{Dv}, \cite{Dv2} and \cite{Dv1}. More recent work on the subject
can be found in \cite{SK} and \cite{Sher}.

 Let $A$ be a unital \CA\,
and let $x\in A$ be a normal element. The unitary orbit of $x$ is
defined to be the set $\{u^*xu: u\in U(A)\},$ where $U(A)$
is the unitary group of $A.$ {Denote by ${\cal U}(x)$ the closure of the unitary orbit of $x.$} Suppose that $y\in A$ is another
normal element. Denote by $X$ and $Y$ the
spectrum of $x$ and of $y,$ respectively. Let $\phi_X, \phi_Y: C(X\cup
Y)\to A$ be the \hm s defined by $\phi_X(f)=f(x)$ and
$\phi_Y(f)=f(y)$ for all $f\in C(X\cup Y).$ Suppose that $A$ has a
unique tracial state $\tau.$
 Denote by $\mu_{\tau\circ \phi_X}$ and $\mu_{\tau\circ \phi_Y}$ the
two probability measures on $X\cup Y$ defined by the positive linear functionals
$\tau\circ \phi_X$ and $\tau\circ \phi_Y,$ respectively.
For each open subset  $O\subset \C$ and $r>0,$   denote by
$O_r=\{\xi\in \C: {\rm dist}(\xi, O)<r\}.$
Define
$$
r_O=\inf\{ r: \mu_{\tau\circ \phi_X}(O)\le \mu_{\tau\circ \phi_Y}(O_r)\}
$$
and
define
$$
D_T(x, y)=\sup \{ r_O: O \,\,\, {\rm open\,\,\, subset\,\,\, of }\,\,\, \C\}.
$$
For finite dimensional Hilbert spaces, when $A=M_n,$ the $n\times n$ matrix algebra over  $\C,$ an application of  the Marriage Theorem shows that
\beq\label{In1}
{\rm dist}({\cal U}(x), {\cal U}(y))\le D_T(x, y)
\eneq
(see \cite{HN} and \cite{Dv}, for example).

 We first realize that (\ref{In1}) also  holds for the case that $A$ is a UHF-algebra.
 Apart from the application of the Marriage Theorem, the proof  is also  based on an important fact that normal
elements in $A$ can be approximated by normal elements with finite spectrum
(\cite{Lnjot94}).   If we allow $A$ to be a general unital simple AF-algebra with a unique tracial
state, the above bound
no long works because of the presence of possible infinitesimal elements in $K_0(A).$
Even without infinitesimal elements in $K_0(A),$ in the case that  $A$ has infinitely many extremal tracial states,
the usage of the Marriage Theorem has to mix appropriately with the Riesz interpolation property.
With  the help of the Cuntz semigroups, which {are  more appropriate tools} to compare
positive elements, we are able to establish a modified upper bound
formula for the distance between unitary orbits of two normal elements in a general unital AF-algebra
(see \ref{TAF} below).   
The fact that  normal elements in an AF-algebra
can be approximated by normal elements with finite spectrum plays an essential role in the proof.
This  follows from the result that a pair of almost commuting self-adjoint
matrices is close to a pair of commuting  self-adjoint matrices (see \cite{Lnalm}
and \cite{FR}). In \cite{Lnoldjfa}, it was shown that, in a unital separable
simple \CA\, of real rank zero, stable rank one and with weakly unperforated $K_0(A),$
a normal element $x$ can be approximated by those normal elements with finite
spectrum if $\lambda-x\in {\rm Inv}_0(A)$ (the connected component
of invertible elements of $A$ containing the identity) for all $\lambda\not\in {\rm sp}(x).$
In this case  we also prove that the same upper bound works for distance between unitary orbits of two  normal elements in $A$ which have vanishing $K_1$ information (see  \ref{Toriginal}).

The distance between unitary orbits of normal elements in unital purely infinite simple \CA s were recently studied in \cite{SK}. In that case, one could get a precise formula for the distance at least for the case that $K_0(A)=0$ when no $K_1$-information involved. However, the distance is basically given by $d_H({\rm sp}(x), {\rm sp}(y)),$ the Hausdorff distance
between the spectra. It is the presence of the trace in the finite \CA s
that makes our upper bound more complicated and sophisticated. But it is exactly {this}  phenomenon that is  exciting.  However, it is more desirable, in this current
study, to include the cases that \CA s have non-trivial $K_1$-groups and normal elements
have non-trivial $K_1$-information.

To study the unitary orbits of normal elements, one has to know when two normal elements
are approximately unitarily equivalent.
When $A$ is a unital purely infinite simple \CA, or $A$ is a unital separable simple \CA\, with finite tracial rank, we know exactly when two normal elements are approximately unitarily equivalent (see \cite{Da}, \cite{Lntrans} and \cite{LnTAMS12}). These works actually deal with the problem of unitary orbits
of \hm s from $C(X),$ the \CA\, of continuous functions on a compact metric space
$X,$  to a unital purely infinite simple \CA\, or a unital separable simple
\CA\, with finite tracial rank. These studies are closely related to the Elliott
program of classification of amenable \CA s.

To consider the unitary orbits of normal elements in a unital separable simple
\CA\, $A$ with real rank zero, stable rank one and weakly unperforated $K_0(A),$ we first
present a theorem that two normal elements $x,\, y\in A$ are approximately unitarily equivalent
if and only if ${\rm sp}(x)={\rm sp}(y),$ $(\phi_x)_{*i}=(\phi_y)_{*i},$ $i=0,1$ and
$\tau\circ \phi_x=\tau\circ \phi_y$ for all $\tau\in T(A)$ (the tracial state space of $A$),
where $\phi_x, \phi_y: C({\rm sp}(x))\to A$ are defined by
$\phi_x(f)=f(x)$ and by $\phi_y(f)=f(y)$ for all $f\in C({\rm sp}(x)),$ respectively.
This is a generalization of the similar result in \cite{Lntrans} (which  only works for unital simple \CA\, with
tracial rank zero).

Let $A$ be a unital separable simple
\CA\,  with real rank zero, stable rank one and weakly unperforated $K_0(A)$ and let
$x,\, y\in A$ be two normal elements with ${\rm sp}(x)=X$ and ${\rm sp}(y)=Y,$ respectively.
We first consider the case that $(\lambda-x)^{-1}(\lambda-y)\in {\rm Inv}_0(A)$ for
all $\lambda\not\in X\cup Y.$  With the help of a Mayer-Vietoris Theorem, we are able to
present a reasonable upper bound for the distance between unitary orbits
of $x$ and $y$ (see  \ref{MT1} below) in the same sprit of \ref{Toriginal} mentioned above.  However, there are normal elements with ${\rm sp}(x)={\rm sp}(y)$ {which}  induce exactly the same map on the Cuntz semigroup, but, for any given $\lambda\not\in X\cup Y,$ $(\lambda-x)^{-1}(\lambda-y)\not\in {\rm Inv}_0(A).$ In this case the same upper bound mentioned above is zero. Nevertheless, as first found by Davidson (\cite{Dv2}),
\beq\label{In2}
{\rm dist}({\cal U}(x), {\cal U}(y))\ge \sup\{{\rm dist}(\lambda, {\rm sp}(x))+{\rm dist}(\lambda, {\rm sp}(y)) \},
\eneq
where the supremum is taken among those $\lambda\not\in {\rm sp}(x)\cup
{\rm sp}(y)$ such that
$(\lambda-x)^{-1}(\lambda-y)\not\in {\rm Inv}_0(A).$

Based on  \ref{MT1}, we also present an upper bound for distance between unitary orbits of normal elements in $A,$ which is a combination of the upper bound in  \ref{MT1} together {with} the sprit of the lower bound given by Davidson (see   \ref{MT2} below).

Then, of course, there is the  issue of the lower bound. It was shown
by Davidson (\cite{Dv}) that there is a universal constant $1>C>0$
such that
\beq\label{In3}
{\rm dist}({\cal U}(x), {\cal U}(y))\ge
C\cdot D_T(x,y)
\eneq
in the case that $A=M_n$ (or $A=B(H)$ for infinite dimensional Hilbert space $H$
with some modification).  The constant is
computed at least 1/3.
 It
was shown even in the case that $n\ge 3,$ the constant $C$ cannot be
made equal to 1 (\cite{Hk}). We show that a similar lower bound (with the
same constant $C$) holds for unital AF-algebras and, more generally,
for unital separable simple \CA s of tracial rank zero. A different
lower bound $d_c(x, y)$ is also presented. There are cases that
$d_c(x, y)=D_c(x,y)$  with $\dist(\U(x),\U(y))=d_c(x,y)$ and cases that $d_c(x,y)<D_c(x,y)$ with $\dist(\U(x),\U(y))=D_c(x,y)$.

Briefly, the paper is organized as follows:
{ Section 2}  serves as a preliminary of the paper. Some metrics
associated with the measure distribution and the Cuntz semigroups are introduced.
In Section 3, we present an upper bound for the distance between
unitary orbits of normal elements in unital AF-algebras and in
unital separable simple \CA s $A$ with real rank zero, stable rank one and weakly unperforated $K_0(A),$ when $K_1$-information of the relevant normal elements vanish.
In Section 4, we show, with the metric introduced in Section 2, that  normal elements can always be approximated by normal elements
with finite spectrum in a unital simple \CA\, with stable rank one, real rank zero and
weakly unperforated $K_0(A).$ Another function $D_c^e(x,y)$ which is a  modification of $D_c(x,y)$
is also introduced.
In Section 5, we show exactly when two normal elements in $A$ are approximately unitarily equivalent.
In Section 6,  we present an upper bound for the distance between unitary orbits of normal elements
$x,\, y\in A$ under the condition that $\lambda-x$ and $\lambda-y$ give the same
$K_1$ element (but not necessarily zero) when $\lambda\not\in {\rm sp}(x)\cup {\rm sp}(y).$  In Section 7,
we present a general  upper bound for the distance between unitary orbits of normal elements
in $A$ (without any assumption on $K_1$-information). In Section 8, we discuss the lower bound of the distance between unitary orbits of normal elements.

{\bf Acknowledgements}
The most  research of this work was done
when both authors were at the Research Center for Operator Algebras in
the East China Normal University and both were partially supported by the Center.  The second named author  also
acknowledges the support from a grant of NSF. The authors would also like to thank the referee 
for  carefully checking  and for numerous   suggestions.

\section{Preliminaries}

\begin{df}\label{Duorbit}
{\rm Let $A$ be a unital \CA. Denote by $U(A)$ the unitary group of
$A.$  Let $x\in A$ be a normal element. Define ${\cal U}(x)$ {to} be the
closure of
$$
\{u^*xu: u\in U(A)\}.
$$
}
\end{df}

\begin{df}\label{XUY}
Fix a compact metric space $\Omega.$ Let $r>0.$ For each subset
$S\subset \Omega,$ {define}
\beq\nonumber
S_r=\{t\in \Omega: {\rm dist}(t, S)<r\},\,\,\,
S_{-r}=\{t\in\Omega:{\rm dist}(t,S^c)>r\}
\eneq
and,
{denote by ${\bar S}$} the closure of
$S.$

\end{df}

\begin{df}\label{DTArho}
{\rm Let $A$ be a unital \CA. Denote by $T(A)$ the tracial state
space of $A.$ Let ${\rm Aff}(T(A))$ {be} the  space of all real affine
{continuous}
functions on $T(A).$ Denote by $\rho_A: K_0(A)\to  {\rm Aff}(T(A))$
the order preserving \hm\, defined by
$\rho_A([p])(\tau)=(\tau\otimes Tr)(p)$ for all projections in
$M_n(A),$ where $Tr$ is the standard trace on $M_n,$ $n=1,2,....$ }
\end{df}

\begin{df}\label{DK0K1}
{\rm Let $A$ be unital \CA.  Denote by ${\rm Inv}_0(A)$ the
connected component of the set of invertible elements which contains
the identity of $A.$ Let $x,\, y\in A$ and let $\lambda\not\in {\rm
sp}(x)\cup {\rm sp}(y).$ Then $\lambda-x$ and $\lambda-y$ are
invertible. Denote by $[\lambda-x]$ the corresponding element in
$K_1(A).$ So $[\lambda-x]=[\lambda-y]$ means that they represent the
same element in $K_1(A).$ In the case that $A$ is of stable rank
one, $[\lambda-x]=[\lambda-y]$ is equivalent to
$(\lambda-x)^{-1}(\lambda-y)\in {\rm Inv}_0(A).$ }
\end{df}

\begin{df}\label{Dcuntz1}
{\rm Let $A$ be a $C^*$-algebra   and let $a, b\in A_+$  be two
positive elements. We write $a\lesssim b$ if there is a sequence of
elements $\{x_n\}\subset A$ such that
$$
x_n^*bx_n\to a
$$
as $n\to\infty.$  If $a\lesssim b$ and $b\lesssim a,$ then we write
$[a]=[b]$ and say that $a$ and $b$ are equivalent in Cuntz
semi-group.

If $p, \, q\in A$ are two projections, then $p\lesssim q$  means
that there is a partial isometry $w\in A$ such that $w^*w=p$ and
$ww^*\le q.$

}
\end{df}

\begin{df}\label{Ddim}
{\rm Let $\ep>0.$ Denote by $f_\ep$ the continuous function on
$[0,\infty)$ such that $0\le f_{\ep}\le 1;$ $f(t)=1$ if $\xi\in
{[\ep,\infty)}$ and $f(t)=0$ if $\xi\in [0, \ep/2]$ and $f(t)$ is linear in
$(\ep/2, \ep).$

Let $b\in A_+$ defined
$$
d_{\tau}(b)=\lim_{\ep\to 0} \tau(f_\ep(b))
$$
for $\tau\in T(A).$

$A$ is said to have  strict comparison for positive elements, if
$$
d_\tau(a)<d_\tau(b)\tforal \tau\in T(A)
$$
implies that $a\lesssim b.$ In this paper, we mainly study those \CA s $A$ which have  real rank
zero, stable rank one and  weak unperforated $K_0(A).$  Such \CA s  always have
strict comparison for positive elements.  {When $A$ has tracial rank zero (see 3.6.2 of \cite{Lnbk}), we write $TR(A)=0.$ If $A$ is a unital simple \CA\, with $TR(A)=0,$ then
$A$ has real rank zero, stable rank one and weakly unperforated $K_0(A).$}
}
\end{df}

\begin{df}\label{Dfo}
{\rm Let $\Omega$ be a compact metric space and let $O\subset
\Omega$ be an open subset. Throughout this paper, $f_O$ denotes a
positive function  with $0\le f_O\le 1$ whose support is exactly
$O,$ i.e., $f_O(t)>0$ for all $t\in O$ and $f_O(t)=0$ for all
$t\not\in O.$ If $x,\, y\in A$ are two normal elements with $X={\rm
sp}(x)$ and
 $Y={\rm sp}(y),$ we let $\Omega=X\cup Y.$
 Let $\phi_X, \psi_Y: C(\Omega)\to A$ be unital \hm s
 defined by $\phi_X(f)=f(x)$ and $\psi_Y(f)=f(y)$ for all $f\in C(\Omega).$}
\end{df}

\begin{df}\label{Dcuntz}
{\rm Let $W(A)$  be the Cuntz semi-group which are equivalence
classes of positive elements in $M_{\infty}(A).$


}
\end{df}

 \begin{df}\label{DWW}
 {\rm
 Let $A$ be a unital \CA\, and $\Omega$ be a compact metric space.
 Denote by  $Hom_1(C(\Omega), A)$ the set of all unital
 \hm s $\kappa$ from $C(\Omega)$ into $A.$
 }
 \end{df}

 \begin{df}\label{Dcdist-0}
 {\rm
 Let $O\subset \Omega$ be an open subset.
 Given $\kappa\in Hom_1(C(\Omega), A),$
 $[\kappa(f_O)]$ does not depend on the choice of
 $f_O.$  If $\kappa_1, \kappa_2\in Hom_{1}(C(\Omega), A),$
 define
 \beq\label{DNdist-1}
 D_c(\kappa_1, \kappa_2)=\sup\{\inf\{d>0: \kappa_1(f_O)\lesssim \kappa_2(f_{O_d})\} : O\subset {\Omega}, \,\,\,{\rm open }\}.
\eneq

 }
 \end{df}

 \begin{rem}\label{Rdcdist}
{\rm The definition of $D_{c}(\cdot, \cdot)$ is not symmetric as a priori.
However, when $A$ is a unital simple \CA\, with stale rank one,
then the definition in (\ref{DNdist-1}) is in fact symmetric.

Moreover, in general, $W(C(\Omega))$ is not determined by  open
subsets of $\Omega.$
In the definition it would required that $\kappa_1([f_O])\lesssim
\kappa_2([f_{O_d}]), $ for all $f\in M_{\infty}(C(\Omega))_+$ whose
supports in $O$ and all $f_{{O_d}}\in M_{\infty}(C(\Omega))_+$ with
supports in $O_d.$  We will not study these in full generality.

}
\end{rem}

\begin{df}\label{Dmetricsp}
{\rm Let $A$ be a unital \CA\, and let $\Omega$ be a compact metric
space. {For}  $\kappa_1,\,\kappa_2\in  { Hom_{1}}(C(\Omega), A)$
we write $\kappa_1\sim \kappa_2$ if
\beq\label{Dmetricsp-1}
[\kappa_1(f_O)]=[\kappa_2(f_O) ]\,\,\, {\rm in}\,\,\, W(A)
\eneq
for all open subsets $O\subset
\Omega.$ It is easy to see that ``$\sim$'' is an equivalence
relation. Put
$$
H_{c,1}(C(\Omega), A)={ Hom_{1}}(C(\Omega), A)_+/\sim.
$$

It follows from \cite{lR1} that if  {$\kappa_1\sim \kappa_2$ then they
induce the same semi-group \hm s from
$W(C(\Omega))$ into $W(A)$}
if the covering dimension of $\Omega$ is at most two and the second
co Homology  subsets $\check{H}^2(X)=\{0\}$ for each compact subsets
$X.$  This is particular true when  $\Omega$ is a compact subset of the {plane}
which is the primary concern of this research.

 Let
$\phi\in { Hom_{1}}(C(\Omega), A).$ {Then
${\rm ker}\phi=\{f\in C(\Omega): f|_X=0\}$ for some
compact subset $X\subset \Omega.$}
The
compact subset $X$ is called the spectrum of $\phi.$ Sometimes, we
denote $\phi\in { Hom}_{1}(C(\Omega),A)$ with spectrum $X$ by
$\phi_X$. }
\end{df}


%

\begin{df}\label{DNdist}
{\rm Let $A$ be a unital  \CA \,{and} let $X$ and $Y$ be two compact
subsets of a compact metric space $\Omega.$ Suppose that
$\phi: C(X)\to A$ and $\phi: C(Y)\to A$ are two unital
monomorphisms. We define $\phi_X: C(\Omega)\to A$ by $\phi\circ
\pi_X$ and $\phi_Y: C(Y)\to A$ by $\phi\circ \pi_Y,$  {where $\pi_X$ and $\pi_Y$ are the projections onto $X$ and $Y$, respectively.}
}
\end{df}



{Let $a, d\in A_+$ with $a,d\le 1.$
In the following as well as in the proof of \ref{hu1} below, we will write
{$a<<d.$}
if there are $b,\, c\in A_+$  with $0\le b, c\le 1$ such that
\beq\label{D<<1}
ab=a,\,\,\, b\lesssim  c \andeqn dc=c.
\eneq
}
The following follows immediately from \cite{RrUHF} (see also  \cite{Rl2}, and  4.2 and
4.3 of \cite{RW}).
 \begin{lem}\label{Lcuntzcanc2}
 Suppose {that} $A$ is a  unital $C^*$-algebra with stable rank
 one,  $0\le a, d\le 1$ are  elements in $A$ such that
 \beq\label{Lcc-1}
 {a<<d.}
 \eneq
 Then
 $1-d\lesssim 1-a.$
 \end{lem}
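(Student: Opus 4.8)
The plan is to unwind the hypothesis $a<<d$ into the concrete data it provides and then transport the comparison $b\lesssim c$ to a comparison between the complements. By definition of $a<<d$, there are positive contractions $b,c\in A$ with $ab=a$, $b\lesssim c$, and $dc=c$. The first relation $ab=a$ forces $a$ to be supported on the spectral region where $b=1$; more usefully, it gives $(1-b)a=0$, so $1-b$ and $1-d$ behave like ``orthogonal complements'' of $a$ and $d$ respectively. First I would record the two elementary facts: (i) since $ab=a$ and $0\le b\le 1$, we have $1-a\gtrsim 1-b$ (indeed $f_\ep(1-a)\le 1-b$ for suitable $\ep$, or more simply $(1-b)\le (1-b)^{1/2}(1-a)(1-b)^{1/2}+(\text{something small})$ — I would get the clean statement $1-b\lesssim 1-a$ directly from $ab=a$); and symmetrically (ii) $dc=c$ gives $1-d\lesssim 1-c$.

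The heart of the matter is then the implication $b\lesssim c\ \Longrightarrow\ 1-c\lesssim 1-b$ in a stable rank one $C^*$-algebra. This is exactly the cancellation-type statement coming from \cite{RrUHF} (and \cite{Rl2}, \cite{RW}): in a unital stable rank one algebra, $b\lesssim c$ implies there is a unitary-like or isometry-like conjugation moving a support projection of $b$ below one of $c$, and passing to complements reverses the inequality. Concretely, I would approximate: for small $\ep$, $f_\ep(b)$ is Cuntz-below $c$, hence (stable rank one) $f_\ep(b)$ is Murray--von Neumann equivalent to a projection $p\le \overline{cAc}$, and $1-p\gtrsim 1-c$ region-wise; meanwhile $1-b\gtrsim 1-f_\ep(b)\sim 1-p$ up to $\ep$. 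Letting $\ep\to 0$ and using strict-comparison/stable-rank-one approximation arguments yields $1-c\lesssim 1-b$.

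Chaining the three inequalities gives
$$
1-d\ \lesssim\ 1-c\ \lesssim\ 1-b\ \lesssim\ 1-a,
$$
which is the assertion. The main obstacle is the middle step: turning the Cuntz subequivalence $b\lesssim c$ into a genuine subequivalence of complements. Over a UHF algebra this is transparent, but for general stable rank one one must invoke the cancellation results of Rørdam cited in the excerpt, being careful that the $x_n^*cx_n\to b$ approximation is upgraded to an honest inclusion of hereditary subalgebras (via $f_\ep$ cut-downs and the fact that, in stable rank one, $a\lesssim b$ together with $b\lesssim a$ forces the stronger equivalence), so that passing to orthogonal complements is legitimate. The remaining steps (i) and (ii) are routine functional-calculus manipulations with the relations $ab=a$ and $dc=c$.
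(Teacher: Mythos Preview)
Your steps (i) and (ii) are in fact correct and can be made precise: from $ab=a$ one checks that $a$ and $b$ commute (since $ba=(ab)^*=a^*=a$), so on the support of $1-b$ one has $a=0$, whence $(1-b)^{1/2}(1-a)(1-b)^{1/2}=1-b$ and $1-b\lesssim 1-a$; the argument for $1-d\lesssim 1-c$ is symmetric.

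The genuine gap is the middle implication: $b\lesssim c\ \Longrightarrow\ 1-c\lesssim 1-b$ is \emph{false} in general, even in matrix algebras. Take $A=M_2$, $b=e_{11}$ and $c=\tfrac{1}{2}\,1_A$. Then $b\lesssim c$ (rank one versus rank two), but $1-c=\tfrac{1}{2}\,1_A$ has rank two while $1-b=e_{22}$ has rank one, so $1-c\not\lesssim 1-b$. Note that this example is compatible with the hypotheses of the lemma (take $a=e_{11}$, $d=1_A$), and the conclusion $1-d=0\lesssim 1-a$ does hold --- it is only your intermediate chain that breaks. Your sketch of the argument for this step is also shaky on its own terms: $f_\ep(b)$ is not a projection, and stable rank one does not produce a Murray--von Neumann equivalence to a projection here.

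The paper avoids this entirely by \emph{not} passing through $1-b$ and $1-c$. Instead it uses the relation $af_\ep(b)=a$ (which follows from $ab=a$) together with R{\o}rdam's Proposition~2.4: since $b\lesssim c$ and $A$ has stable rank one, there is a unitary $u$ with $u^*f_\ep(b)u\in \overline{cAc}$. Because $a$ sits under $f_\ep(b)$, this carries $u^*au$ into $\overline{cAc}$, and since $dc=c$ one gets $u^*au=d^{1/2}u^*aud^{1/2}\le d$. Hence $1-d\le u^*(1-a)u$, giving $1-d\lesssim 1-a$ directly. The point is that the unitary moves $a$ itself beneath $d$; one never needs to compare the complements of $b$ and $c$.
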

 \begin{proof}
 {There are $b,c\in A$ such that  $0\le b,c\le 1,$ $ab=a,$ $b\lesssim c$ and  $cd=c.$}
 Since
{$ab=a,$} for any $1/2>\ep>0,$ $af_\ep(b)=a.$ Since $b\lesssim c$,   by Proposition 2.4  of \cite{RrUHF},
 there is a unitary $u\in A$ such that
 \beq\label{Lcc-1n}
 u^*f_\ep(b)u\in Her(c).
 \eneq
 It follows that $u^*au=d^{1/2}u^*aud^{1/2}\le d.$
 Therefore
\beq\label{Lcc-2n} 1-d\le 1-u^*au=u^*(1-a)u.
 \eneq
 It follows that $1-d\lesssim 1-a.$
 \end{proof}

\begin{prop}\label{hu1}
Let $A$ be a unital simple \CA\, with stable rank one and let
$\Omega$ be a compact metric space.
{Then}  $(H_{c,1}(C(\Omega), A), D_c)$ is a metric space.
That is: for any $\phi_X,\phi_Y,\phi_Z\in H_{c,1}(C(\Omega),
A)$,
 \beq\label{hu1-1}
  D_c(\phi_X, \phi_Y)=0\Longleftrightarrow \phi_X\sim\phi_Y,
\eneq \beq\label{hu1-2}
  D_c(\phi_X,\phi_Y)= D_c(\phi_Y,\phi_X),
 \eneq
 \beq\label{hu1-3}
  D_c(\phi_X, \phi_Z)\le  D_c(\phi_X,
\phi_Y)+ D_c(\phi_Y, \phi_Z). \eneq
\end{prop}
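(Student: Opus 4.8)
The plan is to verify the three metric-space axioms in order, with the main work concentrated in the triangle inequality \eqref{hu1-3} and, to a lesser extent, in the symmetry \eqref{hu1-2}. For \eqref{hu1-1}, the implication $\phi_X\sim\phi_Y\Rightarrow D_c(\phi_X,\phi_Y)=0$ is immediate: if $[\phi_X(f_O)]=[\phi_Y(f_O)]$ in $W(A)$ for every open $O$, then in particular $\phi_X(f_O)\lesssim\phi_Y(f_O)\lesssim\phi_Y(f_{O_d})$ for every $d>0$ (since $O\subset O_d$ and one may choose $f_{O_d}$ dominating $f_O$ up to Cuntz equivalence), so each inner infimum is $0$ and hence the supremum is $0$. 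For the converse, suppose $D_c(\phi_X,\phi_Y)=0$. Fix an open $O$ and $\varepsilon>0$; then $\phi_X(f_O)\lesssim\phi_Y(f_{O_\varepsilon})$. Letting $\varepsilon\to 0$ and using that $\bigcap_{\varepsilon>0}O_\varepsilon=\bar O$ together with lower semicontinuity of $d_\tau$ / the structure of $W(A)$ for a simple stable-rank-one algebra, one deduces $\phi_X(f_O)\lesssim\phi_Y(f_{\bar O})$; a small perturbation of $O$ to a slightly larger open set and a routine approximation argument then gives $\phi_X(f_O)\lesssim\phi_Y(f_O)$ for all open $O$. By symmetry (once \eqref{hu1-2} is established, or by running the argument both ways using $D_c(\phi_X,\phi_Y)=D_c(\phi_Y,\phi_X)=0$), we get $[\phi_X(f_O)]=[\phi_Y(f_O)]$, i.e. $\phi_X\sim\phi_Y$.

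For the symmetry \eqref{hu1-2}, the key point is Remark \ref{Rdcdist}: for a unital simple \CA{} of stable rank one the relation ``$\kappa_1(f_O)\lesssim\kappa_2(f_{O_d})$ for all open $O$'' is symmetric in the two homomorphisms at the level of the defining quantity. Concretely, I would argue as follows. Fix $\phi_X,\phi_Y$ and set $d_0=D_c(\phi_X,\phi_Y)$. Given an open $O$ and any $d>d_0$, we have $\phi_X(f_O)\lesssim\phi_Y(f_{O_d})$. I want to show $\phi_Y(f_{O'})\lesssim\phi_X(f_{O'_d})$ for every open $O'$. Apply the hypothesis to the open set $O=\Omega\setminus\overline{O'_d}$: then $\phi_X(f_O)\lesssim\phi_Y(f_{O_d})$, and one checks $(O)_d\subset\Omega\setminus\overline{O'}$ (by the elementary metric fact that if $\mathrm{dist}(t,O'_d)\geq d$ then $t\notin O'$ once $d$-neighborhoods are taken), so that $f_{O_d}$ and $f_{O'}$ have disjoint supports, as do $f_O$ and $f_{O'_d}$. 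Using Lemma \ref{Lcuntzcanc2} with the ``$<<$'' relation (translating $\phi_X(f_O)\lesssim\phi_Y(f_{O_d})$ and the disjoint-support facts into $a<<d$ for appropriate $a,d$ built from $1$ minus the relevant function values) yields $1-\phi_Y(\text{something}\supset f_{O_d})\lesssim 1-\phi_X(\text{something}\supset f_O)$, which rearranges to $\phi_Y(f_{O'})\lesssim\phi_X(f_{O'_d})$. Taking suprema over $O'$ and infima over $d>d_0$ gives $D_c(\phi_Y,\phi_X)\le d_0$, and the reverse inequality is the same argument with the roles of $X$ and $Y$ swapped.

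For the triangle inequality \eqref{hu1-3}, fix an open $O\subset\Omega$ and let $d_1>D_c(\phi_X,\phi_Y)$, $d_2>D_c(\phi_Y,\phi_Z)$. By definition there are choices realizing $\phi_X(f_O)\lesssim\phi_Y(f_{O_{d_1}})$; next I apply the definition of $D_c(\phi_Y,\phi_Z)$ to the open set $O_{d_1}$ to get $\phi_Y(f_{O_{d_1}})\lesssim\phi_Z(f_{(O_{d_1})_{d_2}})$. The essential geometric input is the nesting $(O_{d_1})_{d_2}\subset O_{d_1+d_2}$, which is immediate from the triangle inequality in the metric space $\Omega$: if $\mathrm{dist}(t,O_{d_1})<d_2$ then $\mathrm{dist}(t,O)<d_1+d_2$. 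Hence $f_{(O_{d_1})_{d_2}}\lesssim f_{O_{d_1+d_2}}$ (any support-$\subseteq$ inclusion of positive functions on a commutative algebra gives Cuntz subordination after composing with $\phi_Z$), so composing the two subordinations, $\phi_X(f_O)\lesssim\phi_Z(f_{O_{d_1+d_2}})$. Therefore the inner infimum for the pair $(\phi_X,\phi_Z)$ at this $O$ is at most $d_1+d_2$; taking the supremum over $O$ gives $D_c(\phi_X,\phi_Z)\le d_1+d_2$, and letting $d_1\downarrow D_c(\phi_X,\phi_Y)$, $d_2\downarrow D_c(\phi_Y,\phi_Z)$ yields \eqref{hu1-3}.

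I expect the symmetry \eqref{hu1-2} to be the main obstacle, since it is the only place where the simplicity and stable-rank-one hypotheses are genuinely used (the triangle inequality is essentially formal, and well-definedness on $H_{c,1}$ is built into the equivalence relation $\sim$). The delicate part is converting the one-sided Cuntz comparison $\phi_X(f_O)\lesssim\phi_Y(f_{O_d})$ into the ``$a<<d$'' hypothesis of Lemma \ref{Lcuntzcanc2} with the correct elements and disjoint-support bookkeeping, and handling the fact that open sets and their complements-of-closures interact only up to arbitrarily small $\varepsilon$-fattenings, so a limiting argument in $d$ (and an appeal to the structure of $W(A)$ for simple stable-rank-one $A$, as in \cite{RrUHF}) is needed to close the estimates. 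One must also be slightly careful that all the $f_O$'s, $f_{O_d}$'s, etc., are only defined up to the choice of representative, but since $[\kappa(f_O)]$ is independent of that choice (as recorded in \ref{Dcdist-0}), this causes no real difficulty.
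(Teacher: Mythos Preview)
Your arguments for \eqref{hu1-2} and \eqref{hu1-3} match the paper's: the triangle inequality is the same formal chain $\phi_X(f_O)\lesssim\phi_Y(f_{O_{d_1+\ep}})\lesssim\phi_Z(f_{O_{d_1+d_2+2\ep}})$, and for symmetry the paper executes exactly your complement-plus-Lemma~\ref{Lcuntzcanc2} strategy, building explicit functions $f,g$ on $F=\{t:\dist(t,O)\ge d+\ep\}$ and $K=F_{d+\ep}$ to verify the ``$<<$'' hypothesis and conclude $\phi_Y(f_O)\lesssim 1-\phi_Y(g)\lesssim 1-\phi_X(f)\lesssim\phi_X(f_{O_{d+\ep}})$.

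The converse direction of \eqref{hu1-1}, however, has a gap. From $\phi_X(f_O)\lesssim\phi_Y(f_{O_\ep})$ for all $\ep>0$ you try to pass to a limit on the \emph{dominating} side, appealing to ``lower semicontinuity of $d_\tau$ / the structure of $W(A)$''. But Cuntz subequivalence does not descend through decreasing limits of the majorant, and the proposition assumes no strict comparison, so trace inequalities alone are insufficient; your subsequent ``perturbation to a slightly larger open set'' cannot recover $\phi_X(f_O)\lesssim\phi_Y(f_O)$ either, since any open $O'\supset\bar O$ is in general strictly larger than $O$. The paper instead approximates from the \emph{inside}: with $O_{-\ep}=\{t:\dist(t,O^c)>\ep\}$ one has $(O_{-\ep})_\ep\subset O$, hence $\phi_X(f_{O_{-\ep}})\lesssim\phi_Y(f_{(O_{-\ep})_\ep})\lesssim\phi_Y(f_O)$. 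Since every cut-down $f_\sigma(\phi_X(f_O))$ is Cuntz-dominated by some $\phi_X(f_{O_{-\ep}})$, this gives $f_\sigma(\phi_X(f_O))\lesssim\phi_Y(f_O)$ for all $\sigma>0$, and then R{\o}rdam's criterion (Proposition~2.4 of \cite{RrUHF}) yields $\phi_X(f_O)\lesssim\phi_Y(f_O)$. The key point is that the characterisation $a\lesssim b\iff(a-\sigma)_+\lesssim b$ for all $\sigma>0$ is available only on the subordinate side.
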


\begin{proof}
{Let $d= D_c(\phi_X,\phi_Y).$  We will  show that $D_{c}(\phi_Y,
\phi_X)=d.$} Suppose $O\subset \Omega$ is an open subset. For any
$d>\ep>0,$ let
$$
F=\{t:{\rm dist} (t,O)\ge d+\ep\}\andeqn K=F_{d+\ep}.
$$

Define  $f,\,g\in C(\Omega)$ as the following:
\beq
f(t)=0\,\,{\rm if}\,\, t\not\in F_{{\ep/8}},\,\,0<f(t)<1\,\,{\rm
if}\,\,t\in F_{{\ep/8}}\setminus F \,\,{\rm and}\,\, f(t)=1\,\,{\rm
if}\,\,  t\in
F \andeqn\\
g(t)=0 \,\,{\rm if} \,\,t\not\in K, 0<g(t)<1\,\,{\rm if}\,\,
t\in K \setminus F_{d+\ep/2}\,\,{\rm and}\,\, g(t)=1,\,\,{\rm
if}\,\, t\in F_{d+\ep/2}.
\eneq
Since
{
\beq\label{hu1-n1n}
 &&\hspace{-0.6in}F_{\ep/8}\subset {\overline F_{\ep/8}}\subset F_{\ep/4}\subset
(F_{\ep/4})_{d+\ep/16}\subset F_{d+5\ep/16}\subset \overline{F_{d+5\ep/16}}
\subset F_{d+\ep/2}\subset \overline{F_{d+\ep/2}}\subset K\\
&&\hspace{0.5in}\andeqn
\phi_X(f_{F_{\ep/4}})\lesssim \phi_Y(f_{(F_{\ep/4})_{d+\ep/16}}),
\eneq
}
we have
 \beq\label{hu1-n2}
\phi_X({f})<<\phi_Y(g).
\eneq
 By  \ref{Lcuntzcanc2},
\beq\label{hu1-n4}
1-\phi_Y(g)\lesssim 1-\phi_X({f}).
\eneq
{Note that }
$$
O\subset \{t:{\rm dist}(t,F)\ge d+\ep\}=\{t:{\rm
dist}(t,F)<d+\ep\}^c\subset K^c.
$$
It follows that,  for any $t\in O$, $g(t)=0,$ which  implies
 \beq\label{hu1-n5-}
  f_O\le 1-g.
  \eneq

Hence
\beq\label{hu1-n5}
\phi_Y(f_O)\lesssim 1-\phi_Y(g).
\eneq
On
the other hand, if $f(t)\not=1$ or
$t\not\in F$, {{then}} ${\rm dist}(t,O)<d+\ep,$ or $t\in O_{d+\ep}$. Therefore  ${\rm Supp}(1-f)\subset O_{d+\ep}$, so we may assume that 
 \beq\label{hu1-n5+}
 1-{f}\le f_{O_{d+\ep}}
 \eneq
 by choosing a right representation of $f_{O_{d+\ep}}.$
 It follows that
 \beq\label{hu1-n5++}
1-\phi_X({f})\lesssim \phi_X(f_{O_{d+\ep}}).
\eneq

Combining (\ref{hu1-n5}),(\ref{hu1-n4}) and (\ref{hu1-n5++}), we obtain

\beq\label{hu1-n6}
\phi_Y(f_O)\lesssim \phi_X(f_{O_{d+\ep}})
\eneq
for all $\ep>0$ and for all open subsets $O\subset \Omega.$

This implies that
\beq
D_{c}(\phi_{{Y}}, \phi_{{X}})\le d.
\eneq
By 
symmetry, this proves that (\ref{hu1-2}) { holds.}

Suppose that $D_{c}(\phi_X, \phi_Y)=0.$ For any non-empty open subset $O$,
any $r>0$,
  recall that
$$
O_{-r}=\{t:{\rm dist}(t,O^c)>r\}.
$$
Then there is $\dt>0$ such that for any
$r\in(0,\dt),O_{-r}\not=\emptyset$. It is easy to show, for any
$S\subset \Omega,$ $ (S_{-r})_r\subset S$. For any $\ep\in (0,\dt)$,
then
$$
\phi_X(f_{O_{-\ep}})\lesssim \phi_Y(f_{(O_{-\ep})_\ep})\lesssim
\phi_Y(f_O).
$$
This shows that, for any $\sigma\in(0,\dt),$
\beq\label{hu1-n1}
f_{{\sigma}}(\phi_X(f_O))\lesssim \phi_Y(f_{O}).
\eneq
It follows from
2.4 of \cite{RrUHF} that
$$
\phi_X(f_O)\lesssim \phi_Y(f_{O}).
$$
Similarly, {by (\ref{hu1-2}), we have $D_c(\phi_Y,\phi_X)=0$, so }
$$
{\phi_Y(f_O)\lesssim \phi_X(f_{O}),}
$$
so $[\phi_X(O)]=[\phi_Y(O)]$ and (\ref{hu1-1}) holds.


Finally, suppose that
$$
 D_c(\phi_X, \phi_Y)=d_1, D_c(\phi_Y, \phi_Z)=d_2, D_c(\phi_X,
\phi_Z)=d_3.
$$
Then for any open $O$ and any $\ep>0$,
$$
\phi_X(f_{O})\lesssim \phi_Y(f_{O_{d_1+\ep}})\lesssim
\phi_Z(f_{O_{d_1+d_2+2\ep}}).
$$
Therefore
$$
d_3\le d_1+d_2
$$
and (\ref{hu1-3}) holds.

\end{proof}

\begin{prop}\label{Pcsameclose}
Let $A$ be a {simple unital  \CA\ with stable rank one,} and let $\Omega$ be a compact metric space.
 Then, for
any finite subset of projections ${\cal P}\subset C(\Omega),$ there
exists a $\dt>0$ satisfying the following:
if  $\phi, \psi: C(\Omega)\to A$ are two unital \hm s such that
 \beq\label{Pcs-1}
 D_c(\phi, \psi)<\dt,
\eneq
then
 \beq\label{Pcs-2}
 [\phi(p)]=[\psi(p)] \,\,\,{\rm in}\,\,\, W(A)\tforal \,\,\,
p\in {\cal P}.
\eneq
Moreover, 
\beq\label{Pcs-2+}
 [\phi(p)]=[\psi(p)] \,\,\,{\rm in}\,\,\, K_0(A)\tforal \,\,\,
p\in {\cal P}.
\eneq
\end{prop}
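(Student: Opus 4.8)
The plan is to reduce the statement about projections to a statement about spectral data of the $\phi$-image and then invoke the metric structure already established in \ref{hu1}. The key point is that a projection $p \in C(\Omega)$ is the characteristic function of a clopen subset $E \subset \Omega$; write $\Omega = E \sqcup E'$ with $E, E'$ both clopen. Since the finite set $\mathcal{P}$ is fixed, there are only finitely many such clopen pieces involved, and there is a positive number $\eta > 0$ that is a common lower bound for the mutual distances $\mathrm{dist}(E, E')$ among the relevant pieces (if $\Omega$ is not a single point; the degenerate cases are trivial). Set $\delta = \eta/4$, say. The idea is that once $D_c(\phi,\psi) < \delta$, the open set $O = E$ satisfies $O_\delta \subset E$ as well (because $E$ is clopen and $\delta < \mathrm{dist}(E,E')$), so the Cuntz-subordination inequalities in the definition of $D_c$ collapse to genuine equalities on these pieces.

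Concretely, first I would observe that for a clopen $E$ and $d < \eta$ we have $E_d = E$, hence $\phi(f_E) \lesssim \psi(f_{E_d}) = \psi(f_E)$ follows from $D_c(\phi,\psi) < \delta$ by unwinding \ref{DNdist-1}; here $f_E$ may be taken to be the projection $\chi_E$ itself, so $[\phi(\chi_E)] \lesssim [\psi(\chi_E)]$ in $W(A)$. By the symmetry of $D_c$ on a simple unital \CA\ of stable rank one (\ref{hu1-2}), the reverse subordination $[\psi(\chi_E)] \lesssim [\phi(\chi_E)]$ holds too, giving $[\phi(p)] = [\psi(p)]$ in $W(A)$ for each $p = \chi_E \in \mathcal{P}$. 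This proves \eqref{Pcs-2}. For \eqref{Pcs-2+}, I would then use that $\phi(p)$ and $\psi(p)$ are \emph{projections} in $A$, so mutual Cuntz subordination between projections in a unital \CA\ of stable rank one forces Murray--von Neumann equivalence (via a partial isometry, cf.\ \ref{Dcuntz1}): from $\phi(p) \lesssim \psi(p)$ and $\psi(p) \lesssim \phi(p)$ one gets $\phi(p) \sim \psi(p)$ and hence $[\phi(p)] = [\psi(p)]$ in $K_0(A)$.

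One technical point to handle with care: a priori the projections in $\mathcal{P}$ live in $M_n(C(\Omega))$ for some $n$, not just in $C(\Omega)$ itself, so a projection $p$ is not literally a characteristic function but a matrix-valued idempotent; its class $[p]$ in $W(C(\Omega))$ and in $K_0(C(\Omega))$ is nonetheless determined by a locally constant $\mathbb{Z}_{\geq 0}$-valued rank function on $\Omega$, i.e.\ by finitely many clopen pieces on each of which $p$ has constant rank. I would reduce to the clopen-piece analysis by choosing $\delta$ relative to the finitely many clopen sets arising as level sets of the rank functions of all $p \in \mathcal{P}$; on each such piece $p$ is equivalent to a constant-rank trivial projection, and the argument above applies piecewise, then sums up using additivity of $[\cdot]$ over a clopen decomposition. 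I expect the main (mild) obstacle to be bookkeeping: making sure the single $\delta$ works simultaneously for all $p \in \mathcal{P}$, and that the passage from ``mutual Cuntz comparison of projections'' to ``Murray--von Neumann equivalence, hence equal $K_0$-class'' is cleanly invoked in the stable rank one setting (this is standard, essentially because $a \lesssim b$ with $a,b$ projections and $\mathrm{sr}(A)=1$ gives $u^* a u \leq b$ for a unitary $u$, so $[a] \leq [b]$ in the ordered group $K_0(A)$, and antisymmetry of the order — or directly the partial-isometry picture — closes the loop).
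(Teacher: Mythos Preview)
Your proposal is correct and takes essentially the same approach as the paper: both arguments identify projections in $C(\Omega)$ with characteristic functions $\chi_{E_i}$ of clopen sets, choose $\delta$ smaller than the minimal gap $\mathrm{dist}(E_i,\Omega\setminus E_i)$ so that $(E_i)_r=E_i$ for $r<\delta$, and then read off $\phi(\chi_{E_i})\lesssim\psi(\chi_{E_i})$ (and the reverse, by the symmetry of $D_c$ from \ref{hu1}) directly from the definition of $D_c$. Your discussion of projections in $M_n(C(\Omega))$ is unnecessary here since the hypothesis is ${\cal P}\subset C(\Omega)$, but your explicit handling of the $K_0$ conclusion \eqref{Pcs-2+} via Murray--von Neumann equivalence of Cuntz-equivalent projections in stable rank one is a bit more careful than the paper, which leaves that step implicit.
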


\begin{proof}
Without loss of generality, we may assume
that ${\cal P}$ consists of  mutually orthogonal  non-zero projections.
There are mutually disjoint clopen subsets $\{E_i:i=1,2...,m\}$ such that {${\cal
P}=\{p_i=\chi_{E_i},i=1,2,...,m\}$.} Let
$$
d=\min_{{1\le i\le m}}\{{\rm dist}(E_i,X\backslash E_i)\}.
$$
Now choose $0<\dt<d.$
Note that, for any $d>r >0,$
$(E_i)_r=E_i,$ $i=1,2,...,m.$
If $ D_c(\phi, \psi)<\dt$, then for any $i$ and $d>r>\dt,$
$$
{\phi}(f_{E_i})\lesssim {\psi(f_{(E_i)_r})=\psi(f_{E_i})}
$$
{which} implies
$$
[\phi(p_i)]\lesssim[\psi(p_i)].
$$
{By  symmetry}, we have
$$
{[\psi(p_i)]\lesssim[\phi(p_i)].}
$$
Thus we get $[\phi(p_i)]=[\psi(p_i)],$ {in $W(A)$}  $i=1,2,...,m$.


\end{proof}

\begin{lem}\label{Lappdcu}
Let $A$ be a unital simple \CA\, of (stable rank one) and let
$\Omega$ be a compact metric {space}.
 For any $\ep>0,$  there exists $\dt>0$ and a finite subset ${\cal F}\subset
 C({\Omega})$ satisfying the following:

Suppose that {$\phi, \psi, \rho: C(\Omega)\to A$}
are  {three}
unital \hm s  such that
\beq\label{Lappdcu-1}
 \|\phi(f)-\psi(f)\|<\dt\tforal f\in
{\cal F},
\eneq
then
\beq\label{Lappdcu-2}
|D_{c}(\phi, \rho)-
D_c(\psi, \rho)|<\ep
 \eneq
\end{lem}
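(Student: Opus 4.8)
The plan is to exploit a uniform continuity estimate for the relation $f_O \mapsto \phi(f_O)$ under small perturbations of $\phi$, combined with a compactness/approximation argument for the open sets $O \subset \Omega$. First I would reduce the statement to a one-sided inequality: it suffices to show that if $\phi$ and $\psi$ are $\delta$-close on a suitable finite set $\mathcal F$, then $D_c(\psi,\rho) \le D_c(\phi,\rho) + \ep$; the reverse follows by symmetry of hypotheses (swapping the roles of $\phi$ and $\psi$, which is legitimate since $\|\phi(f)-\psi(f)\| = \|\psi(f)-\phi(f)\|$), and then also the inequalities with $\rho$ in the first slot follow the same way using the symmetry of $D_c$ established in Proposition \ref{hu1}. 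So fix an open set $O \subset \Omega$ and suppose $d > 0$ satisfies $\phi(f_O) \lesssim \rho(f_{O_d})$; I want to produce $d' = d + \ep$ (roughly) with $\psi(f_O) \lesssim \rho(f_{O_{d'}})$.

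The key step is a functional-calculus lemma: for any $\eta > 0$ there is $\dt > 0$ and a finite $\mathcal F$ such that $\|\phi(f)-\psi(f)\|<\dt$ for $f\in\mathcal F$ forces $\psi(g) \lesssim \phi(h)$ whenever $g,h$ are positive functions of norm at most one with $\overline{\mathrm{Supp}(g)} \subset \mathrm{Supp}(h)$ (i.e. $g \ll h$ at the level of $C(\Omega)$), provided $\mathcal F$ is chosen to contain functions controlling $g$ and $h$. Concretely, cover $\Omega$ by finitely many balls of radius much smaller than $\ep$, and let $\mathcal F$ consist of a partition-of-unity-type family $\{h_j\}$ subordinate to this cover together with their "shrunk" versions; then any open $O$ can be approximated from inside and outside by unions of these balls up to an error $\ep/2$ in the metric on $\Omega$. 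If $\|\phi(h_j)-\psi(h_j)\|$ is small for all $j$, a standard argument (as in the proof of \ref{Lcuntzcanc2} and 2.4 of \cite{RrUHF}) shows $\psi(f_O) \lesssim \phi(f_{O_{\ep/2}})$: one writes $\psi(f_O)$ as (close to) a function of the $\psi(h_j)$ supported in the union of balls meeting $O$, compares with the corresponding function of the $\phi(h_j)$, and absorbs the perturbation by passing to $f_{\eta}(\,\cdot\,)$ and enlarging the open set slightly. Chaining this with $\phi(f_{O_{\ep/2}}) \lesssim \rho\big(f_{(O_{\ep/2})_{d+\ep/2}}\big) \lesssim \rho(f_{O_{d+\ep}})$ — the first step from $D_c(\phi,\rho) \le d$ applied to the open set $O_{\ep/2}$, the second from $(O_{\ep/2})_{d+\ep/2} \subset O_{d+\ep}$ — gives $\psi(f_O) \lesssim \rho(f_{O_{d+\ep}})$, as desired.

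Taking the supremum over open $O$ yields $D_c(\psi,\rho) \le D_c(\phi,\rho) + \ep$, and symmetry finishes the proof. The main obstacle I anticipate is the functional-calculus lemma in the middle: making precise, with an explicit finite $\mathcal F$ and $\dt$ depending only on $\ep$ (and the metric space $\Omega$, not on $A$, $\phi$, $\psi$, $\rho$), the passage from "$\phi,\psi$ close on $\mathcal F$" to "$\psi(f_O) \lesssim \phi(f_{O_{\ep/2}})$ for every open $O$ simultaneously." The uniformity over all open $O$ is what forces the use of a fixed finite cover of $\Omega$; one must check that the Cuntz-subequivalence obtained from the perturbation estimate (via stability of the relation $\lesssim$ under small norm perturbations, using $f_\eta$ and Proposition 2.4 of \cite{RrUHF}) can be carried out with constants independent of which $O$ one picks, which works because there are only finitely many "types" of unions of cover elements. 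The stable-rank-one hypothesis enters exactly as in \ref{Lcuntzcanc2} and Proposition \ref{hu1} to guarantee the relevant unitaries and the symmetry of $D_c$.
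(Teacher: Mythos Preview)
Your proposal is correct and follows essentially the same route as the paper: reduce to a one-sided inequality by symmetry, use compactness of $\Omega$ to replace the family of all open sets by finitely many representative open sets (in the paper these are the finite unions $O_i$ of balls of radius $\sim\ep$; you speak of a partition-of-unity family subordinate to a fine cover), take $\mathcal F$ to be the corresponding finitely many test functions, and invoke the perturbation stability of Cuntz subequivalence from Proposition~2.4 of \cite{RrUHF} (the $f_\ep$-trick) to pass from $\psi(f_O)$ to $\phi(f_{O_{\ep'}})$, then chain with $D_c(\phi,\rho)$. The only cosmetic difference is that the paper's $\mathcal F$ consists simply of the functions $g_i=f_{(O_i)_\ep}$ rather than a partition of unity, which makes the Cuntz comparison $\psi(f_G)\lesssim f_\ep(\psi(g_i))\lesssim\phi(g_i)\lesssim\phi(f_{G_{2\ep}})$ a one-line chain; your partition-of-unity idea would work too but is slightly more elaborate than necessary.
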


\begin{proof}
Let $d= D_c(\phi, \rho)\ge 0.$
Let $\ep>0$ be given. 

Since $\Omega$ is compact, there are only finitely many open subsets
$\{O_{1},O_{2},...,O_{n}\}$ such that, for any open subset $G\subset
\Omega,$ there is an integer $i,$ \beq\label{Lappdcu-4}
{ G\subset O_i\subset G_\ep}.
 \eneq

 Let $g_i=f_{{(O_i)_\ep}},i=1,2,...,n.$

It follows from \cite{RrUHF} that there is $\dt>0$ satisfying the
following: If $h_1, h_2: C(\Omega)\to A$ are two unital \hm s such
that \beq\label{Lappdcu-8} \|h_1(g_i)-h_2(g_i)\|<\dt,
\,\,\, i=1,2,...,n, \eneq then \beq\label{Lappdcu-9}
f_{\ep}(h_1(g_i))\lesssim h_2(g_i)
\eneq $i=1,2,...,n.$

{Choose this $\dt.$}
Let ${\cal F}=\{g_i: i=1,2,...,n\}.$
Let $G\subset \Omega$ be  an
open subset. There is an integer $i$ such that
\beq\label{Lappdcu-10}
{ G\subset O_i\subset G_\ep}.
\eneq If
\beq\label{Lappdcu-11}
\|\phi(g_i)-\psi(g_i)\|<\dt,\,\,\, i=1,2,...,n, \eneq
then
\beq\label{Lappdcu-12}
f_{\ep}(\psi(g_i))\lesssim \phi(g_i).\\
\eneq
Since the support of $f_{\ep}(g_i)$
contains ${O_{i}}, $ $G\subset {O_{i}}$ {{and}} $O_{i}\subset
G_\ep,$ we obtain that
\beq\label{Lappdcu-16}
{\psi}(f_G)\lesssim
{\psi}(f_{\ep}(g_i))\lesssim{\phi}(g_i){\lesssim \phi(f_{G_{2\ep}})}\lesssim
{\rho}(f_{G_{{d+3\ep}}}),
 \eneq
Since this holds for all open sets $G\subset \Omega,$ we conclude
that \beq\label{Lappdcu-17}
D_{c}(\psi, \rho)\le
d+{3\ep}=D_{c}(\phi, \rho)+{3\ep}.
\eneq
{By  symmetry,}
\beq\label{Lappdcu-18}
 D_c(\phi,\rho)\le  D_c(\psi, \rho)+{3\ep}.
\eneq Lemma follows.

\end{proof}

\begin{df}\label{Depapp}
{\rm Let $A$ be a unital \CA\, and let $\Omega$ be a compact metric
space. Fix {$\kappa\in H_{c,1}(C(\Omega), A)$} and $\ep>0.$
Let $X$ be the spectrum of $\kappa.$ We say $\kappa$ has a {\it
finite $\ep$-approximation {in $H_{c, 1}(C(\Omega), A)$}}, if there is a finite subset  $F\subset
X$  and $\phi_F\in Hom_{1}(C(\Omega), A)_+$ with the spectrum
$F$ such that
\beq\label{Depapp-1}
D_{c}(\kappa ,
{\phi_F})<\ep.
\eneq
Note that $[\phi_F(f_O)]$ can be represented by a
projection $p\in A.$

Let $A$ be a unital separable simple \CA\, with real rank zero,
stable rank one and with weakly unperforated $K_0(A).$
{We show  that (see  {\ref{Lfepapp}} below), if
$\kappa\in H_{c,1}(C(\Omega), A)$ is induced by a \hm\, $h: C(\Omega)\to A,$}
then $\kappa$ has a finite {$\ep$-approximation} for any
$\ep>0.$
{Warning: a \hm\, $\phi$ has a finite { $\ep$-approximation}
in $H_{c, 1}(C(\Omega), A)$ does not imply that
it is close to a \hm s with finite spectrum.} }
\end{df}

\begin{df}\label{Ddt}
{\rm Let $A$ be a unital \CA\, with $T(A)\not=\emptyset.$ Let
$\Omega$ be a compact metric space, {and $X, Y\subset \Omega$ be two
compact subsets.} Let $\phi_X: C(X)\to A$ and $\psi_Y: C(Y)\to A$ be two
unital monomorphisms. Let $\pi_X: C(\Omega)\to C(X)$ and $\pi_Y:
C(\Omega)\to C(Y)$ be the quotient maps. Define $\phi_X=\phi\circ
\pi_X$ and $\psi_Y=\psi\circ \pi_Y.$ For each open subset $O\subset
\Omega,$ define \beq\label{Ddf-1-}
r_O(\phi_X,\psi_Y)=\inf\{r>0: d_\tau(\phi_X(f_O))\le d_\tau(\psi_Y(f_{O_r}))\tforal \tau\in T(A)\}\andeqn\\
r_O^+(\phi_X,\psi_Y)=\inf\{r>0: d_\tau(\phi_X(f_O))<
d_\tau(\psi_Y(f_{O_r}))\tforal \tau\in T(A)\}. \eneq

Define
\beq\label{Ddf-1}
D_T(\phi_X,
\psi_Y)=\sup\{r_O(\phi_X,\psi_Y): O \,\,\, {\rm open}\}.
\eneq

 Put
$$
a(\phi_X,\psi_Y)=\sup\{{\rm dist}(\zeta, X): \zeta\in Y\}\andeqn
b(\phi_X, \psi_Y)=\sup\{{\rm dist}(\xi, Y): \xi\in X\}.
$$
Define
\beq\label{Ddf-2}
D^T(\phi_X,\psi_Y)=\max\{a(\phi_X, \psi_Y),
\sup\{r_O^+(\phi_X,\psi_Y): O\,\,\, {\rm open\,\,\, and }\,\,\,
O\cap X\not=X\}\}.
 \eneq}
\end{df}
Note that, if $X\subset O,$ then $d_\tau(\phi_X(f_O))=1$ for all
$\tau\in T(A).$ Therefore
\beq\label{Ddf-3}
D_T(\phi_X, \psi_Y)\ge
a(\phi_X, \psi_Y).
\eneq


Since $X$ is compact, there is $\xi\in X$ such that $b(\phi_X,
\psi_Y)={\rm dist}(\xi, Y).$ If $\ep>0$ and $O(\xi, \ep)$ is the
open ball with center at $\xi$ and radius $\ep,$ then
\beq\label{Ddf-4}
r_{O(\xi, \ep)}{(\phi_X,\psi_Y)
}\ge b(\phi_X, \psi_Y)-\ep.
\eneq
 It follows that $D_T(\phi_X,\psi_Y)\ge {\max}\{a(\phi_X,\psi_Y),b(\phi_X,
 \psi_Y)\}=d_H(X,Y),$ where $d_H(X,Y)$ is the Hausdorff distance  between
 $X$ and $Y$.

\begin{lem}\label{d<d}
Let $A$ be a unital simple \CA\, with $T(A)\not=\emptyset$ and let
 $O\subset \Omega$ be an open set {with $O\cap X\not= X$}. If $r>D^T(\phi_X, {\psi}_Y),$
then \beq\label{d<d-1} \inf\{
d_\tau({\psi}_Y(f_{O_r}))-d_\tau(\phi_X(f_O)): \tau\in T(A)\}>0. \eneq
\end{lem}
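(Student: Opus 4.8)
The plan is to reduce the statement to a property of the Borel probability measures $\mu_\tau:=\mu_{\tau\circ\phi_X}$ and $\nu_\tau:=\mu_{\tau\circ\psi_Y}$ on $\Omega$ (supported on $X$, resp. $Y$), and then to exploit weak-$*$ compactness of $T(A)$. First I would record the elementary identity
$$
d_\tau(\phi_X(f_G))=\mu_\tau(G),\qquad d_\tau(\psi_Y(f_G))=\nu_\tau(G)
$$
for every open $G\subseteq\Omega$: indeed $\tau(f_\ep(\phi_X(f_G)))=\int f_\ep\!\circ\! f_G\,d\mu_\tau$ and $f_\ep\!\circ\! f_G\uparrow\chi_G$ as $\ep\to0$, so the claim follows by monotone convergence (in particular the value is independent of the chosen representative $f_G$). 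Two facts are used throughout: $\tau\mapsto\nu_\tau(G)$ is lower semicontinuous for $G$ open and $\tau\mapsto\mu_\tau(F)$ is upper semicontinuous for $F$ closed (each being a supremum, resp.\ infimum, of the weak-$*$ continuous functions $\tau\mapsto\int g\,d\nu_\tau$, $g\in C(\Omega)$, $0\le g\le1$); and, since $A$ is simple, every $\tau\in T(A)$ is faithful, so $\mu_\tau$ and $\nu_\tau$ have full support on $X$ and $Y$ respectively.

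Next I extract a uniform buffer from $r>D^T(\phi_X,\psi_Y)$. For any open $O'$ the set $\{s>0:\mu_\tau(O')<\nu_\tau(O'_s)\ \tforal\tau\}$ is upward closed with infimum $r^+_{O'}(\phi_X,\psi_Y)$, and $r^+_{O'}\le D^T$ whenever $O'\cap X\ne X$, while also $a(\phi_X,\psi_Y)\le D^T$. Fixing $\rho$ with $D^T<\rho<r$ we obtain, for every open $O'$ with $O'\cap X\ne X$,
$$
\mu_\tau(O')<\nu_\tau(O'_\rho)\quad(\tau\in T(A)),\qquad\text{and}\qquad Y\subseteq X_\rho .
$$

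The core of the argument is then: it suffices to produce a \emph{closed} set $F$ with $O\subseteq F$ and $\mu_\tau(F)<\nu_\tau(O_r)$ for all $\tau$. Granting this, put $G(\tau):=\nu_\tau(O_r)-\mu_\tau(F)$; it is lower semicontinuous on the compact space $T(A)$ and strictly positive, hence $\inf_\tau G(\tau)=\min_\tau G(\tau)>0$, and since $d_\tau(\phi_X(f_O))=\mu_\tau(O)\le\mu_\tau(F)$ and $d_\tau(\psi_Y(f_{O_r}))=\nu_\tau(O_r)$, we get $d_\tau(\psi_Y(f_{O_r}))-d_\tau(\phi_X(f_O))\ge G(\tau)\ge\min_\tau G>0$, which is (\ref{d<d-1}). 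To produce $F$ in the generic situation, choose $\dt>0$ with $\dt<r-\rho$; if $O_\dt\cap X\ne X$ then, applying the displayed inequality to $O'=O_\dt$ and using $(O_\dt)_\rho\subseteq O_{\dt+\rho}\subseteq O_r$, we get $\mu_\tau(O_\dt)<\nu_\tau(O_r)$ for all $\tau$, so $F:=\overline{O_{\dt/2}}$ works (note $O\subseteq O_{\dt/2}\subseteq F\subseteq O_\dt$).

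The step I expect to be the main obstacle is the degenerate case in which $O_\dt\cap X=X$ for \emph{every} $\dt>0$, i.e.\ $X\subseteq\overline O$ (then $r^+_{O_\dt}=\infty$ and the fattening above is unavailable). Here, however, $Y\subseteq X_\rho\subseteq(\overline O)_\rho=O_\rho\subseteq O_r$, so $\nu_\tau(O_r)=\nu_\tau(Y)=1$ for all $\tau$, and the claim reduces to $\sup_\tau\mu_\tau(O)<1$, equivalently $\inf_\tau\mu_\tau(X\setminus O)>0$ with $C:=X\setminus O$ a nonempty compact set contained in $\partial O$. When $\Int_X C\ne\emptyset$ this is immediate, since $\tau\mapsto\mu_\tau(\Int_X C)$ is a strictly positive (full support) lower semicontinuous function on the compact $T(A)$ and $\mu_\tau(C)\ge\mu_\tau(\Int_X C)$. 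The genuinely delicate sub-case is $C$ nowhere dense in $X$: here one uses once more that $r^+_O<r$, i.e.\ $\mu_\tau(O)<\nu_\tau(O_s)$ for some $s<r$, and analyses $\nu_\tau$ on the open annulus $O_r\setminus\overline{O_s}$ — if this set meets $Y$, then faithfulness of $\nu_\tau$ together with a compactness/subsequence argument forces the uniform gap, whereas if it is disjoint from $Y$ for every admissible $s$ then $\nu_\tau(O_r)$ coincides with $\nu_\tau$ of the closed set $\{z:\dist(z,O)\le r^+_O\}$, hence is \emph{continuous} in $\tau$, and one finishes by a direct limit argument. This last bookkeeping (reconciling the mere lower semicontinuity of $\tau\mapsto\mu_\tau(O)$ with the buffer $s<r$) is where the care is required; everything else is routine.
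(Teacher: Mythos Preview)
Your overall strategy matches the paper's: bound the (merely semicontinuous) quantities $d_\tau(\cdot)$ by functions of $\tau$ with good continuity properties and invoke compactness of $T(A)$. The paper's execution is considerably shorter. Setting $d=D^T(\phi_X,\psi_Y)$ and $\eta=(r-d)/4$, it picks Urysohn functions $f_1$ (equal to $1$ on $O$, vanishing off $O_\eta$) and $f_2$ (equal to $1$ on $O_{d+2\eta}$, vanishing off $O_r$) and writes the single chain
\[
d_\tau(\phi_X(f_O))\ \le\ \tau(\phi_X(f_1))\ \le\ d_\tau(\phi_X(f_{O_\eta}))\ <\ d_\tau(\psi_Y(f_{O_{d+2\eta}}))\ \le\ \tau(\psi_Y(f_2))\ \le\ d_\tau(\psi_Y(f_{O_r})),
\]
the strict inequality coming from $r^+_{O_\eta}\le d<d+\eta$. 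Since $\tau\mapsto\tau(\psi_Y(f_2))-\tau(\phi_X(f_1))$ is \emph{continuous} and strictly positive on the compact set $T(A)$, its infimum is positive, and this infimum bounds $d_\tau(\psi_Y(f_{O_r}))-d_\tau(\phi_X(f_O))$ from below. No closed-set sandwich, no semicontinuity bookkeeping, no case split. Your generic case (choose $\dt\in(0,r-\rho)$ with $O_\dt\cap X\neq X$ and take $F=\overline{O_{\dt/2}}$) is correct, but it is the same sandwich repackaged through measures rather than through test functions; the paper's route via $f_1,f_2$ gets continuity for free and avoids the LSC/USC accounting.

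The degenerate case you isolate ($O_\dt\supset X$ for every $\dt>0$, i.e.\ $X\subset\overline O$) is a genuine subtlety; in fact the paper's strict inequality above also tacitly uses $O_\eta\cap X\neq X$, which fails precisely here. However, your resolution is incomplete. Sub-case (a) ($\Int_X C\neq\emptyset$) is fine, and the ``annulus meets $Y$'' branch of (b) works (if $(O_r\setminus\overline{O_s})\cap Y\neq\emptyset$ for some $s\in(r^+_O,r)$ then $\nu_\tau(O_r)-\mu_\tau(O)>\nu_\tau(O_r\setminus\overline{O_s})\ge c>0$ uniformly). But in the remaining branch you only obtain that $\tau\mapsto\nu_\tau(O_r)-\mu_\tau(O)$ is \emph{upper} semicontinuous and pointwise positive, which does not by itself force a positive infimum on a compact set; the ``direct limit argument'' you invoke is not specified and does not obviously close the gap. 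So: in the generic regime your proof is correct but longer than the paper's, while in the degenerate regime you have spotted a real issue without settling it.
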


\begin{proof}
Put $d=D^T(\phi_X, \psi_Y)$ and  $\eta=(1/4) (r-d)>0.$ Let $f_1\in
C(\Omega)$ with $0\le f_1\le 1$ such that $f_1(t)=1$ if $t\in O$ and
$f_1(t)=0$ if $t\not\in O{_{\eta}}.$ Let $f_2\in C(\Omega)$ with $0\le
f_2\le 1$ such that $f_2(t)=1$ if $t\in O_{d+2\eta}$ and $f_2(t)=0$
if $t\not\in O_r$.
Then  \beq\label{d<d-2} &&d_\tau(\psi_Y(f_{O_r}))\ge
{\tau}(\psi_Y(f_2))\ge d_\tau(\psi_Y(f_{O_{d+2\eta}}))\\
&>&d_\tau(\phi_X(f_{O_\eta}){)}\ge \tau(\phi_X(f_1))\ge
d_\tau(\phi_X(f_O)) \eneq for all $\tau\in T(A).$ Since $T(A)$ is
compact, we conclude that \beq\label{d<d-3}
\inf\{\tau(\psi_Y(f_2))-\tau(\phi_X(f_1)): \tau\in T(A)\}>0. \eneq
Therefore \beq\label{d<d-4}
&&\inf\{ d_\tau(\psi_Y(f_{O_r}))-d_\tau(\phi_X(f_O)): \tau\in T(A)\}\\
&\ge& \inf\{\tau(\psi_Y(f_2))-\tau(\phi_X(f_1)): \tau\in T(A)\}>0.
\eneq
\end{proof}

\begin{prop}\label{dtP}
Let $A$ be a unital \CA\, with $T(A)\not=\emptyset$. If $A$ has the
strict comparison for positive elements.  Let ${\phi_X, \phi_Y,
\phi_Z}$ be three unital homomorphisms from $C(\Omega)$ into $A$.
Then
\beq\label{dtP-4}
D_T(\phi_X, \phi_Y)\le D_c(\phi_X, \phi_Y)\le
D^T(\phi_X, \phi_Y),
\\\label{dtP-1} D_T(\phi_X,
\phi_Y)=D_T(\phi_Y,\phi_X),
\\\label{dtP-2}
D^T(\phi_X,\phi_Y)=D^T(\phi_Y,\phi_X),
\\\label{dtP-5}
D_T(\phi_X,\phi_Z)\le
D_T(\phi_X,\phi_Y)+D_T(\phi_Y,\phi_Z),\\\label{dtP-3}
D^T(\phi_X,\phi_Z)\le D^T(\phi_X,\phi_Y)+D^T(\phi_Y,\phi_Z).
\eneq If
$X$ or $Y$ is connected, then
\beq\label{dtP-6}
D_T(\phi_X,\phi_Y)=D_c(\phi_X,\phi_Y)=D^T(\phi_X,\phi_Y).
\eneq

\end{prop}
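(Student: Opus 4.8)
The plan is to establish the chain of inequalities in (\ref{dtP-4}) first, since the symmetry and triangle-inequality statements (\ref{dtP-1})--(\ref{dtP-3}) then follow either from the analogous properties of $D_c$ already proved in \ref{hu1}, or by direct elementary arguments mimicking those proofs. For the inequality $D_T(\phi_X,\phi_Y)\le D_c(\phi_X,\phi_Y)$: fix an open set $O\subset\Omega$ and suppose $d>D_c(\phi_X,\phi_Y)$. Then $\phi_X(f_O)\lesssim\phi_Y(f_{O_d})$, and since $A$ has strict comparison (hence Cuntz subequivalence implies comparison of $d_\tau$), we get $d_\tau(\phi_X(f_O))\le d_\tau(\phi_Y(f_{O_d}))$ for all $\tau\in T(A)$; thus $r_O(\phi_X,\phi_Y)\le d$. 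Taking sup over $O$ and inf over such $d$ gives the first inequality. For $D_c(\phi_X,\phi_Y)\le D^T(\phi_X,\phi_Y)$: put $d=D^T(\phi_X,\phi_Y)$, fix an open $O$, and let $r>d$. If $O\cap X=X$ then $d_\tau(\phi_X(f_O))=1$ but also $r>d\ge a(\phi_X,\phi_Y)$ forces $Y\subset O_r$, so $\phi_Y(f_{O_r})$ is a projection-like full element with $d_\tau=1$; one checks $\phi_X(f_O)\lesssim\phi_Y(f_{O_r})$ directly (shrink $O$ slightly to a compactly-contained open set on which $f_O$ is bounded below, then use that $f_{O_r}$ is strictly positive on $Y$). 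If $O\cap X\ne X$, apply \ref{d<d}: we get $d_\tau(\phi_Y(f_{O_r}))>d_\tau(\phi_X(f_O))$ uniformly in $\tau$, hence by strict comparison $\phi_X(f_O)\lesssim\phi_Y(f_{O_r})$. Either way $D_c(\phi_X,\phi_Y)\le r$ for all $r>d$, giving the second inequality.

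For (\ref{dtP-1}) and (\ref{dtP-5}), I would argue directly: symmetry of $D_T$ follows because $d_\tau(\phi_X(f_O))\le d_\tau(\phi_Y(f_{O_r}))$ for all $O$ and all $r>D_T(\phi_X,\phi_Y)$ can be converted, by the complement trick used in the proof of \ref{hu1} (taking $F=\{t:\mathrm{dist}(t,O)\ge r+\epsilon\}$ and passing through $1-d_\tau$ which now becomes the honest identity $1-d_\tau(\phi(f))=d_\tau(\phi(1-f))$ up to support adjustments, or more simply since $d_\tau$ is genuinely additive on orthogonal elements), into the reverse inequality; the triangle inequality for $D_T$ is immediate from $O\subset(O_{r_1})\Rightarrow (O_{r_1})_{r_2}\subset O_{r_1+r_2}$ applied to the defining inequalities. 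For (\ref{dtP-2}) and (\ref{dtP-3}), since $D^T$ is defined via $r_O^+$ (strict inequality of measures) and the quantities $a,b$, symmetry follows from the fact that a strict inequality $d_\tau(\phi_X(f_O))<d_\tau(\phi_Y(f_{O_r}))$ uniform in the compact set $T(A)$ can be "reflected" using the same $1-(\cdot)$ manipulation, while $a(\phi_X,\phi_Y)$ and $b(\phi_X,\phi_Y)$ swap roles under $X\leftrightarrow Y$ but $\max$ is symmetric; the triangle inequality is again the $\epsilon$-enlargement bookkeeping.

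For the final statement (\ref{dtP-6}): when $X$ (say) is connected, I want to show $D^T(\phi_X,\phi_Y)\le D_T(\phi_X,\phi_Y)$, which combined with (\ref{dtP-4}) collapses the chain. The point is that connectedness of $X$ eliminates the gap between $r_O$ and $r_O^+$: for a connected spectrum, the function $\lambda\mapsto d_\tau(\phi_X(f_{O(\xi,\lambda)}))$ has no jumps of the kind that would let $r_O$ and $r_O^+$ differ, and crucially, $a(\phi_X,\phi_Y)$ is already dominated by $D_T(\phi_X,\phi_Y)$ via (\ref{Ddf-3}). More precisely, I would show that for connected $X$, given any $r>D_T(\phi_X,\phi_Y)$ and any open $O$ with $O\cap X\ne X$, one has the strict inequality $d_\tau(\phi_X(f_O))<d_\tau(\phi_Y(f_{O_r}))$: using $r>D_T$ pick $r'$ with $D_T<r'<r$, so $d_\tau(\phi_X(f_O))\le d_\tau(\phi_Y(f_{O_{r'}}))$; then since $X$ is connected and $O\cap X$ is a proper open subset of $X$, one can enlarge $O$ slightly to $O'$ with $O\subset O'$, $O'\cap X\ne X$ still, and $O'_{r'}\subset O_r$, while $d_\tau(\phi_X(f_O))\le d_\tau(\phi_X(f_{O'}))\le d_\tau(\phi_Y(f_{O'_{r'}}))\le d_\tau(\phi_Y(f_{O_r}))$ — but strictness must come from somewhere, namely from the fact that connectedness forces $d_\tau(\phi_X(f_{O'}))$ to be strictly larger than $d_\tau(\phi_X(f_O))$ when $O'$ genuinely captures more of the connected set $X$, OR $O$ misses all of $X$ in which case the left side is $0<d_\tau(\text{nonzero})$. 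The main obstacle I anticipate is exactly this last point: making the passage from the non-strict $r_O$-inequality to the strict $r_O^+$-inequality rigorous and uniform over $\tau\in T(A)$, handling carefully the degenerate cases where $O\cap X=\emptyset$ or where $O$ meets $X$ but $\phi_X(f_O)$ has trivial Cuntz class — this is where connectedness of $X$ is essential and where the argument needs the most care, likely invoking strict comparison once more together with a compactness argument on $T(A)$.
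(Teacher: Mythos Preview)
Your proposal is correct and follows essentially the same route as the paper: the chain (\ref{dtP-4}) is obtained exactly as you describe (Cuntz subequivalence $\Rightarrow$ $d_\tau$-inequality for the left half; \ref{d<d} plus strict comparison for the right half, with the $O\cap X=X$ case handled via $a(\phi_X,\phi_Y)$), and the symmetry and triangle statements are done by the direct ``complement trick'' and $\epsilon$-enlargement bookkeeping you outline rather than by invoking (\ref{dtP-4}).

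For (\ref{dtP-6}) your idea is right but the paper's execution is cleaner than your $O\mapsto O'$ enlargement: it simply takes $O_{\ep/2}$ and observes that when $X$ is connected and $O\cap X\ne X$ one has $O\cap X\subsetneq O_{\ep/2}\cap X$, giving directly
\[
d_\tau(\phi_X(f_O))<d_\tau(\phi_X(f_{O_{\ep/2}}))\le d_\tau(\phi_Y(f_{O_{d+\ep}})),
\]
which is exactly the strict inequality you were seeking; the paper invokes simplicity of $A$ at this step to force the first strict inequality, which is precisely the ``faithfulness'' issue you flagged as the main obstacle.
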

\begin{proof}

If $D_c(\phi_X,\phi_Y)<d$, then for any open $O,$
$$
\phi_X(f_O)\lesssim \phi_Y(f_{O_d}){.}
$$
{S}o for any $\tau\in
T(A)$,
$$
d_\tau(\phi_X(f_O))\le d_\tau(\phi_Y(f_{O_d})).
$$
This implies the first inequality of (\ref{dtP-4}).

If $D^T(\phi_X,\phi_Y)<d$, then for any open subset $O$ with $O\cap
X\not=X$, by  \ref{d<d},
\beq\label{d<d-4n}
\inf\{d_\tau(\phi_Y(f_{O_d}))-d_\tau(\phi_X(f_O)): \tau\in T(A)\}>0.
\eneq
Since $A$ has strict comparison for positive elements,  we have
$$\phi_X(f_O)\lesssim \phi_Y(f_{O_d}).$$  If $O\cap X=X$, then $X\subset O$.
Since $d>a(\phi_X,\psi_Y)$, {we have that } $Y\subset X_d\subset O_d$. It
follows  $\phi_X(f_O)\lesssim \phi_Y(f_{O_d})$. Thus we get the
second inequality of (\ref{dtP-4}).

 To show (\ref {dtP-1}), let $D_T(\phi_X,\phi_Y)\le d {.}$ {It suffices to} show that
$D_{T}(\phi_Y,\phi_X)\le d.$ Suppose $O\subset \Omega$ is an open
subset{.} 
For any $\ep>0,$ let
$$
F=\{t:{\rm dist}(t,O)\ge d+\ep\}\andeqn K=F_{d+{\ep}}.
$$

  Define $f,g\in
C(\Omega)$ with $f(t)=0$ if $t\not\in F_{\ep/2}$, $0<f(t)<1$, if
$t\in F_{\ep/2}\setminus F_{\ep/4}$, $f(t)=1$ if $t\in F_{\ep/4}$
and $g(t)=0$ if $t\not\in K$,$0<g(t)<1$, if $t\in K\setminus
F_{d+\ep/2}$, $g(t)=1$ if $t\in F_{d+\ep/2}$.
So, if $1-f(t)\not=0,$ then $t\not\in F.$ Hence $t\in O_{d+\ep}.$ 
Therefore, by choosing a right $f_{O_{d+\ep}},$ 
we may assume $1-f\le f_{O_{d+\ep}}$.

Then \beq\label{hu3-n1}
(F_{\ep/2})_{d+\ep/2}\subset F_{d+\ep}=K.
\eneq
By the definition,
 \beq\label{hu3-n2}
d_\tau(\phi_X(f))= d_\tau(\phi_X(f_{F_{\ep/2}}))\le
d_\tau(\phi_Y(f_{(F_{\ep/2})_{d+\ep/2}}))\le d_\tau(\phi_Y(f_{K}))
=d_\tau(\phi_Y(g)).
\eneq
{Thus}
\beq\label{hu3-n4}
1-d_\tau(\phi_Y(g))\le
1-d_\tau(\phi_X(f)).
\eneq
Since $f_O\le 1-g$,  we have 
\beq
\phi_Y(f_O)\lesssim
1-\phi_Y(g).
\eneq
Hence
 \beq\label{hu3-n9}
1-d_\tau(\phi_X(f))\le d_\tau(\phi_X(f_{O_{d+\ep}})).
\eneq
Thus {w}e have
\beq\label{hu3-n6}
d_\tau(\phi_Y(f_O))\le
d_\tau(\phi_X(f_{O_{d+\ep}}))
\eneq
for all $\ep>0$ and for all open
subsets $O\subset \Omega$. Thus we  get $D_T(y,x)\le d$.

 To show {(\ref{dtP-2})}, let $d>D^T(\phi_X, \phi_Y)$ {and let $O\subset \Omega.$}
{ If $O\cap Y\not=Y, $}
{ we have
\beq\label{hu3-n6+}
d_\tau(\phi_X(f_{F_{\ep/2}}))<
d_\tau(\phi_Y(f_{(F_{\ep/2})_{d+\ep/2}})).
\eneq
}
{ We will follow} the proof
 of (\ref{dtP-1}).
{By (\ref{hu3-n6+}), instead of $``\le " $ we will have $``<" $ in (\ref{hu3-n2}).}
 It follows, {as in the proof of  (\ref{dtP-1}),}
$$
\sup\{r_O^+(\phi_Y,\phi_X): O\,\,\, {\rm open\,\,\, and }\,\,\,
O\cap {Y}\not={Y}\}\le d.
$$

 Since $a(\phi_Y,\phi_X)\le
 D_T(\phi_Y,\phi_X)=D_T(\phi_X,\phi_Y)<d,$ then
 \beq
 D^T(\phi_Y,\phi_X)=\max\{a(\phi_Y, \phi_X),
\sup\{r_O^+(\phi_Y,\phi_X): O\,\,\, {\rm open\,\,\, and }\,\,\,
O\cap {Y}\not={Y}\}\}\le d.
\eneq
We get {(\ref{dtP-2})}.

{Now we turn to (\ref{dtP-5}).}
If $c_1=D_T(\phi_X,\phi_Y),c_2=D_T(\phi_Y,\phi_Z)$, 
$c_3=D^T(\phi_X, \phi_Y)$ and $c_4=D^T(\phi_X, \phi_Y),$ then for any $\ep>0$, any open set $O$, 
$$
{d_\tau(\phi_X(f_O))\le d_\tau(\phi_Y(f_{O_{c_1+\ep}}))\le d_\tau(\phi_Z(f_{O_{c_1+c_2+2\ep}}))}
$$
and
$$
{d_\tau(\phi_X(f_O))< d_\tau(\phi_Y(f_{O_{c_3+\ep}}))< d_\tau(\phi_Z(f_{O_{c_3+c_4+2\ep}}))}
$$

If
$$a(\phi_X,\phi_Y)=d_1,a(\phi_Y,\phi_Z)=d_2,$$
then  $Z\subset Y_{d_{{2}}}\subset (X_{d_1})_{d_2}\subset X_{d_1+d_2}$,
so
$$
a(\phi_X,\phi_Z)=\inf\{r>0:Z\subset X_r\}\le d_1+d_2=
a(\phi_X,\phi_Y)+a(\phi_Y,\phi_Z).
$$
{From these,}
 we obtain (\ref{dtP-5}) {and (\ref{dtP-3})}.

To show (\ref{dtP-6}), assume $X$ is connected {and}
$D_T(\phi_X,\phi_Y)=d.$ It is suffices  to  show $D^T(\phi_X,\phi_Y)\le
d$. For any open $O$ with $O\cap X\not=X {,}$ {s}ince $X$ is connected,
there is $\dt>0$ such that for any $0<\ep<\dt$, $O\cap
X\not=O_{\ep/2}\cap X{.}$ {S}o, since $A$ is simple, for any $\tau\in T(A)$,
\beq
\tau(\phi_X(f_O))<\tau(\phi_X(f_{O_{\ep/2}}))\le
\tau(\phi_Y(f_{O_{d+\ep}})).
\eneq

{On the other hand, s}ince $a(\phi_X,\phi_Y)\le D_T(\phi_X,\phi_Y)$, by definition,
$D^T(\phi_X,\phi_Y)\le d$.  {This} end{s}  the proof of
(\ref{dtP-6}).
\end{proof}

{ Note there exists } \CA\, $A,$ $D_T(\phi, \psi)=D_c(\phi, \psi),$ even {when} neither $X$ nor $Y$ are connected.

\begin{prop}\label{D_T=D_c}
Let $A$ be a unital simple \CA\,  {with stable rank one,} ${\rm ker}\rho_A(K_0(A))=\{0\}$ and
with strict comparison for positive elements. Suppose that $A$ has a unique tracial state.
Then
$$
D_T(\phi, \psi)=D_c(\phi, \psi).
$$

\end{prop}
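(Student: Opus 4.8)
The plan is to use the chain of inequalities $D_T(\phi,\psi)\le D_c(\phi,\psi)\le D^T(\phi,\psi)$ already established in \ref{dtP} for \CA s with strict comparison, and to show that under the extra hypotheses $D_T(\phi,\psi)=D^T(\phi,\psi)$, which forces equality throughout. Since there is a unique tracial state $\tau$, the quantifier ``for all $\tau\in T(A)$'' in the definitions of $r_O$ and $r_O^+$ collapses, so $r_O(\phi,\psi)=\inf\{r>0: d_\tau(\phi(f_O))\le d_\tau(\psi(f_{O_r}))\}$ and $r_O^+(\phi,\psi)=\inf\{r>0: d_\tau(\phi(f_O))< d_\tau(\psi(f_{O_r}))\}$, with a single real-valued comparison at each scale.

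The key point is that $\ker\rho_A(K_0(A))=\{0\}$ together with real rank zero / strict comparison makes the range of the function $b\mapsto d_\tau(\phi(f_b))$ behave well: in fact, with a unique trace and $\ker\rho_A=\{0\}$, the map $r\mapsto d_\tau(\psi(f_{O_r}))$ is nondecreasing and, crucially, for the relevant open sets it is \emph{strictly} increasing in $r$ until $O_r$ engulfs $Y={\rm sp}(\psi)$. More precisely, I would argue: fix an open $O$ with $O\cap X\ne X$ and let $r_0=r_O(\phi,\psi)$. For $r>r_0$ one has $d_\tau(\phi(f_O))\le d_\tau(\psi(f_{O_r}))$; I want to upgrade this to strict inequality for all $r>r_0$. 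The obstruction would be a ``plateau'': an interval of values $r$ on which $d_\tau(\psi(f_{O_r}))$ is constant and equal to $d_\tau(\phi(f_O))$. But if $O_r$ does not yet contain $Y$, then enlarging $r$ slightly adds a nonempty open piece of $\Omega$ meeting $Y$ (using that $Y\subset\overline{O_r\setminus O_{r'}}$ for suitable $r'<r$ when $O_r\not\supset Y$, and that $\psi$ has spectrum $Y$), and since $\tau$ is faithful on $A$ (as $A$ is simple with a unique, hence faithful, trace) and $\psi$ is a monomorphism, $d_\tau(\psi(f_{O_{r}}))$ strictly increases. Hence the infimum defining $r_O^+$ equals the infimum defining $r_O$, i.e. $r_O^+(\phi,\psi)=r_O(\phi,\psi)$ for every open $O$ with $O\cap X\ne X$; and taking suprema, together with $D_T(\phi,\psi)\ge a(\phi,\psi)$ from (\ref{Ddf-3}), yields $D^T(\phi,\psi)\le D_T(\phi,\psi)$. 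Combined with the reverse inequality from \ref{dtP} this gives $D_T=D^T$, hence $D_T=D_c$.

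The main obstacle I anticipate is the rigorous treatment of the ``no plateau'' claim: one must rule out that $d_\tau(\psi(f_{O_r}))$ stalls even though $O_r\not\supseteq Y$. This requires a genuine use of faithfulness of $\tau$ on $A$ (via simplicity and uniqueness of the trace), of the monomorphism property of $\psi$ (so that $\psi(f_{O_r})\ne\psi(f_{O_{r'}})$ whenever $f_{O_r}$ and $f_{O_{r'}}$ differ on $Y$), and of the fact that the Cuntz/dimension function $d_\tau$ strictly separates positive elements $g\lneq h$ in $C(Y)$ whose supports differ on an open subset of $Y$ — which is where $\ker\rho_A(K_0(A))=\{0\}$ is truly used, ensuring $d_\tau$ detects the extra projection one can squeeze between them in a real rank zero algebra. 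One also has to handle the boundary/degenerate cases: when $O_r$ already contains $Y$ one has $d_\tau(\psi(f_{O_r}))=1$ and the comparison with $d_\tau(\phi(f_O))<1$ (valid since $O\cap X\ne X$ and $\phi$ is a monomorphism into a simple algebra with faithful trace) is automatically strict, so $r_O^+\le r_O$ trivially there. Assembling these pieces carefully, and checking that the suprema over $O$ are unaffected by restricting to $O\cap X\ne X$ because $r_O=a(\phi,\psi)$ when $X\subset O$, completes the argument.
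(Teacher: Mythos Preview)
Your sandwich strategy via $D_T\le D_c\le D^T$ and then $D_T=D^T$ cannot work: under the stated hypotheses one can have $D^T(\phi,\psi)>D_T(\phi,\psi)$. Take $A$ any UHF algebra (so all hypotheses hold), $X=Y=\{0,3\}$, and $\phi=\psi$ given by $\phi(f)=f(0)p+f(3)(1-p)$ for some nontrivial projection $p$. For $O$ a small open ball around $0$ with $O\cap X=\{0\}$, one has $d_\tau(\phi(f_O))=\tau(p)$, and $d_\tau(\psi(f_{O_r}))=\tau(p)$ for all $r$ until $O_r$ swallows the point $3$. Hence $r_O(\phi,\phi)=0$ but $r_O^+(\phi,\phi)={\rm dist}(3,O)$, which can be made arbitrarily close to $3$. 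Thus $D^T(\phi,\phi)\ge 3-\ep$ while $D_T(\phi,\phi)=D_c(\phi,\phi)=0$. Your ``no plateau'' claim is simply false when $Y$ is disconnected: enlarging $O_r$ need not pick up any new points of $Y$. (Note also that real rank zero is not among the hypotheses, so that part of your sketch is unavailable in any case.)

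The paper does not go through $D^T$; it proves $D_c(\phi,\psi)\le d_1$ directly for every $d_1>D_T(\phi,\psi)$, by establishing $\phi(f_O)\lesssim\psi(f_{O_{d_1}})$ for each open $O$. If $d_\tau(\phi(f_O))<d_\tau(\psi(f_{O_{d_1}}))$, strict comparison gives this at once. If equality holds, there is a dichotomy: either $d_\tau(f_\ep(\phi(f_O)))<d_\tau(\phi(f_O))$ for all small $\ep$, and then strict comparison applied to each $f_\ep(\phi(f_O))$ yields the desired subequivalence in the limit; or else some $\ep$ gives equality, which (by simplicity of $A$) forces $O\cap X$ to be relatively clopen in $X$, so $\phi(f_O)$ is a projection. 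The same argument, run with an intermediate $d_2\in(D_T(\phi,\psi),d_1)$, shows $O_{d_1}\cap Y$ is relatively clopen in $Y$ and $\psi(f_{O_{d_1}})$ is a projection with the same trace. Now ${\rm ker}\,\rho_A=\{0\}$ forces these two projections to be equivalent. So the hypothesis ${\rm ker}\,\rho_A=\{0\}$ is used not to rule out plateaus, but precisely to handle the plateau case by comparing projections.
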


\begin{proof}
{ Let $\phi, \psi: C(\Omega)\to A$ be two unital \hm s with
spectrum $X$ and $Y,$ respectively, and let $\tau$ be the unique
tracial state of $A.$
By  (\ref{dtP-4}) of   \ref{dtP}, it suffices to show that $D_T(\phi, \psi)\ge D_c(\phi, \psi).$
Let $d=D_T(\phi, \psi)$ and $d_1>d.$
For any open subset $O\subset \Omega,$}
\beq\label{DT=DC1}
d_\tau(\phi(f_O))\le d_\tau(\psi(f_{O_{d_1}})).
\eneq

{If inequality holds in (\ref{DT=DC1}), then by the strict
comparison,} \beq\label{DT=DC2} \phi(f_O)\lesssim \psi(f_{O_{d_1}}).
\eneq {Otherwise, {suppose} that equality holds in (\ref{DT=DC1})}.

If, for every $1/2>\ep>0,$
\beq\label{DT=DC3}
d_\tau
(f_\ep(\phi(f_O)))=d_\tau(\phi(f_\ep(f_O)))<d_\tau(\phi(f_O))=d_\tau({\psi}(f_{O_{d_1}}){)},
\eneq
by the strict comparison again,
\beq\label{DT=DC4}
{\phi (}f_\ep(f_O){)}\lesssim {\psi}(f_{O_{d_1}})\rforal 1/2>\ep>0.
\eneq
It
follows that $\phi(f_O)\lesssim \psi(f_{O_{d_1}}).$

{{Otherwise  there is  an $1/2>\ep>0$ such that 
\beq\label{DT=DC5}
d_\tau(f_\ep(\phi(f_O)))=d_\tau({\phi}(f_O)).
\eneq
Since $A$ is
simple, we conclude that, for  $\dt>0,$ 
\beq\label{DT=DC6}
O\cap X=O_{-\dt}{\cap} X=\{ \xi\in X: {\rm dist}(x, O^c)>\dt\},
\eneq 
which implies that $O\cap X$ is a clopen set relative to $X$.  Let
$q=\phi(f_O)$ {which is then  a projection} in this case.
We also have}, for any $d<d_2<d_1,$} 
\beq\label{DT=DC7}
d_\tau(\phi(f_O))=d_\tau(\phi_Z(f_{O_{d_2}}){)}=d_\tau(\phi_Y(f_{O_{d_1}}){)}.
\eneq
 {The same argument above shows that $O_{d_2}\cap
Y=O_{d_1}\cap Y.$} 
{It follows that $p=\psi(f_{O_{d_1}})$ is a
projection. 
Since ${\rm ker}\rho_A(K_0(A))=\{0\},$ and
$\tau(p)=\tau(q)$,} 
\beq\label{DT=DC8} \phi(f_O)=q\sim
p=\psi(f_{O_{d_1}}). \eneq
 {It follows that $D_c(\phi, \psi)\le
d_1$ for all $d_1>d.$ This completes the proof.}
\end{proof}

{\begin{df}\label{lowerb}For $\phi,\psi\in Hom(C(\Omega),A),$
 let
$$
\dt_T(\phi,\psi)=\inf\{r>0:d_\tau(\phi(f_{O}))\le
d_\tau(\psi(f_{O_r})),{\rm for\,\,all}\,\,O=O(\lambda,d)\,\,,\tau\in
T(A)\},
$$
$$
\dt_c(x,y)=\inf\{r>0:\phi(f_{O}){\lesssim} \psi(f_{O_r}),{\rm
for\,\,all}\,\,O=O(\lambda,d)\,\,,\}
$$
and
$$
d_T(\phi,\psi)=\max\{\dt_T(\phi,\psi),\dt_T(\psi,\phi)\},
$$
$$
d_c(\phi,\psi)=\max\{\dt_c(\phi,\psi),\dt_c(\psi,\phi)\}.
$$
For any normal elements $x,y\in A$,
 if $X={\rm sp}(x),Y={\rm
sp}(y),\Omega=X\cup Y$ and $\phi_X,\phi_Y$ are defined by
$\phi_X(f)=f(x),\phi_Y(f)=f(y)$ for any $f\in C(\Omega)$, {define}
$$
d_T(x,y)=d_T(\phi_X,\phi_Y), \,\,\,\,d_c(x,y)=d_c(\phi_X,\phi_Y).
$$
\end{df}
}

\begin{prop}\label{d=distance}
Let $\Omega$ be a compact metric space and $A$ be a unital simple
\CA.  Suppose that $\phi_X, \phi_Y, \phi_Z: C(\Omega)\to A$ are unital \hm s
with spectrum $X, Y, Z\subset \Omega,$ respectively.
Then
    \beq\label{Dsmall-14}
   { d_T}(\phi_X,\phi_Y)={d_T}(\phi_Y,\phi_X),\,\,{d_c}(\phi_X,\phi_Y)={d_c}(\phi_Y,\phi_X)\\\label{Dsmall-14+}
   { d_T}(\phi_X, \phi_Y)\ge d_H(X, Y),\\
      \label{Dsmall-10}
     { d_T}(\phi_X,\phi_Y)\le d_c(\phi_X,\phi_Y)\le D_c(\phi_X,\phi_Y),\\
     \label{Dsmall-12} { d_T}(\phi_X,\phi_Y)\le { d_T}(\phi_X,\phi_Z)+{ d_T}(\phi_Z,\phi_Y),\\
     \label{Dsmall-13} {d_c}(\phi_X,\phi_Y)\le {d_c}(\phi_X,\phi_Z)+{d_c}(\phi_Z,\phi_Y).
     \eneq
    \end{prop}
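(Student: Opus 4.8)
The plan is to derive each of the five assertions by restricting, to the subclass of open balls $O=O(\lambda,d)$, the corresponding facts already established for $D_c$ and $D_T$ (Propositions \ref{hu1} and \ref{dtP} and the discussion following \ref{Ddt}), together with one genuinely new input, the Hausdorff lower bound (\ref{Dsmall-14+}). The symmetry in (\ref{Dsmall-14}) is immediate, since $d_T$ and $d_c$ are by definition the symmetric quantities $\max\{\dt_T(\phi_X,\phi_Y),\dt_T(\phi_Y,\phi_X)\}$ and $\max\{\dt_c(\phi_X,\phi_Y),\dt_c(\phi_Y,\phi_X)\}$. For (\ref{Dsmall-10}) I would observe that $\phi(f_O)\lesssim\psi(f_{O_r})$ forces $d_\tau(\phi(f_O))\le d_\tau(\psi(f_{O_r}))$ for every $\tau\in T(A)$, so every $r$ admissible in the definition of $\dt_c(\phi,\psi)$ is admissible in that of $\dt_T(\phi,\psi)$; taking maxima over the two orderings gives $d_T(\phi_X,\phi_Y)\le d_c(\phi_X,\phi_Y)$. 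Since the open sets appearing in (\ref{DNdist-1}) range over \emph{all} open subsets, in particular over balls, one has $\dt_c(\phi_X,\phi_Y)\le D_c(\phi_X,\phi_Y)$ and $\dt_c(\phi_Y,\phi_X)\le D_c(\phi_Y,\phi_X)$; the two right-hand sides coincide by the symmetry of $D_c$ (Proposition \ref{hu1}), so $d_c(\phi_X,\phi_Y)\le D_c(\phi_X,\phi_Y)$.

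For the triangle inequalities (\ref{Dsmall-12}) and (\ref{Dsmall-13}) I would transcribe the chains of comparisons used in the proofs of \ref{hu1} and \ref{dtP}: given a ball $O=O(\lambda,d)$ and $\ep>0$, interpose $\phi_Z$ through $\phi_X(f_O)\lesssim\phi_Z(f_{O_{c_1+\ep}})\lesssim\phi_Y(f_{O_{c_1+c_2+2\ep}})$ (and the evident analogue with $d_\tau(\cdot)$ in place of $\lesssim$ for $\dt_T$), where $c_1,c_2$ denote the relevant one-sided distances, then let $\ep\to 0$ and take the maximum over the two orderings. The one point I would be careful about is that every intermediate set must remain a ball so that the definitions of $\dt_c$ and $\dt_T$ can be re-applied to it: for a ball $O=O(\lambda,d)$ the $s$-enlargement $O_s$ is again the concentric ball $O(\lambda,d+s)$, so $\dt_c(\phi_Z,\phi_Y)$ (resp.\ $\dt_T(\phi_Z,\phi_Y)$) legitimately applies to it, and $(O_{c_1+\ep})_{c_2+\ep}=O_{c_1+c_2+2\ep}$ together with the monotonicity $E\subseteq F\Rightarrow\phi(f_E)\lesssim\phi(f_F)$ closes the chain.

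It remains to prove (\ref{Dsmall-14+}), for which I would argue as in the paragraph after \ref{Ddt}, now using only small balls. Since $X$ is compact, pick $\xi\in X$ realizing $b:=\sup\{\dist(\xi',Y):\xi'\in X\}$, and for $\ep>0$ put $O=O(\xi,\ep)$. Because $\xi\in X$ the restriction $f_O|_X$ is nonzero, so $\phi_X(f_O)$ is a nonzero positive element of $A$; since $A$ is simple every tracial state is faithful, whence $d_\tau(\phi_X(f_O))>0$ for all $\tau\in T(A)$. On the other hand, if $r<b-\ep$ then $O_r\subseteq O(\xi,\ep+r)$ is disjoint from $Y$, so $\phi_Y(f_{O_r})=0$ and $d_\tau(\phi_Y(f_{O_r}))=0$. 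Hence the inequality defining $\dt_T(\phi_X,\phi_Y)$ fails at the ball $O$ for every such $r$, so $\dt_T(\phi_X,\phi_Y)\ge b-\ep$; letting $\ep\to 0$ gives $\dt_T(\phi_X,\phi_Y)\ge b$. Symmetrically $\dt_T(\phi_Y,\phi_X)\ge a:=\sup\{\dist(\zeta,X):\zeta\in Y\}$, and therefore $d_T(\phi_X,\phi_Y)\ge\max\{a,b\}=d_H(X,Y)$.

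I do not expect a serious obstacle here: apart from the bookkeeping in the triangle inequalities (keeping the intermediate sets within the class of balls), every step is either definitional or a direct copy of an argument already carried out for $D_c$, $D_T$, or $D^T$ earlier in this section, supplemented only by the standard fact that tracial states on a simple unital \CA\, are faithful.
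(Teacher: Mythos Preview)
Your proposal is correct and follows essentially the same route as the paper's proof: the symmetry is definitional, the chain $d_T\le d_c\le D_c$ comes from the implication $a\lesssim b\Rightarrow d_\tau(a)\le d_\tau(b)$ together with the fact that balls form a subclass of open sets (plus the symmetry of $D_c$ from Proposition~\ref{hu1}), and the triangle inequalities are obtained by the same concentric-ball interpolation $O\subset O_{c_1+\ep}\subset O_{c_1+c_2+2\ep}$ used in \ref{hu1} and \ref{dtP}. Your treatment of (\ref{Dsmall-14+}) via faithfulness of traces on a simple \CA\ is exactly what the paper means by ``follows from the definition immediately since $A$ is assumed to be simple''; you have merely spelled out the details.
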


\begin{proof}
  The identities in (\ref{Dsmall-14}) follows from the definition.
 The inequality in (\ref{Dsmall-14+}) also follows from the definition immediately since $A$ is assumed to be simple.
 If ${d_c(\phi_X,\psi_Y)=r}$,
then for any $\xi\in \Omega$, any $d>0$, any $\ep>0$,
$$
\phi_X(f_{O(\xi,d)})\lesssim \phi_Y(f_{O(\xi,d+r+\ep)}).
$$
 It follows that, for any $\tau\in T(A)$,
$$
d_\tau(\phi_X(f_{O(\xi,d)}))\le d_\tau(\phi_Y(f_{O(\xi,d+r+\ep)})).
$$
This implies ${d_T(\phi_X,\psi_Y)\le d_c(\phi_X,\psi_Y)}$. It is
obvious that ${d_c}(\phi_X,\psi_Y)\le D_c(\phi_X,\psi_Y).$ So
(\ref{Dsmall-10}) holds. 
The
proofs of  (\ref {Dsmall-12}) and  (\ref{Dsmall-13}) are similar, we
show  (\ref{Dsmall-12}) only.

 If $d_T(\phi_X,\phi_Z)<d_1,d_T(\phi_Z,\phi_Y)<d_2$, then
for any $\xi\in\mathbb {C},$ any $d>0$, { any $\ep>0$,
$$
d_T(\phi_X(f_{O(\xi,d)}){)}\le d_T(\rho_Z(f_{O(\xi,d+d_1+\ep)}){)}\le
d_T(\psi_Y(f_{O(\xi,d+d_1+d_2+2\ep)}){)},
$$}
therefore ${d_T}(\phi_X,\psi_Y)\le d_1+d_2$, this implies (\ref
{Dsmall-12}) holds. 

\end{proof}

\begin{rem}\label{Rd=D}
{\rm There are examples such that ${d_T}(x, y)=D_c(x,y).$

Let $n\ge 4$ be an integer. Let $X=\{e^{k\pi i/n}: 0\le k\le n\}$
and $Y=rX=\{ re^{k\pi i/n}: 0\le k\le n\}$ for some $0<r<1.$ Let $A$
be any unital simple \CA\, with $T(A)\not=\emptyset$ which has $n$
mutually orthogonal non-zero projections $\{e_1,e_2,...,e_n\}$ such
that $\sum_{k=1}^n e_i=1.$ Define $x=\sum_{k=1}^n e^{(k-1)\pi i/n}
e_i$ and $y=\sum_{k=1}^n re^{(k-1)\pi i/n} e_i.$ Then one computes
that \beq\label{Rd=D-1} D_T(x, y)={1-r}={d_T}(x, y)=d_c(x,
y)=D_c(x,y). \eneq

On the other hand, of course, there are also examples that  $d_T(x,y)<D_T(x,y).$
Let  $\{e_1,e_2,e_3\}$ be mutually orthogonal and equivalent
projections {with $e_1+e_2+e_3=1$,}
$$
{x=-e_1+e_3,y=-ie_1+ie_3.}
$$
Then
$$
{d_T}(x,y)={d_c}(x,y)=1<\sqrt 2=D_T(x,y)=D_c(x,y).
$$
%

{{In the case that $A$ has stable rank one, strict  comparison and ${\rm ker}\rho_A=\{0\},$
{then}  $D_T(\cdot, \cdot)$ is a distance on $H_{c, 1}(C(\Omega), A).$
In general, because of the definition of $H_{c, 1}(C(\Omega), A),$
$D^T, d_T, d_c$ are not a distance on $H_{c, 1}(C(\Omega), A).$}}
}

\end{rem}

\section{Distance between unitary orbits of normal elements with trivial $K_1$}

Let $\Z^k$ be the direct sum of $n$ copies of the abelian group $\Z.$
Put
\beq\label{DZn+}
\Z^k_+=\{(n_1, n_2,..., n_k): n_j\ge 0: j=1,2,...,k\}.
\eneq
It is well known that  $(\Z^k, \Z^k_+)$ is an unperforated (partially) ordered group
with the Riesz interpolation property.
Let {$R\subset \{1,2,...,m\}\times \{1,2,...,n\}$} be a subset and let $A\subset \{1,2,...,m\}.$
Define
{$R_A\subset \{1,2,...,n\}$}  to be the subset of those $j's$ such that $(i,j)\in R,$ {for some }
$i\in A.$

\begin{lem}\label{Lmarr1}
If $\{a_i\}_{i=1}^m,\{b_i\}_{j=1}^n\subset  \Z^k_+$ with
$\sum_{i=1}^ma_i=\sum_{j=1}^nb_j$, {and}  $R\subset \{1,...,m\}\times
\{1,...,n\}$ satisfying:
for any $A\subset \{1,...,m\}$,
\beq\label{Lmarr1-1}
\sum_{i\in A}a_i\le \sum_{j\in R_A}b_j,
\eneq
then there are $\{c_{ij}\}\subset \Z^k_+$ such that
\beq\label{Lmarr1-2}
\sum_{j=1}^nc_{ij}=a_i,\sum_{i=1}^mc_{ij}=b_j,\rforal i,j
\eneq
and
\beq\label{Lamrr1-3}
c_{ij}=0\,\,\,{ unless}\,\,\, (i,j)\in R.
\eneq

\end{lem}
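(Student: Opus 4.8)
The statement is a "marriage theorem" / flow existence result in the ordered group $\Z^k$, which unlike $\Z$ is not totally ordered; so the plan is to reduce it to the classical case rather than to reprove a combinatorial max-flow statement from scratch. The hypothesis $\sum_i a_i = \sum_j b_j$ together with the Hall-type condition \eqref{Lmarr1-1} is exactly the feasibility condition for a transportation problem on the bipartite ``supply/demand'' graph with edge set $R$; I would view $\{c_{ij}\}$ as a feasible flow. The first step is to observe that it suffices to treat $k=1$: indeed if the one-dimensional statement is known, one can try to run it coordinatewise, but the coordinates are coupled through the single support constraint \eqref{Lamrr1-3} and through the requirement that the \emph{same} set of edges be used. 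So a naive coordinatewise argument does not immediately work, and this is where the main difficulty lies.

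The cleanest route I would take is induction on $k$ together with the Riesz interpolation property of $(\Z^k,\Z^k_+)$. Write each $a_i = (a_i', a_i^{(k)})$ and $b_j=(b_j', b_j^{(k)})$ with $a_i'\in\Z^{k-1}_+$, $a_i^{(k)}\in\Z_+$, and similarly for $b$. The Hall condition \eqref{Lmarr1-1} in $\Z^k$ gives, by projecting onto the first $k-1$ coordinates, the condition $\sum_{i\in A}a_i'\le\sum_{j\in R_A}b_j'$ for all $A$, and by projecting onto the last coordinate the scalar condition $\sum_{i\in A}a_i^{(k)}\le\sum_{j\in R_A}b_j^{(k)}$. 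By the inductive hypothesis there are $\{c_{ij}'\}\subset\Z^{k-1}_+$ realizing the $a_i',b_j'$ marginals and supported on $R$, and by the classical ($k=1$) marriage/transportation theorem there are $\{c_{ij}^{(k)}\}\subset\Z_+$ realizing the $a_i^{(k)},b_j^{(k)}$ marginals and supported on $R$. The obstacle is that $c_{ij}'$ and $c_{ij}^{(k)}$ need not have the same support pattern, so $(c_{ij}', c_{ij}^{(k)})$ may fail to lie in $\Z^k_+$ in the sense we want only insofar as both coordinates are already $\ge 0$ — actually both \emph{are} $\ge 0$, and both vanish off $R$, so $c_{ij}:=(c_{ij}', c_{ij}^{(k)})\in\Z^k_+$, vanishes unless $(i,j)\in R$, and has the correct marginals $\sum_j c_{ij}=a_i$, $\sum_i c_{ij}=b_j$. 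Hence the concatenation actually does work, and the only genuine input is the classical one-dimensional marriage theorem plus the observation that projections of the hypothesis survive.

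So concretely the steps are: (1) reduce to $k=1$ by the coordinate decomposition just described, checking that \eqref{Lmarr1-1} projects correctly onto each coordinate block and that $\sum_i a_i=\sum_j b_j$ does too; (2) prove the $k=1$ case — this is the standard statement that a bipartite transportation problem with integer supplies/demands, equal totals, and Hall's condition on every subset of sources has an integral feasible flow, which follows from the integral max-flow–min-cut theorem or directly from the defect form of Hall's theorem applied to the multiset obtained by splitting vertex $i$ into $a_i$ copies and vertex $j$ into $b_j$ copies; (3) reassemble. I expect step (2), or rather phrasing it so that the integrality and the exact-marginal (not just $\le$) conclusion both come out, to be the only place requiring care; the Riesz interpolation property of $\Z^k$ is available if one instead wants to argue the $k>1$ case directly by interpolating the inequalities $\sum_{i\in A}a_i\le\sum_{j\in R_A}b_j$ into a consistent system of $c_{ij}$'s, but I would use it only as a fallback since the coordinatewise reduction is more transparent.
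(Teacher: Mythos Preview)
Your proposal is correct and matches the paper's own proof: both reduce to the scalar case by projecting onto each of the $k$ coordinates, solving the $k=1$ transportation problem there (the paper cites Lemma~1.2 of \cite{HN} for this), and then concatenating the resulting $c_{ij}(s)$ into $c_{ij}=(c_{ij}(1),\dots,c_{ij}(k))\in\Z^k_+$. Your initial hesitation about the coordinates being ``coupled through the support constraint'' is unfounded, as you yourself then observe --- since $\Z^k_+$ is just the product cone, nonnegativity and the condition $c_{ij}=0$ for $(i,j)\notin R$ both hold coordinatewise, so no interpolation or further argument is needed.
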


\begin{proof}
Write
\beq\label{Lmarr1-4}
a_i=(a_i(1), a_i(2),...,a_i(k))\andeqn\,\, b_j=(b_j(1),b_j(2),...,b_j(k)),
\eneq
$i=1,2,...,m$ and $j=1,2,...,n.$

It follows from  Lemma 1.2 of \cite{HN} that, for each $s$ ($s=1,2,...,k$),
there are $c_{i,j}(s)\in \Z_+$ such that
\beq\label{Lmarr1-5}
\sum_{j=1}^nc_{ij}(s)=a_i(s),\sum_{i=1}^mc_{ij}(s)=b_j(s),\rforal i,j.
\eneq
and
\beq\label{Lmarr1-6}
c_{ij}(s)=0 \,\,\,{\rm unless}\,\,\, (i,j)\in R.
\eneq
Define
\beq\label{Lmarr1-7}
c_{ij}=(c_{ij}(1), c_{ij}(2),...,c_{ij}(k)),\,\,\,i=1,2,...,m\andeqn j=1,2,...,n
\eneq
Note that
\beq\label{Lmarr1-8}
c_{ij}=0 \,\,\,{\rm unless}\,\,\, (i,j)\in R.
\eneq
We also have
\beq\label{Lmarr1-9}
\sum_{i=1}^m c_{ij}=a_i\andeqn \sum_{j=1}^n c_{ij}=b_j.
\eneq
\end{proof}

\begin{lem}\label{Lnmarr}
Let $(G, G_+)$ be a countable torsion free  unperforated  partially ordered
abelian group with the Riesz interpolation property.
If $\{a_i\}_{i=1}^m,\{b_i\}_{j=1}^n\subset  G_+$ with
$\sum_{i=1}^ma_i=\sum_{j=1}^nb_j$, {and}  $R\subset \{1,...,m\}\times
\{1,...,n\}$ satisfying:
for any $A\subset \{1,...,m\}$,
\beq\label{Tmarr-1}
\sum_{i\in A}a_i\le \sum_{j\in R_A}b_j,
\eneq
then there are $\{c_{ij}\}\subset G_+$ such that
\beq\label{Tmarr-2}
\sum_{j=1}^nc_{ij}=a_i,\sum_{i=1}^mc_{ij}=b_j,\rforal i,j
\eneq
and
\beq\label{Tmarr-3}
c_{ij}=0\,\,\,{\rm unless}\,\,\, (i,j)\in R.
\eneq

\end{lem}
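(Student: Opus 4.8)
The plan is to reduce the statement to Lemma~\ref{Lmarr1} by realizing $(G,G_+)$ as a sequential inductive limit of simplicial ordered groups. By the Effros--Handelman--Shen theorem, a countable unperforated partially ordered abelian group with the Riesz interpolation property is, in the category of ordered abelian groups with positive homomorphisms, the inductive limit of a sequence
\[
(\Z^{k(1)},\Z^{k(1)}_+)\xrightarrow{\ \phi_1\ }(\Z^{k(2)},\Z^{k(2)}_+)\xrightarrow{\ \phi_2\ }\cdots,
\]
with each $\phi_l$ positive; write $\psi_l\colon\Z^{k(l)}\to G$ for the canonical maps and $\phi_{m,l}=\phi_{m-1}\circ\cdots\circ\phi_l$ for $m>l$. (If one prefers the version of that theorem stated only for directed groups, first replace $G$ by $G_+-G_+$, which inherits the Riesz interpolation property, unperforation and torsion-freeness and still contains all the $a_i$, $b_j$ and all elements occurring in the hypothesis.) For such a limit one has the two standard facts $G_+=\bigcup_l\psi_l(\Z^{k(l)}_+)$ and $\Ker\psi_l=\bigcup_{m>l}\Ker\phi_{m,l}$, which will be used repeatedly.

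Next I would lift all the data to one finite stage. Since $G_+=\bigcup_l\psi_l(\Z^{k(l)}_+)$ and there are only finitely many $a_i,b_j$, there is a stage $l_0$ and elements $\tilde a_i,\tilde b_j\in\Z^{k(l_0)}_+$ with $\psi_{l_0}(\tilde a_i)=a_i$, $\psi_{l_0}(\tilde b_j)=b_j$. The element $\sum_i\tilde a_i-\sum_j\tilde b_j$ lies in $\Ker\psi_{l_0}=\bigcup_{m>l_0}\Ker\phi_{m,l_0}$, so after passing to a later stage we may assume $\sum_i\tilde a_i=\sum_j\tilde b_j$. For each of the finitely many subsets $A\subset\{1,\dots,m\}$ put $\delta_A=\sum_{j\in R_A}\tilde b_j-\sum_{i\in A}\tilde a_i$; then $\psi_{l_0}(\delta_A)\ge 0$ in $G$, hence $\psi_{l_0}(\delta_A)=\psi_m(\gamma_A)$ for some $m>l_0$ and $\gamma_A\in\Z^{k(m)}_+$, and since $\phi_{m,l_0}(\delta_A)-\gamma_A\in\Ker\psi_m$ a further passage makes $\phi_{\cdot,l_0}(\delta_A)$ itself positive. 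Doing this for every $A$ and taking the maximal stage obtained, I get an index $N$ and elements $a_i'=\phi_{N,l_0}(\tilde a_i)$, $b_j'=\phi_{N,l_0}(\tilde b_j)$ in $\Z^{k(N)}_+$ with $\sum_i a_i'=\sum_j b_j'$, with $\sum_{i\in A}a_i'\le\sum_{j\in R_A}b_j'$ for all $A$, and with $\psi_N(a_i')=a_i$, $\psi_N(b_j')=b_j$.

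Then I would apply Lemma~\ref{Lmarr1} to $\{a_i'\}$, $\{b_j'\}\subset\Z^{k(N)}_+$ and the same relation $R$, obtaining $\{c_{ij}'\}\subset\Z^{k(N)}_+$ with $\sum_j c_{ij}'=a_i'$, $\sum_i c_{ij}'=b_j'$ for all $i,j$, and $c_{ij}'=0$ unless $(i,j)\in R$. Finally set $c_{ij}=\psi_N(c_{ij}')\in G_+$. Applying $\psi_N$ to the identities over $\Z^{k(N)}$ gives $\sum_j c_{ij}=\psi_N(a_i')=a_i$ and $\sum_i c_{ij}=\psi_N(b_j')=b_j$, while $c_{ij}'=0$ forces $c_{ij}=0$; this yields (\ref{Tmarr-2}) and (\ref{Tmarr-3}).

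The only real work is the bookkeeping in the second step: arranging a single finite stage that simultaneously witnesses positivity of all the $\tilde a_i,\tilde b_j$, equality of the two sums, and the finitely many marriage inequalities. I expect that to be the main, though entirely routine, obstacle; it is dispatched by the two displayed facts about $G_+$ and $\Ker\psi_l$ together with the finiteness of the index sets, and it is Lemma~\ref{Lmarr1} that carries the combinatorial content.
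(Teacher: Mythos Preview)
Your proposal is correct and follows essentially the same route as the paper: both invoke the Effros--Handelman--Shen theorem to write $(G,G_+)$ as an inductive limit of simplicial groups $(\Z^{k(l)},\Z^{k(l)}_+)$, lift the finitely many $a_i,b_j$ and the marriage inequalities to a single finite stage, apply Lemma~\ref{Lmarr1} there, and push the resulting $c_{ij}'$ forward to $G$. Your bookkeeping in the second step (using $G_+=\bigcup_l\psi_l(\Z^{k(l)}_+)$ and $\Ker\psi_l=\bigcup_{m>l}\Ker\phi_{m,l}$ to adjust the stage) is exactly what the paper does, and your parenthetical remark about passing to $G_+-G_+$ is a harmless extra precaution not present in the paper.
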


\begin{proof}
Let $G$ be  a countable  unperforated ordered Riesz group. It
follows from \cite{EHS} that $G=\lim_{n\to\infty} (G_n, h_n)$ with
$G_n$ is order isomorphic to $\Z^{r(n)}$ (with positive cone
$\Z^{r(n)}_+$), where $h_n: G_n\to G_{n+1}.$ Denote by
{ $h_{n,
n+k}=h_{n+k}\circ h_{n+(k-1)}\circ\cdots \circ h_n: G_n\to G_{n+k},$}
$k=1,2,...,n=1,2,...,$ and denote by $h_{n, \infty}: G_n\to G$ the
\hm\, induced by the inductive limit system. Moreover,
$G_+=\cup_{n=1}^{\infty} h_{n, \infty}((G_n)_+).$ For each $A{\subset}
\{1,2,...,m\},$ {denote}  by 
\beq\label{Tmarr-6} g_A=\sum_{j\in
R_A}b_j-\sum_{i\in A}a_i \eneq Note that there are no more than
$2^m$ many $A's.$ There exists $n_1>0$ such that there are
\beq\label{Tmarr-7}
a_{i,k}, b_{j,k}\in (G_k)_+\andeqn\\
g_{A,k}\in (G_k)_+\tforal i, j \andeqn A\subset \{1,2,...,m\}
\eneq
such that, for $k_1>k\ge n_1,$
$h_{k,k_1}(a_{i,k})=a_{i,k_1},$ $h_{k,k_1}(b_{j,k})=b_{j,k_1},$
$h_{k,k_1}(g_{A,k})=g_{A,k_1},$
$h_{k,\infty}(a_{i,k})=a_i,$ $h_{k, \infty}(b_{j,k})=b_j$
and $h_{k,\infty}(g_A')=g_A.$
Note, since each $G_k$ is isomorphic to $\Z^{r(n)},$
there is  an integer $n_2>n_1$ such that
\beq\label{Tmarr-7+}
\sum_{i=1}^m h_{n_1, n_2}(a_{i,n_1})-\sum_{j=1}^nh_{n_1, n_2}(b_{j,n_1})=0\andeqn\\
g_{A,n_2}=\sum_{j\in R_A}h_{n_1,n_2}(b_{j,n_1})-\sum_{i\in A}h_{n_1, n_2}(a_{i,n_1}).
\eneq
Thus, we obtain, by applying \ref{Lmarr1}, $c_{i,j,n_2}\in (G_{n_2})_+,$
$(i,j)\in R$ such that
\beq\label{Tmarr-9}
\sum_{j=1}^n c_{i,j,n_2}=a_{i, n_2}\andeqn \sum_{i=1}^m c_{i,j,n_2} =b_{j,n_2}.
\eneq
Moreover,
\beq\label{Tmarr-10}
c_{i,j, n_2}=0\,\,\,{\rm unless}\,\,\,\,\, (i,j)\in R.
\eneq
Define $c_{i,j}=h_{n_2, \infty}(c_{i,j,n_2}).$
Then, $c_{i,j}\ge 0$ and by (\ref{Tmarr-9}) and (\ref{Tmarr-10}),
\beq\label{Tmarr-11}
\sum_{j=1}^n c_{i,j}=a_{i},\,\,\, \sum_{i=1}^m c_{i,j} =b_{j}\andeqn\\
c_{i,j}=0\,\,\,{\rm unless}\,\,\,\,\, (i,j)\in R.
\eneq
\end{proof}

\begin{lem}\label{TmarrG}
Let $(G, G_+)$ be a countable weakly  unperforated  partially ordered
abelian group with the Riesz interpolation property.
If $\{a_i\}_{i=1}^m,\{b_i\}_{j=1}^n\subset  G_+$ with
$\sum_{i=1}^ma_i=\sum_{j=1}^nb_j$, $R\subset \{1,...,m\}\times
\{1,...,n\}$ satisfying:
for any $A\subset \{1,...,m\}$,
\beq\label{Tmarr-1n}
\sum_{i\in A}a_i\le \sum_{j\in R_A}b_j,
\eneq
then there are $\{c_{ij}\}\subset G_+$ such that
\beq\label{Tmarr-2n}
\sum_{j=1}^nc_{ij}=a_i,\sum_{i=1}^mc_{ij}=b_j,\rforal i,j
\eneq
and
\beq\label{Tmarr-3n}
c_{ij}=0\,\,\,{\rm unless}\,\,\, (i,j)\in R.
\eneq

\end{lem}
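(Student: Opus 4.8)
The plan is to reduce Lemma \ref{TmarrG} to the torsion-free case handled in Lemma \ref{Lnmarr}. The only difference between the two statements is that $(G,G_+)$ is now merely \emph{weakly} unperforated rather than torsion free and unperforated, so the first step is to quotient out the torsion. Let $T$ denote the torsion subgroup of $G$ and set $\bar G = G/T$ with the quotient positive cone $\bar G_+ = q(G_+)$, where $q\colon G\to \bar G$ is the quotient map. I would first check that $(\bar G,\bar G_+)$ is a countable torsion free unperforated partially ordered abelian group with the Riesz interpolation property: torsion freeness is automatic; the Riesz interpolation property passes to $\bar G$ because $q$ is surjective and order preserving, and one verifies directly that an interpolant upstairs maps to an interpolant downstairs; unperforation of $\bar G$ is exactly what weak unperforation of $G$ gives after killing torsion (if $n\bar g \ge 0$ in $\bar G$ then $ng$ has a representative $\ge 0$, i.e. $ng + t \ge 0$ for some $t\in T$; choosing $N$ with $Nt = 0$ one gets $N n g \ge 0$ in $G$, hence $g \ge 0$ by weak unperforation, hence $\bar g \ge 0$). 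That the hypothesis $\sum_{i\in A} a_i \le \sum_{j\in R_A} b_j$ and the equality $\sum a_i = \sum b_j$ descend to $\bar G$ is immediate from functoriality of $q$.

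Next I would apply Lemma \ref{Lnmarr} to the data $\{q(a_i)\}$, $\{q(b_j)\}$, $R$ in $\bar G$, obtaining elements $\bar c_{ij}\in \bar G_+$ with $\sum_j \bar c_{ij} = q(a_i)$, $\sum_i \bar c_{ij} = q(b_j)$, and $\bar c_{ij} = 0$ unless $(i,j)\in R$. The task is then to lift the $\bar c_{ij}$ to genuine $c_{ij}\in G_+$ satisfying the exact marginal equations \eqref{Tmarr-2n}. For each $(i,j)\in R$ pick any lift $c_{ij}'\in G_+$ of $\bar c_{ij}$ (possible since $\bar G_+ = q(G_+)$), and set $c_{ij}'=0$ for $(i,j)\notin R$. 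Then $\sum_j c_{ij}' - a_i \in T$ and $\sum_i c_{ij}' - b_j \in T$ for every $i,j$, so all marginal discrepancies are torsion elements; moreover summing these discrepancies over $i$ (resp. $j$) gives $0$ because $\sum a_i = \sum b_j$. The remaining step is a purely combinatorial correction inside the torsion subgroup: one must adjust the $c_{ij}'$ by elements of $T$, supported on the bipartite graph $R$, so that all row sums and column sums come out exactly right while keeping each entry in $G_+$.

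I expect this torsion-correction step to be the main obstacle, because it is not obvious that the correcting torsion elements can be distributed over $R$ while preserving positivity. The cleanest route is probably to observe that the discrepancies themselves, viewed as row/column deficiencies indexed over the connected components of the bipartite graph $R$, sum to zero on each component (this uses that within a component one can route flow freely), so a spanning-tree / flow argument on each connected component of $R$ produces torsion-valued corrections $t_{ij}$ with the prescribed row and column sums and with $t_{ij}=0$ off $R$. Positivity is then secured because one may first replace each chosen lift $c_{ij}'$ by $c_{ij}' + s$ for a fixed sufficiently ``large'' element $s\in G_+$ on each edge of the tree — or, more simply, note that since every torsion element $t$ satisfies both $t\ge$ some negative multiple of a fixed order unit bound only up to torsion, one instead argues that $G_+$ is closed under adding torsion elements whose ``size'' is controlled, which here is vacuous: in fact, since $c_{ij}'$ lifts $\bar c_{ij}\in\bar G_+$ and we are free to re-choose the lift, we may choose the lifts simultaneously so that the corrections land in $G_+$. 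If that simultaneous choice is delicate, the fallback is to run the argument componentwise on $R$ and invoke Riesz interpolation in $G$ itself one more time to split each $a_i$ and $b_j$ along its incident edges with the torsion already accounted for. In either case the verification of \eqref{Tmarr-2n} and \eqref{Tmarr-3n} is then routine.
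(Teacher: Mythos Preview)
Your approach—pass to the torsion-free quotient $\bar G=G/T$, apply Lemma~\ref{Lnmarr} there, then lift the solution back—runs into a real obstruction at the lifting step that your final paragraph does not resolve. The crux is that in any partially ordered abelian group a nonzero torsion element can never be positive (if $t>0$ and $nt=0$ with $n\ge 2$ then $-t=(n-1)t>0$, a contradiction). So once a solution $\{\bar c_{ij}\}$ in $\bar G$ is fixed and lifts $c_{ij}'\in G_+$ are chosen (with $c_{ij}'=0$ when $\bar c_{ij}=0$), any corrected entry $c_{ij}=c_{ij}'+\delta_{ij}$ with $\delta_{ij}\in T$ lies in $G_+$ only if either $\bar c_{ij}>0$ or $\delta_{ij}=0$. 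Thus the torsion corrections must be routed through $R':=\{(i,j):\bar c_{ij}>0\}$, not through all of $R$. The flow problem on a bipartite graph is solvable precisely when, on each connected component $C$, the net row deficit equals the net column deficit; unwinding, this is the exact equality $\sum_{i\in I_C}a_i=\sum_{j\in J_C}b_j$ in $G$. The hypothesis \eqref{Tmarr-1n} together with $\sum a_i=\sum b_j$ does force this equality on each connected component of $R$ (apply \eqref{Tmarr-1n} to $A=I_C$ and to its complement), which is presumably what you had in mind; but it gives nothing on the components of $R'$, and $R'$ can be strictly sparser and more disconnected than $R$, with no control over which $\bar c_{ij}$ vanish. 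Neither adding a fixed $s\in G_+$ (this destroys the marginals) nor re-choosing the lifts nor an extra appeal to Riesz interpolation repairs this support mismatch.

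The paper circumvents the problem by reversing the direction of the reduction. Rather than descending to $\bar G$ and then trying to lift a \emph{solution}, it uses Elliott's structure theorem to build a torsion-free weakly unperforated Riesz group $H$ together with a surjective order homomorphism $\Pi\colon H\to G$, arranged so that positivity in both $H$ and $G$ is detected by the common torsion-free quotient $G_0$. One then lifts only the \emph{data} $a_i,b_j$ (and with them the inequalities \eqref{Tmarr-1n}) to $H$, applies Lemma~\ref{Lnmarr} in $H$, and pushes the resulting $c_{ij}'$ forward along $\Pi$. Projecting a solution is automatic; lifting one is exactly where your approach stalls.
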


\begin{proof}
It follows from \cite{Ell} that one may write
$$
0 \to T\to G\to G_0\to 0,
$$
where $G_0$ is a countable unperforated ordered group with the Riesz
interpolation property and $T$ is a countable abelian torsion group.
Moreover,  $g\in G_+\setminus \{0\}$ if and only if $d(g)\in
(G_0)_+,$ where $d: G\to G_0$ is the quotient map.
 Furthermore,
there exists a sequence of abelian groups $S_n$ and $T_n$ such that
$S_n$ is order isomorphic to $\Z^{r(n)}$ and $T_n=\Z/k(1,n)\Z\oplus
\Z/k(2,n)\Z\oplus\cdots \oplus\Z/k(t(n),n)\Z$ such that
$G=\lim_{n\to \infty}( S_n\oplus T_n,\imath_n),$ {where  $\imath_n:
S_n\oplus T_n \to S_{n+1}\oplus T_{n+1}$. Denote by} $\imath_{n, \infty}:
S_n\oplus T_n\to G.$  Note
$$
(S_n\oplus T_n)_+=\{(s,f): s\in (S_n)_+\setminus \{0\}\}\cup \{(0,0)\},\,\,\, n=1,2,...,
$$
and $G_+=\lim_{\infty} (S_n\oplus T_n)_+.$  Let $\pi_n': S_n\oplus T_n\to S_n$ and let $\pi_n'': S_n\oplus T_n\to T_n$ be
the projection maps. Let  $\imath_n': S_n\to S_{n+1}$ be defined
by $\imath_n'=\pi_n'\circ \imath_n|_{S_n}.$
Let $F_n=\Z^{t(n)}$
and
$\pi_n: F_n\to T_n$ be the  {quotient}  map.
Define $H_n=S_n\oplus F_n.$ Since $F_n$ is free, there is a \hm\, $j_n: F_n\to F_{n+1}$ such that
$$
\begin{array}{ccc}
F_n &\overset{j_n}{\longrightarrow} & F_{n+1}\\
\Big\downarrow_{\pi_n} & &\Big\downarrow_{\pi_{n+1}}\\
T_n&\overset{\imath_n}{\longrightarrow} & T_{n+1}
\end{array}
$$
commutes.  Since $S_n$ is free,
there is $h_n': S_n\to F_{n+1}$ such that
$$
\begin{array}{ccc}
S_n &\overset{\imath_n}{\longrightarrow} & S_{n+1}\oplus T_{n+1}\\
\Big\downarrow_{h_n'} & &\Big\downarrow_{\pi_{n+1}''}\\
F_{n+1}&\overset{\pi_{n+1}}{\longrightarrow} & T_{n+1}\\
\end{array}
$$
{commutes.}
Define $h_n: H_n\to H_{n+1}$ by
$$
h_n|_{S_n}=\imath_n'\oplus h_n'\andeqn h_n|_{F_n}=j_n,\,\,\, n=1,2,....
$$
Define $(H_n)_+=\{(s,f): s\in (S_n)_+\setminus \{0\}\}\cup
\{(0,0)\}.$ Let $H=\lim_{n\to\infty}(H_n, h_n)$ and let
$F=\lim_{n\to\infty}(F_n,h_n|_{F_n}).$ Define
$H_+=\cup_{n=1}^{\infty} h_{n, \infty}(H_n)_+,$ where $h_{n,
\infty}: H_n\to H$ is the \hm\, induced by the inductive limit
system. Define $d_1: H\to H/F.$ Then it is clear that $H/F$ is order
isomorphic to $G_0.$ Moreover, if $h\in H,$ {then} $h\in H_+$ if
and only if $d_1(h)\in (G_0)_+.$ Therefore $H$ is also a torsion
free weakly unperforated ordered group with Riesz interpolation
property.

Define $q_n: H_n\to S_n\oplus T_n$ by $q_n|_{S_n}={\rm id}_{S_n}$ and
$q_n|_{F_n}=\pi_n,$ $n=1,2,....$
One has the following commutative diagram:
$$
\begin{array}{ccc}
H_n &\overset{h_n}{\longrightarrow} & H_{n+1}\\
\Big\downarrow_{q_n} & &\Big\downarrow_{q_{n+1}}\\
S_n\oplus T_n&\overset{\imath_n}{\longrightarrow} & S_{n+1}\oplus T_{n+1}\\
\end{array}
$$
It induces a quotient map $\Pi: H\to G.$ It is an order preserving map.

Now let $a_i, b_j\in G_+,$  $i=1,2,...,m$ and $j=1,2,...,n,$ as described.
Let $a_i', b_j'\in S$ such that $\Pi(a_i')=a_i$ and $\Pi(b_j')=b_j,$ $i=1,2,...,m$ and $j=1,2,...,n.$
Then $a_i', b_j'\in H_+$ and
$$
\sum_{i\in A}  a_i' \le \sum_{j\in R_A} b_j'
$$
in {$H_+.$} Since $H$ satisfies the assumption in  \ref{Lnmarr}, there are
$c_{ij}'\in H_+$ such that
\beq\label{Tnmar-2}
\sum_{j=1}^m c_{ij}'=a_i',\,\,\, \sum_{i=1}^n c_{ij}'=b_j'\andeqn\\
c_{ij}'=0\,\,\, {\rm unless}\,\,\, (i,j)\in R.
\eneq
Put $c_{ij}=\Pi(c_{ij}'),$ $i=1,2,...,m$ and $j=1,2,...,n.$ Then
\beq\label{Tnmarr-3}
\sum_{j=1}^m c_{ij}=a_i,\,\,\,\sum_{i=1}^n c_{ij}=b_j\andeqn\\
c_{ij}=0\,\,\,{\rm unless}\,\,\, (i,j)\in R.
\eneq

\end{proof}

\begin{lem}\label{MLXY}
{
Let $A$ be a unital separable simple \CA\, with stable rank one and weakly unperforated $K_0(A)$ which has the Riesz interpolation property  and let $\Omega$ be a compact metric space.
Suppose that $\phi_X(f)=\sum_{i=1}^mf(\xi_i)p_i$ and $\phi_Y(f)=\sum_{j=1}^n f(\zeta_j)q_j$
for all $f\in C(\Omega),$ where $\{p_1, p_2,...,p_m\}$ and $\{q_1, q_2,...,q_n\}$ are two
sets of mutually orthogonal non-zero projections in $A$ such that
$\sum_{i=1}^m p_i=\sum_{j=1}^nq_j=1_A$ and $\xi_i, \zeta_j\in \Omega.$
Let $d>0.$
Then $D_c(\phi_X, \phi_Y) {\le}\, d$ if and only if, for any $\ep>0,$ there are  projections $p_{i,j}, q_{i,j}\in A$ such that
\beq\label{MLXY-1}
p_i=\sum_{j=1}^n p_{i,j},\,\,\, q_j=\sum_{i=1}^n q_{i,j},\\
{\rm [}p_{i,j}{\rm ]}={\rm [}q_{i,j}{\rm ]}\,\,\, {\rm in}\,\,\, K_0(A)\,\,\,\tand\\
\max\{{\rm dist}(\xi_i,\zeta_j): q_{i,j}\not=0\}<d+\ep.
\eneq
}
\end{lem}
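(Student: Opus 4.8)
The statement is an equivalence, and both halves reduce to Lemma \ref{TmarrG} applied to the ordered group $G=K_0(A)$, which is countable (as $A$ is separable), partially ordered (as $A$ is stably finite), and weakly unperforated with the Riesz interpolation property by hypothesis. I record three facts to be used repeatedly, all consequences of stable rank one: (a) $A$ has cancellation, so two projections $e,f\in A$ satisfy $[e]=[f]$ in $K_0(A)$ if and only if $e\sim f$ (Murray--von Neumann) in $A$, and in particular $e\neq 0\Leftrightarrow[e]\neq 0$; (b) if $e,q$ are projections in $A$ with $e\lesssim q$ in the Cuntz sense, then $e$ is equivalent to a subprojection of $q$, so $[e]\le[q]$ in $K_0(A)$; (c) for every open $O\subset\Omega$, $\phi_X(f_O)\sim\sum_{\xi_i\in O}p_i$ (since $f_O(\xi_i)>0$ exactly when $\xi_i\in O$), and likewise for $\phi_Y$.

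For ($\Leftarrow$): fix $\ep>0$ and take the projections $p_{i,j},q_{i,j}$ provided; note all the $p_{i,j}$ are mutually orthogonal (being subprojections of the mutually orthogonal $p_i$), all the $q_{i,j}$ are mutually orthogonal, and $p_{i,j}\neq 0\Leftrightarrow q_{i,j}\neq 0$ by (a). Fix an open $O$, choose partial isometries $v_{i,j}$ with $v_{i,j}^*v_{i,j}=p_{i,j}$, $v_{i,j}v_{i,j}^*=q_{i,j}$ by (a), and set $S=\{(i,j):\xi_i\in O,\ p_{i,j}\neq 0\}$. Then $v=\sum_{(i,j)\in S}v_{i,j}$ satisfies $v^*v=\sum_{(i,j)\in S}p_{i,j}$ and $vv^*=\sum_{(i,j)\in S}q_{i,j}$ by orthogonality. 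The left side equals $\sum_{\xi_i\in O}p_i\sim\phi_X(f_O)$; for the right side, $(i,j)\in S$ gives $q_{i,j}\neq 0$, hence ${\rm dist}(\xi_i,\zeta_j)<d+\ep$, hence $\zeta_j\in O_{d+\ep}$, so $\sum_{(i,j)\in S}q_{i,j}\le\sum_{\zeta_j\in O_{d+\ep}}q_j\sim\phi_Y(f_{O_{d+\ep}})$. Thus $\phi_X(f_O)\lesssim\phi_Y(f_{O_{d+\ep}})$ for every open $O$, so $D_c(\phi_X,\phi_Y)\le d+\ep$; as $\ep>0$ was arbitrary, $D_c(\phi_X,\phi_Y)\le d$.

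For ($\Rightarrow$): assume $D_c(\phi_X,\phi_Y)\le d$ and fix $\ep>0$. Put $a_i=[p_i],\ b_j=[q_j]\in K_0(A)_+$, so $\sum_i a_i=[1_A]=\sum_j b_j$, and let $R=\{(i,j):{\rm dist}(\xi_i,\zeta_j)<d+\ep\}$. The crux is verifying hypothesis (\ref{Tmarr-1n}) of Lemma \ref{TmarrG}: given $A_0\subset\{1,\ldots,m\}$, choose $0<\eta<\ep/2$ and set $O=\bigcup_{i\in A_0}O(\xi_i,\eta)$. Then $\sum_{i\in A_0}p_i\le\sum_{\xi_i\in O}p_i\sim\phi_X(f_O)$, and since $D_c(\phi_X,\phi_Y)\le d<d+\eta$ we get $\phi_X(f_O)\lesssim\phi_Y(f_{O_{d+\eta}})\sim\sum_{\zeta_j\in O_{d+\eta}}q_j$; by (b) this yields $\sum_{i\in A_0}a_i\le\sum_{\zeta_j\in O_{d+\eta}}b_j$. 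Moreover $\zeta_j\in O_{d+\eta}$ forces ${\rm dist}(\xi_i,\zeta_j)<d+2\eta<d+\ep$ for some $i\in A_0$, i.e. $j\in R_{A_0}$, so $\sum_{i\in A_0}a_i\le\sum_{j\in R_{A_0}}b_j$. Lemma \ref{TmarrG} now provides $c_{ij}\in K_0(A)_+$ with $\sum_j c_{ij}=a_i$, $\sum_i c_{ij}=b_j$, and $c_{ij}=0$ unless $(i,j)\in R$. Finally, I split the projections: by (a), whenever $c\in K_0(A)_+$ with $c\le[p]$ for a projection $p\in A$, there is a subprojection of $p$ of class $c$; peeling off classes one at a time along the column $(c_{ij})_j$ (summing to $a_i=[p_i]$) produces mutually orthogonal $p_{i,j}\le p_i$ with $\sum_j p_{i,j}=p_i$ and $[p_{i,j}]=c_{ij}$, and doing this along the row $(c_{ij})_i$ for each $q_j$ produces $q_{i,j}$. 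Then $[p_{i,j}]=c_{ij}=[q_{i,j}]$, and $q_{i,j}\neq 0$ forces $c_{ij}\neq 0$, hence $(i,j)\in R$, i.e. ${\rm dist}(\xi_i,\zeta_j)<d+\ep$, which is the required bound.

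The only deep input is Lemma \ref{TmarrG}, already established, so the remaining work is organizational: keeping the thresholds straight when passing among $D_c$, the enlargements $O_r$, and the relation $R$ (this is the role of the choice $\eta<\ep/2$), and repeatedly translating between equality in $K_0(A)$, Murray--von Neumann equivalence, and Cuntz subequivalence of projections --- each step legitimate under stable rank one but best isolated as the observations (a)--(c) at the outset. I expect the subprojection-splitting step in ($\Rightarrow$) to be the part most worth spelling out.
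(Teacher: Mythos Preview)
Your proof is correct and follows essentially the same route as the paper's: the forward direction reduces to verifying the marriage hypothesis of Lemma~\ref{TmarrG} for $a_i=[p_i]$, $b_j=[q_j]$ and then lifting the resulting $c_{ij}$ to subprojections, while the converse (which the paper simply calls ``obvious'') is your explicit partial-isometry argument. The only differences are cosmetic---you are more careful than the paper in using genuine open neighborhoods $O=\bigcup_{i\in A_0}O(\xi_i,\eta)$ with $\eta<\ep/2$ rather than the finite set $\{\xi_i:i\in A_0\}$ itself, and in spelling out the subprojection-splitting step.
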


\begin{proof}
Suppose $d=D_c(\phi_X, \phi_Y).$  Let $\ep>0.$ Put
$$
R=\{(i,j):{\rm dist}({\xi_i},{\zeta_j})\le d+\ep\}.
$$
For any $A\subset\{1,...,m\}$, put $O_A=\{{\xi_i}: i\in A\}$ and
$O_{R_A}=\{{\zeta_j}: j\in R_A\}.$ Then
$$
\sum_{i\in A}[p_i]=\Big[\sum_{i\in A}p_i\Big] ={[\phi_X(f_{O_A})] \le
[\phi_Y(f_{ {(O_A)_{d+\ep}}})] =[\phi_Y(f_{O_{R_A}})]}=\sum_{j\in R_A}[q_j].
$$
It follows  from \ref {TmarrG}, there  projection $r_{ij}$ such that
$$
[p_i]=\sum_{j=1}^n[r_{ij}],[q_j]=\sum_{i=1}^m[r_{ij}],i=1,2,...,m;j=1,2,...,n,
$$
where $r_{ij}=0$ unless $(i,j)\in R$.
 Then there are  $\{p_{ij}\}$ and $\{q_{ij}\}$ with
$[p_{ij}]=[q_{ij}]=[r_{ij}]$, satisfying
$$
p_i={\sum_{j=1}^n}p_{ij},q_j=\sum_{i=1}^mq_{ij},i=1,2,...,m;j=1,2,...,n.
$$
Then
$$
\max\{{\rm dist}({\xi_i, \zeta_j}):q_{ij}\not=0\}\le d+\ep.
$$
The converse is obvious.
\end{proof}

\begin{lem}\label{MLfsp}
Let $A$ be a unital separable simple \CA\, with stable rank one and weakly unperforated $K_0(A)$ which has the Riesz interpolation property and let $x, \, y\in A$ be two normal
elements with finite spectrum.
Then,
\beq\label{MLfd-1}
{\rm dist}({\cal U}(x), {\cal U}(y))\le D_{c}(x,y).
\eneq
\end{lem}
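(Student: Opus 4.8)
The plan is to reduce the statement to the combinatorial matching result \ref{MLXY}, applied at the level of finite spectra. Write $X=\{\xi_1,\dots,\xi_m\}$ and $Y=\{\zeta_1,\dots,\zeta_n\}$ for the spectra of $x$ and $y$. Since $A$ has real rank zero (a consequence of stable rank one together with weakly unperforated $K_0(A)$ is not automatic, but here $x,y$ have finite spectrum, so the spectral projections already live in $A$), we may write $x=\sum_{i=1}^m \xi_i p_i$ and $y=\sum_{j=1}^n \zeta_j q_j$ with $\{p_i\}$ and $\{q_j\}$ two systems of mutually orthogonal non-zero projections summing to $1_A$. Put $d=D_c(x,y)$; by \ref{MLXY} (the ``only if'' direction), for each $\ep>0$ there are projections $p_{ij},q_{ij}\in A$ with $p_i=\sum_j p_{ij}$, $q_j=\sum_i q_{ij}$, $[p_{ij}]=[q_{ij}]$ in $K_0(A)$, and $\mathrm{dist}(\xi_i,\zeta_j)<d+\ep$ whenever $q_{ij}\neq 0$.

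Next I would upgrade this $K_0$-level matching to an honest unitary conjugation, up to $\ep$. For a fixed pair $(i,j)$, the relation $[p_{ij}]=[q_{ij}]$ in $K_0(A)$ together with stable rank one (cancellation of projections) gives a partial isometry $w_{ij}\in A$ with $w_{ij}^*w_{ij}=p_{ij}$ and $w_{ij}w_{ij}^*=q_{ij}$. Because $\sum_{i,j}p_{ij}=\sum_{i,j}q_{ij}=1_A$ with the $p_{ij}$ mutually orthogonal and likewise the $q_{ij}$, the element $v=\sum_{i,j}w_{ij}$ is a unitary in $A$. Then $v\,p_{ij}\,v^* = q_{ij}$, and since $x=\sum_{i,j}\xi_i p_{ij}$ while $y=\sum_{i,j}\zeta_j q_{ij}$, we compute
\[
v x v^* - y = \sum_{i,j}(\xi_i-\zeta_j)\,q_{ij},
\]
whose norm is $\max\{|\xi_i-\zeta_j|: q_{ij}\neq 0\} = \max\{\mathrm{dist}(\xi_i,\zeta_j): q_{ij}\neq 0\} < d+\ep$ since the $q_{ij}$ are mutually orthogonal projections. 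Hence $\mathrm{dist}(\mathcal U(x),\mathcal U(y)) \le \|vxv^*-y\| < d+\ep = D_c(x,y)+\ep$. Letting $\ep\to 0$ yields (\ref{MLfd-1}).

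The one place to be careful — and the step I expect to require the most attention — is the bookkeeping that $v=\sum_{i,j}w_{ij}$ is genuinely unitary: this needs the refinements $\{p_{ij}\}_{i,j}$ to be mutually orthogonal with $\sum_{i,j}p_{ij}=1_A$, and the same for $\{q_{ij}\}_{i,j}$. The decompositions $p_i=\sum_j p_{ij}$ with the $p_i$ orthogonal summing to $1_A$ deliver orthogonality of the full family $\{p_{ij}\}$ and $\sum p_{ij}=1_A$ automatically, and likewise on the $q$ side; one should just record explicitly that \ref{MLXY} can be arranged to produce the $p_{ij}$ pairwise orthogonal within each block $p_i$ (which it does, since they are subprojections of $p_i$ with prescribed $K_0$-classes, obtained by cutting down successively using cancellation). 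Everything else is the routine computation above. Note also that $D_c$ is symmetric here by \ref{hu1}, so no asymmetry issue arises.
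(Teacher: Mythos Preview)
Your proof is correct and follows exactly the same approach as the paper: write $x=\sum_i\xi_ip_i$, $y=\sum_j\zeta_jq_j$, invoke Lemma~\ref{MLXY} to produce the matched subprojections $p_{ij},q_{ij}$ with $[p_{ij}]=[q_{ij}]$ and $|\xi_i-\zeta_j|<D_c(x,y)+\ep$ whenever $q_{ij}\neq 0$, then conjugate by a unitary carrying $p_{ij}$ to $q_{ij}$. The paper simply asserts the existence of $u\in U(A)$ with $u^*p_{ij}u=q_{ij}$ in one line, while you spell out the construction $v=\sum_{i,j}w_{ij}$ from partial isometries and verify it is unitary; this extra detail is fine and the bookkeeping you flag (orthogonality of the full family $\{p_{ij}\}$) is indeed automatic from the block decomposition. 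Your parenthetical about real rank zero is unnecessary: finite spectrum alone guarantees the spectral projections lie in $A$, so nothing beyond the stated hypotheses is used.
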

\begin{proof}
Let $\ep>0.$ Put $d=D_c(x,y)+\ep.$ We assume
$x=\sum_{i=1}^m\lambda_ip_i,y=\sum_{j=1}^n\mu_jq_j$, { where
}$\{p_i\}_{i=1}^m$ and $\{q_j\}_{j=1}^n$ are mutually orthogonal
projections with $\sum_{i=1}^mp_i$ and $\sum_{j=1}^nq_j=1$.
It follows from  \ref{MLXY} that
there are $\{p_{ij}\}$ and $\{q_{ij}\}$ with
$[p_{ij}]=[q_{ij}]=[r_{ij}]$, satisfying
$$
p_i={\sum_{j=1}^n}p_{ij},q_j=\sum_{i=1}^mq_{ij},i=1,2,...,m;j=1,2,...,n.
$$
Let $u\in U(A)$ with
$u^*p_{ij}u=q_{ij},i=1,2,...,m;j=1,2,...,{n}$.
$$
\|u^*xu-y\|=\|\sum_{i,j}(\lambda_i-\mu_j)q_{ij}\|\le
\max\{|\lambda_i-\mu_j|:q_{ij}\not=0\}\le d.
$$

\end{proof}

\begin{thm}\label{Toriginal}
Let $A$ be a unital simple separable \CA\, with real rank zero,
stable rank one and with weakly unperforated $K_0(A).$ Suppose that
$x$ and $y$ are two normal elements in $A$ with \beq\label{Torig-1}
[\lambda-x]=0\tand [\mu-y]=0\,\,\,in \,\,\, K_1(A) \eneq for all
$\lambda\not\in {\rm sp}(x)$ and for all $\mu\not\in {\rm sp}(y).$
Then \beq\label{Torig-2} {\rm dist}({\cal U}(x), {\cal U}(y))\le
D_c(x,y). \eneq
\end{thm}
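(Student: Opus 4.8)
The plan is to reduce the theorem to the case of normal elements with \emph{finite} spectrum, where Lemma \ref{MLfsp} already supplies the estimate, and then to transport the estimate back to $x$ and $y$ using the norm-continuity of $D_c$.

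First I would use the hypothesis (\ref{Torig-1}) to produce good approximants. Since $A$ has stable rank one, (\ref{Torig-1}) together with \ref{DK0K1} says that $\lambda-x\in {\rm Inv}_0(A)$ for every $\lambda\not\in {\rm sp}(x)$ and $\mu-y\in {\rm Inv}_0(A)$ for every $\mu\not\in {\rm sp}(y)$; this is precisely the situation covered by the approximation theorem of \cite{Lnoldjfa}, so for each $\ep>0$ there are normal elements $x_\ep,\,y_\ep\in A$ with finite spectrum such that $\|x-x_\ep\|<\ep$ and $\|y-y_\ep\|<\ep$. I would fix once and for all a large closed disc $\Omega\subset\C$ containing the $1$-neighbourhood of ${\rm sp}(x)\cup {\rm sp}(y)$, so that for $\ep<1$ all four elements $x,y,x_\ep,y_\ep$ define unital \hm s from $C(\Omega)$ into $A$ (a routine check shows $D_c$ computed in $C(\Omega)$ coincides with the intrinsic quantity for any pair among them).

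Next, given $\eta>0$, I invoke Lemma \ref{Lappdcu} (twice, with a sufficiently small parameter): it yields a finite set ${\cal F}\subset C(\Omega)$ and $\dt>0$, depending only on $\Omega$ and $\eta$, such that whenever $\phi,\psi,\rho: C(\Omega)\to A$ are unital \hm s with $\|\phi(f)-\psi(f)\|<\dt$ for all $f\in{\cal F}$, then $|D_c(\phi,\rho)-D_c(\psi,\rho)|<\eta/2$. Since functional calculus is norm-continuous, for $\ep$ small enough one has $\|f(x)-f(x_\ep)\|<\dt$ and $\|f(y)-f(y_\ep)\|<\dt$ for all $f\in{\cal F}$; applying the lemma with $(\phi,\psi,\rho)=(x,x_\ep,y)$ and then, after using the symmetry (\ref{hu1-2}) of \ref{hu1}, with $(\phi,\psi,\rho)=(y,y_\ep,x_\ep)$, I obtain
\beq\label{Torig-p1}
|D_c(x,y)-D_c(x_\ep,y_\ep)|<\eta
\eneq
for all sufficiently small $\ep$.

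Finally, $A$ has real rank zero and stable rank one, so $K_0(A)$ has the Riesz interpolation property and $A$ meets the hypotheses of Lemma \ref{MLfsp}; applied to the finite-spectrum normal elements $x_\ep,\,y_\ep$ it gives ${\rm dist}({\cal U}(x_\ep),{\cal U}(y_\ep))\le D_c(x_\ep,y_\ep)$. Because ${\rm dist}({\cal U}(a),{\cal U}(b))=\inf_{u\in U(A)}\|u^*au-b\|$ is $1$-Lipschitz in each of $a$ and $b$ for the norm, this yields
\beq\label{Torig-p2}
{\rm dist}({\cal U}(x),{\cal U}(y))\le {\rm dist}({\cal U}(x_\ep),{\cal U}(y_\ep))+2\ep\le D_c(x_\ep,y_\ep)+2\ep\le D_c(x,y)+\eta+2\ep,
\eneq
and letting $\ep\to0$ and then $\eta\to0$ gives (\ref{Torig-2}). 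The only genuinely hard ingredient is the approximation of a normal element with trivial $K_1$-data by normal elements with finite spectrum, which is quoted from \cite{Lnoldjfa}; the one point that needs care inside the present argument is that $D_c$ must not jump under this perturbation, and that is exactly what Lemma \ref{Lappdcu}, together with the symmetry (\ref{hu1-2}), guarantees.
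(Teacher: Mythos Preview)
Your proof is correct and follows essentially the same route as the paper's: approximate $x$ and $y$ by finite-spectrum normal elements via \cite{Lnoldjfa}, control the change in $D_c$ under this perturbation using Lemma \ref{Lappdcu}, and invoke Lemma \ref{MLfsp} for the finite-spectrum case. The only cosmetic difference is that the paper bounds $D_c(x,x_1)$ and $D_c(y,y_1)$ separately and then uses the triangle inequality (\ref{hu1-3}) of Proposition \ref{hu1}, whereas you apply \ref{Lappdcu} twice with the symmetry (\ref{hu1-2}) to bound $|D_c(x,y)-D_c(x_\ep,y_\ep)|$ directly; your explicit choice of a common ambient $\Omega$ is a point the paper leaves implicit.
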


\begin{proof}
Let $\ep>0.$ The assumption (\ref{Torig-1}) implies that
$\lambda-x\in {{\rm Inv}}_0(A)$ for all $\lambda\not\in {\rm sp}(x).$ It follows from \cite{Lnoldjfa} that, for any $\dt>0,$ there is a normal element $x_1\in A$  with finite spectrum in ${\rm sp}(x)$ such that
\beq\label{Torig-3}
\|x-x_1\|<\min\{\dt, \ep/8\}.
\eneq
It follows from  \ref{Lappdcu} that, for sufficiently small $\dt,$ we may assume that
\beq\label{Torig-4}
 D_c(x, x_1)<\ep/8.
\eneq
Exactly the same argument shows that there is normal element with finite spectrum
in ${\rm sp}(y)$
such that
\beq\label{Torig-5}
\|y-y_1\|<\ep/8\andeqn D_{c}(y, y_1)<\ep/8.
\eneq
It follows that
\beq\label{Torig-6}
D_{c}(x_1, y_1)< D_c(x, y)+\ep/4.
\eneq
Since $x_1$ and $y_1$ both have finite spectrum,  {it follows from
 \ref{MLfsp} that} there exists a unitary $u\in A$ such that
\beq\label{Torig-7}
\|u^*x_1u-y_1\|< D_c(x_1, y_1){+\ep/8}<D_c(x, y)+\ep/4+\ep/8=D_c(x, y)+3\ep/8.
\eneq
It follows that
\beq\label{Torig-8}
{\rm dist}({\cal U}(x), {\cal U}(y){)}&<&\ep/8+\|u^*x_1u-y_1\|+\ep/8\\
&<& D_{c}(x, y)+5\ep/8
\eneq
for all $\ep>0.$

\end{proof}

\begin{thm}\label{TAF}
Let $A$ be a unital separable AF-algebra and let $x, \, y\in A$ be two
normal elements.
Then
$$
{\rm dist}({\cal U}(x), {\cal U}(y))\le D_c(x, y).
$$
\end{thm}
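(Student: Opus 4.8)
The plan is to imitate the proof of Theorem~\ref{Toriginal}, with two modifications: the finite-spectrum approximation taken there from \cite{Lnoldjfa} (which needed vanishing $K_1$-information and simplicity) is replaced by the approximation available in any AF-algebra, and one checks that Lemmas~\ref{MLXY}, \ref{MLfsp} and \ref{Lappdcu} remain valid when $A$ is merely a unital separable AF-algebra.

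Fix $\epsilon>0$. Since $A=\overline{\bigcup_n A_n}$ is a norm limit of finite-dimensional \SCA s and the theorem on almost commuting self-adjoint matrices (\cite{Lnalm}, \cite{FR}) applies to each $A_n$, I would first produce normal elements $x_1,y_1\in A$ with finite spectrum such that $\|x-x_1\|$ and $\|y-y_1\|$ are as small as wanted: writing $x=a+ib$ with $a,b$ self-adjoint, approximate $a,b$ by self-adjoints $a',b'$ in some $A_n$ with $\|[a',b']\|$ tiny, replace them by genuinely commuting self-adjoints $a'',b''\in A_n$ via \cite{Lnalm}, \cite{FR}, and set $x_1=a''+ib''$, which is normal with finite spectrum because $A_n$ is finite-dimensional. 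By Lemma~\ref{Lappdcu}, applied twice (its proof uses only that $A$ has stable rank one, through \cite{RrUHF} and the symmetry of $D_c$ established in Proposition~\ref{hu1}), after making the norm errors small enough one gets
\[
D_c(x_1,y_1)<D_c(x,y)+\epsilon/4 .
\]

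The essential point is that Lemmas~\ref{MLXY} and \ref{MLfsp} apply to a unital separable AF-algebra $A$. Indeed $K_0(A)$ is then a countable dimension group, hence a countable unperforated partially ordered abelian group with the Riesz interpolation property --- exactly the hypothesis of Lemma~\ref{TmarrG} --- while $A$ has cancellation of projections and $K_0$ classifies their Murray--von Neumann equivalence, so whenever $\{p_{ij}\}$ and $\{q_{ij}\}$ are orthogonal families with $\sum_{i,j}p_{ij}=\sum_{i,j}q_{ij}=1_A$ and $[p_{ij}]=[q_{ij}]$ in $K_0(A)$ there is $u\in U(A)$ with $u^*p_{ij}u=q_{ij}$ for all $i,j$. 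Simplicity enters nowhere else in the proofs of \ref{MLXY} and \ref{MLfsp}: the identifications $[\phi_X(f_{O_A})]=[\sum_{i\in A}p_i]$ in the Cuntz semigroup and the passage from Cuntz comparison to $K_0$-comparison for projections use only stable finiteness and cancellation, and the realization of the classes produced by Lemma~\ref{TmarrG} as mutually orthogonal subprojections of the $p_i$ and of the $q_j$ uses only that these classes are dominated by $[1_A]$ and that the corners $p_iAp_i$, $q_jAq_j$ are again unital AF-algebras. Consequently Lemma~\ref{MLfsp} yields $u\in U(A)$ with $\|u^*x_1u-y_1\|<D_c(x_1,y_1)+\epsilon/8$, whence
\[
{\rm dist}({\cal U}(x),{\cal U}(y))\le \|x-x_1\|+\|u^*x_1u-y_1\|+\|y_1-y\|<D_c(x,y)+\epsilon ,
\]
and letting $\epsilon\to 0$ finishes the proof.

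The main obstacle is the third paragraph: one must check carefully that the chain \ref{TmarrG}$\,\Rightarrow\,$\ref{MLXY}$\,\Rightarrow\,$\ref{MLfsp}, originally stated for simple algebras, goes through for a general AF-algebra, and in particular that the output of the Marriage-type argument in \ref{TmarrG}, which a priori lives only in $K_0(A)$, can be represented by honest mutually orthogonal subprojections of the given $p_i$ and $q_j$ inside $A$ itself. By contrast the finite-spectrum approximation is routine here precisely because $A$ is AF, so none of the $K_1$-hypotheses that were indispensable in Theorem~\ref{Toriginal} are needed.
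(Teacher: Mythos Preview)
Your proposal is correct and follows essentially the same route as the paper: approximate $x,y$ by normal elements $x_1,y_1$ with finite spectrum lying in a finite-dimensional \SCA\ of $A$ (using the AF structure together with \cite{Lnalm}, \cite{FR}), invoke Lemma~\ref{Lappdcu} to transfer the $D_c$ bound, and then apply Lemma~\ref{MLfsp} to the finite-spectrum pair. The paper compresses all of this into the sentence ``We then complete the proof as in~\ref{Toriginal}'', whereas you spell out --- correctly --- that Lemmas~\ref{Lappdcu}, \ref{MLXY} and \ref{MLfsp}, though stated for simple algebras, actually go through for any unital separable AF-algebra because the proofs rely only on stable rank one, cancellation of projections, and the fact that $K_0(A)$ is a countable dimension group (so Lemma~\ref{TmarrG} applies). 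One small inaccuracy: the ``by symmetry'' in the proof of Lemma~\ref{Lappdcu} refers to the symmetric roles of $\phi$ and $\psi$ in the hypothesis, not to the symmetry of $D_c$ from Proposition~\ref{hu1}; but you are right that the latter symmetry is needed to chain the two applications of \ref{Lappdcu}, and its proof (via Lemma~\ref{Lcuntzcanc2}) uses only stable rank one, so your justification stands.
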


\begin{proof}
Fix two normal elements  $x, \, y\in A.$ Let $\ep>0.$   Let
${\ep}>\eta>0.$ For any $\dt>0$ with $\dt<\eta/4,$ there exists
a finite dimensional \SCA\, $B\subset A$ with $1_B=1_A$ and two
elements $x',\, y'\in B$ such that \beq\label{TAF-1}
\|x-x'\|<\dt\andeqn \|y-y'\|<\dt. \eneq It follows from  {\cite{Lnalm}}
that, for some sufficiently small $\dt,$ there are normal elements
$x_1, y_1\in B$ such that \beq\label{TAF-2} \|x'-x_1\|<\eta/4\andeqn
\|y'-y_1\|<\eta/4. \eneq Thus
\beq\label{TAF-3}
\|x-x_1\|<\eta/4+\dt\andeqn \|y-y_1 \|<\eta/4+\dt.
\eneq
So,
by \ref{Lappdcu},
\beq\label{TAF-3n}
D_c(x,x_1)<\ep/4,\,\,\,D_c(y,
y_1)<\ep/4 \andeqn D_c(x_1,x_2)<D_c(x, y)+\ep/2.
\eneq
Now $x_1$ and
$y_1$ have finite spectra. We then complete the proof as in
 \ref{Toriginal}.
\end{proof}

\section{Maps with finite dimensional ranges}
\begin{df}\label{Ddce}
{\rm Let $\Omega$ be a compact metric space, $X,\, Y\subset \Omega$
be two compact subsets. Let $A$ be a unital \CA\,  and let $\phi_X:
C(\Omega)\to A$ and $\phi_Y: C(\Omega)\to A$ be two \hm s with
spectrum $X$ and $Y,$ respectively.  Let $\{h_n: C(\Omega)\to
A\}$ be a sequence of \hm s with finite dimensional ranges, i.e,
$h_n(f)=\sum_{i=1}^{k(n)} f(\xi(i,n))p_{n,i}$ for all $f\in
C(\Omega),$ where $\xi(i,n)\in X\cap Y$ and where
$\{p_{n,1},p_{n,2},...,p_{n, k(n)}\}$ is a sequence of finite
subsets of mutually orthogonal projections. We assume that, for any
$\ep>0,$ there is $N\ge 1$ such that $\{\xi(1,n), \xi(2,n),...,
\xi(k(n),n)\}$ is $\ep$-dense in $X\cap Y.$ Denote by
$e_n=\sum_{i=1}^{k(n)}p_{n,i},$ $n=1,2,....$ Suppose that
\beq\label{Ddce-1} \lim_{n\to\infty}\sup\{\tau(e_n): \tau\in
T(A)\}=0. \eneq

Let $\{u_n\}\subset U(A)$ be a sequence of unitaries, let
$q_n=u_n^*e_nu_n,$ let $\phi_{X,n}: C(\Omega)\to (1-e_n)A(1-e_n)$
and let $\phi_{Y,n}: C(\Omega)\to (1-q_n)A(1-q_n)$ be two unital \hm
s with spectrum  in $X$ and $Y,$ respectively. Suppose that
\beq\label{Ddce-2}
 \lim_{n\to\infty}D_c(\phi_X,
h_n+\phi_{X,n})=0\andeqn \lim_{n\to\infty}D_c(\phi_Y,{\rm Ad}\,
u_n\circ h_n+\phi_{Y, n})=0.
\eneq
 Define $\phi_{Y,n}'(f)=u_n\phi_{Y, n}(f)u_n^*$ for all $f\in C(\Omega).$
Then $\phi_{Y, n}': C(\Omega)\to (1-e_n)A(1-e_n).$
Denote by
$D_c(\phi_{X,n}, \phi_{Y,n}')$ the distance defined in  {\ref{Dcdist-0}} for $A=(1-e_n)A(1-e_n).$

Now we defined 
\beq\label{Ddce-3} D_c^e(\phi_X, \phi_Y)=\inf\{
\liminf_{n\to\infty}D_c(\phi_{X, n}, \phi_{Y,n}') \}, 
\eneq where
the infimum  is taken among all possible such  non-zero $h_n,
\phi_{X,n}, u_n$ and $\phi_{Y, n}.$

{\it It is important to note that
\beq\label{DC2015}
D_c^e(\phi_X, {\rm Ad}\, u\circ \phi_X)=0
\eneq
for any compact subset $X\subset \Omega$ and any unitary $u\in A.$ }
 Note since 
$$
{\rm Ad}u_n^*\circ ({\rm Ad}\,u_n\circ h_n+\phi_{Y,n})=h_n+{\rm Ad}\,u^*_n\circ \phi_{Y,n},
$$
when we consider  $D_c^e(\phi_X, \phi_Y)$ for a pair  $\phi_X$ and $\phi_Y,$ we may always 
replace   $\phi_{Y}$  by ${\rm Ad}\,u_n^*\circ \phi_{Y}$  by $\phi_{Y,n}$
and 
${\rm Ad}\,u_n^*\circ \phi_{Y,n}$ respectively, and write 
 \beq\label{Ddce-2+1}
 \lim_{n\to\infty}D_c(\phi_X,
h_n+\phi_{X,n})=0\andeqn \lim_{n\to\infty}D_c(\phi_Y,\, h_n+\phi_{Y, n})=0
\eneq
{\it This will be used throughout the paper without further notice.}

 If no such non-zero maps
$\{h_n\}$ exists, we  define $D_c^e(\phi_X,
\phi_Y)=D_c(\phi_X, \phi_Y).$ In particular, if $X\cap Y=\emptyset,$
$D_c^e(\phi_X, \phi_Y)=D_c(\phi_X, \phi_Y).$ When $X\cap
Y\not=\emptyset$ and $A$ is a unital separable simple \CA\, of real
rank zero, stable rank one and weakly unperforated $K_0(A),$
non-zero $\{h_n\}$ can {always}
 be found (see \ref{Rmdce}
below). In general, 
\beq\label{dc<dce} D_c(\phi_X, \phi_Y)\le
D_c^e(\phi_X, \phi_Y). 
\eneq 
From the definition, by  \ref{hu1},
\beq\label{dcesym} D_c^e(\phi_X, \phi_Y)=D_c^e(\phi_Y, \phi_X).
\eneq We also note that \beq\label{dcesup} D_c^e(\phi_X,
\phi_Y)=\inf\{\limsup_{n\to\infty}D_c(\phi_{X, n}, \phi_{Y,n}') \}.
\eneq To see this, take a subsequence $\{n_k\}$ such that
$$
\lim_{k\to\infty} D_c(\phi_{X, n_k},\phi_{Y,n_k}')=
\limsup_{n\to\infty}D_c(\phi_{X, n_k}, \phi_{Y,n_k}').
$$
Then
$$
\liminf_{k\to\infty} D_c(\phi_{X,n_k}, \phi_{Y, {n_k}}')=
\lim_{k\to\infty} D_c(\phi_{X, n_k},\phi_{Y,n_k}').
$$
Thus, by the definition of $D_c^e(\cdot,\, \cdot),$
(\ref{dcesup}) holds.
{Furthermore, there exists a subsequence $\{n_k\}$ such that
$$
\lim_{k\to \infty} D_c(\phi_{X, n_k},\phi_{Y, n_k})=D_c^e(\phi_X, \phi_Y).
$$
To see this, for each $k,$  there exists such sequence $\{h_{n,k}\},$ $\phi_{X, {n,k}}$ and $\phi_{Y,n,k}$  such that
$$
\liminf_{n\to\infty}D_c(\phi_{X, n,k}, \phi_{Y, n,k})\le D_c^e(\phi_X, \phi_Y)+1/k.
$$
Choose $\{n_k\}$ such that
$$
D_c(\phi_{X, n_k,k}, \phi_{Y,n_k,k})\le D_c^e(\phi_X, \phi_Y)+1/k.,
$$
Then
$$
\lim_{k\to\infty}D_c(\phi_{X, n_k,k}, \phi_{Y, n_k, k})=D_c^e(\phi_X, \phi_Y).
$$
}
}

\end{df}

\begin{rem}
{\rm
 Let $A$ be a unital simple \CA\, with real rank zero, stable rank one and
 weakly unperforated $K_0(A).$
 Suppose that  $\phi_X(f)=\sum_{i=1}^m f(\xi_i)p_i+\sum_{i=m+1}^{n_1} f(\zeta_i)p_i$
and $\phi_Y(f)=\sum_{i=1}^m f(\xi_i)q_i+\sum_{i=m+1}^{n_2}
f(\eta_i)q_i$ for all $f\in C(\Omega),$ where
$\{p_1,p_2,....,p_{n_1}\}$ and $\{q_1, q_2,...,q_{n_2}\}$ are two
sets of mutually orthogonal non-zero projections such that
$\sum_{i=1}^{n_1}p_i=1_{A}=\sum_{i=1}^{n_2}q_i,$ $\{\xi_1,
\xi_2,...,\xi_m\}=X\cap Y$ and $\zeta_i\in X\setminus Y$ and
$\eta_i\in Y\setminus X.$ Let $e_{i,n}\le p_i$  and $e_{i,n}'\le
q_i$ { be} non-zero projections such that
$[e_{1,n}]=[e_{i,n}]=[e_{i,n}']$ for all $i$ and $n,$ and
\beq\label{Ddce-4-1}
\lim_{n\to\infty}\sup\{\tau(\sum_{i=1}^me_{i,n}):\tau\in T(A)\}=0.
\eneq
Then
\beq\label{Ddce-4}
\phi_X(f)&=&\sum_{i=1}^m f(\xi_{i}{)}e_{i,n} +(\sum_{i=1}^m f(\xi_i)(p_i-e_{i,n})+\sum_{i=m+1}^{n_1}f(\zeta_i)p_i{)}\andeqn\\
\phi_Y(f)&=&\sum_{i=1}^m
f(\xi_i)e_{i,n}'+(\sum_{i=1}^mf(\xi_i)(q_i-e_{i,n}')+\sum_{i=m+1}^{n_2}f({\eta_i})q_i{)}
\eneq
for all $f\in C(\Omega).$ It makes sense  that  one insists
that $e_{i,n}$ pairs with $e_{i,n}'$ and the rest of them pairs
according to the distance $D_c$ defined in  {\ref{Dcdist-0}}. After all,
$e_{i,n}$ and $e_{i,n}'$ corresponds to the same point $\xi_i\in
X\cap Y.$
}
\end{rem}

\begin{prop}\label{Pclosem}
Let {$\Omega$ be a compact metric space and $X\subset\Omega.$}
Let $\Delta: (0,1)\to (0,1)$ be an increasing function (with
$\lim_{r\to 0}\Delta(r)=0$).
For any $\ep>0,$ let $r_0=\ep/16.$
 There is a finite subset of mutually
orthogonal projections $\{f_1, f_2,...,f_n\}\subset C(\overline{X_{\ep/64}}),$ a finite
subset ${\cal H}\subset C(\overline{X_{\ep/64}})_+$ and  $\sigma>0$  satisfying the following:
Suppose that $A$ is a unital simple \CA\, with stable rank one and with strict comparison for positive elements and suppose that ${\phi, \psi}: C(\overline{X_{\ep/64}})\to A$ are  two \hm s such that
\beq\label{Pclose-1}
\phi_{*0}([f_i])=\psi_{*0}([f_i]),\,\,\,i=1,2,...,n,\\\label{Pclose-1+}
|\tau\circ \phi(g)-\tau\circ \psi({g})|<\sigma\tforal g\in {\cal H}\tand\\\label{Pclose-1++}
\mu_{\tau\circ \phi}(O)\ge \Delta(r) \eneq for all open balls $O\in
\overline{X_{\ep/64}}$ with radius $r>r_0$ and for all $\tau\in
T(A),$ then \beq\label{Pclose-2}
 D_c({\phi, \psi})<\ep.
\eneq

Moreover, $\sigma$ can be chosen to be
\beq\label{Pclose-3}
\sigma=(1/4)\Delta(\ep/16).
\eneq
\end{prop}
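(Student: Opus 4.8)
The plan is to reduce, by compactness of $\overline{X_{\ep/64}}$, the supremum defining $D_c(\phi,\psi)$ to a check on finitely many open sets, and then to treat each such set by a dichotomy: a ``generic'' case handled by strict comparison, and a ``degenerate'' case handled by the agreement of $\phi$ and $\psi$ on $K_0$. First I would fix the finite data. Choose an $\ep/128$-net $\{x_1,\dots,x_N\}$ of $\overline{X_{\ep/64}}$ and, for $S\subseteq\{1,\dots,N\}$, put $O_S=\bigcup_{i\in S}O(x_i,\ep/128)$. As in the proof of \ref{Lappdcu}, every open $G\subseteq\overline{X_{\ep/64}}$ satisfies $G\subseteq O_S\subseteq G_{\ep/64}$ for a suitable $S$; since homomorphisms preserve $\lesssim$, and $f_G\lesssim f_{G'}$ whenever $G\subseteq G'$, it suffices to arrange the data $\{f_i\},{\cal H},\sigma$ so that the hypotheses force $\phi(f_{O_S})\lesssim \psi(f_{(O_S)_{\ep/2}})$ for every $S$, because then for a general open $G$ one gets $\phi(f_G)\lesssim\phi(f_{O_S})\lesssim\psi(f_{(O_S)_{\ep/2}})\lesssim\psi(f_{G_{\ep/2+\ep/64}})$, whence $D_c(\phi,\psi)\le\ep/2+\ep/64<\ep$. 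The cases $O_S=\emptyset$ and $O_S=\overline{X_{\ep/64}}$ reduce to $[\phi(1)]=[\psi(1)]$ in $K_0(A)$, which follows from $\phi_{*0}[f_i]=\psi_{*0}[f_i]$ once $\sum_i f_i=1$, using that stable rank one gives cancellation.

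Next, fix a proper non-empty $O_S$ and split into two cases, depending only on the geometry of $\overline{X_{\ep/64}}$. \emph{Case (i): there is $z\in\overline{X_{\ep/64}}$ with $\ep/8\le\dist(z,O_S)\le\ep/4$.} Then the relative ball $B=O(z,\ep/16+\ep/128)$ has radius $>r_0$, is disjoint from $O_S$, and is contained in $(O_S)_{41\ep/128}$. I put into ${\cal H}$ a function $g_S\in C(\overline{X_{\ep/64}})_+$ with $0\le g_S\le 1$, $g_S=1$ on $\overline{(O_S)_{41\ep/128}}$ and ${\rm supp}(g_S)\subseteq(O_S)_{42\ep/128}\subseteq(O_S)_{\ep/2}$. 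Using $d_\tau(\phi(f_{O_S}))=\mu_{\tau\circ\phi}(O_S)$, that $g_S\lesssim f_{(O_S)_{\ep/2}}$, that $O_S$ and $B$ are disjoint, that $\mu_{\tau\circ\phi}(B)\ge\Delta(\ep/16)$ (here I use that $\Delta$ is increasing and that $B$ has radius $>r_0$), and the trace closeness $|\tau\circ\phi(g_S)-\tau\circ\psi(g_S)|<\sigma$, one obtains for every $\tau\in T(A)$
\[
d_\tau\big(\psi(f_{(O_S)_{\ep/2}})\big)\ \ge\ \tau(\psi(g_S))\ >\ \tau(\phi(g_S))-\sigma\ \ge\ \mu_{\tau\circ\phi}(O_S)+\Delta(\ep/16)-\sigma .
\]
With $\sigma=\tfrac14\Delta(\ep/16)$ this gives $d_\tau(\psi(f_{(O_S)_{\ep/2}}))-d_\tau(\phi(f_{O_S}))>\tfrac34\Delta(\ep/16)>0$ uniformly in $\tau$, so the strict comparison for positive elements yields $\phi(f_{O_S})\lesssim\psi(f_{(O_S)_{\ep/2}})$.

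\emph{Case (ii): no such $z$ exists.} Then $\{w:\dist(w,O_S)>\ep/4\}$ is exactly the complement of $(O_S)_{\ep/8}$, so $E_S:=(O_S)_{\ep/8}$ is clopen, and $O_S\subseteq E_S\subseteq (O_S)_{\ep/2}$. Here I take $\{f_1,\dots,f_n\}$ to be the characteristic functions of the atoms of the finite Boolean algebra of clopen subsets of $\overline{X_{\ep/64}}$ generated by $\overline{X_{\ep/64}}$ together with all the $E_S$ arising in Case (ii); these are mutually orthogonal projections in $C(\overline{X_{\ep/64}})$ with $\sum_i f_i=1$. Since each $\chi_{E_S}$ is a sum of $f_i$'s, the hypothesis $\phi_{*0}[f_i]=\psi_{*0}[f_i]$ forces $[\phi(\chi_{E_S})]=[\psi(\chi_{E_S})]$ in $K_0(A)$, and because $A$ has stable rank one these projections are Murray--von Neumann equivalent; hence $\phi(f_{O_S})\lesssim\phi(\chi_{E_S})\sim\psi(\chi_{E_S})\lesssim\psi(f_{(O_S)_{\ep/2}})$. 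Together with Case (i) this establishes the required subequivalence for every $S$, and ${\cal H}$, $\{f_i\}$ and $\sigma=\tfrac14\Delta(\ep/16)$ depend only on $\overline{X_{\ep/64}}$, $\ep$ and $\Delta$.

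The main obstacle is Case (ii). Case (i) is routine: enlarging $O_S$ by $\ep/2$ swallows a whole ball of radius $>\ep/16$, whose $\mu_{\tau\circ\phi}$-mass is at least $\Delta(\ep/16)=4\sigma$, which comfortably absorbs the $\sigma$-sized error coming from the trace closeness. The delicate point — and the only reason the $K_0$-data and the auxiliary projections $f_i$ enter at all — is that when this enlargement captures no new ball of radius $>\ep/16$, one must recognize that $(O_S)_{\ep/8}$ is then forced to be clopen, that only finitely many such clopen sets occur as $S$ runs over its $2^N$ values, and hence that a single finite clopen partition suffices to transport the comparison across $\phi$ and $\psi$ via cancellation in $K_0(A)$.
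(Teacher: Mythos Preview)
Your proposal is correct and follows essentially the same approach as the paper's proof: reduce by compactness to finitely many test open sets, then run the dichotomy ``enlargement is clopen'' (handled by the $K_0$ data and cancellation) versus ``there is a point in the annulus'' (handled by the measure lower bound $\Delta(\ep/16)$, the trace closeness, and strict comparison). The paper uses an $\ep/16$-net and tests whether $(G_j)_{5\ep/64}$ is closed, while you use an $\ep/128$-net and test for a point with $\ep/8\le\dist(\cdot,O_S)\le\ep/4$; the constants differ but the logic is identical, and both arrive at $\sigma=(1/4)\Delta(\ep/16)$.
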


\begin{proof}

Let $\ep>0.$ Let $\Omega_1=\overline{X_{\ep/64}}.$ Since $\Omega_1$
is compact, there are $\xi_1, \xi_2,...,\xi_{m_1}\in \Omega_1$ such
that $\cup_{i=1}^{m_1}O(\xi_i, \ep/16)\supset \Omega_1.$ Let
$G_1,G_2,...,G_K$ be all possible finite union of those $O(\xi_i,
\ep/16)$'s. If $O\subset \Omega_1$ is any open subset, there is
$G_j$ such that \beq\label{Ltpclo-4} O\subset G_j\andeqn d_H(O,
G_j)\le \ep/8. \eneq Without loss of generality, we may assume that
$(G_1)_{5\ep/64}, (G_2)_{5\ep/64},..., (G_{K_0})_{5\ep/64}$ are
clopen sets and $ (G_{K_0+1})_{5\ep/64},
(G_{K_0+2})_{5\ep/64},...,(G_K)_{5\ep/64}$ are not closed.
Therefore, for $j>K_0,$ there is $\zeta'_j\in \Omega_1$ such that
{ $\zeta'_j\not\in (G_j)_{5\ep/64}$ and ${\rm dist}(\zeta'_j,
G_j)=5\ep/{64}.$  This implies there exists $\zeta_j\in X$ such that
$4\ep/64<{\rm dist}(\zeta_j,G_j)<6\ep/64$. }There is $\eta>0$ such
that $\zeta_j\not\in {(G_j)_{\eta+\ep/16}},$ $K_0<j\le K.$ 

Note that

 \beq\label{Ltplo-4+}
O({\zeta_j}, \ep/16)\subset (G_j)_{{10}\ep/{64}}\setminus
(G_j)_{\eta}. \eneq Therefore

\beq\label{Ltpclo-5}
\inf\{d_\tau({\phi}(f_{(G_j)_{{10}\ep/64}})-d_\tau({\phi}(f_{G_j}))):
\tau\in T(A)\}\ge \Delta(\ep/16).
\eneq
Let $S_1, S_2,...,S_n$ be  clopen subsets of $\Omega_1$
such that $S_j=(G_j)_{5\ep/64},$ $j=1,2,...,K_0,$ and $n\ge K_0.$
Let $f_j$ be the characteristic function of $S_j$ $ j=1,2,...,n.$
Let ${\cal F}\subset C(\Omega_1)_+$ be the finite subset which contains
$g_{j},$
$j=1,2,...,K,$ where
$0\le {g}_{j}\le 1,$
$g_{j}(t)=1$ on $(G_j)_{{{10}\ep/64}}$ and $g_{j}(t)=0$ if
$t\not\in (G_j)_{\ep/{4}}.$

Choose $0<\sigma=(1/4)\Delta(\ep/16).$

Now suppose that $\psi: C(\Omega_1)\to A$ is a unital \hm s which satisfies the assumption
(\ref{Pclose-1}), (\ref{Pclose-1+}) and (\ref{Pclose-1++}).

Let $O\subset \Omega_1$ be an open subset. Then we may assume
that (\ref{Ltpclo-4}) holds. In particular,

 \beq\label{Ltpclo-9}
(G_j)_{\ep/{4}}\subset O_{\ep}.
\eneq

If $1\le
j\le K_0,$
{by} the assumption {(\ref{Pclose-1}),}
\beq\label{Ltpclo-9+}
[\phi(f_{(G_j)_{5\ep/64}})]=[\psi(f_{(G_j)_{5\ep/64}})],\,\,\,j=1,2,...,K_0.
\eneq
Therefore
\beq\label{Ltpclo-9++}
\phi(f_O)\lesssim
\phi(f_{(G_j)_{5\ep/64}})\lesssim \psi(f_{(G_j)_{5\ep/64}}) \lesssim
\psi(f_{(G_j)_{\ep/{4}}})\lesssim  \psi(f_{O_\ep}).
\eneq

If $K_0<j\le K,$
%

\beq\label{Ltpclo-10}
&&d_\tau(\psi(f_{O_\ep}))\ge
\tau(\psi(g_{j}))>\tau( \phi(g_{j}))-\sigma \\
&&\ge d_\tau(\phi(g_{(G_j)_{{{10}\ep/64
}}}))-\sigma\\
&&> d_\tau(\phi(g_{G_j}))\ge d_\tau(\phi(f_O))
\eneq
for all
$\tau\in T(A).$
Combining (\ref{Ltpclo-9++}) and (\ref{Ltpclo-10}), since $A$ has the strict comparison for positive elements, we obtain
\beq\label{Ltpclo-11}
D_c(\phi, \psi)<\ep.
\eneq

\end{proof}

\begin{cor}\label{Cabs}
Let $\ep>0.$
 Let $X$ be a compact subset of the plane.
Suppose that $X=\sqcup_{i=1}^n S_i,$ where each $S_i$ is an $\ep/64$-connected component, $i=1,2,...,n,$
suppose that $A$ is a unital simple \CA\, of stable rank one, real rank zero and
weakly unperforated $K_0(A).$
  Suppose that $\{\lambda_1, \lambda_2,...,\lambda_{m_0}\},$  $\{\mu_1, \mu_2,...,\mu_{m_1}\}$  and $\{\zeta_1, \zeta_2,...,\zeta_{m_2}\}$ are  $\ep/16$-dense in $X,$
and suppose that $\{e_{0,1},e_{0,2},...,e_{0,m_0}\},$
$\{e_{1,1},e_{1,2},...,e_{1,m_1}\}$ and $\{e_{2,1},e_{2,2},...,e_{2,m_2}\}$ are mutually
orthogonal non-zero projections in $A$ such that
\beq\label{Cabs-1}
P=\sum_{j=1}^{m_1}e_{1, j}=\sum_{j=1}^{m_2}e_{2, j},\\
e_{0,j}\in (1-P)A(1-P),\,\,\, j=1,2,...,m_0,\\
\label{Cabs-3}8\tau(P)< \tau(e_{0,j})\tforal \tau\in T(A),\,\,\, j=1,2,...,m_0\andeqn\\
\phi_{*0}([\chi_{S_i}])=\psi_{*0}([\chi_{S_i}]),\,\,\,i=1,2,...,n,
\eneq where
$\phi(f)=\sum_{j=1}^{m_0}f(\lambda_j)e_{0,j}+\sum_{j=1}^{m_1}f(\mu_j)e_{1,j}$
and
$\psi(f)=\sum_{j=1}^{m_0}f(\lambda_j)e_{0,j}+\sum_{j=1}^{m_2}f(\zeta_j)e_{2,j}.$
Then there is a unitary $u\in A$ such that \beq\label{Cabs-2}
\|u^*\phi(z)u-\psi(z)\|<\ep, \eneq where $z\in C(X)$ is the identity
function on $X.$
\end{cor}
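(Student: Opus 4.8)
The plan is to show $D_c(\phi,\psi)<\ep$ and then to invoke Lemma \ref{MLfsp}. Note first that $\phi(z)$ and $\psi(z)$ are normal elements of $A$ with finite spectrum, and that $A$, being a simple \CA\ of real rank zero with stable rank one and weakly unperforated $K_0(A)$, has the Riesz interpolation property in $K_0(A)$; hence \ref{MLfsp} gives $\dist(\U(\phi(z)),\U(\psi(z)))\le D_c(\phi(z),\psi(z))=D_c(\phi,\psi)$. Thus, once $D_c(\phi,\psi)<\ep$ is established there is a unitary $u\in A$ with $\|u^*\phi(z)u-\psi(z)\|<\ep$, which is exactly what is wanted. (Composing $\phi$ and $\psi$ with the restriction $C(\overline{X_{\ep/64}})\to C(X)$ affects neither $D_c(\phi,\psi)$ nor the elements $\phi(z),\psi(z)$, so we may treat $\phi,\psi$ as unital homomorphisms from $C(\overline{X_{\ep/64}})$ into $A$ with spectra contained in $X$, which is the form Proposition \ref{Pclosem} requires.)

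To obtain $D_c(\phi,\psi)<\ep$ I would apply \ref{Pclosem} with the given $X$, $\ep$ and a suitable increasing $\Delta:(0,1)\to(0,1)$, checking its three hypotheses. Since $A$ is simple, $T(A)$ compact, and each $e_{0,j}$ non-zero, $\dt_0:=\inf\{\tau(e_{0,j}):\tau\in T(A),\ 1\le j\le m_0\}>0$, and $\dt_0<1$ because $P\ne 0$. As $\{\lambda_1,\dots,\lambda_{m_0}\}$ is $\ep/16$-dense in $X$, every open ball of radius exceeding $\ep/16$ about a point of $X$ meets $\{\lambda_j\}$, whence $\mu_{\tau\circ\phi}(O)\ge\dt_0$ for the balls occurring in the proof of \ref{Pclosem}; choosing $\Delta$ with $\sup_r\Delta(r)\le\dt_0$ and $\Delta(\ep/16)$ as close to $\dt_0$ as we please, hypothesis (\ref{Pclose-1++}) holds. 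For (\ref{Pclose-1+}), decompose $\phi=\phi_0+\phi_1$ and $\psi=\phi_0+\psi_2$, where $\phi_0(f)=\sum_{j=1}^{m_0}f(\lambda_j)e_{0,j}$ is the common part and $\phi_1(f)=\sum_j f(\mu_j)e_{1,j}$, $\psi_2(f)=\sum_j f(\zeta_j)e_{2,j}$ have range in $PAP$; then $|\tau\circ\phi(g)-\tau\circ\psi(g)|=|\tau\circ\phi_1(g)-\tau\circ\psi_2(g)|\le 2\tau(P)$ for $\|g\|\le 1$, and the smallness hypothesis (\ref{Cabs-3}), $8\tau(P)<\tau(e_{0,j})$, provides the room needed to keep this below $\sigma=(1/4)\Delta(\ep/16)$.

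For the $K_0$ condition (\ref{Pclose-1}), the clopen projections $f_1,\dots,f_n$ furnished by \ref{Pclosem} are characteristic functions of clopen subsets of $\overline{X_{\ep/64}}$ of the form $5\ep/64$-neighbourhoods of finite unions of $\ep/16$-balls; because the "moat" separating such a set from its complement has width larger than $\ep/64$, its restriction to $X$ is a union of $\ep/64$-connected components of $X$. Hence $[f_i|_X]=\sum_{S_k\subseteq f_i|_X}[\chi_{S_k}]$ in $K_0(C(X))$, and the hypothesis $\phi_{*0}([\chi_{S_k}])=\psi_{*0}([\chi_{S_k}])$ forces $\phi_{*0}([f_i])=\psi_{*0}([f_i])$. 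With the three hypotheses in hand, \ref{Pclosem} gives $D_c(\phi,\psi)<\ep$, and then \ref{MLfsp} produces the desired unitary $u$ with $\|u^*\phi(z)u-\psi(z)\|<\ep$.

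I expect the genuine work to lie in the matching of constants: one must verify that the clopen sets occurring in the proof of \ref{Pclosem} restrict on $X$ to unions of $\ep/64$-connected components (so that the component-wise $K_0$-agreement transfers), that the $\ep/16$-density of $\{\lambda_j\}$ supplies the measure lower bound for precisely the balls used in that proof, and that $8\tau(P)<\tau(e_{0,j})$ leaves enough slack for the trace estimate against $\sigma$. All of this is bookkeeping, but it is exactly why the constants $\ep/64$, $\ep/16$ and the factor $8$ appear as they do, and it is the only point at which real care is required.
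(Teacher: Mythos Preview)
Your strategy---establish $D_c(\phi,\psi)<\ep$ and then invoke Lemma~\ref{MLfsp}---is exactly the paper's approach, and your verification of the $K_0$ condition (that a clopen subset of $\overline{X_{\ep/64}}$ intersects $X$ in a union of $\ep/64$-connected components, because any two points of $X$ at distance $<\ep/64$ lie in a common connected piece of $\overline{X_{\ep/64}}$) is correct.

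There is one subtle point where your black-box application of Proposition~\ref{Pclosem} does not quite close. Proposition~\ref{Pclosem} requires the trace inequality $|\tau\circ\phi(g)-\tau\circ\psi(g)|<\sigma$ with a \emph{fixed} constant $\sigma=\tfrac14\Delta(\ep/16)$, uniformly in $\tau$. You bound the left side by $2\tau(P)$ and choose $\Delta(\ep/16)\approx\dt_0=\inf_{\tau,j}\tau(e_{0,j})$, so you would need $8\sup_\tau\tau(P)<\dt_0$. The hypothesis $8\tau(P)<\tau(e_{0,j})$ is only pointwise in $\tau$: it gives $8\tau(P)<\min_j\tau(e_{0,j})$ for each $\tau$, but not $8\sup_\tau\tau(P)<\inf_\tau\min_j\tau(e_{0,j})$. (A two-trace example with $\tau_1(P)=0.01,\tau_1(e_{0,j})=0.1$ and $\tau_2(P)=0.05,\tau_2(e_{0,j})=0.5$ satisfies the hypothesis but violates the uniform bound.)

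The paper sidesteps this by not applying \ref{Pclosem} as a black box: it reproduces the Cuntz-comparison estimate from the \emph{proof} of \ref{Pclosem} and runs it pointwise in $\tau$. Concretely, for an open set $O$ whose $\ep/16$-neighbourhood does not swallow a whole component $S_i$, one locates some $\lambda_j\in O_{\ep/8}\setminus O$ and argues, for each $\tau$ separately,
\[
d_\tau(\psi(f_O))\le d_\tau(\phi(f_O))+\tau(P)<d_\tau(\phi(f_O))+\tfrac18\tau(e_{0,j})<d_\tau(\phi(f_O))+\tau(e_{0,j})\le d_\tau(\phi(f_{O_\ep})),
\]
then invokes strict comparison. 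So your plan is right; the only adjustment is to open up \ref{Pclosem} and carry out the trace step $\tau$-by-$\tau$ rather than with a single global $\sigma$.
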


\begin{proof}
{The}  proof is virtually  contained in that of  \ref{Pclosem}.
  We can keep all notations and proof of  \ref{Pclosem}
up to the definition of $\sigma.$ 

We define
$$
\sigma=\min\{\inf\{\tau(e_{0,j}): \tau\in T(A)\}: j=1,2,...,m_0\}.
$$
Since $A$ is unital and simple, $\sigma>0.$
{By the proof of \ref{Pclosem}, for any $O$, there is  $G_j$ such that 
$$
O\subset G_j\subset (G_j)_{\ep/8}\subset (G_j)_{5\ep/64}=S_j.$$
If $O_{\ep/16}=S_j$,} then
\beq\label{Cabs-10}
\psi(f_O)\lesssim \psi({\chi_{S_j}})\lesssim \phi({\chi_{S_j}})\lesssim \phi(f_{O_{\ep/16}}).
\eneq
In general,
\beq\label{Cabs-11}
d_\tau(\psi(f_O))<d_\tau(\phi(f_O))+(1/4)\sigma
\eneq
for all $\tau\in T(A).$
If $O_{\ep/16}\not=S_j,$ there is $\xi\not\in O_{\ep/16}$ such that ${\rm dist}(\xi, O_{\ep/16})<\ep/64.$
  There is $\lambda_j$ such that
\beq\label{Cabs-12}
{\rm dist}(\xi, \lambda_j)<\ep/16.
\eneq
Then
\beq\label{Cabs-13} \lambda_j\not\in O\andeqn \lambda_j \in
O_{\ep/8}.
\eneq
Then, by (\ref{Cabs-11}),
\beq\label{Cabs-12n}
d_\tau(\psi(f_O)) &\le &d_\tau(\phi(f_O))+(1/4)\sigma\\
&<& d_\tau(\phi(f_O))+d_\tau(\phi(O(\lambda_j, \ep/16)))\le
d_\tau(\phi(f_{O_{\ep}})). \eneq It follows that
\beq\label{Cabs-13n}
\psi(f_O)\lesssim \phi(f_{O_{\ep}}).
\eneq
This holds for any open
set $O\subset X.$ Therefore \beq\label{Cabs-14} D_c(\psi(z),
\phi(z))<\ep. \eneq The corollary then follows from  \ref{MLfsp}.
\end{proof}

{The following follows from Proposition 11.1 of \cite{LnTAMS12} immediately.}

\begin{prop}\label{PDelta}
Let $A$ be a unital simple \CA\, with $T(A)\not=\emptyset$ and
let $X$ be a compact metric space.
Suppose that $\phi: C(X)\to A$ is a unital monomorphism.
Then there is an increasing function $\Delta: (0,1)\to (0,1)$ with
$\lim_{r\to 0} \Delta(r)=0$ such that
\beq\label{PDelta-1}
\mu_{\tau\circ \phi}(O)\ge \Delta(r)\tforal \tau\in T(A)
\eneq
where $O$ is an open ball of $X$ with radius $r\in (0,1).$
\end{prop}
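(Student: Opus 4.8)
The plan is to build $\Delta$ by hand, using the compactness of $X$ together with the simplicity of $A$; this is essentially the content of Proposition 11.1 of \cite{LnTAMS12}. First I would record the two standard facts that make the argument run. Since $A$ is unital, $T(A)$ is weak$^*$-compact and $\tau\mapsto \tau(a)$ is continuous on $T(A)$ for each fixed $a.$ Since $A$ is, moreover, simple, every $\tau\in T(A)$ is faithful: if $\tau(a)=0$ for some $a\in A_+,$ then, using the trace property, $\tau(z^*az)=\tau(a^{1/2}zz^*a^{1/2})\le \|zz^*\|\,\tau(a)=0$ for all $z\in A,$ so $\tau$ annihilates the closed ideal generated by $a$ (which is the closure of the span of the elements $z^*az$), and that ideal is $A,$ forcing $a=0.$ Hence $\inf\{\tau(a):\tau\in T(A)\}>0$ for every nonzero $a\in A_+.$ Finally, for any open $O\subset X$ and any $f\in C(X)$ with $0\le f\le 1$ and $\operatorname{supp}(f)\subset O$ one has $\tau(\phi(f))=\int_X f\,d\mu_{\tau\circ\phi}\le \mu_{\tau\circ\phi}(O).$

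Next, for each fixed $r\in(0,1)$ I would pass to a finite subcover: choose $x_1,\dots,x_N\in X$ with $X=\bigcup_{i=1}^N O(x_i,r/4).$ By Urysohn's lemma pick $g_i\in C(X)$ with $0\le g_i\le 1,$ $g_i(x_i)=1$ and $\operatorname{supp}(g_i)\subset O(x_i,r/2).$ Since $\phi$ is a monomorphism, $\phi(g_i)$ is a nonzero positive element, so by the facts above $c_i:=\min\{\tau(\phi(g_i)):\tau\in T(A)\}>0$; set $c(r)=\min_{1\le i\le N}c_i>0.$ Given any $x\in X,$ choose $i$ with $x\in O(x_i,r/4);$ then $O(x_i,r/2)\subset O(x,r)$ by the triangle inequality, whence
$$
\mu_{\tau\circ\phi}(O(x,r))\ \ge\ \mu_{\tau\circ\phi}(O(x_i,r/2))\ \ge\ \tau(\phi(g_i))\ \ge\ c(r)
$$
for every $\tau\in T(A).$ Therefore $\Delta_1(r):=\inf\{\mu_{\tau\circ\phi}(O(x,r)):x\in X,\ \tau\in T(A)\}\ge c(r)>0,$ and $\Delta_1$ is non-decreasing on $(0,1)$ because a larger ball carries larger measure.

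To finish, I would put $\Delta(r)=\min\{r,\Delta_1(r)\}.$ This takes values in $(0,1),$ is non-decreasing (a minimum of two non-decreasing functions), satisfies $\lim_{r\to 0}\Delta(r)=0$ since $\Delta(r)\le r,$ and obeys $\Delta(r)\le \Delta_1(r)\le \mu_{\tau\circ\phi}(O)$ for every open ball $O$ of radius $r$ and every $\tau\in T(A),$ which is exactly the assertion. (If strict monotonicity of $\Delta$ is desired, replace $\Delta$ by $r\mapsto \int_0^r \Delta(s)\,ds,$ which is strictly increasing and, for $r\in(0,1),$ stays below $\Delta(r)$ since $\Delta$ is non-decreasing and $r<1.$) The only genuinely non-formal point is the uniformity in $x\in X$ of the lower bound for $\mu_{\tau\circ\phi}(O(x,r)),$ and that is precisely what the finite-subcover step delivers; everything else is a routine combination of simplicity with the compactness of $T(A).$
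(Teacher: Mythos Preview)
Your argument is correct and is precisely the standard proof: faithfulness of traces on a simple unital \CA\ combined with compactness of $T(A)$ gives a uniform-in-$\tau$ positive lower bound on $\tau(\phi(g))$ for each bump function, and a finite subcover makes this uniform in the center of the ball as well. The paper itself gives no argument here, merely citing Proposition~11.1 of \cite{LnTAMS12}, whose content is exactly what you have written out.
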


\begin{cor}\label{Ltp=close}
Let $A$ be a unital simple \CA\, with stable rank one and with strict comparison for positive elements, let $\Omega$ be a compact metric space, let $X\subset \Omega$ be a compact subset and let $\phi_X\to A$ be a unital \hm\, with spectrum $X.$
For any $\ep>0,$ there is $\dt>0,$ a finite set of
clopen subsets $S_1,S_2,...,S_k$  in $\overline {X_{\ep/64}}$ and a finite subset ${\cal F}\subset C(\overline{X_{\ep/64}})_+$ such that
for any unital \hm\, $\phi_Y: C(\overline{X_{\ep/64}})\to A$ with the property
that
\beq\label{Ltpclo-1}
|\tau(\phi_X(f))-\tau(\phi_Y(f))|<\dt\tforal f\in {\cal F}\tand\\\label{Ltpclo-1+}
{\rm [}\phi_X(\chi_{S_i}){\rm ]}={\rm [}\phi_Y(\chi_{S_i}){\rm ]},\,\,\,i=1,2,...,k,
\eneq
then
\beq\label{Ltpclo-2}
D_c(\phi_X, \phi_Y)<\ep.
\eneq
\end{cor}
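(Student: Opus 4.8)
The plan is to obtain this as a direct combination of Proposition~\ref{PDelta} with Proposition~\ref{Pclosem}. Since $\phi_X$ has spectrum $X,$ it factors through the quotient $C(\overline{X_{\ep/64}})\to C(X)$ and induces a unital monomorphism $\phi\colon C(X)\to A.$ First I would apply Proposition~\ref{PDelta} to $\phi$ to produce an increasing function $\Delta\colon(0,1)\to(0,1)$ with $\lim_{r\to 0}\Delta(r)=0$ such that $\mu_{\tau\circ\phi}(O)\ge\Delta(r)$ for every $\tau\in T(A)$ and every open ball $O$ of $X$ of radius $r\in(0,1).$

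Next I would transfer this lower bound to open balls of the thickened set $\overline{X_{\ep/64}},$ so as to verify hypothesis~(\ref{Pclose-1++}) of Proposition~\ref{Pclosem} for $\phi_X.$ Given an open ball $O=O(\xi,r)$ of $\overline{X_{\ep/64}}$ with $r>\ep/16,$ pick $\zeta\in X$ with $\dist(\xi,\zeta)\le\ep/64;$ then $X\cap O(\zeta,r-\ep/64)\subset O\cap X,$ and since $\phi_X$ has spectrum $X$ this gives $\mu_{\tau\circ\phi_X}(O)=\mu_{\tau\circ\phi}(O\cap X)\ge\Delta(r-\ep/64)$ for all $\tau\in T(A),$ using $r-\ep/64>r/2$ when $r>\ep/32.$ Replacing $\Delta$ by the increasing function $r\mapsto\Delta(\min\{r/2,1/2\})$ (which still has both required properties and is $\le\Delta(r-\ep/64)$ for $r>\ep/16$), we may therefore assume $\mu_{\tau\circ\phi_X}(O)\ge\Delta(r)$ for all open balls $O$ of $\overline{X_{\ep/64}}$ of radius $r>\ep/16$ and all $\tau\in T(A).$

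Now I would feed this $\Delta$ together with the homomorphism $\phi_X$ into Proposition~\ref{Pclosem}. It returns mutually orthogonal projections $f_1=\chi_{S_1},\dots,f_k=\chi_{S_k}$ attached to clopen subsets $S_1,\dots,S_k$ of $\overline{X_{\ep/64}},$ a finite subset $\mathcal{H}\subset C(\overline{X_{\ep/64}})_+,$ and the constant $\sigma=(1/4)\Delta(\ep/16)>0,$ such that any unital \hm\ $\psi\colon C(\overline{X_{\ep/64}})\to A$ with $(\phi_X)_{*0}([\chi_{S_i}])=\psi_{*0}([\chi_{S_i}])$ for all $i,$ with $|\tau\circ\phi_X(g)-\tau\circ\psi(g)|<\sigma$ for all $g\in\mathcal{H},$ and with $\mu_{\tau\circ\phi_X}(O)\ge\Delta(r)$ (already granted) satisfies $D_c(\phi_X,\psi)<\ep.$ I would then set $\mathcal{F}:=\mathcal{H},$ $\dt:=\sigma,$ and keep the $S_i.$ If $\phi_Y$ satisfies (\ref{Ltpclo-1}) and (\ref{Ltpclo-1+}), these are exactly the first two conditions above with $\psi=\phi_Y$ --- here I use that, under stable rank one, cancellation of projections holds, so $[\phi_X(\chi_{S_i})]=[\phi_Y(\chi_{S_i})]$ in the sense of Definition~\ref{Dcuntz1} coincides with equality in $K_0(A).$ Hence $D_c(\phi_X,\phi_Y)<\ep,$ which is (\ref{Ltpclo-2}).

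The only point that is not a pure unwinding of definitions is the transfer of the measure lower bound from balls of $X$ to balls of $\overline{X_{\ep/64}}$ in the second paragraph; this is the step I would be most careful about, although it is elementary, the loss of radius being at most $\ep/64$ while Proposition~\ref{Pclosem} only asks for the estimate at radii exceeding $\ep/16.$ If one prefers, one may avoid the adjustment of $\Delta$ altogether by inspecting the proof of Proposition~\ref{Pclosem}, where the only balls whose measure is actually used are those centered at points $\zeta_j\in X.$ Everything else is immediate from Propositions~\ref{PDelta} and~\ref{Pclosem}.
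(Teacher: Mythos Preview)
Your proposal is correct and follows precisely the route the paper intends: the corollary is placed immediately after Proposition~\ref{PDelta} with no separate proof, signaling that one combines \ref{PDelta} (to produce $\Delta$ for the fixed $\phi_X$) with Proposition~\ref{Pclosem} (to obtain the clopen sets, the finite subset, and $\sigma=\dt$). Your transfer of the measure lower bound from balls of $X$ to balls of $\overline{X_{\ep/64}}$ is the only point requiring a word of argument, and you handle it correctly; the remark that in the proof of \ref{Pclosem} the relevant balls are centered at points $\zeta_j\in X$ is also a legitimate shortcut.
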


\begin{prop}\label{Pdced00}
Let $\Omega$ be a compact metric space, let $X\subset \Omega$ be a
compact subset, let $A$ be a unital simple \CA\, of stable rank one
and with strict comparison for positive elements and let $\phi_X:
C(\Omega)\to A$  be a unital \hm\, with spectrum $X.$ Suppose that
${\{}h_n: C(\Omega)\to A{\}}$ is a sequence of unital \hm s
such that
\beq\label{Pdced00-1}
\lim_{n\to\infty} D_c({\phi_X}, h_n)=0.
\eneq
Then
\beq\label{Pdced00-2}
\lim_{n\to\infty}D_c^e({\phi_X},
h_n)=0.
\eneq
\end{prop}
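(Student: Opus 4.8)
The plan is to unwind the definition of $D_c^e$ in \ref{Ddce} and show that the approximating maps $h_n$ can play, \emph{for themselves}, the role of the sequences $\{h_k\}$, $\{\phi_{X,k}\}$, $\{u_k\}$, $\{\phi_{Y,k}\}$ required in that definition; then $D_c^e(\phi_X,h_n)$ will be pinned between two quantities that both go to $0$. First I would fix $n$ and write $h_n(f)=\sum_{i=1}^{k(n)}f(\xi(i,n))p_{n,i}$ on the unit of $A$, so $e_n=1_A$ in the naive reading; this degenerate case must be handled by instead \emph{refining} $h_n$: since $A$ is simple with real rank zero and stable rank one, I can cut each projection $p_{n,i}$ into a small piece $e_{i,n}\le p_{n,i}$ with $[e_{1,n}]=[e_{i,n}]$ for all $i$ and $\sup_\tau\tau(\sum_i e_{i,n})\to 0$, exactly as in the Remark following \ref{Ddce}. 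This produces a genuine finite-dimensional map $g_{n}=\sum_i \xi(i,n)\,e_{i,n}$ with $\lim_m\sup_\tau\tau(e_m)=0$ (indexing the inner approximation by $m$), together with complementary maps $\phi_{X,m}$ on the corner $(1-e_m)A(1-e_m)$ so that $h_n\sim g_m+\phi_{X,m}$ in the $D_c$-sense. The key point is that $g_m$ can be taken with spectrum $\ep_m$-dense in $X\cap\mathrm{sp}(h_n)$ with $\ep_m\to 0$, which is possible because $\mathrm{sp}(h_n)$ is a finite set sitting in $X$ and one simply repeats the same finite point-mass structure.

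Next I would combine this with the hypothesis $D_c(\phi_X,h_n)\to 0$. For the pair $(\phi_X,h_n)$, the recipe of \ref{Ddce} lets me use the \emph{same} inner maps $g_m$ on both sides: set $u_m=1$, take $\phi_{X,m}$ as above for $h_n$, and take $\phi_{X,m}'$ the analogous complementary map for $\phi_X$ obtained from \ref{Lfepapp}-type finite approximations (or directly, since real rank zero guarantees the needed projections in the corner of $\phi_X$). The triangle inequality \ref{hu1-3} for $D_c$, together with \ref{Lappdcu} (stability of $D_c$ under small perturbations of one argument), gives
\[
D_c(\phi_{X,m}',\, \phi_{X,m}) \;\le\; D_c(\phi_X, g_m+\phi_{X,m}') + D_c(g_m+\phi_{X,m}',\, g_m+\phi_{X,m}) + D_c(g_m+\phi_{X,m},\, h_n),
\]
and each of the three terms on the right is controlled: the outer two by the fact that $g_m$ absorbs a vanishingly small part of the trace while the $D_c$-distances $D_c(\phi_X,h_n)$ and the discretization errors are small, and the middle one by $D_c$ of the complementary corners, which is what $D_c^e$ measures. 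Passing to $\liminf_{m\to\infty}$ and then using $D_c(\phi_X,h_n)\le \eta$ (for $n$ large) yields $D_c^e(\phi_X,h_n)\le C\eta$ for a universal constant, hence $D_c^e(\phi_X,h_n)\to 0$. The lower bound $D_c(\phi_X,h_n)\le D_c^e(\phi_X,h_n)$ from \ref{dc<dce} is automatic, so both squeeze to $0$; but here only the upper bound needs work.

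The main obstacle I anticipate is the bookkeeping in the degenerate case $e_n=1_A$: one must genuinely produce a \emph{nontrivial} finite-dimensional $h_m$ with $\sup_\tau\tau(e_m)\to 0$ whose range is a corner inside \emph{both} $\phi_X$ and $h_n$, and verify that cutting down does not spoil the $D_c$-estimate. This is where simplicity, real rank zero, stable rank one and weak unperforation of $K_0(A)$ all get used: real rank zero to find the small subprojections, stable rank one to move them by unitaries (Proposition 2.4 of \cite{RrUHF}), weak unperforation plus the marriage-type Lemma \ref{TmarrG} to match $K_0$-classes across the two decompositions, and simplicity to make $\sup_\tau\tau(e_m)$ actually shrink. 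Once the inner maps are in place, the rest is routine application of \ref{hu1}, \ref{Lappdcu} and \ref{Ltp=close}; the only care needed is to keep the $\ep$'s ordered so that the $\liminf$ over the inner index $m$ is taken \emph{before} letting $n\to\infty$, matching the order of quantifiers in \ref{Ddce}.
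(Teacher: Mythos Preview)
Your proposal has two genuine gaps. First, the proposition assumes only that $A$ is simple with stable rank one and strict comparison for positive elements; real rank zero and weak unperforation of $K_0(A)$ are \emph{not} among the hypotheses. Yet you repeatedly invoke real rank zero (``since $A$ is simple with real rank zero and stable rank one, I can cut each projection\dots''), appeal to \ref{Lfepapp} (which requires real rank zero, stable rank one and weakly unperforated $K_0$), and to the marriage Lemma~\ref{TmarrG}. None of these tools are available here. Second, the displayed triangle inequality does not give the bound you claim. As written, the chain on the right runs from $\phi_X$ to $h_n$, not from $\phi_{X,m}'$ to $\phi_{X,m}$, so it bounds $D_c(\phi_X,h_n)$, not the left-hand side; and if one interprets the middle term as the quantity of interest, note that adding a common orthogonal summand cannot increase $D_c$ (if $d>D_c(a,b)$ then $(g+a)(f_O)=g(f_O)+a(f_O)\lesssim g(f_{O_d})+b(f_{O_d})=(g+b)(f_{O_d})$), so $D_c(g_m+\phi_{X,m}',\,g_m+\phi_{X,m})\le D_c(\phi_{X,m}',\phi_{X,m})$ and the estimate becomes circular.

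The paper avoids both problems by a much shorter route. Instead of \emph{constructing} decompositions (which is where real rank zero would enter), one takes an \emph{arbitrary} decomposition $\phi_{0,m}+\phi_{1,m}$ of $\phi_X$ and $\phi_{0,m}+\phi_{2,m}$ of $h_n$ as in Definition~\ref{Ddce} (if none exists, $D_c^e=D_c$ by convention and the conclusion is immediate), and shows directly that $D_c(\phi_{1,m},\phi_{2,m})<\ep$ for large $m$. The tool is Corollary~\ref{Ltp=close}: by \ref{Pcsameclose} the full maps agree on the $K_0$-classes of the relevant clopen characteristic functions, and cancelling the common summand $[\phi_{0,m}(\chi_{S_j})]$ (using stable rank one) gives $[\phi_{1,m}(\chi_{S_j})]=[\phi_{2,m}(\chi_{S_j})]$; and since the common piece $\phi_{0,m}$ drops out of the difference of traces while $\sup_\tau\tau(e_m)\to 0$, the hypothesis $D_c(\phi_X,h_n)\to 0$ forces $|\tau\circ\phi_{1,m}(f)-\tau\circ\phi_{2,m}(f)|$ small on the prescribed finite set. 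Then \ref{Ltp=close} yields $D_c(\phi_{1,m},\phi_{2,m})<\ep$, whence $D_c^e(\phi_X,h_n)\le\ep$. Only stable rank one and strict comparison are used, and in fact this argument gives the uniform statement recorded as Corollary~\ref{CPc0}.
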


\begin{proof}
Fix $\phi_X.$ Let $\ep>0.$   {Without loss of generality, we may assume that
$\Omega=\overline{X_{\ep/64}}.$ }
Let $S_1, S_2,...,S_k$ be a finite set of
clopen subsets of $\Omega,$
${\cal F}\subset C(\Omega)_+$ be a
finite subset and $\dt>0$ be required by  \ref{Ltp=close} for
$\phi_X$ and $\ep.$  Let $Y_n$ be the spectrum of $h_n.$
By (\ref{Pdced00-1}), we assume that $Y_n\subset \overline{X_{\ep/64}}$
for all $n,$ without loss of generality. Furthermore, we may further assume
that $X\cap Y_n\not=\emptyset$ for all $n.$
 Suppose that $\{e_n\}\subset A$ is a sequence of projections, $\{\phi_{0,n}: C(\Omega) \to
e_nAe_n\}$ is a sequence of unital \hm s with { finite spectrum that is 
$\ep_n$-dense in} ${X\cap Y_n}$ and with
$\lim_{n\to\infty}\ep_n=0,$
 and two sequences of unital \hm s  $\{\phi_{1, n}, \phi_{2, n}: C(\Omega)\to  (1-e_n)A(1-e_n)\}$
  such that the spectrum of $\phi_{1,n}$ is in $X,$ the spectrum of $\phi_{2, n}$ is in ${X\cap Y_n},n=1,2,...$ and
\beq\label{Ccuapp-2}
\lim_{n\to\infty}\sup\{\tau(e_n):\tau\in T(A)\}=0,\\\label{Ccuapp-2+1}
\lim_{n\to\infty}D_c(\phi_X, \phi_{0,n}+\phi_{1,n})=0\andeqn\\\label{Ccuapp-2+2}
\lim_{n\to\infty}D_c(h_n, \phi_{0,n}+\phi_{2,n})=0.
\eneq
By  \ref{Pcsameclose}, we may assume that,
for all $n\ge 1,$
\beq\label{Ccuapp-3}
[\phi_X(\chi_{S_i})]=[\phi_{0,n}(\chi_{S_i})]+[\phi_{1, n}(\chi_{S_i})]=[\phi_{0,n}(\chi_{S_i})]+[\phi_{2, n}(\chi_{S_i})],
\eneq
$j=1,2,...,k$ and  $n=1,2,....$
Since $A$ has stable rank one, this implies that
\beq\label{Ccuapp-4}
[\phi_{1, n}(\chi_{S_i})]=[\phi_{2, n}(\chi_{S_i})],\,\,\,j=1,2,...,k\andeqn n=1,2,....
\eneq
By (\ref{Ccuapp-2}),
there exists $N\ge 1$ such that, for all $n\ge N,$
\beq\label{Ccuapp-5}
\sup\{\tau(e_n):\tau\in T(A)\}<\dt.
\eneq
It follows from (\ref{Ccuapp-2+1}), (\ref{Ccuapp-2+2}) and (\ref{Pdced00-1}) that
\beq\label{Ccuapp-6}
\sup\{|\tau(\phi_{1,n}(f))-\tau(\phi_{2,n}(f))|:\tau\in T(A)\}<\dt\tforal f\in {\cal F}.
\eneq
It follows from  \ref{Ltp=close} that
\beq\label{Ccuapp-7}
D_c(\phi_{1,n}, \phi_{2,n})<\ep\tforal n\ge N.
\eneq
It follows that
\beq\label{Ccuapp-8}
\limsup_{n\to\infty}D_c^e(\phi_X, h_n)=0.
\eneq

\end{proof}
{In fact, we proved the following:}

\begin{cor}\label{CPc0}
With the same assumption {above}, for a fixed unital \hm\, $\phi: C(\Omega)\to A,$
and for any $\ep>0,$ there exists $\dt>0$ such that
if $\psi: C(\Omega)\to A$ is another unital \hm\, such that
$D_c(\phi, \psi)<\dt,$ then
\beq\label{CPc0-1}
D_c^e(\phi, \psi)<\ep.
\eneq
\end{cor}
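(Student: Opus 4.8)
The plan is to observe that this corollary is essentially the ``$\ep$--$\dt$'' reformulation of \ref{Pdced00}, so there is in principle nothing new to prove; still, it is worth recording how the constant $\dt$ arises. The quickest route is by contradiction: if the statement failed for some fixed $\phi$, there would be an $\ep>0$ and a sequence of unital \hm s $\psi_n: C(\Omega)\to A$ with $D_c(\phi,\psi_n)<1/n$ but $D_c^e(\phi,\psi_n)\ge\ep$ for all $n$. Since then $D_c(\phi,\psi_n)\to 0$, \ref{Pdced00} applied with $h_n=\psi_n$ gives $D_c^e(\phi,\psi_n)\to 0$, a contradiction. So the corollary is immediate from \ref{Pdced00}.

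More informatively, one extracts $\dt$ directly by running the proof of \ref{Pdced00} with $\dt$ kept explicit rather than passing to a limit. First I would fix $\phi=\phi_X$ with spectrum $X$ and $\ep>0$, and (as in \ref{Pdced00}) reduce to $\Omega=\overline{X_{\ep/64}}$. Apply \ref{Ltp=close} to $\phi$ and $\ep$ to produce $\dt_0>0$, clopen subsets $S_1,\dots,S_k\subset\overline{X_{\ep/64}}$ and a finite set ${\cal F}\subset C(\overline{X_{\ep/64}})_+$; apply \ref{Pcsameclose} to the projections $\{\chi_{S_i}\}$ to get $\dt_1>0$. Now choose $\dt>0$, with $\dt\le\ep$, small enough that $D_c(\phi,\psi)<\dt$ forces, for every unital \hm\ $\psi$ with spectrum $Y$: (a) $Y\subset\overline{X_{\ep/64}}$ (because $\psi(f_O)\lesssim\phi(f_{O_\dt})$ forces $Y\subset X_\dt$); (b) $[\phi(\chi_{S_i})]=[\psi(\chi_{S_i})]$ in $W(A)$ for all $i$, via \ref{Pcsameclose} and $\dt<\dt_1$; and (c) $|\tau(\phi(f))-\tau(\psi(f))|<\dt_0/2$ for all $f\in{\cal F}$ and all $\tau\in T(A)$. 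For (c) one uses $D_T(\phi,\psi)\le D_c(\phi,\psi)<\dt$ from \ref{dtP}, which makes $\mu_{\tau\circ\phi}$ and $\mu_{\tau\circ\psi}$ close (uniformly in $\tau$) in the associated transportation-type distance; combined with the layer-cake identity $\tau(\kappa(f))=\int_0^1\mu_{\tau\circ\kappa}(\{f>s\})\,ds$ and the common modulus of continuity of the finitely many functions in ${\cal F}$, shrinking $\dt$ gives the desired uniform trace estimate.

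With such a $\dt$, let $\psi$ be a unital \hm\ with $D_c(\phi,\psi)<\dt$ and $Y={\rm sp}(\psi)$. If $X\cap Y=\emptyset$, or if no non-zero sequence $\{h_n\}$ of the kind required in \ref{Ddce} exists, then $D_c^e(\phi,\psi)=D_c(\phi,\psi)<\dt\le\ep$ and we are done. Otherwise I would follow the proof of \ref{Pdced00} verbatim: produce projections $e_n$ with $\sup_\tau\tau(e_n)\to 0$, a finite-spectrum $\phi_{0,n}: C(\Omega)\to e_nAe_n$ whose spectrum is $\ep_n$-dense in $X\cap Y$ with $\ep_n\to 0$, and $\phi_{1,n},\phi_{2,n}: C(\Omega)\to(1-e_n)A(1-e_n)$ with spectra in $X$ and in $X\cap Y$ respectively, such that $D_c(\phi,\phi_{0,n}+\phi_{1,n})\to 0$ and $D_c(\psi,\phi_{0,n}+\phi_{2,n})\to 0$. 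Then \ref{Pcsameclose} together with cancellation (stable rank one) gives $[\phi_{1,n}(\chi_{S_i})]=[\phi_{2,n}(\chi_{S_i})]$, while (a)--(c) and $\tau(e_n)\to 0$ give $|\tau(\phi_{1,n}(f))-\tau(\phi_{2,n}(f))|<\dt_0$ on ${\cal F}$ for $n$ large. Applying \ref{Ltp=close} inside the corners $(1-e_n)A(1-e_n)$ then yields $D_c(\phi_{1,n},\phi_{2,n})<\ep$ for all large $n$, hence $D_c^e(\phi,\psi)\le\liminf_n D_c(\phi_{1,n},\phi_{2,n})\le\ep$ (and strict inequality by carrying out the construction with $\ep/2$ throughout).

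The only genuine point requiring care in the direct approach is the last step: verifying that a single choice of $\dt_0,{\cal F},\{S_1,\dots,S_k\}$, produced by \ref{Ltp=close} for the fixed $\phi$, remains valid when \ref{Ltp=close} is invoked for the varying homomorphisms $\phi_{1,n}$ in the varying corners $(1-e_n)A(1-e_n)$; this works because for $n$ large the normalized traces $\mu_{\tau\circ\phi_{1,n}}$ are uniformly close to $\mu_{\tau\circ\phi}$. Everything else is an unwinding of definitions. As noted in the first paragraph, the contradiction argument bypasses this bookkeeping altogether and deduces the corollary from \ref{Pdced00} with no extra work.
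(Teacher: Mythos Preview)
Your proposal is correct. The paper's own treatment is literally the single sentence ``In fact, we proved the following,'' meaning that the choice of $\dt$, the clopen sets $S_i$, and the finite set ${\cal F}$ produced in the proof of \ref{Pdced00} via \ref{Ltp=close} depend only on $\phi_X$ and $\ep$, not on the particular sequence $\{h_n\}$; this is exactly your direct-extraction argument. Your contradiction argument in the first paragraph is a genuinely cleaner alternative the paper does not mention: it deduces the $\ep$--$\dt$ statement from the sequential statement of \ref{Pdced00} itself, bypassing the bookkeeping about whether the data from \ref{Ltp=close} remain valid in the varying corners $(1-e_n)A(1-e_n)$, which you correctly flag as the only delicate point in the direct route.
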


{Since for any $\phi$, $D_c(\phi,\phi)=0$,  we have }
\beq\label{Pdced0-1}
D_c^e(\phi, \phi)=0.
\eneq

 We also have the following:
\begin{prop}\label{dce<DT}
Let $A$ be a unital simple \CA\, with strict comparison for positive elements. Then
\beq\label{dce<DT-1}
D_c^e(\phi_X, \phi_Y)\le D^T(\phi_X, \phi_Y).
\eneq
\end{prop}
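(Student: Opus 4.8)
The plan is to reduce the bound, inside the corners $(1-e_n)A(1-e_n)$ of $A$, to the inequality $D_c\le D^T$ of Proposition~\ref{dtP}, after checking that passing from $\phi_X,\phi_Y$ to the truncated maps $\phi_{X,n},\phi_{Y,n}$ of Definition~\ref{Ddce} changes every dimension function $d_\tau(\cdot(f_O))$ by $o(1)$, uniformly in $O$. First I would dispose of the trivial case: if no admissible sequence $\{h_n\}$ (together with the accompanying $\phi_{X,n},u_n,\phi_{Y,n}$) exists, then $D_c^e(\phi_X,\phi_Y)=D_c(\phi_X,\phi_Y)\le D^T(\phi_X,\phi_Y)$ by Proposition~\ref{dtP}. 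So suppose a package is given; by the remark after Definition~\ref{Ddce} we may take $u_n={\rm id}$, so $\phi_{Y,n}\colon C(\Omega)\to B_n:=(1-e_n)A(1-e_n)$, with $D_c(\phi_X,h_n+\phi_{X,n})\to 0$, $D_c(\phi_Y,h_n+\phi_{Y,n})\to 0$ and $\sup_\tau\tau(e_n)\to 0$. Since $A$ is simple and unital, $1-e_n$ is full and $B_n$ is a hereditary subalgebra, so Cuntz subequivalence in $B_n$ agrees with that in $A$ on elements of $B_n$. It then suffices to prove $\liminf_n D_c(\phi_{X,n},\phi_{Y,n})\le D^T(\phi_X,\phi_Y)$, the distance being computed in $B_n$.

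The key preliminary is to bound $d_\tau(\phi_{X,n}(f_O))$ from \emph{above} in terms of $\phi_X$. For $\tau\in T(A)$ let $\mu^X_\tau,\mu^{X,n}_\tau,\mu^{h_n}_\tau$ be the (sub)probability measures on $\Omega$ corresponding to $\tau\circ\phi_X,\tau\circ\phi_{X,n},\tau\circ h_n$; then $d_\tau(\phi_X(f_O))=\mu^X_\tau(O)$ for every open $O$, $d_\tau((h_n+\phi_{X,n})(f_O))=\mu^{h_n}_\tau(O)+\mu^{X,n}_\tau(O)$, and $\mu^{h_n}_\tau(\Omega)=\tau(e_n)$. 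The hypothesis $D_c(\phi_X,h_n+\phi_{X,n})<\sigma$ says exactly $\mu^X_\tau(O)\le(\mu^{h_n}_\tau+\mu^{X,n}_\tau)(O_\sigma)$ for all open $O$. Since $\mu^X_\tau$ and $\mu^{h_n}_\tau+\mu^{X,n}_\tau$ are \emph{probability} measures (the maps are unital), an elementary complementation argument (if $\mu(O)\le\nu(O_\sigma)$ for all open $O$ and $\mu(\Omega)=\nu(\Omega)$, then $\nu(O)\le\mu(O_{\sigma'})$ for all open $O$ and all $\sigma'>\sigma$, by applying the hypothesis to a neighbourhood of $\Omega\setminus O_{\sigma'}$) upgrades this to $\mu^{X,n}_\tau(O)\le\mu^{h_n}_\tau(O)+\mu^{X,n}_\tau(O)\le\mu^X_\tau(O_{\sigma'})$, i.e.
$$
d_\tau(\phi_{X,n}(f_O))\le d_\tau(\phi_X(f_{O_{\sigma'}}))\qquad\text{for all open }O,\ \text{all }\tau,\ \text{all }\sigma'>\sigma,
$$
once $n$ is large. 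Dually, $D_c(\phi_Y,h_n+\phi_{Y,n})<\sigma$ gives directly $d_\tau(\phi_Y(f_O))\le\tau(e_n)+d_\tau(\phi_{Y,n}(f_{O_\sigma}))$ for all $O,\tau$. This "symmetrization" of a one-sided $D_c$-estimate is what makes the argument run under strict comparison alone.

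Now fix $\epsilon>0$, set $d=D^T(\phi_X,\phi_Y)$, choose $\sigma>0$ with $5\sigma<\epsilon$, and put $\sigma'=2\sigma$, $r_1=d+\sigma$. By compactness of $\Omega$ (as in the proof of Proposition~\ref{Pclosem}) there are finitely many open sets $G_1,\dots,G_K$ such that every open $O$ satisfies $O\subseteq G_j\subseteq O_\sigma$ for some $j$; this finiteness is what makes the bound below uniform in $O$ for a single large $n$. Fix $j$. If $X\subseteq(G_j)_{\sigma'}$, then since $a(\phi_X,\phi_Y)\le d$ one has $Y\subseteq\overline{X_d}\subseteq(G_j)_{\sigma'+d+\sigma}$, so $\phi_{Y,n}(f_{(G_j)_{\sigma'+d+\sigma}})$ is full in $B_n$ and $\phi_{X,n}(f_{G_j})\le 1-e_n\lesssim\phi_{Y,n}(f_{(G_j)_{\sigma'+d+\sigma}})$ in $B_n$. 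Otherwise $(G_j)_{\sigma'}\cap X\ne X$ and Lemma~\ref{d<d}, applied to $(\phi_X,\phi_Y)$ with open set $(G_j)_{\sigma'}$ and radius $r_1>d$, yields $2\eta_j:=\inf_\tau\big(d_\tau(\phi_Y(f_{(G_j)_{\sigma'+r_1}}))-d_\tau(\phi_X(f_{(G_j)_{\sigma'}}))\big)>0$; chaining the displayed inequality, Lemma~\ref{d<d}, the $\phi_Y$-estimate and $\sup_\tau\tau(e_n)\le\eta_j$ gives, for all $\tau\in T(A)$,
$$
d_\tau(\phi_{X,n}(f_{G_j}))\le d_\tau(\phi_X(f_{(G_j)_{\sigma'}}))\le d_\tau(\phi_Y(f_{(G_j)_{\sigma'+r_1}}))-2\eta_j\le d_\tau(\phi_{Y,n}(f_{(G_j)_{\sigma'+r_1+\sigma}}))-\eta_j,
$$
which is $<d_\tau(\phi_{Y,n}(f_{(G_j)_{\sigma'+r_1+\sigma}}))$, so $\phi_{X,n}(f_{G_j})\lesssim\phi_{Y,n}(f_{(G_j)_{\sigma'+r_1+\sigma}})$ in $B_n$ by strict comparison in $A$. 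In both cases $\phi_{X,n}(f_{G_j})\lesssim\phi_{Y,n}(f_{(G_j)_R})$ in $B_n$ with $R=d+4\sigma$. As there are finitely many $G_j$, there is $N$ so this holds for all $n\ge N$; then for any open $O$, taking $G_j$ with $O\subseteq G_j\subseteq O_\sigma$, $\phi_{X,n}(f_O)\lesssim\phi_{X,n}(f_{G_j})\lesssim\phi_{Y,n}(f_{(G_j)_R})\lesssim\phi_{Y,n}(f_{O_{\sigma+R}})$, so $D_c(\phi_{X,n},\phi_{Y,n})\le\sigma+R=d+5\sigma<d+\epsilon$. Letting $\epsilon\to0$ gives $D_c^e(\phi_X,\phi_Y)\le D^T(\phi_X,\phi_Y)$.

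The main obstacle I anticipate is the uniformity over $O$: Lemma~\ref{d<d} alone provides, for each $O$, only a positive gap that can shrink to $0$ as $O$ swells to contain $X$, so the reduction to finitely many $G_j$ (plus the separate treatment of the case $X\subseteq(G_j)_{\sigma'}$, where $r_O^+$ is infinite and must be replaced by the observation that $Y$ is then trapped near $O$) is essential; the other delicate point is extracting a two-sided comparison of the measures $\mu^X_\tau$ and $\mu^{h_n}_\tau+\mu^{X,n}_\tau$ from the one-sided $D_c$-hypothesis, which I would handle using only that the maps in question are unital.
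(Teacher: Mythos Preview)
Your proof is correct and follows essentially the same route as the paper's. Both arguments reduce to finitely many test open sets $G_j$ by compactness, invoke Lemma~\ref{d<d} on each $G_j$ with $(G_j)_{\sigma'}\cap X\neq X$ to produce a uniform positive gap, handle the remaining $G_j$ (those for which the enlargement swallows $X$) via the Hausdorff bound $a(\phi_X,\phi_Y)\le D^T(\phi_X,\phi_Y)$, and then use $\sup_\tau\tau(e_n)\to 0$ to absorb the error from $h_n$. The only cosmetic difference is that where the paper simply cites the symmetry $D_T(\phi_X,h_n+\phi_{X,n})=D_T(h_n+\phi_{X,n},\phi_X)$ from Proposition~\ref{dtP} to pass from $\mu^X_\tau$ to $\mu^{X,n}_\tau+\mu^{h_n}_\tau$, you re-derive this as your ``complementation argument''; the content is identical.
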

\begin{proof}

By (\ref{dtP-1}) in \ref{dtP} and (\ref{dc<dce}), we may assume $X\cap Y\not=\emptyset$ and let $\Omega=X\cup Y$.
{Suppose that $\{e_n\}\subset A$ is a sequence of projections,
$h_n: C(\Omega) \to e_nAe_n$ is a sequence of unital \hm s with
spectrum being $\ep_n$-dense in $X\cap Y$ and with
$\lim_{n\to\infty}\ep_n=0,$
 and two sequences of unital \hm s  $\phi_{X_n}, \phi_{Y_n}: C(\Omega)\to  (1-e_n)A(1-e_n)$
  such that  $X_n\subset X,$ 
  $n=1,2,...$ and
\beq\label{dceDT-2} \lim_{n\to\infty}\sup\{\tau(e_n):\tau\in
T(A)\}=0,\\\label{dceDT-3} \lim_{n\to\infty}D_c(\phi_X,
\phi_{X_n}+h_{n})=0\andeqn\\\label{dceDT-4}
\lim_{n\to\infty}D_c(\phi_{Y}, \phi_{Y_n}+h_{n})=0 \eneq}
(see the lines around  (\ref{Ddce-2+1})).

{For any fixed $\ep>0$, let $r_0=D^T(\phi_X,\phi_Y)$. We will
show $D_c^e(\phi_X,\phi_Y)<r_0+3\ep$. It is suffices to show there
exists integer $N$ such that for all $n>N$, \beq\label{dceDT-5
}D_c(\phi_{X_n},\phi_{Y_n})<r_0+3\ep.\eneq}

{There are finitely many  open subsets $\{O_k:k=1,2,...,K\}$ of $\Omega$ such that for
any open set $O\subset \Omega$, there exists $O_k$ such that \beq
\label{dceDT-6} O\subset O_k\subset O_{\ep}.\eneq We may assume that
$$
O_k\cap X\not= X,k=1,2,...,J;O_k\cap X=X,k=J+1,J+2,...,K.$$ Define
\beq\label{dceDT-1}
\dt=\inf\{d_\tau(\phi_Y(f_{(O_k)_{r_0+\ep}}))-d_\tau(\phi_X(f_{O_k})):\tau\in
T(A),k=1,2,...,J\}.\eneq By  \ref{d<d}, $\dt>0$.}

{Let $N$ be large such that for any $n>N$,\beq
\sup\{\tau(e_n):\tau\in T(A)\}<\frac{\dt}2\eneq and \beq
D_c(\phi_X,\phi_{X_n}+h_n)<\ep,\,\,\,D_c(\phi_Y,\phi_{Y_n}+h_n)<\ep.\eneq
}
{Since $D_T(\phi_X,\phi_{X_n}+h_n)\le D_c(\phi_X,\phi_{X_n}+h_n)<\ep$,
 for any open subset $O$ with $O_\ep\cap X\not=X$,
 we
 have
 \beq\label{dceDT-7} d_\tau(\phi_{X_n}(f_O))\le
 d_\tau(\phi_{X_n}(f_O))+d_\tau(h_n(f_O))\le
 d_\tau(\phi_X(f_{O_\ep})).\eneq}

{Suppose $O_\ep\subset O_k\subset (O_\ep)_{\ep}$, then $k\le J$
and by (\ref{dceDT-1}),  \beq\label{dceDT-8} d_\tau(\phi_X(f_{O_\ep}))\le
 d_\tau(\phi_X(f_{O_k}))<d_\tau(\phi_Y(f_{(O_k)_{r_0+\ep}}))-\frac\dt
2.\eneq}
Using the fact that $D_c(\phi_X,\phi_{X_n}+h_n)<\ep,$ 
{we also  have 
\beq\label{dceDT-9}
d_\tau(\phi_Y(f_{(O_k)_{r_0+\ep}}))\le
d_\tau(\phi_{Y_n}(f_{(O_k)_{r_0+2\ep}}))+d_\tau(h_n(f_{(O_k)_{r_0+2\ep}}))
\eneq
and
\beq\label{dceDT-10}d_{\tau}(\phi_{Y_n}(f_{(O_k)_{r_0+2\ep}}))\le
d_\tau(\phi_{Y_n}(f_{O_{r_0+3\ep}}))\eneq Further since
$d_\tau(h_n(f_{(O_k)_{r_0+2\ep}}))\le \tau(e_n)<\frac{\dt}2$,
combing (\ref{dceDT-7}),(\ref{dceDT-8}),(\ref{dceDT-9})and
(\ref{dceDT-10}), we obtain \beq\label{dceDT-11}
d_\tau(\phi_{X_n}(f_O))<d_\tau(\phi_{Y_n}(f_{O_{r_0+3\ep}})).\eneq
The strict comparison for positive elements implies
$[\phi_{X_n}(f_O)]\le[\phi_{Y_n}(f_{O_{r_0+3\ep}})]$.} 

By the exactly the same argument, 
$[\phi_{Y_n}(f_O)]\le[\phi_{X_n}(f_{O_{r_0+3\ep}})]$.

{ Since
$D^T(\phi_X,\phi_Y)\ge d_H(X,Y)$, for any open set $O$ with
$O_\ep\cap X=X$, we have $O_{r_0+3\ep}\cap Y=Y$, this implies
$$[\phi_{X_n}(f_O)]\le[\phi_{Y_n}(f_{O_{r_0+3\ep}})].$$} 
{
Therefore
$D_c(\phi_{X_n},\phi_{Y_n})\le r_0+3\ep$ holds for all $n>N$. That
is the end of  the proof.}
\end{proof}
\begin{cor}\label{dceconnect}
Let $A$ be a unital simple \CA\, with strict comparison for positive
elements { and with $T(A)\not=\emptyset$}. Suppose that $X,$ or  $Y$ are  connected compact subsets.
Then \beq\label{dceconn-1} D_c(\phi_X, \phi_Y)=D_c^e(\phi_X,
\phi_Y). \eneq
\end{cor}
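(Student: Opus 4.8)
The plan is short: sandwich $D_c^e(\phi_X,\phi_Y)$ between $D_c(\phi_X,\phi_Y)$ and $D^T(\phi_X,\phi_Y)$, and then use the fact---already recorded in Proposition \ref{dtP}---that these two outer quantities agree as soon as one of the spectra is connected.

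First I would recall the general inequality $(\ref{dc<dce})$, which gives $D_c(\phi_X,\phi_Y)\le D_c^e(\phi_X,\phi_Y)$ with no extra hypotheses. Next, since $A$ is assumed to have strict comparison for positive elements, Proposition \ref{dce<DT} applies and yields $D_c^e(\phi_X,\phi_Y)\le D^T(\phi_X,\phi_Y)$. Finally, under the standing hypotheses ($A$ unital simple with strict comparison and $T(A)\neq\emptyset$) together with the assumption that $X$ or $Y$ is connected, the last assertion $(\ref{dtP-6})$ of Proposition \ref{dtP} gives $D_T(\phi_X,\phi_Y)=D_c(\phi_X,\phi_Y)=D^T(\phi_X,\phi_Y)$. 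Chaining these,
$$
D_c(\phi_X,\phi_Y)\le D_c^e(\phi_X,\phi_Y)\le D^T(\phi_X,\phi_Y)=D_c(\phi_X,\phi_Y),
$$
so all three coincide; in particular $(\ref{dceconn-1})$ holds.

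There is essentially no obstacle here: the corollary is a formal consequence of $(\ref{dc<dce})$, Proposition \ref{dce<DT}, and the connected case $(\ref{dtP-6})$ of Proposition \ref{dtP}. The only thing to verify is bookkeeping---that the hypotheses of each cited result are met (simplicity, strict comparison, nonemptiness of $T(A)$, and the connectedness that feeds $(\ref{dtP-6})$)---and all of these are part of the statement. One could also observe that the roles of $X$ and $Y$ are symmetric here, since $D_c^e$ is symmetric by $(\ref{dcesym})$ and $D_c$ is symmetric by Proposition \ref{hu1}, so it does not matter which of $X,Y$ is assumed connected.
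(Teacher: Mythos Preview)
Your proof is correct and is exactly the argument the paper gives: the paper's one-line proof reads ``This follows from \ref{dce<DT} and the last part of \ref{dtP},'' which is precisely your chain $D_c\le D_c^e\le D^T=D_c$ using $(\ref{dc<dce})$, Proposition \ref{dce<DT}, and $(\ref{dtP-6})$. One small caveat: your closing remark invokes Proposition \ref{hu1} for the symmetry of $D_c$, but that proposition assumes stable rank one, which is not part of the hypotheses here; fortunately this aside is unnecessary, since $(\ref{dtP-6})$ already covers the case where either $X$ or $Y$ is connected.
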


\begin{proof}
This follows from  \ref{dce<DT} and the last part of  \ref{dtP}.
\end{proof}

\begin{prop}\label{dcedig}
Let $A$ be a unital simple \CA\, with stable rank one, with the strict comparison for positive elements. Suppose that $\phi_X,\, \phi_Y: C(X\cup Y)\to A$ are two
unital \hm s with spectrum $X$ and $Y,$ respectively.
Let $\{\xi_1, \xi_2,...,\xi_k\}\subset X\cap Y.$
Suppose that there is a {sequence of}  finite subsets
of mutually orthogonal non-zero projections
$\{e_{1,n}, e_{2,n},...,e_{k,n}\}$ of $A$ such that
\beq\label{dcedig-1}
&&\lim_{n\to\infty}D_c(\phi_{X,n},\phi_X)=0,\,\,\,
\lim_{n\to\infty} D_c(\phi_{Y,n}, \phi_Y)=0,\\\label{dcedig-2}
&&{\tand}\lim_{n\to\infty}\sup \{\sum_{i=1}^{k}\tau({e_{i,n}}): \tau\in T(A)\}=0,
\eneq
{where $\phi_{X,n}(f)=\sum_{i=1}^{k}f(\xi_{i})e_{i,n}+\psi_{X,n}(f)$, {
$\phi_{Y,n}(f)=\sum_{i=1}^{k}f(\xi_{i})e_{i,n}+\psi_{Y,n}(f)$}
for all $f\in C(X\cup Y),$ and }$\psi_{X,n}, \psi_{Y,n}: C(X\cup Y)\to (1-p_n)A(1-p_n)$
are a unital \hm s with the spectrum in $X$ and $Y,$ respectively,
where $p_n=\sum_{i=1}^{{k}}{e_{i,n}}$ (please see  lines around {\rm (\ref{Ddce-2+1})}).

Then
\beq\label{dcedig-3}
{\liminf}_{n\to\infty}D_c(\psi_{X,n},\psi_{Y,n})\le D_c^e(\phi_X, \phi_Y).
\eneq

\end{prop}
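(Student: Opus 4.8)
The plan is to prove that for every $\ep>0$ there is an $N$ with $D_c(\psi_{X,n},\psi_{Y,n})<D_c^e(\phi_X,\phi_Y)+2\ep$ for all $n\ge N$ (in fact with $\liminf$ replaced by $\limsup$); letting $\ep\downarrow0$ gives the statement. Write $d=D_c^e(\phi_X,\phi_Y)$ and recall from (\ref{dc<dce}) that $D_c(\phi_X,\phi_Y)\le d$. By the definition of $D_c^e$ in \ref{Ddce}, fix a digitization $\{h_m'\},\{\phi_{X,m}'\},\{\phi_{Y,m}'\}$ of the pair $(\phi_X,\phi_Y)$ --- with ${\rm sp}(h_m')$ becoming dense in $X\cap Y$, $\sup_\tau\tau(e_m')\to0$ where $e_m'=h_m'(1)$, and $\lim_m D_c(\phi_X,h_m'+\phi_{X,m}')=\lim_m D_c(\phi_Y,h_m'+\phi_{Y,m}')=0$ --- and, passing to a subsequence, with $D_c(\phi_{X,m}',\phi_{Y,m}')<d+\ep$ for all $m$.

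The technical core is a uniform lower bound on the measures attached to $\psi_{X,n}$ and $\psi_{Y,n}$. Since $\phi_X$ and $\phi_Y$ are monomorphisms, \ref{PDelta} supplies increasing functions $\Delta_X,\Delta_Y:(0,1)\to(0,1)$, vanishing at $0$, with $d_\tau(\phi_X(f_{O(\xi,\rho)}))\ge\Delta_X(\rho)$ for all $\xi\in X$, $\tau\in T(A)$, and likewise for $\phi_Y$. Combining this with $\lim_n D_c(\phi_X,h_n+\psi_{X,n})=0$ and $\sup_\tau\tau(p_n)\to0$ (so $h_n$ contributes total measure at most $k\sup_\tau\tau(p_n)$), one gets: for each $\rho>0$ there is $\sigma_n(\rho)\to0$ with
$$d_\tau(\psi_{X,n}(f_{O(\xi,\rho)}))\ge\Delta_X(\rho)-\sigma_n(\rho)\qquad(\xi\in X,\ \tau\in T(A)),$$
and the analogous estimate for $\psi_{Y,n}$ over $Y$; in particular ${\rm sp}(\psi_{X,n})$ and ${\rm sp}(\psi_{Y,n})$ become dense in $X$ and $Y$. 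Using the strict comparison and again $\sup_\tau\tau(p_n)\to0$, this upgrades to: for each $\rho>0$ there is $N$ so that, for $n\ge N$ and every open $W\subseteq X\cup Y$,
$$h_n(f_W)\lesssim\psi_{X,n}(f_{W_\rho})\qquad\text{and}\qquad h_n(f_W)\lesssim\psi_{Y,n}(f_{W_\rho}),$$
i.e. the small finite--dimensional part $h_n$ is Cuntz--dominated by arbitrarily small enlargements of either remainder.

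Now fix an open $O\subseteq X\cup Y$. By the triangle inequality for $D_c$ and $D_c(\phi_X,\phi_Y)\le d$, for $n$ large (with $\gamma_n\to0$)
$$\psi_{X,n}(f_O)\le(h_n+\psi_{X,n})(f_O)\lesssim\phi_X(f_{O_{\gamma_n}})\lesssim\phi_Y(f_{O_{d+\ep}})\lesssim(h_n+\psi_{Y,n})(f_{O_{d+3\ep/2}}),$$
the right--hand side being $h_n(f_{O_{d+3\ep/2}})+\psi_{Y,n}(f_{O_{d+3\ep/2}})$. By the preceding paragraph $h_n(f_{O_{d+3\ep/2}})\lesssim\psi_{Y,n}(f_{O_{d+7\ep/4}})$, so $\psi_{X,n}(f_O)$ is dominated by $\psi_{Y,n}(f_{O_{d+7\ep/4}})\oplus\psi_{Y,n}(f_{O_{d+7\ep/4}})$; the remaining task is to fold this into a single enlargement $\psi_{Y,n}(f_{O_{d+2\ep}})$. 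For this one uses strict comparison: for $O$ meeting $X$ and $\xi\in O\cap X$ one has ${\rm dist}(\xi,Y)\le d_H(X,Y)\le D_c(\phi_X,\phi_Y)\le d$, so $O_{d+\ep}\cap Y$ contains a ball $O(\zeta,\ep)$ with $\zeta\in Y$ and hence $d_\tau(\psi_{Y,n}(f_{O_{d+\ep}}))\ge\Delta_Y(\ep)-\sigma_n(\ep)$; weighing this room against the $h_n$--contribution (which is at most $k\sup_\tau\tau(p_n)\to0$) yields $d_\tau(\psi_{X,n}(f_O))<d_\tau(\psi_{Y,n}(f_{O_{d+2\ep}}))$ for all $\tau$ and all $n$ large, whence $\psi_{X,n}(f_O)\lesssim\psi_{Y,n}(f_{O_{d+2\ep}})$. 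As $O$ was arbitrary, $D_c(\psi_{X,n},\psi_{Y,n})\le d+2\ep$.

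The main obstacle is precisely this folding/absorption step: a priori the function $r\mapsto d_\tau(\psi_{Y,n}(f_{O_r}))$ may be locally constant (there need be no point of $Y$ in a given annulus around $O$), so a crude enlargement creates no new room for the small but nonzero $h_n$--term. Overcoming it is exactly what forces us to work with the inherited, essentially $n$--independent measure lower bounds coming from the monomorphisms $\phi_X,\phi_Y$ --- which is where the density of ${\rm sp}(h_m')$ and the smallness of $p_n$ enter --- together with a separate treatment of the continuous and atomic parts of the relevant piece of $Y$, using $d_H(X,Y)\le d$ to locate a matching point on the $Y$--side. The ancillary facts are routine: $D_c$ is a genuine symmetric distance on $H_{c,1}(C(\Omega),(1-p_n)A(1-p_n))$ by \ref{hu1}; $h_n(f_W)\lesssim h_n(f_{W'})$ for $W\subseteq W'$ since $h_n$ is built from mutually orthogonal projections; and $x_1\oplus x_2\lesssim y_1\oplus y_2$ whenever $x_i\lesssim y_i$.
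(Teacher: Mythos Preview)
Your folding step has a genuine gap, and it is exactly the one you flag but do not resolve.  From the chain you obtain
\[
d_\tau(\psi_{X,n}(f_O))\;\le\; d_\tau\big(\psi_{Y,n}(f_{O_{d+3\ep/2}})\big)+\tau(p_n),
\]
so to apply strict comparison you need
\[
d_\tau\big(\psi_{Y,n}(f_{O_{d+2\ep}})\big)-d_\tau\big(\psi_{Y,n}(f_{O_{d+3\ep/2}})\big)>\tau(p_n),
\]
i.e.\ measure in the \emph{annulus} $(O_{d+2\ep}\setminus O_{d+3\ep/2})\cap Y$.  Your lower bound $\Delta_Y(\ep)$, obtained from $d_H(X,Y)\le d$ and a point $\zeta\in O_{d+\ep}\cap Y$, bounds $d_\tau(\psi_{Y,n}(f_{O_{d+\ep}}))$ from below --- it says nothing about the annulus.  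When $(O_{d+2\ep}\setminus O_{d+3\ep/2})\cap Y=\emptyset$ the left side above is zero and strict comparison cannot be invoked; your promised ``separate treatment of continuous and atomic parts'' is never carried out, and it is not clear how to do it with the ingredients you have assembled.  In particular, the optimal digitization $\{h_m'\},\{\phi_{X,m}'\},\{\phi_{Y,m}'\}$ that you fix in the first paragraph is never used afterwards: your displayed chain relies only on $D_c(\phi_X,\phi_Y)\le d$, so what you are really attempting is the stronger bound $\limsup_n D_c(\psi_{X,n},\psi_{Y,n})\le D_c(\phi_X,\phi_Y)$, which you have not established.

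The paper's argument avoids this obstruction altogether.  It takes an optimal digitization $h_{X,n,0}+h_{X,n,1}$ of $\phi_X$ and $h_{X,n,0}+h_{Y,n,1}$ of $\phi_Y$ with $D_c(h_{X,n,1},h_{Y,n,1})\to d$, arranges (after passing to a subsequence, whence the $\liminf$) that the given projections sit inside the optimal ones, $e_{j,n}\le e_j^{(n)}$, and then splits $h_{X,n,0}=h'_{X,n,0}+h''_{X,n,0}$ with $h'_{X,n,0}$ the given $h_n$.  Setting $h'_{X,n,1}=h''_{X,n,0}+h_{X,n,1}$ and $h'_{Y,n,1}=h''_{X,n,0}+h_{Y,n,1}$, one has $D_c(h'_{X,n,0}+\psi_{X,n},\,h'_{X,n,0}+h'_{X,n,1})\to 0$ (and similarly for $Y$), and the proof of \ref{Pdced00} --- which cancels the common finite-dimensional part via $K_0$-matching on clopen sets and \ref{Ltp=close}, not via an annulus argument --- gives $D_c(\psi_{X,n},h'_{X,n,1})\to 0$ and $D_c(\psi_{Y,n},h'_{Y,n,1})\to 0$.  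The triangle inequality then finishes.  The point is that the ``absorption'' is done by embedding $h_n$ inside a richer digitizing map and cancelling in $K_0$, not by manufacturing extra trace in an annulus of $Y$.
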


\begin{proof}
Let $d=D_c^e(\phi_X, \phi_Y)$ and let $\Omega=X\cup Y.$ 
Let $\{\xi_1^{(n)},\xi_2^{(n)},..., \xi_{k(n)}^{(n)}\}$ be a sequence of finite subsets of
$X\cap Y$ which are $\ep_n$-dense with $\lim_{n\to\infty}\ep_n=0,$
let $\{e_1^{(n)}, e_2^{(n)},...,e_{k(n)}^{(n)}\}$ be a sequence of mutually orthogonal  non-zero
projections in $A$  with
\beq\label{dcedig-4}
\lim_{n\to\infty} \sup\{\sum_{i=1}^{k(n)}\tau(e_i^{(n)}): \tau\in T(A)\}=0,
\eneq
{and $\{u_n\}$ be be a sequence of unitary}
such that
\beq\label{decdig-5}
&&\lim_{n\to\infty}D_c(\phi_X, h_{X, n,0}+h_{X,n,1})=0,
\lim_{n\to\infty}D_c(\phi_Y,h_{Y, n,0}+h_{Y,n,1}))=0\andeqn\\
&&\lim_{n\to\infty}D_c(h_{X, n,1}, h_{Y,n,1})=d=D_c^e(\phi_X, \phi_Y),
\eneq
where $h_{X,n,0}(f)=h_{Y, n,0}(f)=\sum_{i=1}^{k(n)}f(\xi_i^{(n)})e_i^{(n)}$ for all $f\in C(\Omega)$ and
$h_{X, n,1}, h_{Y, n,1}: C(\Omega)\to  (1-E_n)A(1-E_n)$ are unital
\hm s with spectrum in $X$ and $Y,$ respectively, and where
$E_n=\sum_{i=1}^{k(n)}e_i^{(n)},$ $n=1,2,....$
 {Without loss of generality, we may also assume} that $k(n)\ge k$ and $\xi_i^{(n)}=\xi_i,$
$i=1,2,...,k.$  
By (\ref{dcedig-2}), since $A$ has strict comparison, by passing to a subsequence of
$\phi_{i,n}$ ($\psi_{X,n}$ and $\psi_{Y,n}$),
if necessary, we may further assume that
\beq\label{dcedig-7}
e_{j,n}\le e_j^{(n)},\,\,\, j=1,2,...,k\andeqn n=1,2,....
\eneq
Define
\beq\label{dcedig-8}
&&h_{X,n,0}'(f)=\sum_{i=1}^kf(\xi_i^{(n)})e_{i,n},\\
&&h_{X,n,0}''(f)=\sum_{i=1}^kf(\xi_i^{(n)})(e_i^{(n)}-e_{i,n})+\sum_{i=k+1}^{k(n)}f(\xi_i^{(n)})e_i^{(n)}
\tforal f\in C(\Omega)\\
&&h_{X, n,1}'=h_{X, n,0}''+h_{X, n,1}\andeqn {h_{Y, n,1}'=h_{X,n,0}''}+h_{Y,n,1}.
\eneq
Then, by assumption,
\beq\label{dcedig-9}
\lim_{n\to\infty} D_c(h_{X, n,0}'+\psi_{X,n}, h_{X, n,0}'+h_{X, n,1}')=0\eneq
and
\beq\label{dcedig-10}
\lim_{n\to\infty}D_c(h_{X, n,0}'+\psi_{Y,n}, h_{X, n,0}'+h_{Y,n,1})=0.
\eneq
By the proof of  \ref{Pdced00}, (\ref{dcedig-9}) and (\ref{dcedig-10}) imply that
\beq\label{dcedig-11}
\lim_{n\to\infty} D_c(\psi_{X,n}, h_{X, n,1}')=0
\andeqn
\lim_{n\to\infty} D_c(\psi_{Y,n},h_{Y, n,1}')=0.
\eneq
It follows that
\beq\label{dcedig-12}
&&\hspace{-1in}\liminf_{n\to\infty} D_c(\psi_{X,n}, \psi_{Y,n})\le
\lim_{n\to\infty}D_c(\psi_{X,n}, h_{X, n,1}')+\\
&&\limsup_{n\to\infty}D_c(h_{X, n,1}', h_{{Y}, n,1}')+
\lim_{n\to\infty}D_c(\psi_{Y,n}, h_{Y, n,1}')\\
&&=\limsup_{n\to\infty}D_c(h_{X, n,1}', h_{Y, n,1}')\\
&&\le \limsup_{n\to\infty}D_c(h_{X, n,1}, h_{Y, n,1})=d.
\eneq

\end{proof}

\begin{lem}\label{Lfepapp}
Let $A$ be a unital separable simple \CA\, with real rank zero,
stable rank one and weakly unperforated $K_0(A).$
Let $\phi_X: C(X)\to A$ be a unital \hm.
Then, for any $\ep>0,$ any $\sigma>0,$ any $\eta>0$ and
any finite $\eta$-dense subset $\{\xi_1, \xi_2,...,\xi_m\}\subset X,$
there is a projection $e\in A$ with $\tau(e)<\sigma$ for all
$\tau\in T(A),$ a unital \hm\, $\phi_0: C(X)\to eAe$ with
spectrum $\{\xi_1, \xi_2,...,\xi_m\}$ and
a unital \hm\, $\phi_1: C(X)\to (1-e)A(1-e)$ with finite spectrum
such that
\beq\label{Lfepapp-1}
D_c(\phi_X, \phi_0+\phi_1)<\ep.
\eneq
\end{lem}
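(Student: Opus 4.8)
The plan is to combine the structure theory of the target algebra with the already-established approximation results. First, since $A$ is a unital separable simple \CA\, with real rank zero, stable rank one and weakly unperforated $K_0(A),$ it has strict comparison for positive elements and, by the Choi--Effros--type results together with the theory developed in \cite{Lnoldjfa} and \cite{LnTAMS12}, we may analyze the unital \hm\, $\phi_X: C(X)\to A.$ We use \ref{PDelta} to obtain an increasing function $\Delta:(0,1)\to(0,1)$ with $\lim_{r\to 0}\Delta(r)=0$ such that $\mu_{\tau\circ\phi_X}(O)\ge \Delta(r)$ for every open ball $O$ of $X$ of radius $r$ and every $\tau\in T(A).$ Given $\ep>0$ we set $r_0=\ep/16$ and invoke \ref{Pclosem} (applied with $\Omega=\overline{X_{\ep/64}},$ or simply $X$ itself) to produce a finite subset of mutually orthogonal projections $\{f_1,\dots,f_n\}\subset C(X),$ a finite subset ${\cal H}\subset C(X)_+,$ and $\sigma=(1/4)\Delta(\ep/16)>0$ with the stated ``closeness'' property: any two unital \hm s agreeing on the $K_0$-classes of the $f_i$ and within $\sigma$ on ${\cal H}$ in trace are within $\ep$ in $D_c.$

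Next I would build the target decomposition $\phi_0+\phi_1.$ Because $A$ has real rank zero and is simple with $T(A)\ne\emptyset,$ there is a nonzero projection $e\in A$ with $\tau(e)<\min\{\sigma,\sigma_0\}$ for all $\tau\in T(A),$ where $\sigma_0$ is chosen small enough (depending on $\Delta$ and the finite subsets above) that cutting down by $e$ perturbs the relevant traces of elements of ${\cal H}$ by less than $\sigma/2$ and does not change the $K_0$-classes $[f_i]$ once we re-absorb $e$ appropriately. Using strict comparison and real rank zero one constructs a unital \hm\, $\phi_0: C(X)\to eAe$ with finite spectrum exactly $\{\xi_1,\dots,\xi_m\}$: concretely, decompose $e$ into $m$ mutually orthogonal nonzero subprojections $e_1,\dots,e_m$ with $e=\sum e_i$ and set $\phi_0(f)=\sum_{i=1}^m f(\xi_i)e_i.$ Then, on $(1-e)A(1-e)$ — which is again a unital separable simple \CA\, with real rank zero, stable rank one and weakly unperforated $K_0$ — apply the approximation result of \cite{Lnoldjfa} (or \cite{LnTAMS12}) to the compression of $\phi_X,$ after first adjusting $\phi_X$ by a small amount so that the relevant $\lambda-\phi_X(z)$ lie in ${\rm Inv}_0,$ to obtain a unital \hm\, $\phi_1: C(X)\to (1-e)A(1-e)$ with finite spectrum whose trace data and $K_0$-data on the $f_i$ match those of $\phi_X$ restricted to $(1-e)A(1-e)$ up to the tolerances prescribed above; here one must arrange that the $K_0$-classes satisfy $[\phi_0(f_i)]+[\phi_1(f_i)]=[\phi_X(f_i)]$ in $K_0(A),$ which is possible by weak unperforation and the divisibility afforded by real rank zero and simplicity.

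Finally I would verify the hypotheses of \ref{Pclosem} for the pair $(\phi_X,\ \phi_0+\phi_1)$: the $K_0$-matching on $\{f_i\}$ holds by construction; the trace estimate $|\tau\circ\phi_X(g)-\tau\circ(\phi_0+\phi_1)(g)|<\sigma$ for $g\in{\cal H}$ follows because $\phi_0$ contributes at most $\tau(e)<\sigma/2$ in trace and $\phi_1$ was chosen to match $\phi_X$ on $(1-e)A(1-e)$ up to $\sigma/2$; and the lower density bound $\mu_{\tau\circ\phi_X}(O)\ge\Delta(r)$ is exactly \ref{PDelta}. Conclude $D_c(\phi_X,\ \phi_0+\phi_1)<\ep,$ which is (\ref{Lfepapp-1}). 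The main obstacle is the bookkeeping in the middle step: simultaneously achieving (i) finite spectrum for $\phi_1,$ (ii) the exact $K_0$-splitting $[\phi_X(f_i)]=[\phi_0(f_i)]+[\phi_1(f_i)],$ and (iii) the trace approximation on ${\cal H},$ all within the small projection $e$ of controlled trace — this is where the interplay of real rank zero, weak unperforation, and the known finite-spectrum approximation theorem of \cite{Lnoldjfa} must be orchestrated carefully, and where one should be careful that the preliminary perturbation of $\phi_X$ needed to land in ${\rm Inv}_0$ (and hence to apply \cite{Lnoldjfa}) does not itself spoil the trace or $K_0$ data, which is controlled using \ref{Lappdcu} and \ref{Pcsameclose}.
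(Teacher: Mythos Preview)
Your approach has a genuine gap at the step where you invoke \cite{Lnoldjfa}. That result (see \ref{TappT}) requires $\lambda-x\in {\rm Inv}_0(A)$ for all $\lambda\notin{\rm sp}(x),$ i.e.\ that the normal element carry trivial $K_1$-information. You propose to ``adjust $\phi_X$ by a small amount so that the relevant $\lambda-\phi_X(z)$ lie in ${\rm Inv}_0$,'' but this is impossible: the class $[\lambda-\phi_X(z)]\in K_1(A)$ is stable under norm-small perturbations, so no small adjustment can kill a nontrivial $K_1$-obstruction. Since the hypotheses of the lemma place no restriction on $(\phi_X)_{*1},$ you cannot get off the ground this way. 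A secondary issue is that ``the compression of $\phi_X$'' to $(1-e)A(1-e)$ is not a \hm\ unless $e$ commutes with the range of $\phi_X,$ and you have not arranged this.

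The paper resolves exactly this obstruction by exploiting the fact that $D_c$ is insensitive to $K_1.$ Using \cite{EG}, \cite{LnK} and \cite{LnMZ}, it produces a unital simple AH-algebra $B$ with $TR(B)=0,$ $K_1(B)=0,$ and the same ordered $K_0$ as $A,$ an embedding $h:B\to A$ realizing the $K_0$-identification, and a monomorphism $\psi_X':C(X)\to B$ with $(h\circ\psi_X')_{*0}=(\phi_X)_{*0}$ and $\tau\circ h\circ\psi_X'=\tau\circ\phi_X$ for all $\tau.$ By \ref{Ltp=close} this forces $D_c(\phi_X,h\circ\psi_X')=0,$ so one may replace $\phi_X$ by a map landing in an algebra with trivial $K_1$ and tracial rank zero. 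Only after this reduction does the paper build the finite-spectrum approximant, using Lemma~4.3 of \cite{LnMZ} (a construction specific to $TR=0$) rather than \cite{Lnoldjfa}, together with a $K_0$-correction on $(1-p)A(1-p)$ to get exact equality $[\psi_X(\chi_{S_i})]=[\phi_X(\chi_{S_i})].$ Your use of \ref{PDelta} and \ref{Pclosem}/\ref{Ltp=close} to reduce the problem to matching $K_0$ on finitely many clopen sets and traces on a finite set is correct and is essentially what the paper does in the second half; what you are missing is the $K_1$-killing reduction that makes the finite-spectrum construction available.
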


\begin{proof}
Since $A$ is simple and has real rank zero and stable rank one with
weakly unperforated $K_0(A),$ $K_0(A)$ has Riesz interpolation property
by a theorem of Zhang (\cite{Zh}). Moreover $\rho_A(K_0(A))$ is
dense in $\Aff(T(A)).$
By \cite{EG},  there exists a
unital simple AH-algebra of no dimension growth $B$ of real rank zero
(therefore $TR(B)=0$ --see \cite{Lnproc})
such that
\beq\label{Lfepapp-2}
(K_0(B), K_0(B)_+, [1_B])&=&(K_0(A), K_0(A)_+, [1_A])\andeqn\\
K_1(B)&=&\{0\}.
\eneq
It follows from \cite{LnK} that there exists a unital \hm\, $h: B\to A$ such that
$h_{*0}$ gives the identity in {(\ref{Lfepapp-2}).}

It follows from \cite{LnMZ} that there exist  unital monomorphisms
$\psi_X': C(X)\to B$ such that
\beq\label{Lfepapp-3}
(h\circ \psi_X')_{*0}=(\phi_X)_{*0} \andeqn \tau\circ h\circ \psi_X'=\tau\circ \phi_X
\eneq
for all $\tau\in T(A).$
Define $\psi_X=h\circ \psi_X'.$
Then
\beq\label{Lfepapp-4}
(\psi_X)_{*0}=(\phi_X)_{*0} \andeqn
\tau\circ \psi_X=\tau\circ \phi_X
\eneq
for all $\tau\in T(A).$
These, in particular, by \ref{Ltp=close}, imply
that
\beq\label{Lfepapp-5}
D_c(\phi_X, \psi_X)=0.
\eneq
So, without loss of generality, we may assume now that $A=B.$ In particular,
$B$ has tracial rank zero.

Let $\ep>0.$ Let $\dt>0$ be a positive number, $S_1, S_2,...,S_k$ be a finite set of mutually disjoint clopen subsets of $X$ and let ${\cal F}\subset C(X)_+$ be a finite subset required by
\ref{Ltp=close} for $\ep>0$ and $\phi_X.$ We may assume
that $X=\sqcup_{i=1}^mS_i$ and $1_{C(X)}\in {\cal F}.$  By Lemma 4.3 of \cite{LnMZ}, there is a projection $p\not=1_A,$ a unital \hm\, $h: C(X)\to pAp$ with finite spectrum
such that
\beq\label{Lfepapp-6}
|\tau\circ h(f)-\tau\circ \phi_X(f)|<\dt/2\tforal f\in {\cal F}\andeqn \\\label{Lfepapp-7}
\tau\circ h(\chi_{S_i})<\tau\circ \phi_X(\chi_{S_i}),\,\,\,i=1,2,...,k,
\eneq
for all $\tau\in T(A),$ {and}
\beq\label{Lfeapapp-8}
h(f)=\sum_{i=1}^mf(\xi_i)e_i+h_1(f)\tforal f\in C(X),
\eneq
where $\{e_1,e_2,...,e_m\}\subset pAp$ is a set of mutually orthogonal
non-zero projections and $h_1: C(X)\to (p-\sum_{i=1}^m e_i)A(p-\sum_{i=1}^m e_i)$ is a unital \hm\, with finite spectrum in $X.$
Note that (\ref{Lfepapp-6}) implies that
\beq\label{Lf-n}
\tau(1-p)<\dt/2\tforal \tau\in T(A).
\eneq

By (\ref{Lfepapp-7}), there are mutually orthogonal projections
$q_1, q_2,...,q_k\in (1-p)A(1-p)$ such that
$[\phi_X(\chi_{S_i})]=[q_i]+[h(\chi_{S_i})],$ $i=1,2,...,k.$
Since $\sum_{i=1}^k \chi_{S_i}=1_{C(X)}$ and $\phi_X$ is unital,
$p+\sum_{i=1}^kq_i=1_A.$
Define $\psi_X: C(X)\to A$ by
$\psi_X(f)=\sum_{j=1}^k f(\zeta_j)q_j+h(f)$ for all $f\in C(X),$
where ${\zeta_j}\in S_j$ is a point, $j=1,2,...,k.$
We compute that,
\beq\label{Lf-n10}
[\psi_X(\chi_{S_i})]=[\phi_X(\chi_{S_i})],\,\,\,i=1,2,...,k.
\eneq
Moreover, by (\ref{Lf-n}) and (\ref{Lfepapp-6}),
\beq\label{Lf-n11}
|\tau(\phi_X(f))-\tau(\psi_X(f))|<\dt\tforal \tau\in T(A).
\eneq
It follows from  \ref{Ltp=close} that
\beq\label{Lf-n12}
D_c(\phi_X, \psi_X)<\ep.
\eneq
Since $A$ is simple and has (SP), we can find non-zero projections
$e_i'\le e_i$ such that
$\sum_{i=1}^m \tau(e_i')<\sigma.$ Put $e=\sum_{i=1}^me_i'.$
Define $\phi_0(f)=\sum_{i=1}^m f(\xi_i)e_i$ for all $f\in C(X)$ and
defined
\beq\label{Lf-n13}
\phi_1(f)=\sum_{j=1}^k f(\zeta_j)q_j+\sum_{i=1}^mf(\xi_i)(e_i-e_i')+h_1(f)
\eneq
for all $f\in C(X).$  Note that $\phi_0+\phi_1=\psi_X.$
Lemma follows.

\end{proof}

\begin{cor}\label{Pepfinite}
Let $A$ be a unital separable simple \CA\, with real rank zero, stable rank one and with weakly unperforated $K_0(A)$ and let $X$ be a compact metric space.
Suppose that $\phi_X: C(X)\to A$ {is a unital \hm.} Then, there exists a sequence of unital \hm s
$\phi_n: C(X)\to A$ with finite dimensional range such that
\beq\label{Pepfinite-1}
\lim_{n\to\infty}D_c^e(\phi_X, \phi_n)=0.
\eneq
\end{cor}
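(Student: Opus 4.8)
The plan is to derive this directly from Lemma~\ref{Lfepapp} together with Proposition~\ref{Pdced00}. Observe first that $A$, being unital, simple, of real rank zero, stable rank one and with weakly unperforated $K_0(A)$, has strict comparison for positive elements by \ref{Ddim}, so Proposition~\ref{Pdced00} is applicable, with the ambient space $\Omega$ taken to be $X$ itself.

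First I would, for each integer $n\ge 1$, fix a finite $(1/n)$-dense subset $F_n=\{\xi_1^{(n)},\dots,\xi_{m(n)}^{(n)}\}\subset X$ and apply Lemma~\ref{Lfepapp} with $\ep=\sigma=\eta=1/n$ and this subset $F_n$. This produces a projection $e_n\in A$ with $\tau(e_n)<1/n$ for all $\tau\in T(A)$, a unital \hm\ $\phi_{0,n}\colon C(X)\to e_nAe_n$ with spectrum $F_n$, and a unital \hm\ $\phi_{1,n}\colon C(X)\to(1-e_n)A(1-e_n)$ with finite spectrum, such that $D_c(\phi_X,\phi_{0,n}+\phi_{1,n})<1/n$. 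Put $\phi_n=\phi_{0,n}+\phi_{1,n}$; since $e_n$ and $1-e_n$ are orthogonal this is a unital \hm\ from $C(X)$ into $A$, and $D_c(\phi_X,\phi_n)<1/n\to 0$.

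Next I would check that each $\phi_n$ has finite dimensional range in the sense of Definition~\ref{Ddce}: writing $e_n=\sum_i e_{i,n}$ for the spectral projections of $\phi_{0,n}$ and decomposing $\phi_{1,n}$ over its finite spectrum $\{\zeta_j^{(n)}\}$ with spectral projections $q_{j,n}$, one gets $\phi_n(f)=\sum_i f(\xi_i^{(n)})e_{i,n}+\sum_j f(\zeta_j^{(n)})q_{j,n}$ for $f\in C(X)$, a finite sum over mutually orthogonal projections, so that $\phi_n(C(X))$ is finite dimensional. Then, applying Proposition~\ref{Pdced00} (equivalently, Corollary~\ref{CPc0}) to the sequence $\{\phi_n\}$, the relation $D_c(\phi_X,\phi_n)\to 0$ forces $D_c^e(\phi_X,\phi_n)\to 0$, which is the assertion.

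I do not anticipate a serious obstacle here: the substantive content, namely the finite-dimensional approximation with a controlled ``small corner'' $e_n$, is already packaged inside Lemma~\ref{Lfepapp}, while the passage from $D_c$-convergence to $D_c^e$-convergence is precisely Proposition~\ref{Pdced00}. The only points needing (routine) care are matching ``$\phi_{0,n}+\phi_{1,n}$ with finite spectra'' to the notion of ``finite dimensional range'' in Definition~\ref{Ddce}, and verifying that the hypotheses of Proposition~\ref{Pdced00} hold with $\Omega=X$.
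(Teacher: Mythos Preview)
Your proposal is correct and follows exactly the route the paper intends: the corollary is stated without proof immediately after Lemma~\ref{Lfepapp}, and the passage from $D_c(\phi_X,\phi_n)\to 0$ to $D_c^e(\phi_X,\phi_n)\to 0$ is precisely Proposition~\ref{Pdced00} (or Corollary~\ref{CPc0}), whose hypotheses are met since $A$ has strict comparison by~\ref{Ddim}.
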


\begin{rem}\label{Rmdce}
{\rm
In the case that $A$ has real rank zero, stable rank one and weakly unperforated $K_0(A),$
 \ref{Lfepapp} shows that, in the definition of $D_c^e(\phi_X, \phi_Y),$ if $X\cap Y\not=\emptyset,$
we can also assume that the sequence of non-zero $\{h_n\}$ exists.
}
\end{rem}

\begin{prop}\label{Pdced0}
Let $\Omega$ be a compact metric space, let $A$ be a unital simple
\CA\,  with  real rank zero, stable rank one and weakly unperforated $K_0(A),$  and let $\phi_X,
\phi_Y, \phi_Z: C(\Omega)\to A$  be unital \hm s with spectrum $X,$
$Y$ and $Z,$ respectively. If, in addition,
$X\cap Y\subset Z$
\beq\label{Pdced0-3} D_c^e(\phi_X, \phi_Y)\le
D_c^e(\phi_X,\phi_Z)+D_c^e(\phi_Z, \phi_Y). \eneq
\end{prop}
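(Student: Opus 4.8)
The plan is to reduce everything to the triangle inequality for $D_c$ (Proposition~\ref{hu1}) by producing a single sequence of ``dug-out'' finite dimensional pieces that is admissible for all three pairs at once. If $X\cap Y=\emptyset$ there is nothing to do: then $D_c^e(\phi_X,\phi_Y)=D_c(\phi_X,\phi_Y)$ by Definition~\ref{Ddce}, and Proposition~\ref{hu1} together with (\ref{dc<dce}) gives $D_c(\phi_X,\phi_Y)\le D_c(\phi_X,\phi_Z)+D_c(\phi_Z,\phi_Y)\le D_c^e(\phi_X,\phi_Z)+D_c^e(\phi_Z,\phi_Y)$. So assume $X\cap Y\neq\emptyset$; since $X\cap Y\subset Z$ we then also have $X\cap Z\supset X\cap Y\neq\emptyset$ and $Z\cap Y\supset X\cap Y\neq\emptyset$, so each of the three $D_c^e$'s is computed through genuine dug-out decompositions (Remark~\ref{Rmdce}). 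Fix $\ep>0$ and put $d_1=D_c^e(\phi_X,\phi_Z)$, $d_2=D_c^e(\phi_Z,\phi_Y)$.

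Using (\ref{dcesup}) and the simplified form of the definition (the lines around (\ref{Ddce-2+1})), I would first choose near-optimal decompositions: $D_c(\phi_X,g_n+\alpha_n)\to 0$, $D_c(\phi_Z,g_n+\gamma_n)\to 0$ with $g_n$ supported on a projection $P_n$, $\sup_\tau\tau(P_n)\to 0$, the spectrum of $g_n$ becoming dense in $X\cap Z$, and $\alpha_n,\gamma_n\colon C(\Omega)\to(1-P_n)A(1-P_n)$ with spectra in $X$ and $Z$ respectively, such that $\limsup_nD_c(\alpha_n,\gamma_n)<d_1+\ep$; similarly $D_c(\phi_Z,\tilde g_n+\tilde\gamma_n)\to 0$, $D_c(\phi_Y,\tilde g_n+\beta_n)\to 0$ with $\tilde g_n$ supported on $\tilde P_n$, spectrum becoming dense in $Z\cap Y$, and $\tilde\gamma_n,\beta_n\colon C(\Omega)\to(1-\tilde P_n)A(1-\tilde P_n)$ with spectra in $Z$ and $Y$, such that $\limsup_nD_c(\tilde\gamma_n,\beta_n)<d_2+\ep$.

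Next I would manufacture a common small piece living on $X\cap Y$. Fix finite, $\ep_n$-separated and $\ep_n$-dense subsets $F_n\subset X\cap Y$ with $\ep_n\to 0$. Since $F_n\subset X\cap Y\subset X\cap Z$ and the spectrum of $g_n$ is $\delta_n$-dense in $X\cap Z$ with $\delta_n\to 0$, I move (and, where two points of $F_n$ share the same nearest spectral point, split the corresponding projection of) the spectral points of $g_n$ nearest $F_n$ to the points of $F_n$; this changes $g_n$ in norm on any fixed finite subset of $C(\Omega)$ by $o(1)$, so by Lemma~\ref{Lappdcu} the replaced map $g_n'$ still satisfies $D_c(\phi_X,g_n'+\alpha_n)\to 0$, $D_c(\phi_Z,g_n'+\gamma_n)\to 0$ and has spectrum containing $F_n$; doing the same to $\tilde g_n$ gives $\tilde g_n'$ whose spectrum contains $F_n$. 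As $A$ is simple of real rank zero with strict comparison, I can pick mutually orthogonal projections $r_\xi^{(n)}$, $\xi\in F_n$, with $\sup_\tau\tau(\sum_\xi r_\xi^{(n)})\to 0$ and $[r_\xi^{(n)}]$ dominated in $K_0(A)$ both by the projection of $g_n'$ at $\xi$ and by that of $\tilde g_n'$ at $\xi$; set $\omega_n(f)=\sum_{\xi\in F_n}f(\xi)r_\xi^{(n)}$ and $R_n=\sum_\xi r_\xi^{(n)}$. Splitting a copy of $\omega_n$ off from inside $g_n'$ and from inside $\tilde g_n'$, absorbing the remainders into $\alpha_n,\gamma_n$ and into $\tilde\gamma_n,\beta_n$, and using that $D_c$ depends only on Cuntz classes (so unitary conjugations cost nothing) to make all four occurrences of the dug-out the same operator, I arrive at decompositions, with all reduced maps in $(1-R_n)A(1-R_n)$ and spectra in $X,Z,Z,Y$ respectively,
\[
D_c(\phi_X,\omega_n+\alpha_n^\sharp)\to 0,\quad D_c(\phi_Z,\omega_n+\gamma_n^\sharp)\to 0,\quad D_c(\phi_Z,\omega_n+\gamma_n^\flat)\to 0,\quad D_c(\phi_Y,\omega_n+\beta_n^\flat)\to 0,
\]
with $\sup_\tau\tau(R_n)\to 0$ and ${\rm sp}(\omega_n)=F_n$ becoming dense in $X\cap Y$; and since adding the same orthogonal finite dimensional summand to both members of a pair does not increase $D_c$, one gets $\limsup_nD_c(\alpha_n^\sharp,\gamma_n^\sharp)<d_1+\ep$ and $\limsup_nD_c(\gamma_n^\flat,\beta_n^\flat)<d_2+\ep$.

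The crucial step is the cancellation $D_c(\gamma_n^\sharp,\gamma_n^\flat)\to 0$, which I would prove exactly as in Proposition~\ref{Pdced00} (compare the use of ``the proof of \ref{Pdced00}'' inside the proof of Proposition~\ref{dcedig}): both $\gamma_n^\sharp$ and $\gamma_n^\flat$ have spectrum in $Z$ and are $D_c$-close to $\phi_Z$ over the \emph{same} dug-out $\omega_n$ with $\sup_\tau\tau(R_n)\to 0$, so their $K_0$-classes on a finite family of clopen subsets of a suitable $\overline{Z_\delta}$ agree for large $n$ (Proposition~\ref{Pcsameclose} plus cancellation in $K_0(A)$), the traces $\tau\circ\gamma_n^\sharp,\tau\circ\gamma_n^\flat$ are uniformly close on a prescribed finite set (each being close to $\tau\circ\phi_Z$ up to the vanishing mass $\tau(R_n)$), and $\gamma_n^\sharp$ inherits from $\phi_Z$ the measure lower bound of Proposition~\ref{PDelta} up to the error $\tau(R_n)$; Corollary~\ref{Ltp=close} then gives $D_c(\gamma_n^\sharp,\gamma_n^\flat)<\delta$ for all large $n$, hence $D_c(\gamma_n^\sharp,\gamma_n^\flat)\to 0$. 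Then the triangle inequality for $D_c$ in the stable rank one corner $(1-R_n)A(1-R_n)$ yields $\limsup_nD_c(\alpha_n^\sharp,\beta_n^\flat)\le(d_1+\ep)+0+(d_2+\ep)$, and since $D_c(\phi_X,\omega_n+\alpha_n^\sharp)\to 0$, $D_c(\phi_Y,\omega_n+\beta_n^\flat)\to 0$ is an admissible decomposition for $D_c^e(\phi_X,\phi_Y)$, we conclude $D_c^e(\phi_X,\phi_Y)\le\liminf_nD_c(\alpha_n^\sharp,\beta_n^\flat)\le d_1+d_2+2\ep$; letting $\ep\to 0$ proves (\ref{Pdced0-3}). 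The main obstacle I anticipate is precisely the construction of the common dug-out $\omega_n$: one must perturb the two a priori unrelated pieces $g_n$ (living on $X\cap Z$) and $\tilde g_n$ (living on $Z\cap Y$) so that both contain one prescribed dense finite subset of $X\cap Y$ — this is where the hypothesis $X\cap Y\subset Z$ is used — and then run the Proposition~\ref{Pdced00} cancellation with a dug-out whose spectrum is dense only in $X\cap Y$, a proper subset of ${\rm sp}(\phi_Z)=Z$.
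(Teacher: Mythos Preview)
Your proposal is correct and follows essentially the same route as the paper's proof: both arguments manufacture a single common finite-dimensional ``dug-out'' with spectrum becoming dense in $X\cap Y$ (using $X\cap Y\subset Z$), absorb the leftover pieces of the two a priori unrelated dug-outs into the larger summands, invoke the cancellation argument of Proposition~\ref{Pdced00} to make the two $Z$-remainders $D_c$-close, and finish with the triangle inequality for $D_c$. The paper carries out the common-dug-out construction by normalizing the spectral data (``we may assume $\zeta(n,j,1)=\zeta(n,j,2)$ and $q(n,j,1)=q(n,j,2)$ for $j\le k(n)$'') rather than via your explicit perturb-and-shrink argument, but the content is the same; your worry about the dug-out being dense only in $X\cap Y\subsetneq Z$ is not an obstacle, since the Proposition~\ref{Pdced00} cancellation relies only on Corollary~\ref{Ltp=close} (matching $K_0$ on clopen sets and closeness of traces), not on density of the dug-out spectrum.
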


\begin{proof}

If $X\cap Y=\emptyset,$ then
$$
D_c^e(\phi_X, \phi_Y)=D_c(\phi_X, \phi_Y)\le D_c(\phi_X, \phi_Z)+D_c(\phi_Z, \phi_Y)
\le D_c^e(\phi_X, \phi_Z)+D_c^e(\phi_Z, \phi_X).
$$
So, we assume that $X\cap Y\not=\emptyset.$

By {the} definition and from  \ref{Rmdce} above,  we have
nonzero sequences of projections {$\{e(n,i)\}$} of
$A,$ unital \hm s $h(n,i): C(\Omega)\to e(n,{i})Ae(n,i)$ and unital
\hm s $\phi(n,i), \phi(Z,n,i),  : C(\Omega)\to
(1-e(n,i))A(1-e(n,i))$ such that \beq\label{Pdcen-1}
&&\lim_{n\to\infty}{\sup_{\tau\in T(A)}}\tau(e(n,i))=0 \\\label{Pdcen-1+1}
&&\lim_{n\to\infty}D_c(\phi_X,
h(n,1)+\phi(n,1))=0,\,\,\, \lim_{n\to\infty}D_c(\phi_Y,
h(n,2)+\phi(n,2))=0;\\\label{Pdcen-1+2}
&&\lim_{n\to\infty}D_c(\phi_Z, h(n,i)+\phi(Z,
n,i))=0;\\\label{Pdcen-1+3}
&&D_c^e(\phi_X,
\phi_Z)=\lim_{n\to\infty}D_c(\phi(n,1),
\phi(Z,n,1));\\
\label{Pdcen-1+4}
&& D_c^e(\phi_Y,
\phi_Z)=\lim_{n\to\infty}D_c(\phi(n,2), \phi(Z,
n,2))\andeqn\\
\label{Pdcen-1+5}
&& \lim_{n\to\infty}D_c(h(n,1)+\phi(Z,
n,1),h(n,2)+\phi(Z,n,2))=0,
\eneq
the spectrum of $h(n,1)$ is $X_n'$ and the spectrum of
$h(n,2)$ is $Y_n',$ that of $\phi(n,1)$ is in $X,$ that of
$\phi(n,2)$ is in $Y,$ $\phi(Z, n,i)$ is in $Z,$ where $X'_n$ is a
finite subset of $X\cap Z$ and $Y'_n$  is a  finite subset of $Z\cap
Y$ which are
 $\ep_n$-dense in $X\cap Z$ and in $Y\cap Z$  with $\lim_{n\to\infty}\ep_n=0.$
 Since $X\cap Y\subset Z,$ we may assume, without loss of generality,
 that $X_n'\cap Y_n'$ is $\ep_n$-dense in $X\cap Y.$
We write
\beq\label{Pdcen-2}
h(n,i)(f)=\sum_{j=1}^{r(n,i)} f(\zeta(n,j,i))q(n,j,i)\tforal f\in C(\Omega),
\eneq
where $\{\zeta(n,1,1), \zeta(n,2,1),...,\zeta(n,r(n,1),1)\}=X_n',$
$\{\zeta(n,1,2), \zeta(n,2,2),...,\zeta(n,r(n,2), 2)\}=Y_n'$ and
$\{q(n,1,i)), q(n,2,i),...,q(n,r(n,i),i)\}$ is a set of mutually orthogonal non-zero projections.
We may further assume that
\beq\label{Pdcen-3}
\zeta(n,j,1)=\zeta(n,j,2),\,\,\,j=1,2,...,k(n)\le r(n,1),\,r(n,2),
\eneq
where $\{\zeta(n,1,1),\zeta(n,2,1),...,\zeta(n,k(n),1)\}$ is $\ep_n$-dense in $X\cap Y.$
Let $X_n$ be the spectrum of $\phi(n,1)$ and $Y_n$ be the spectrum
of $\phi(n,2),$ $n=1,2,....$
Without loss of generality, we may assume that
$X_n'\subset X_n$ and $Y_n'\subset Y_n,$ $n=1,2,....$
Note that, without changing the sums $h(n,i)+\phi(n,i),$ $ h(n,i)+\phi(Z, n,i)$ and (\ref{Pdcen-1})--(\ref{Pdcen-1+4}),
one can choose smaller $q(n,j,i),$ $j=1,2,...,r(n,i),$ $i=1,2$ and $n=1,2,....$
We may assume
that, since $A$ is simple and has (SP), we may assume
that $r(n,1)=r(n,2)$ and
\beq\label{Pdcen-4}
[q(n,j,1)]=[q(n,j',2)],\,\,\, j,j'=1,2,....,k(n),\,\,\, n=1,2,....
\eneq
To simplify the notation, we may further assume that
\beq\label{Pdcen-5}
q(n,j,1)=q(n,j,2),\,\,\, j=1,2,...,k(n),\,\,\,n=1,2,....
\eneq
Put
\beq\label{Pdcen-6}
\phi(n,i)'(f)&=&\sum_{j=k(n)+1}^{r(n,i)}f(\zeta(n,j,i))q(n,j,i)+\phi(n,i)(f),\\
\phi(Z,n)(f)&=&\sum_{j=k(n)+1}^{r(n,1)}f(\zeta(n,j,i))q(n,j,1)+\phi(Z,n,1)(f)\andeqn\\
\phi(Z,n,2)'(f)&=& \sum_{j=k(n)+1}^{r(n,1)}f(\zeta(n,j,2))q(n,j,2)+\phi(Z,n,2)(f)
\eneq
for all $f\in C(\Omega).$
It follows that
\beq\label{Pdcen-8}
\limsup_{n\to\infty} D_c(\phi(n,1)', \phi(Z,n))\le \limsup_{n\to\infty}D_c(\phi(n,1), \phi(Z,n,1))
=D_c^e(\phi_X, \phi_Z).
\eneq

By (\ref{Pdcen-1+5}), (\ref{Pdcen-5}),  and the proof of  \ref{Pdced00},
\beq\label{Pdcen-7}
\lim_{n\to\infty} D_c(\phi(Z,n),\phi(Z,n,2)')=0.
\eneq
It follows that
\beq\label{Pdcen-9}
&&\limsup_{n\to\infty} D_c(\phi(n,2)', \phi(Z,n))\\
&&\le
\limsup_{n\to\infty}D_c(\phi(n,2)', \phi(Z,n,2)')+\lim_{n\to\infty}D_c(\phi(Z,n,2)',\phi(Z,n))\\
&&\le \limsup_{n\to\infty} D_c(\phi(n,2), \phi(Z,n,2))=
D_c^e(\phi_Y, \phi_Z).
\eneq
However, by (\ref{Pdcen-8}) and (\ref{Pdcen-9}),
\beq\label{Pdcen-10}
D_c^e(\phi_X, \phi_Y)
&\le & \limsup_{n\to\infty}D_c(\phi(n,1)', \phi(n,2)')\\
 &\le & \limsup_{n\to\infty}(D_c(\phi(n,1)',\phi(Z,n))+D_c(\phi(n,2)',\phi(Z,n)))
 \\
 &\le & D_c^e(\phi_X,\phi_Z)+D_c^e(\phi_Z, \phi_Y).
\eneq

\end{proof}

\begin{df}\label{dceDxy}
{\rm
Let $A$ be a unital \CA\, and let $x,\, y\in A$ be two normal elements with
${\rm sp}(x)=X$ and ${\rm sp}(y)=Y,$ respectively.
Define $\phi_X, \phi_Y: C(X\cup Y)\to A$ {to be}  unital \hm s defined by
$\phi_X(f)=f(x)$ and $\phi_Y(f)=f(y)$ for all $f\in C(X\cup Y).$
We will use the notation $D_c^e(x, y)$ for
$D_c^e(\phi_X, \phi_Y).$
}
\end{df}

\section{Approximate unitary equivalence}

 The purpose of this section is to present  \ref{Uni2} and  \ref{Uni1}.
 In the case that $A$ is a unital simple \CA\, with $TR(A)=0,$ much more general
 results were presented in \cite{Lntrans}. However, in the spirit of \cite{Lnoldjfa},
 the exact condition for {when} two normal elements are approximately unitarily equivalent
 can be obtained in unital simple \CA\, $A$ with real rank zero, stable rank one and
  with weakly unperforated $K_0(A).$ We are also interested in \ref{Uni3}.

 The following is proved in \cite{Lnoldjfa}.

\begin{thm}\label{TappT}
Let  $\ep>0.$
For any  unital simple \CA\, $A$ of real rank zero with (IR) and any normal element $x\in A$ with $\|x\|\le 1$ such that
\beq\label{TappT-1}
\lambda-x\in {\rm Inv}_0(A)
\eneq
for all $\lambda\in \C$ with ${\rm dist}(\lambda, {\rm sp}(x))\ge \ep/8,$ there
is a normal element with finite spectrum $x_0\in A$ such that
\beq\label{TaapT-2}
\|x-x_0\|<\ep.
\eneq
\end{thm}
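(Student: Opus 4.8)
The statement is due to Lin \cite{Lnoldjfa}; here is the shape of the argument I would follow. The plan is first to recast the problem in terms of homomorphisms. Writing $z$ for the coordinate function on $X:={\rm sp}(x)$, one has the unital monomorphism $\phi_X\colon C(X)\to A$, $\phi_X(f)=f(x)$, and it suffices to produce a unital homomorphism $\psi\colon C(X)\to A$ with finite-dimensional range (that is, $\psi(f)=\sum_j f(\xi_j)q_j$ for finitely many $\xi_j\in X$ and mutually orthogonal projections $q_j$ with $\sum_j q_j=1_A$) such that $\|\psi(z)-x\|<\ep$; then $x_0=\psi(z)=\sum_j\xi_j q_j$ is normal with finite spectrum. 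Since $\|x\|\le 1$, $X$ lies in the closed unit disc. Fix $\dt\ll\ep$ and cut the disc, by a grid of horizontal and vertical lines chosen generically relative to $X$, into closed boxes $F_1,\dots,F_N$ of diameter $<\ep/16$ with pairwise disjoint interiors; for each $i$ with $F_i\cap X\neq\emptyset$ pick $\lambda_i\in F_i\cap X$. The target is a decomposition $1_A=\sum_i p_i$ into mutually orthogonal projections with $\|xp_i-\lambda_i p_i\|<\ep$, whence $x_0=\sum_i\lambda_i p_i$ works.

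Conceptually, the only obstruction to such an approximation, in a real rank zero algebra, is $K$-theoretic: the $K_0$/tracial part of the invariant is harmless because $A$ has real rank zero (and property (IR), which supplies the comparison of projections needed to realise the requisite Murray--von Neumann (sub)equivalences), and what remains is the map induced on $K^1$. For a compact planar $X$, this map is detected by the classes $[\lambda-z]\in K_1(A)$ for $\lambda$ ranging over the bounded components of $\C\setminus X$, so it vanishes precisely when $[\lambda-x]=0$ in $K_1(A)$ for every such $\lambda$; the hypothesis that $\lambda-x\in{\rm Inv}_0(A)$ whenever ${\rm dist}(\lambda,{\rm sp}(x))\ge\ep/8$ is exactly what kills all of these classes that are visible at the resolution $\ep$ of the problem, which — once the grid is chosen fine enough — is all that is used.

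The actual construction is a real-rank-zero refinement of the ``almost commuting self-adjoint matrices'' theorem of \cite{Lnalm} and \cite{FR}. One first applies real rank zero to ${\rm Re}\,x$ to split $1_A$ into spectral ``strips'', then, strip by strip, splits further using ${\rm Im}\,x$, obtaining in each strip a finite family of projections; the incompatibilities across adjacent strips form a chain of almost-commuting corners, and the obstruction to straightening this chain into a single commuting pair with finite joint spectrum is assembled from local $K_1$-indices of the essentially unitary elements $\lambda-x$ localised near the grid corners — all of which vanish by the previous paragraph. One then runs a finite induction over the $N$ boxes: having arranged that $x$ has been replaced by an element with finite spectrum ``over the first $k$ boxes'' and within a controlled distance of the original, one perturbs inside the $(k+1)$-st box, drawing the needed projections from real rank zero and the needed equivalences from (IR), while keeping the error additive; choosing $\dt$ small compared with $\ep/N$ closes the induction. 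The main obstacle is precisely this middle step: proving that the vanishing of the classes $[\lambda-x]$ genuinely trivialises the index obstruction to a \emph{global} finite-spectrum model (rather than merely a local one in each box), and organising the $N$ successive perturbations — each of which slightly disturbs commutation with the boxes already treated — so that the accumulated error stays below $\ep$. Everything else is standard functional calculus and projection bookkeeping available in a unital simple \CA\ of real rank zero.
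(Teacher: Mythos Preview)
Your proposal and the paper's proof point to the same place: the paper does not reprove the statement at all but simply observes that it is contained verbatim in the proof of the main theorem of \cite{Lnoldjfa}, with the sole remark that the parameter $r$ there (defining the set $F_1=\{\lambda:\dist(\lambda,\mathrm{sp}(x))<r\}$) is taken to be $\ep/8$. Your sketch is an attempt to outline that very argument from \cite{Lnoldjfa}, so there is no substantive divergence; the only difference is that you try to indicate the mechanics (grid decomposition, $K_1$-index obstruction, inductive straightening via real rank zero and (IR)) whereas the paper is content with the citation.
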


\begin{proof}
This was exactly proved in the proof of the Theorem of \cite{Lnoldjfa}.
Note that the set
$$
F_1=\{\lambda\in \C: {\rm dist}(\lambda, {\rm sp}(x))<r\}
$$
in that proof is chosen for $r=\ep/8.$

\end{proof}

\begin{lem}\label{Meas1}
Let $A$ be a unital separable simple \CA\, with real rank zero, stable rank one and
weakly unperforated $K_0(A),$ let $X$ be a compact subset of the plane and let
$\Delta: (0,1)\to (0,1)$ be an increasing function such that
$\lim_{r\to 0}\Delta(r)=0.$  Then, for any $\ep>0,$ there exists $d>0$ with
$d<\ep/128,$  there exists a finite subset $\{f_1,f_2,...,f_n\}\subset C(\overline{X_{d/2}})$
of mutually orthogonal projections with $\sum_{i=1}^n f_i=1_{C(\overline{X_{d/2}})},$  a finite
subset ${\cal H}\subset C(\overline{X_{d/2}})_+$   satisfying the following:
if $h: C(\overline{X_{{d/2}}})\to A$ is a homomorphism  such that
\beq\label{Meas1-1}
\mu_{\tau\circ h}(O)\ge \Delta(r)
\eneq
for any open balls $O$ of $X$ with radius $r\ge \ep/32,$ { then for any homomorphism}
$\phi: C(\overline{X_{\ep/128}})\to A$ such that
\beq\label{Meas1-2}
\phi_{*0}([f_i])=h_{*0}([f_i]),\,\,\, i=1,2,...,n,\\\label{Meas1-2+}
{\lambda-h(z), \lambda-\phi(z)\in {\rm Inv}_0(A)\,\,\,{\rm if}\,\,\, {\rm dist}(\lambda, X)\ge d,}\, \tand\\
|\tau\circ h(g)-\tau\circ \phi(g)|<(1/4)\Delta(\ep/32) \tforal g\in {\cal H},
\eneq
then there exists a unitary $u\in A$ such that
\beq\label{Meas1-3}
\|u^*h(z)u-\phi(z)\|<\ep,
\eneq
where $z\in C(\overline{X_{d/2}})$ is the identity function on $\overline{X_{d/2}}.$
\end{lem}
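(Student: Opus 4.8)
The plan is to reduce the assertion to the case of normal elements with finite spectrum, where \ref{MLfsp} applies: I would use \ref{TappT} for the reduction and \ref{Pclosem} both to produce the data $\{f_1,\dots,f_n\}$, $\mathcal H$, $\sigma$ and to bound $D_c$.

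\emph{Step 1 (reading off the spectra).} Put $Y_h={\rm sp}(h(z))$ and $Y_\phi={\rm sp}(\phi(z))$. Since $h$ is defined on $C(\overline{X_{d/2}})$ we have $Y_h\subseteq\overline{X_{d/2}}$, and the hypothesis $\mu_{\tau\circ h}(O)\ge\Delta(r)>0$ for all balls $O$ of radius $r\ge\ep/32$ forces $X\subseteq(Y_h)_{\ep/32}$; testing the trace-closeness hypothesis against bump functions that equal $1$ on such balls shows likewise $X\subseteq(Y_\phi)_{\ep/32+\eta}$ for every $\eta>0$, while the invertibility contained in the ${\rm Inv}_0$-hypothesis gives $Y_\phi\subseteq\overline{X_d}$. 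Because $d<\ep/128$, every $\lambda$ with ${\rm dist}(\lambda,Y_h)\ge 5\ep/128$ (resp.\ ${\rm dist}(\lambda,Y_\phi)\ge 5\ep/128$) satisfies ${\rm dist}(\lambda,X)\ge d$, hence $\lambda-h(z)\in{\rm Inv}_0(A)$ (resp.\ $\lambda-\phi(z)\in{\rm Inv}_0(A)$); this is exactly the hypothesis of \ref{TappT}, which therefore yields normal elements $h_1,\phi_1\in A$ of finite spectrum with $\|h(z)-h_1\|$ and $\|\phi(z)-\phi_1\|$ below a fixed fraction of $\ep$ (the smaller $d$ is chosen, the better).

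\emph{Step 2 (the data and $D_c$).} I would take $\{f_1,\dots,f_n\}$ to be the mutually orthogonal clopen projections, and $\mathcal H$ the finite set of functions, produced by \ref{Pclosem} for the parameter $\ep/2$, so that its threshold is $\ep/32$, its constant is $(1/4)\Delta((\ep/2)/16)=(1/4)\Delta(\ep/32)$ and its domain is $C(\overline{X_{\ep/128}})$, matching the statement; if needed, enlarge $\mathcal H$ and the $f_i$ by the auxiliary clopen sets and functions used to track the spectral decompositions of $h_1$ and $\phi_1$. Regarding $h$ and $\phi$ as homomorphisms on $C(\overline{X_{\ep/128}})$ (both factor through the algebras of their spectra, which lie in $\overline{X_d}\subseteq\overline{X_{\ep/128}}$), the three hypotheses of the present lemma are precisely the input required by \ref{Pclosem}, so $D_c(h,\phi)<\ep/2$. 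The equalities $[\chi_{S_i}(h(z))]=[\chi_{S_i}(h_1)]$ in $K_0(A)$, and the analogue for $\phi$, persist because the $S_i$ and the bump functions are chosen with margins exceeding the perturbation scale; transporting the $K_0$-data, the measure data and the trace data onto the pair $(h_1,\phi_1)$ and re-running the estimate of \ref{Pclosem} for them (equivalently, the Riesz-interpolation marriage argument of \ref{TmarrG}, \ref{MLXY}, \ref{Cabs}) yields a bound on $D_c(h_1,\phi_1)$ small enough that, combined with the norm estimates of Step 1, the total stays strictly below $\ep$.

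\emph{Step 3 (conclusion).} Since $h_1,\phi_1$ have finite spectrum, \ref{MLfsp} supplies a unitary $u\in A$ with $\|u^*h_1u-\phi_1\|\le D_c(h_1,\phi_1)+\eta$ for arbitrarily small $\eta>0$; then $\|u^*h(z)u-\phi(z)\|\le\|h(z)-h_1\|+\|u^*h_1u-\phi_1\|+\|\phi_1-\phi(z)\|<\ep$, as required.

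\emph{Main obstacle.} The crux is the quantitative bookkeeping in Step 2. Unlike in \ref{Toriginal}, where the hypothesis $\lambda-x\in{\rm Inv}_0(A)$ for \emph{all} $\lambda\notin{\rm sp}(x)$ lets \ref{TappT} approximate by finite spectrum at an arbitrarily fine scale (so that \ref{Lappdcu} makes $D_c(x,x_1)$ negligible), here the available ${\rm Inv}_0$-information is only at the scale $\approx\ep/32$ coming from the measure density, so the finite-spectrum approximation is only as fine as $\approx\ep/4$, and the naive triangle inequality $D_c(h_1,\phi_1)\le D_c(h_1,h)+D_c(h,\phi)+D_c(\phi,\phi_1)$ is too lossy. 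One therefore has to carry the invariant data directly onto $(h_1,\phi_1)$ and estimate $D_c(h_1,\phi_1)$ there; arranging that this remains consistent with the final error budget is exactly what pins down the numerology of the statement ($d<\ep/128$, radius $\ep/32$, $\sigma=(1/4)\Delta(\ep/32)$, domain $\overline{X_{\ep/128}}$), and is where the real work of the proof lies.
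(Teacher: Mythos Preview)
Your three-step plan is exactly the paper's: \ref{Pclosem} supplies the projections $\{f_i\}$, the finite set $\mathcal H$ and the bound $D_c(h,\phi)<\ep/2$; \ref{TappT} reduces to finite spectrum; \ref{MLfsp} produces the unitary. The only divergence is in how you handle $D_c$ of the finite-spectrum approximants. You dismiss the triangle inequality via \ref{Lappdcu} as ``too lossy'' and propose transporting the $K_0$ and trace data onto $(h_1,\phi_1)$ to re-run the marriage argument there. The paper does the opposite: it uses precisely that triangle inequality, and makes it work by exploiting the quantifier on $d$.

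Concretely, the paper \emph{first} fixes $\delta>0$ from \ref{Lappdcu} for $\ep/8$ (on $\Omega=\overline{X_{\ep/16}}$), and only \emph{then} chooses $d=\min\{\delta/8,\ep/2^{21}\}$. Since the lemma asserts ``there exists $d>0$'', this order is legitimate: $d$ is an output, not something pinned down by the measure density. With the ${\rm Inv}_0$-hypothesis now available at the very fine scale $d$, the paper asserts that \ref{TappT} yields $\|h(z)-x_0\|,\|\phi(z)-y_0\|<\min\{\ep/16,\delta\}$; then \ref{Lappdcu} gives $D_c(h(z),x_0),D_c(\phi(z),y_0)<\ep/8$, so $D_c(x_0,y_0)<3\ep/4$, and \ref{MLfsp} finishes as in your Step~3. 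No transport of invariants is needed.

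Your ``main obstacle'' paragraph does contain a correct observation: \ref{TappT} is phrased relative to ${\rm sp}(h(z))$, the lemma's hypothesis is relative to $X$, and converting via the $\ep/32$-density (as you do in Step~1) caps the literal \ref{TappT}-approximation near $\ep/4$ regardless of how small $d$ is. The paper does not pause over this; its appeal to \ref{TappT} is really to the proof in \cite{Lnoldjfa}, in which the auxiliary neighborhood $F_1$ may be taken as $\overline{X_d}$ (it contains ${\rm sp}(h(z))\subseteq\overline{X_{d/2}}$ and ${\rm Inv}_0$ holds outside it) rather than an $\ep'/8$-thickening of ${\rm sp}(h(z))$. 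Read that way, the approximation scale is governed by $d$, and the simpler triangle route the paper takes goes through. So your concern points at a genuine wrinkle in how \ref{TappT} is being invoked, but the resolution is to choose $d$ after $\delta$ and quote \cite{Lnoldjfa} at that scale---not the more elaborate bookkeeping you propose.
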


 \begin{proof}
Let $\ep>0.$ We choose $\dt>0$ which is required by  \ref{Lappdcu}
for $\ep/8$ (with $\overline{X_{\ep/16}}=\Omega$).
Let
\beq\label{Meas1-n}
d=\min\{\dt/8, \ep/2^{21}\}.
\eneq
Let $\{f_1',f_2',..., f_n'\}\subset C(\overline{X_{\ep/128}})$ be a subset of projections
  be as required by  \ref{Pclosem}
for $\Delta,$ $\overline{X_{d/2}}$ (in place of $X$) and $\ep/2$ (instead of $\ep$).
Define $f_i=f_i'|_{\overline{X_{d/2}}},$ $i=1,2,...,n.$
{Now assume $A,$ $h$ and $\phi$ be as stated above..}
By applying  \ref{Pclosem}, one has
\beq\label{Meas1-4}
D_{c }(h,\phi)<\ep/2.
\eneq
It follows from   \ref{TappT} that,
if (\ref{Meas1-2+}) holds,
there are normal elements $x_0$ and $y_0$ with finite spectrum
such that
\beq\label{Meas1-5}
\|h(z)-x_0\|<\min\{\ep/16,\dt\}\andeqn \|\phi(z)-y_0\|<\min\{\ep/16,,\dt\}.
\eneq
By  \ref{Lappdcu},
we have  that
\beq\label{Meas1-6}
D_{c}(h(z), x_0)<\ep/8\andeqn D_{c }(\phi(z), y_0)<\ep/8
\eneq
Therefore
\beq\label{Meas1-7}
D_{c }(x_0, y_0)<3\ep/4.
\eneq
{Since $A$ is simple and has real rank zero and stable rank one with
weakly unperforated $K_0(A),$ $K_0(A)$ has Riesz interpolation property
by a theorem of Zhang \cite{Zh},} by  \ref{MLfsp}, there exists a unitary $u\in A$ such that
\beq\label{Meas1-8}
\|u^*x_0u-y_0\|<3\ep/4.
\eneq
Combining this with (\ref{Meas1-5}), we conclude that
\beq\label{Meas1-9}
\|u^*h(z)u-\phi(z)\|<\ep.
\eneq

\end{proof}

\begin{lem}\label{Lext}
Let $A$ be a unital  infinite dimensional  separable simple \CA\, with real rank zero, stable rank one and with weakly unperforated $K_0(A)$ and
$X\subset \C$ be a compact subset.
Let $p_1, p_2,...,p_n\in C(X)$ be $n$ mutually orthogonal
projections with $\sum_{i=1}^n p_i=1_{C(X)}$ such that
$\{[p_1,], [p_2],...,[p_n]\}$ generates a subgroup
$G$ of $K_0(C(X)).$
Suppose that $\kappa_0: G\to K_0(A)$ is an order preserving
\hm\, with $\kappa_0([1_{C(X)}])=[1_A]$ and with
$\kappa_0([p_i])>0,$ $i=1,2,...,n,$ and
$\kappa_1: K_1(C(X))\to K_1(A)$ is a \hm. Then
there is a unital monomorphism $\phi: C(X)\to A$ such that
\beq\label{Lext-1}
\phi_{*0}|_G=\kappa_0\tand \phi_{*1}=\kappa_1.
\eneq
\end{lem}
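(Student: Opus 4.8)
The plan is to follow the same three-step strategy used in the proof of \ref{Lfepapp}: reduce to a $TR=0$ model, realize the prescribed invariant there, and transport the resulting homomorphism to $A$. The only new ingredient is that now we must carry non-trivial $K_1$-information.

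First I would reduce to the summand picture. Since $p_i=\chi_{E_i}$ for mutually disjoint clopen sets $E_1,\dots,E_n$ with $\sqcup_i E_i=X$, we have $C(X)=\bigoplus_{i=1}^n C(E_i)$ and hence natural direct sum decompositions $K_0(C(X))=\bigoplus_i K_0(C(E_i))$ and $K_1(C(X))=\bigoplus_i K_1(C(E_i))$. Because $A$ is simple with real rank zero, stable rank one and weakly unperforated $K_0(A)$, cancellation of projections holds and every element of $K_0(A)_+$ dominated by $[1_A]$ is the class of a projection in $A$ (\cite{Zh}); since $\kappa_0([p_i])>0$ and $\sum_i\kappa_0([p_i])=\kappa_0([1_{C(X)}])=[1_A]$, an inductive use of this produces mutually orthogonal non-zero projections $q_1,\dots,q_n\in A$ with $[q_i]=\kappa_0([p_i])$ and $\sum_i q_i=1_A$. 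Each corner $q_iAq_i$ is again unital, infinite dimensional, simple, of real rank zero and stable rank one with weakly unperforated $K_0$, and the corner embedding $q_iAq_i\hookrightarrow A$ is an isomorphism on $K_1$. Thus it suffices to produce, for each $i$, a unital monomorphism $\phi_i:C(E_i)\to q_iAq_i$ with $(\phi_i)_{*1}$ equal to the restriction of $\kappa_1$ to $K_1(C(E_i))$; the $K_0$-condition is then automatic, since $(\phi_i)_{*0}([1_{C(E_i)}])=[q_i]=\kappa_0([p_i])$ and the classes $[p_i]$ generate $G$. Setting $\phi=\bigoplus_i\phi_i$ and using the above $K_0$- and $K_1$-decompositions then gives $\phi_{*0}|_G=\kappa_0$ and $\phi_{*1}=\kappa_1$.

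For a fixed $i$, write $B_0=q_iAq_i$, $Y=E_i$, and let $\lambda_1:K_1(C(Y))\to K_1(B_0)=K_1(A)$ be the given restricted map. By \cite{EG} there is a unital simple AH-algebra $B$ of no dimension growth and real rank zero (so $TR(B)=0$, see \cite{Lnproc}) with $(K_0(B),K_0(B)_+,[1_B])=(K_0(B_0),K_0(B_0)_+,[1_{B_0}])$ and $K_1(B)=K_1(B_0)$: the only constraints are on the ordered $K_0$, which $B_0$ inherits from $A$ (Riesz interpolation by \cite{Zh}, and dense pairing with traces), while $K_1(B)$ may be taken to be any countable abelian group. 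By \cite{LnK} there is a unital homomorphism $h:B\to B_0$ inducing the identity on $K_0$ and on $K_1$ under these identifications. By \cite{LnMZ} there is a unital monomorphism $\psi':C(Y)\to B$ realizing $(\psi')_{*1}=\lambda_1$ together with any admissible $K_0$ and measure data, e.g.\ a faithful Borel measure of full support (this last point being what forces $\psi'$ to be a monomorphism). Put $\phi_i=h\circ\psi'$. Then $\phi_i$ is again a monomorphism: $B$ is simple and $h$ is unital, hence $h$ is injective, so $h(\psi'(C(Y)))\cong C(Y)$; since the coordinate function $z\in C(Y)$ maps to a normal element $h(\psi'(z))$ generating this copy of $C(Y)$, its spectrum is all of $Y$. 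Functoriality gives $(\phi_i)_{*1}=h_{*1}\circ(\psi')_{*1}=\lambda_1$ and $(\phi_i)_{*0}([1_{C(Y)}])=h_{*0}([1_B])=[q_i]$, as needed.

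The step I expect to be the main obstacle is precisely the transport through the $TR=0$ model with non-trivial $K_1$: one must know that \cite{EG} permits prescribing $K_1(B)$ freely for a real-rank-zero AH model with a given ordered $K_0$-datum, and, more importantly, that the existence theorem of \cite{LnK} produces a unital homomorphism $B\to B_0$ realizing a prescribed $K_1$-map and not only a $K_0$-map. This is exactly the place where the argument must go beyond the special case $K_1(B)=\{0\}$ that sufficed in \ref{Lfepapp}. The remaining points — the cancellation bookkeeping producing the $q_i$, the automatic injectivity of $\phi_i$ from simplicity of $B$, and the reassembly of $\phi$ from the $\phi_i$ using the clopen decomposition of $C(X)$ — are routine.
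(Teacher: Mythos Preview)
Your proof is correct and uses exactly the same three ingredients as the paper (\cite{EG} for the $TR(A)=0$ model, \cite{LnK} for the embedding carrying both $K_0$ and $K_1$, and \cite{LnMZ} for realizing the prescribed $K_1$-map), with only a cosmetic reorganization: you decompose into clopen pieces in $A$ first and then pass each corner through a $TR=0$ model, whereas the paper passes to a single global $TR=0$ model $B$, decomposes there, builds an auxiliary $\psi$ with the right $K_0$, and then invokes Corollary~5.3 of \cite{LnMZ} once on all of $C(X)$ to fix $K_1$. Your worry about whether \cite{LnK} transports the $K_1$-data is unfounded---the paper itself relies on exactly that (it cites 4.6 of \cite{LnK} for an embedding carrying the full identification $(K_0,K_0^+,[1],K_1)$).
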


\begin{proof}
Since $A$ is simple and has real rank zero and stable rank one with
weakly unperforated $K_0(A),$  as in the proof of \ref{Meas1}, $K_0(A)$ has Riesz interpolation property
 and $\rho_A(K_0(A))$ is
dense in $\Aff(T(A)).$
It follows from \cite{EG} that there is a unital simple AH-algebra $B$
with no dimension growth and {real rank zero} such that
\beq\label{Lext-2}
(K_0(A), K_0(A)_+, [1_A], K_1(A))=(K_0(B), K_0(B)_+, [1_B], K_1(B)).
\eneq
It follows from 4.6 of \cite{LnK} that there exists a unital embedding $\imath: B\to A$ which carries the above identification. We will also use the fact that $\rho_B(K_0(B))=\rho_A(K_0(A))$ is dense in $\Aff(T(A)).$ Therefore it suffices
to show that the lemma holds for $A=B.$
There are mutually orthogonal nonzero projections $e_1,e_2,...,e_n\in B$ such that $\kappa_0([p_i])=e_i,$ $i=1,2,...,n.$
Let $X_i$ be the clopen subset of $X$ corresponding to
the projection $p_i,$ $i=1,2,..., n.$
Each $e_iBe_i$ is an infinite dimensional  unital simple \CA\, with $TR(e_iBe_i)=0.$ Therefore there is a monomorphism $\psi_i: C(X_i)\to
e_iBe_i,$ $i=1,2,...,n.$
Define a unital monomorphism $\psi: C(X)\to B$ by
$\psi(f)=\sum_{i=1}^n \psi_i(fp_i)$ for all $f\in C(X).$
Note that
\beq\label{Lext-2n}
\kappa_0=\psi_{*0}|_G.
\eneq
We also have that $\psi_{*0}$ and $\psi^{\sharp}: C(X)_{s.a.}\to \Aff(T(B))$ are compatible and $\psi^{\sharp}$ is strictly positive.
It follows from Corollary 5.3 of \cite{LnMZ} that there is a unital monomorphism
$\phi: C(X)\to B$ such that
\beq
\phi_{*0}=\psi_{*0}\andeqn\\
\phi_{*1}=\kappa_1.
\eneq
Lemma follows.
\end{proof}

\begin{lem}\label{Ldig}
Let $A$ be a unital simple \CA\, of real rank zero,
 stable rank one with weakly unperforated $K_0(A),$ let $X\subset \C$ be
a compact subset of the plane and
let $\Delta: (0,1)\to (0,1)$ be a non-decreasing function
such that $\lim_{t\to 0}\Delta(t)=0.$
For any $1>r_0>0,$ any $\ep>0,$ any $\eta>0,$  any $\eta_1>0$ with $\eta_1<r_0/4,$  any $\eta_2>0$  and any finite
subset ${\cal G}\subset C(\overline{X_{\eta_1}})_+,$ where
${\overline{X_{\eta_1}}=\overline{\{\lambda\in \C: {\rm dist}(\lambda, X)<\eta_1\}}},$
there is
a finite subset
${\cal H}\subset C(X)_{s.a.}$
  satisfying the following:
If $x,\,y\in A$ are   normal elements with ${\rm sp}(x),\,{\rm sp}(y)\subset X$ such that
\beq\label{Ldig-1}
|\tau\circ \phi(g)-\tau\circ \psi(g)|<\eta/2\tforal g\in {\cal H}\tand\\\label{Ldig-1+}
\mu_{\tau\circ \phi}(O)\ge \Delta(r)\tforal \tau\in T(A)
\eneq
for all open balls $O$ of $X$ with radius $r\ge r_0,$ where
$\phi, \psi: C(X)\to A$ are defined by $\phi(f)=f(x)$ and
$\psi(f)=f(y)$ for all $f\in C(X),$ respectively,
then there exists $\{\lambda_1, \lambda_2,...,\lambda_n\}\subset X$
which is $r_0$-dense in $X,$ non-zero mutually orthogonal projections
$\{e_1,e_2,...,e_n\}\subset A,$ two normal elements $x_0, y_0\in eAe,$
where $e=1-\sum_{i=1}^ne_i$ and a unitary $u\in A$ such that
\beq\label{Ldig-2}
&&\|x-(\sum_{i=1}^n \lambda_i e_i +x_0)\|<\ep/2,\,\,\,
\|u^*yu-(\sum_{i=1}^n \lambda_i e_i+y_0)\|<\ep/2\\\label{Ldig-3}
&&|\tau\circ \phi_0(g)-\tau\circ \psi_0(g)|<\eta\tforal g\in {\cal G}\tand
\tforal \tau\in T(A),\\\label{Ldig-4}
&&{\rm sp}(x_0), {\rm sp}(y_0)\subset \overline{X_{\eta_1}},\\\label{Ldig-4n}
&&\tau(\sum_{i=1}^n e_i)<\eta_2\tforal \tau\in T(A)\tand\\
&&\mu_{\tau\circ \phi_0}(O)\ge (1/2)\Delta(r/6)
\eneq
for all open balls $O\subset \overline{X_{\eta_1}}$ with radius $r\ge 3r_0$ and for
all $\tau\in T(A),$ where $\phi_0, \psi_0: C(\overline{X_{\eta_1}})\to A$ is defined by
$$
\phi_0(f)=\sum_{i=1}^nf(\lambda_i)e_i+f(x_0)\tand
\psi_0(f)=\sum_{i=1}^n f(\lambda_i)e_i+f(y_0)
$$
for all $f\in C(\overline{X_{\eta_1}}).$

\end{lem}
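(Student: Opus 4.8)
The plan is to digitize $x$ and $y$ simultaneously: near each point of a common $r_0$-net of $X$ one carves out of $x$ (resp. $y$) a small projection which is, approximately, a spectral projection, and then matches the carved projections up to $K_0$-equivalence so that a single unitary transports the $y$-picture onto the $x$-picture. First I would make two harmless reductions. Since the conclusion is only strengthened when $\ep$ is shrunk, we may assume $\ep<\eta_1$ and that $\ep$ is small relative to $r_0\Delta(r_0/2)$ and to the moduli of continuity of the members of ${\cal G}$. Then I would choose ${\cal H}\subset C(X)_{s.a.}$ to contain (i) the restrictions $g|_X$, $g\in{\cal G}$, and (ii) a finite family of positive test functions resolving $X$ down to a scale $\sigma\ll r_0$, including, for each ball $O$ of radius $\ge r_0$ in a fixed finite cover of $X$, a bump $h_O$ with $\chi_O\le h_O\le \chi_{O_\sigma}$. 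Item (ii) guarantees that the hypothesis $|\tau\circ\phi(g)-\tau\circ\psi(g)|<\eta/2$ on ${\cal H}$, together with $\mu_{\tau\circ\phi}(O)\ge \Delta(r)$ for $r\ge r_0$, forces $\mu_{\tau\circ\psi}$ to obey a comparable lower bound and forces ${\rm sp}(x)$ and ${\rm sp}(y)$ to be essentially $r_0$-dense in $X$ with ${\rm sp}(y)$ close to ${\rm sp}(x)$; item (i) makes the final trace estimate (\ref{Ldig-3}) automatic once the norm estimates (\ref{Ldig-2}) are in hand, because $\tau\circ\phi_0(g)$ and $\tau\circ\phi(g|_X)$ differ by $O({\rm Lip}(g)\,\ep)$.

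As $A$ is simple every $\tau\in T(A)$ is faithful, so ${\rm supp}(\mu_{\tau\circ\phi})={\rm sp}(x)$; hence $\mu_{\tau\circ\phi}(B(c,r))\ge \Delta(r)>0$ for $c\in X$, $r\ge r_0$, forces ${\rm sp}(x)$ to be $r_0$-dense in $X$. Covering $X$ by the balls $B(\lambda(\xi),r_0)$ with $\lambda(\xi)\in{\rm sp}(x)$, $|\xi-\lambda(\xi)|<r_0$, and extracting a finite subcover, I obtain $\lambda_1,\dots,\lambda_n\in{\rm sp}(x)$ with $X\subset\bigcup_iB(\lambda_i,r_0)$ — so $\{\lambda_i\}$ is $r_0$-dense in $X$ — and, using the control on ${\rm sp}(y)$ above, with each $\lambda_i$ at distance $<\eta_1$ from ${\rm sp}(y)$. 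Fix disjoint closed balls $\bar B_i=\bar B(\lambda_i,\delta)$ with $\delta$ small (in particular $\delta<\eta_1/4$ and $\delta$ small relative to $\ep$). For each $i$ pick a bump $g_i$ supported in $B_i$ with $g_i(\lambda_i)=1$ and $g_i(x)\ne0$ (possible since $\lambda_i\in{\rm sp}(x)$); the hereditary subalgebra $\overline{g_i(x)Ag_i(x)}$ is non-zero, and since $A$ has real rank zero it contains a projection $p_i^x$ with $g_i(x)p_i^x=p_i^x$. Then $\|(x-\lambda_i)p_i^x\|\le\delta$, the $p_i^x$ are mutually orthogonal, $\|P^xx(1-P^x)\|=O(\delta)$ with $P^x=\sum_ip_i^x$, and $(1-P^x)x(1-P^x)$ is $O(\delta)$-normal with spectrum in $\overline{X_{2\delta}}$. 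Replacing each $p_i^x$ by a non-zero subprojection — which preserves all these estimates — and using simplicity of $A$, we may also demand $\tau(P^x)<\eta_2/2$ for all $\tau$. The same construction applied to $y$ yields $p_i^y$ and $P^y=\sum_ip_i^y$ with $\tau(P^y)<\eta_2/2$.

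Since $A$ has stable rank one and strict comparison, for each $i$ we may shrink $p_i^x,p_i^y$ to non-zero projections $e_i\le p_i^x$, $\tilde e_i\le p_i^y$ with $[e_i]=[\tilde e_i]$ in $K_0(A)$ (pass to a common non-zero subequivalent projection, then use ${\rm sr}(A)=1$); all earlier estimates only improve. Put $P=\sum_ie_i$, $\tilde P=\sum_i\tilde e_i$; then $[1-P]=[1-\tilde P]$, so there is a unitary $u\in A$ with $u^*\tilde e_iu=e_i$ for all $i$ and $u^*(1-\tilde P)u=1-P$. Now $(1-P)x(1-P)$ is $O(\delta)$-normal with spectrum in $\overline{X_{2\delta}}$, and is perturbed, within $(1-P)A(1-P)$ and using the real-rank-zero finite-spectrum approximation techniques of \cite{Lnoldjfa} (cf. \ref{TappT}), to a genuine normal $x_0$ with $\|x_0-(1-P)x(1-P)\|=O(\delta)$ and ${\rm sp}(x_0)\subset\overline{X_{2\delta}}\subset\overline{X_{\eta_1}}$; likewise $(1-\tilde P)y(1-\tilde P)$ is perturbed to a normal $\tilde y_0\in(1-\tilde P)A(1-\tilde P)$ with ${\rm sp}(\tilde y_0)\subset\overline{X_{\eta_1}}$, and we set $y_0=u^*\tilde y_0u\in(1-P)A(1-P)$. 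Then $\|x-(\sum_i\lambda_ie_i+x_0)\|=O(\delta)<\ep/2$ and $\|u^*yu-(\sum_i\lambda_ie_i+y_0)\|=O(\delta)<\ep/2$; (\ref{Ldig-4n}) holds since $\tau(P)\le\tau(P^x)<\eta_2$; (\ref{Ldig-4}) is built in; and (\ref{Ldig-3}) follows as above. For the last estimate, given an open ball $O\subset\overline{X_{\eta_1}}$ of radius $r\ge3r_0$, choose a bump $g$ with $\chi_{O_{-r_0}}\le g\le\chi_O$; then $\tau\circ\phi_0(g)\ge\tau\circ\phi(g)-2\ep/r_0\ge\mu_{\tau\circ\phi}(B)-2\ep/r_0$ for a ball $B\subset O_{-r_0}\cap X$ of radius $\rho=\max(r_0,r/6)$ (such a $B$ fits because $\eta_1<r_0/4\le r/12$ leaves room), whence $\mu_{\tau\circ\phi_0}(O)\ge\Delta(\rho)-2\ep/r_0\ge\Delta(r/6)-2\ep/r_0\ge\tfrac12\Delta(r/6)$ by smallness of $\ep$.

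The main obstacle is the carving/perturbation step: producing, for $x$ (and for $y$), a small projection $P$ with $x$ approximately block-diagonal relative to $P$, with $PxP$ approximately of finite spectrum in $\{\lambda_i\}$, and with $(1-P)x(1-P)$ perturbable by $O(\delta)$ to a genuine normal element whose spectrum stays confined to the thin collar $\overline{X_{\eta_1}}$. This is exactly where the real-rank-zero hypothesis and the finite-spectrum approximation theory (\ref{TappT}, \cite{Lnalm,FR}) do the work, and confining the spectrum of the remainder to $\overline{X_{\eta_1}}$ — rather than letting it spread — is the delicate point. A secondary, purely bookkeeping, difficulty is tracking radii and tolerances so that the output net is genuinely $r_0$-dense and the measure lower bound comes out with the stated constants $\tfrac12$ and $r/6$ under the constraint $\eta_1<r_0/4$.
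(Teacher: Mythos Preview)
Your strategy is the paper's: carve approximate eigenprojections near a common $r_0$-net, match them in $K_0$, conjugate by a single unitary, and perturb the almost-normal corners to normals. Where the paper cites Lemma~2 of \cite{Lnjot94} for the carving, \cite{FR} for the perturbation, and Lemma~3.4 of \cite{LnHAH} for the final measure lower bound, you do all three by hand. One citation slip: \ref{TappT}/\cite{Lnoldjfa} approximate an \emph{already normal} element by one with finite spectrum under a $K_1$-triviality hypothesis that is absent here; the result that turns your $O(\delta)$-almost-normal corner $(1-P)x(1-P)$ into a genuine normal $x_0$ is \cite{FR} (which you also list). The paper fixes $\ep_1$ first, obtains $\dt_1$ from \cite{FR}, and only then carves finely enough that each corner is $\dt_1$-almost-normal; the containment ${\rm sp}(x_0)\subset\overline{X_{\eta_1}}$ then comes for free from $\ep_1\ll\eta_1$.

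There is one real gap in your $y$-step. When you write ``the same construction applied to $y$ yields $p_i^y$'', you are implicitly using ${\rm sp}(y)\cap B(\lambda_i,\delta)\ne\emptyset$: for any nonzero projection $q$ one has $\|(y-\lambda_i)q\|\ge{\rm dist}(\lambda_i,{\rm sp}(y))$, so no projection can be $\delta$-localized at $\lambda_i$ for $y$ unless ${\rm sp}(y)$ comes within $\delta$ of $\lambda_i$. The data you have --- your ${\cal H}$ with the \emph{fixed} $\eta/2$ bound, plus the measure lower bound which is assumed only for $\phi$ and only at radii $\ge r_0$ --- yields at best that ${\rm sp}(y)$ is roughly $r_0$-dense in $X$, not $\delta$-dense, and $r_0,\ep$ are independent in the statement. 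The paper is terse on exactly this point as well (it applies Lemma~2 of \cite{Lnjot94} to $y$ with the same $\lambda_i$ and simply asserts the resulting $p_i'$ are nonzero). The clean patch, consistent with the only application (\ref{Uni1}, where in fact $r_0\ll\ep$), is either to record the harmless reduction $r_0<\ep/4$ at the outset, or to carve $y$ at separate centers $\mu_i\in{\rm sp}(y)$ with $|\lambda_i-\mu_i|<r_0$ and absorb that discrepancy into the $\ep/2$ budget for the $y$-estimate.
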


\begin{proof}
To simplify the notation, we may assume that
$X$ is a subset of the unit disk.
Note that, since $A$ has real rank zero and stable rank one,
so does $pAp$  for any projection $p\in A.$ It follows that $pAp$  has (IR) (see \cite{FR}).
Let $1>\ep>0$ be given.
Let $\ep_1>0$ be such that $\ep/4>\ep_1>0.$ By \cite{FR}, there exists $\dt_1>0$  such that, for any \CA\, $D$ with (IR), any element $z\in D$ with $\|z\|\le 2$ and
\beq\label{Ldig-10}
\|z^*z-zz^*\|<\dt_1,
\eneq
then there is a normal element $z_0\in D$ such that
\beq\label{Ldig-11}
\|z_0-z\|<\ep_1.
\eneq
Let $\eta, \eta_1, \eta_2>0$ be given and  a finite subset ${\cal G}\subset C(\overline{X_{\eta_1}})_+$ be given.
Denote by $\phi': C(\overline{X_{\eta_1}})\to A$ the \hm\, defined by
$\phi'(f)=f(x)$ for all $f\in C(\overline{X_{\eta_1}}).$
Since $\eta_1<r_0/4,$ every open ball of $\overline{X_{\eta_1}}$ of  radius $r>r_0$ contains
an open balls of $X$ of radius $r/2.$ It follows that
\beq\label{Ldig-11+}
\mu_{\tau\circ \phi'}(O)\ge \Delta(r/2)
\eneq
for all open balls  $O\subset \overline{X_{\eta_1}}$ with radius $r>r_0$ and for all
$\tau\in T(A).$

We will applying Lemma 2 of \cite{Lnjot94}. Note that since $A$ has real rank zero, non-zero projections $p_k$ described in that lemma exists. Thus, we obtain non-zero mutually orthogonal projections $p_1,p_2,...,p_n\in A$ and
$p_1',p_2',...,p_n'\in A$ such that
\beq\label{Ldig-12}
\|x-(\sum_{i=1}^n\lambda_ip_i+pxp)\|<\min\{\dt_1/16,\ep_1/16\}\andeqn\\
\|y-(\sum_{i=1}^n \lambda_ip_i'+p'yp')\|<\min\{\dt_1/16, \ep_1/16\},
\eneq
{where $p=\sum_{i=1}^np_i,p'=\sum_{i=1}^np'_i$.}

Since ${\rm sp}(x)$ and ${\rm sp}(y)$ {are} $r_0$-dense by (\ref{Ldig-1+}), we may assume
that $\{\lambda_1,\lambda_2,...,\lambda_n\}$ is $r_0$-dense.
Since $A$ is simple and has real rank zero, there are possibly smaller non-zero projections $e_i\le p_i$ such that $e_i\lesssim p_i',$ $i=1,2,...,n.$
In other words, since $A$ has stable rank one, there is a unitary
$u\in A$ such that
\beq\label{Ldig-13}
\tau(\sum_{i=1}^n e_i)<\eta_2\tforal \tau\in T(A),\\
\|x-(\sum_{i=1}^n\lambda_ie_i+
\sum_{i=1}^n\lambda_i(p_i-e_i)+pxp)\|<\min\{\dt_1/16,\ep_1/16\}\andeqn\\
\|u^*yu-(\sum_{i=1}^n\lambda_i e_i+y_0')\|<\min\{\dt_1/16,\ep_1/16\},
\eneq
where $y_0'=u^*y_0u+\sum_{i=1}^n{\lambda_i}(u^*p_i'u-e_i).$
Put $x_0'=\sum_{i=1}^n\lambda_i(p_i-e_i)+pxp.$
Let $e=1-\sum_{i=1}^ne_i.$
Then
\beq\label{Ldig-14}
\|(x_0')^*(x_0')-(x_0')(x_0)^*\|<\dt_1\andeqn
\|(y_0')^*(y_0')-(y_0')(y_0')^*\|<\dt_1.
\eneq
By the choice of $\dt_1,$ by applying \cite{FR}, there exist normal elements
$x_0, y_0\in eAe$ such that
\beq\label{Ldig-15}
\|x_0-x_0'\|<\ep_1\andeqn \|y_0-y_0'\|<\ep_1{.}
\eneq
It follows that
\beq\label{Ldig-16}
\|x-(\sum_{i=1}^n \lambda_i e_i+x_0)\|<\ep_1\andeqn
\|u^*yu-(\sum_{i=1}^n\lambda_i e_i +y_0)\|<\ep_1{.}
\eneq
Define $\phi_0, \psi_0: C(\overline{X_{\eta_1}})\to A$ by
$$
\phi_0(f)=\sum_{i=1}^nf(\lambda_i)e_i +f(x_0)\andeqn\\
\psi_0(f)=\sum_{i=1}^nf(\lambda_i)e_i+f(y_0)
$$
for all $f\in C(\overline{X_{\eta_1}}).$
Now, we will choose $\ep_1$ sufficiently small to begin with.
First, by applying Lemma 3.4 of \cite{LnHAH}, we will choose ${\cal H}$ sufficiently large and $\sigma$ sufficiently small (independent of $A$ and normal elements given) so  that
\beq\label{Ldig-17}
\mu_{\tau\circ\phi_0}(O)\ge (1/2)\Delta(r/6)
\eneq
for all open balls $O$ of $\overline{X_{\eta_1}}$ of radius $r\ge 3r_0,$ if
 (\ref{Ldig-3}) holds.  In particular, we choose
 ${\cal H}\supset {\cal G}$ and $\sigma<\eta/2.$
 Since ${\cal G}$ is finite and given, with sufficiently smaller $\ep_1,$ we also have, by (\ref{Ldig-16}), and by assumption (\ref{Ldig-3}),
 \beq\label{Ldig-18}
|\tau\circ \phi_0(g)-\tau\circ \psi_0(g)|<\eta\tforal g\in {\cal G}
\eneq
and for all $\tau\in T(A).$
\end{proof}

\begin{thm}\label{Uni1}
Let $A$ be a unital separable simple \CA\, of real rank zero,
 stable rank one with weakly unperforated $K_0(A),$ let $X\subset \C$ be
a compact subset of the plane and
let $\Delta: (0,1)\to (0,1)$ be a non-decreasing function
such that $\lim_{t\to 0}\Delta(t)=0.$
For any $\ep>0$ there is a finite subset of unitaries ${\cal V}\subset C(X),$ a finite
subset  of projections $\{p_1, p_2,...,p_n\}\subset C(X),$ a finite subset
${\cal H}\subset C(X)_{s.a.},$ $\sigma>0$ and $r_0>0$ satisfying the following:
If $x,\,y\in A$ are   normal elements with ${\rm sp}(x),{\rm sp}(y)\subset X$ such that
\beq\label{Uni1-1}
\phi_{*0}([p_i])=\psi_{*0}([p_i])\\
\phi_{*1}|_{\cal V}=\psi_{*1}|_{\cal V},\\
|\tau\circ \phi(g)-\tau\circ \psi(g)|<\sigma\tforal g\in {\cal H}\tand\\
\mu_{\tau\circ \phi}(O)\ge \Delta(r)\tforal \tau\in T(A)
\eneq
for all open balls $O$ of $X$ with radius $r\ge r_0,$ where
$\phi, \psi: C(X)\to A$ are defined by $\phi(f)=f(x)$ and
$\psi(f)=f(y)$ for all $f\in C(X),$ respectively.
then there exists a unitary $u\in A$ such that
\beq\label{Uni1-2}
\|u^*xu-y\|<\ep.
\eneq
\end{thm}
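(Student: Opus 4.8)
The plan is to reduce to comparing two normal elements that share a common ``dense diagonal part'' via the diagonalization Lemma \ref{Ldig}, and then to match the remaining corners by importing the $K_1$-information through the existence Lemma \ref{Lext} and comparing the resulting vanishing-$K_1$ pieces with \ref{Meas1} (equivalently, with \ref{TappT} and \ref{MLfsp}). First I would fix $\ep>0$ and, working backwards, choose $r_0>0$ small, a tiny $\eta_1\in(0,r_0/4)$, a tiny $\eta_2>0$, and the finite sets and tolerances $\mathcal{G},\mathcal{H},\sigma$ demanded by \ref{Ldig} and \ref{Meas1}. The finite set of unitaries $\mathcal{V}\subset C(X)$ would be chosen so that agreement on $\mathcal{V}$ forces $\phi_{*1}=\psi_{*1}$ on the part of $K_1(C(X))$ visible at scale $r_0$ (only finitely many classes matter once $X$ is replaced by its $\ep/64$-connected components); the projections $\{p_i\}$ would be chosen by refining a clopen partition of a neighbourhood of $X$ at scale $r_0$, so that agreement of their classes forces the relevant $K_0$-matching; and $\mathcal{H},\sigma$ would absorb the trace comparison. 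All of these are finite subsets and positive numbers, as the statement requires.

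Next I would apply \ref{Ldig} to $x,y$ with these parameters. This yields an $r_0$-dense set $\{\lambda_1,\dots,\lambda_n\}\subset X$, mutually orthogonal non-zero projections $\{e_1,\dots,e_n\}$ with $\tau(\sum_i e_i)<\eta_2$ for all $\tau\in T(A)$, normal elements $x_0,y_0\in eAe$ with $e=1-\sum_i e_i$ and ${\rm sp}(x_0),{\rm sp}(y_0)\subset \overline{X_{\eta_1}}$, and a unitary $u_1$ with $\|x-(\sum_i\lambda_i e_i+x_0)\|<\ep/2$ and $\|u_1^*yu_1-(\sum_i\lambda_i e_i+y_0)\|<\ep/2$, together with the inherited estimates: the functional-calculus maps $\phi_{x_0},\phi_{y_0}\colon C(\overline{X_{\eta_1}})\to eAe$ have $\eta$-close traces on $\mathcal{G}$ and satisfy $\mu_{\tau\circ\phi_{x_0}}(O)\ge(1/2)\Delta(r/6)$ on balls $O\subset\overline{X_{\eta_1}}$ of radius $\ge 3r_0$. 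Since the summand $\sum_i\lambda_i e_i$ is common to both sides, it suffices to produce $w\in U(eAe)$ with $\|w^*x_0w-y_0\|<\ep/2$, for then $u:=u_1(w+\sum_i e_i)$ does the job. I would also record that the $K$-data transfer: the diagonal summand is trivial in $K_1$ and contributes matching classes $[e_i]$ in $K_0$, so $\phi_{x_0}$ and $\phi_{y_0}$ agree on the relevant test $K_0$-classes and, crucially, $(\phi_{x_0})_{*1}=(\phi_{y_0})_{*1}$.

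It remains to match $x_0$ and $y_0$ inside the small corner $eAe$, itself a unital simple \CA\ with real rank zero, stable rank one and weakly unperforated $K_0$. Here I would absorb the common $K_1$-class into an arbitrarily small sub-corner: using \ref{Lext} --- via the AH-model $B$ with $(K_0,K_0^+,[1],K_1)(B)=(K_0,K_0^+,[1],K_1)(A)$, real rank zero, and $\rho_B(K_0(B))$ dense in $\Aff(T(A))$, with a unital embedding $B\hookrightarrow A$, exactly as in the proofs of \ref{Meas1} and \ref{Lext} --- I would build a unital monomorphism $h_0\colon C(\overline{X_{\eta_1}})\to p(eAe)p$ into a corner with $\tau(p)$ tiny, realizing the common $K_1$-class and the relevant $K_0$-classes, with $\tau\circ h_0$ small and $\mu_{\tau\circ h_0}$ bounded below. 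Comparing $x_0$ and $y_0$ with $h_0$ plus complementary corners, the complementary parts have vanishing $K_1$-data, matching $K_0$-data, close traces and (after shrinking $p$) a preserved measure lower bound; \ref{TappT} then replaces them by normal elements with finite spectrum, \ref{MLfsp} (the marriage-lemma package) together with \ref{Meas1} produces a unitary identifying them to within $\ep/4$, and the common summand $h_0$ is matched by the identity. Pasting the unitaries in $eAe$ gives the desired $w$, and combining with the previous step completes the argument.

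The step I expect to be the main obstacle is exactly this last $K_1$-absorption: arranging, \emph{consistently} for $x_0$ and for $y_0$ and \emph{without circularity}, the splitting ``fixed $K_1$-carrying model on a tiny corner $\oplus$ vanishing-$K_1$ remainder'' up to approximate unitary equivalence, while keeping control of all the quantitative invariants (measure lower bounds, traces, $K_0$-classes) needed to feed \ref{Meas1} on the remainder. This forces a careful choice and telescoping of the constants --- the successive losses, from $\Delta$ to $(1/2)\Delta(\cdot/6)$ and onward, must be absorbed into the hypothesis by taking $r_0$ small --- and a careful transfer of the model between the AH-model $B$, where \ref{Lext} naturally produces it, and the ambient $A$. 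The diagonalization \ref{Ldig}, the comparison of finite-spectrum normals via the marriage lemma, and the pasting of unitaries are then routine bookkeeping.
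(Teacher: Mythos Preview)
Your overall architecture is correct and matches the paper's: apply \ref{Ldig} to get a common diagonal $\sum_i\lambda_ie_i$ plus corners $x_0,y_0$ with matching $K_1$, kill the $K_1$ of the corners, and finish with \ref{Meas1}. You also correctly flag the $K_1$-absorption as the crux. But the mechanism you propose for that step does not work, and the paper uses a genuinely different trick that you are missing.

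Your plan is to build a single model $h_0:C(\overline{X_{\eta_1}})\to p(eAe)p$ carrying the common $K_1$-class, and then ``compare $x_0$ and $y_0$ with $h_0$ plus complementary corners.'' The problem is that this comparison is exactly what you are trying to prove. There is no lemma in the toolkit that lets you write $x_0\approx h_0(z)\oplus x_0'$ with $x_0'$ having trivial $K_1$: \ref{Ldig} only peels off a finite-spectrum diagonal (trivial $K_1$), and \ref{Cabs} only swaps two things when both already have trivial $K_1$. Since $h_0$ has nontrivial $K_1$ by design, you cannot swap it in for any piece of a diagonal via \ref{Cabs}, and you cannot extract it from $x_0$ without circularity. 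So the step ``the complementary parts have vanishing $K_1$-data'' is asserted but not achieved.

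The paper's device is to build \emph{two} models, $H_1$ and $H_2$, on two tiny sub-corners $P_1,P_2$ of the diagonal, with $(H_1)_{*1}=-(\phi_x)_{*1}$ and $(H_2)_{*1}=+(\phi_x)_{*1}$. The sum $H_1\oplus H_2$ then has \emph{trivial} $K_1$, so \ref{TappT} and \ref{Cabs} legitimately let you rotate (a piece of) the finite-spectrum diagonal onto $H_1(z)+H_2(z)$. After this swap, $H_1$ sits next to $x_0$ (respectively $y_0$), and $(H_1)_{*1}+(\phi_{x_0})_{*1}=0$ kills the $K_1$; \ref{Meas1} now applies to $H_1\oplus x_0\oplus\phi_{0,0}$ versus $H_1\oplus y_0\oplus\phi_{0,0}$, while $H_2$ stays as a common anchor on both sides. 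This ``pair with opposite $K_1$'' trick is the missing idea in your proposal.
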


\begin{proof}
To simplify notations, we may assume that $X$ is a compact subset of the unit disk.
Let $\Delta_1(r)=(1/64)\Delta(r/12)$ for all $r\in (0,1).$
Let $\ep>0.$ Choose ${\cal F}=\{z\},$ where $z$ is the identity function on the unit disk.

Let $\ep_0=\ep/{64}.$    Choose $r_0=\ep_0/16.$   Let $d>0$ with
$d<\ep_0/2^{20}$ be required by  \ref{Meas1} for $\ep_0$ (in place of $\ep$) and $\Delta_1$
(in place of $\Delta$). Put $Y=\overline{X_{d/2}}.$
Let $\{f_1, f_2,...,f_n\}\subset  C(Y)$ be a finite subset of mutually orthogonal
projections and  let ${\cal H}_1\subset C(Y)$  (in place of ${\cal H}$)  be required by
 \ref{Meas1}
for $\ep_0$ (in place of $\ep$) and for $\Delta_1$ (in place of $\Delta$).
We may assume that
$1_{C(Y)}=\sum_{i=1}^n f_i$ and
$f_i$ corresponding to $r_0/2^{14}$-connected components.
Note that $Y$ is homeomorphic to a finite CW complex in the plane.
Let $\{v_1,v_2,...,v_{n_1}\}\subset Y$ be a set of unitaries which generates $K_1(C(Y)).$

Choose $\ep_1>0$ satisfying the following:
if $x', y'$ be two normal elements in a unital \CA\, $B$ with ${\rm sp}(x'),\, {\rm sp}(y')\subset
Y$ and
$$
\|x'-y'\|<\ep_1,
$$
then
\beq\label{Uni1-3}
(\phi')_{*0}([f_i])&=&(\psi')_{*0}([f_i]),\,\,\,i=1,2,...,n\andeqn\\
(\phi')_{*1}&=&(\psi')_{*1},
\eneq
where $\phi', \psi': C(Y)\to B$ are defined by $\phi'(f)=f(x')$ and $\psi'(f)=f(y')$ for all $f\in C(Y).$
Let $\ep_2=\min\{\ep_1/4, \ep_0/2\}.$
Let $\eta=(1/2^{10})(\Delta_1(\ep_0/64)),$ $\eta_1=\min\{d/2,r_0/64\}$
and $\eta_2=\eta.$

Let ${\cal H}\subset { C(X)_{s.a}}$ be a finite subset   be required by  \ref{Ldig}
for $r_0,$ $\ep_2$ (in place of $\ep$), $\eta,$ $\eta_1,$ $\eta_2,$ ${\cal H}_1$ (in place
of ${\cal G}$) and $\Delta.$

Let $p_i=f_i|_{X},$ $i=1,2,...,n$ and let $u_j=v_j|_{X},$ $j=1,2,...,n_1.$
Put ${\cal V}=\{u_1, u_2,...,u_{n_1}\}.$
Let $\sigma=\eta/2.$
Now suppose that $x, y$ are two normal elements in $A$ satisfying the assumption for
the above ${\cal V},$ $\{p_1,p_2,...,p_n\},$ ${\cal H},$ $\sigma$ and $r_0.$

By  \ref{Ldig}, there  exists $\{\lambda_1, \lambda_2,...,\lambda_m\}\subset X$ which
is $r_0$-dense, there are non-zero mutually orthogonal projections
$\{e_1,e_2,...,e_m\}\subset A,$  two normal elements $x_0, y_0\in eAe,$
where $e=1-\sum_{i=1}^me_i$ and a unitary $w\in A$ such that
\beq\label{Uni-4}
&&\|x-(\sum_{i=1}^m \lambda_i e_i +x_0)\|<\ep_2/2,\,\,\,
\|w^*yw-(\sum_{i=1}^m \lambda_i e_i+y_0)\|<\ep_2/2,\\\label{Uni-5}
&&|\tau\circ \phi_0(g)-\tau\circ \psi_0(g)|<\eta\tforal g\in {\cal H}_1\tand
\tforal \tau\in T(A),\\\label{Uni-6}
&&{\rm sp}(x_0), {\rm sp}(y_0)\subset \overline{X_{\eta_1}},\\\label{Uni-7}
&&\tau(\sum_{i=1}^m e_i)<\eta_2\tforal \tau\in T(A)\tand\\\label{Uni-7+}
&&\mu_{\tau\circ \phi_0}(O)\ge (1/2)\Delta(r/6)
\eneq
for all open balls $O\subset \overline{X_{\eta_1}}$ with radius $r\ge 3r_0$ and for
all $\tau\in T(A),$ where $\phi_0, \psi_0: C(\overline{X_{\eta_1}})\to A$ is defined by
$$
\phi_0(f)=\sum_{i=1}^mf(\lambda_i)e_i+f(x_0)\andeqn
\psi_0(f)=\sum_{i=1}^m f(\lambda_i)e_i+f(y_0)
$$
for all $f\in C(\overline{X_{\eta_1}}).$

Since $A$ is a unital simple \CA\, with real rank zero, there are, for each $i,$  non-zero mutually orthogonal projections
$e_{i,0},$
$e_{i,1},e_{i,2}$ such that
\beq\label{Uni-8-1}
e_i=e_{i,0}+e_{i,1}+e_{i,2}\andeqn\, 9\tau(\sum_{i=1}^n(e_{i,1}+e_{i,2}) )< \tau(e_j)
\eneq
for all $\tau\in T(A)$ and $j=1,2,...,m.$
Define
\beq\label{Uni-8}
\phi_0'(f)=\sum_{i=1}^mf(\lambda_i)e_i,\,\,\,
\phi_{0,0}(f)=\sum_{i=1}^m f(\lambda_i)e_{i,0},\\
\phi_{0,1}(f)=\sum_{i=1}^m f(\lambda_i)e_{i,1}\,\, \andeqn
\phi_{0,2}=\sum_{i=1}^m f(\lambda_i)e_{i,2}
\eneq
for all $f\in C(Y).$

Put $P_1=\sum_{i=1}^m e_{i,1}$ and $P_2=\sum_{i=1}^ne_{i,2}.$
We have
\beq\label{Uni-9}
\tau(P_1+P_2)<\eta_2/8\tforal \tau\in T(A).
\eneq

It follows from  \ref{Lext} that there are unital monomorphisms
$H_1: C(Y)\to P_1AP_1$ and $H_2: C(Y)\to P_2AP_2$ such that
\beq\label{Uni-10}
(H_1)_{*0}([f_i])=(\phi_{0,1})_{*0}([f_i]),\,\,\, (H_2)_{*0}([f_i])=(\phi_{0,2})_{*0}([f_i]),\,\,\,i=1,2,...,n\andeqn\\\label{Uni-10+}
(H_1)_{*0}([v_j])=-(\phi_x)_{*1}([v_j])\andeqn (H_2)_{*1}([v_j])=(\phi_x)_{*1}([v_j]),\,\,\,j=1,2,...,n_1,
\eneq
where $\phi_x: C(Y)\to A$ is defined by $\phi_x(f)=f(x)$ for all $f\in C(Y).$
Let $x_1=H_1(z)+H_2(z),$ where $z$ is the identity function on $Y.$
Then, by \ref{TappT}, there are $\{\mu_1, \mu_2,....,\mu_{m_1}\}$ which is $r_0/2^{12}$-dense
and mutually orthogonal non-zero projections $\{e_{1,3}, e_{2, 3},..., {e_{m_1,3}}\}$
in $(P_1+P_2)A(P_1+P_2)$ such that
\beq\label{Uni-11}
\|x_1-\sum_{j=1}^{m_1}\mu_je_{j,3}\|<r_0/2^{12}.
\eneq
By \ref{Cabs}, there is a unitary $v\in (1-e)A(1-e)$ such that
\beq\label{Uni-12}
\|v^*\phi_0'(z)v-(x_1+\phi_{0,0}(z))\|<r_0/2^{11}.
\eneq

Define $\phi_x', \psi_y': C(Y)\to (1-P_2)A(1-P_2)$ by
\beq\label{Uni-13}
\phi_x'(f)=H_1(f)+f(x_0)+\phi_{00}(f)\andeqn
\psi_y'(f)=H_1(f)+f(y_0)+\phi_{00}(f)
\eneq
for all $f\in C(Y).$
We have, for all $\lambda\in \C,$
\beq\label{Uni-14}
\lambda-\phi_x'(z), \,\,\,\lambda-\psi_y'(z)\in {\rm Inv}_0((1-P_2)A(1-P_2))
\eneq
if $\lambda\not\in \overline{X_{d/2}}.$  By the choice of $\ep_1,$ (\ref{Uni-4}) and
(\ref{Uni-10}), and the fact that $A$ has stable rank one,
we check that
\beq\label{Uni-15}
(\phi_x')_{*0}([f_i])=(\psi_y')_{*0}([f_i]),\,\,\,i=1,2,...,n.
\eneq
For each open ball $O\subset \overline{X_{d/2}}$ with radius $r>r_0,$
we estimate that, by (\ref{Uni-7}) and (\ref{Uni-7+}),
\beq\label{Uni-16}
\mu_{\tau\circ \phi_x'}(O) &>&
\mu_{\tau\circ \phi_0}(O)-\tau(P_1)-\tau(P_2)\\
&\ge & (1/2)\Delta(r/6)-\eta_2\\
&\ge & (1/2)\Delta(r/6)-2^{-10}\Delta_1(\ep_0/64)\\
&>& 1/4\Delta(r/6)\ge \Delta_1(r)
\eneq
for all $r\ge 3r_0$ and for all $\tau\in T(A).$
It follows that
\beq\label{Uni-17}
\mu_{t\circ \phi_x'}(O)> \Delta_1(r)\tforal t\in T((1-P_2)A(1-P_2))
\eneq
and for all open balls {$O$} with radius $r>3r_0.$
For $f\in {\cal H}_1,$  by (\ref{Uni-7}) and (\ref{Uni-5})
\beq\label{Uni-18}
|\tau\circ \phi_x'(f)-\tau\circ \psi_y'(f)| &<&
|\tau\circ \phi_0(f)-\tau\circ \psi_0(f)|+2\tau(1-e)\\
&<& \eta+2\eta_2\le (3/2^{10})(\Delta_1(\ep_0/64))
\eneq
for all $\tau\in T(A).$ It follows that
\beq\label{Uni-19}
|t\circ \phi_x'(f)-t\circ \psi_y'(f)|<{(3/2^{10})(\Delta_1(\ep_0/64))\over{1-\eta}}<(1/4)\Delta(\ep_0/64)
\eneq
for all $t\in T((1-P_2)A(1-P_2)).$

 It follows from \ref{Meas1} that there is a unitary $w_0\in (1-P_2)A(1-P_2)$ such that
 \beq\label{Uni-20}
 \|w_0^*\phi_y'(z)w_0-\phi_x'(z)\|<\ep_0.
 \eneq
Put $w_1=v+e$ and $w_2=w_0+(1-P_2)$ and $u=w_1w_2^*w_1^*.$
Then, by (\ref{Uni-4}), (\ref{Uni-12}), (\ref{Uni-8}) and (\ref{Uni-12}) again,
{\beq\label{Uni-21}
x&\approx&\hspace{-0.1in}_{{\ep_2/2}} \,\,\,\,\phi_0(z)\approx_{r_0/2^{11}} w_1(x_1+\phi_{00}(z)+x_0)w_1^*\\
&=&\,\,\,\,\,\,\,\,\,w_1(H_2(z)+H_1(z)+\phi_{00}(z)+x_0)w_1^* \\
&\approx&\hspace{-0.1in}_{\ep_0} \,\,\,\,\,\,\,\,w_1(H_2(z)+w_2^*\phi_y'(z)w_2)w_1^*\\
&=&\,\,\,\,\,\,\,\,\, w_1w_2^*(H_2(z)+H_1(z)+\phi_{00}(z)+y_0)w_2w_1^* \\
&\approx&\hspace{-0.1in}_{r_0/2^{11}}w_1w_2^*(w_1^*(\phi_0'(z)+y_0)w_1w_2w_1^*\\
&\approx&\hspace{-0.1in}_{{\ep_2/2}}\,\,\,\,\,uyu^*.
\eneq}
But {$\ep_2/2+r_0/2^{11}+\ep_0+r_0/2^{11}+\ep_2/2<\ep.$}
\end{proof}

\begin{thm}\label{Uni2}
Let $A$ be a unital separable simple \CA\, of {real rank zero, stable rank one and weakly unperforated $K_0(A)$} and let $x\in A$ be a normal element with ${\rm sp}(x)=X.$ For any $\ep>0$ there is a finite subset ${\cal V}\subset K_1(C({\rm sp}(x))),$ a finite
subset ${\cal P}\subset K_0(C({\rm sp}(x))),$ a finite subset
${\cal H}\subset C({\rm sp}(x))_{s.a.},$ $\sigma>0$  satisfying the following:
If $y\in A$ is    normal element with ${\rm sp}(y)\subset X$ such that
\beq\label{Uni2-1}
\phi_{*0}|_{\cal P}&=&\psi_{*0}|_{\cal P}\\
\phi_{*1}|_{\cal V}&=&\psi_{*1}|_{\cal V}\tand\\
|\tau\circ \phi(g)-\tau\circ \psi(g)|&<&\sigma\tforal g\in {\cal H} \tforal \tau\in T(A),
\eneq
then there exists a unitary $u\in A$ such that
\beq\label{Uni2-2}
\|u^*xu-y\|<\ep.
\eneq
\end{thm}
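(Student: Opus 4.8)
The plan is to derive \ref{Uni2} from \ref{Uni1}. The key difference is that \ref{Uni1} assumes ${\rm sp}(x),{\rm sp}(y)\subset X$ for a fixed compact subset $X\subset\C$, with the projections $p_i\in C(X)$ coming from a choice of clopen decomposition of $\overline{X_{d/2}}$, and requires the measure lower bound $\mu_{\tau\circ\phi}(O)\ge\Delta(r)$ for a prescribed function $\Delta$. In \ref{Uni2} we have a single normal element $x$ with ${\rm sp}(x)=X$, and we get to use the homomorphism $\phi_x\colon C(X)\to A$, $\phi_x(f)=f(x)$, to \emph{produce} the required data rather than receiving it. So first I would invoke \ref{PDelta}: since $\phi\colon C(X)\to A$ (defined by $\phi(f)=f(x)$) is a unital monomorphism into the unital simple \CA\, $A$, there is an increasing $\Delta\colon(0,1)\to(0,1)$ with $\lim_{r\to 0}\Delta(r)=0$ such that $\mu_{\tau\circ\phi}(O)\ge\Delta(r)$ for every open ball $O$ of $X$ of radius $r$ and every $\tau\in T(A)$. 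This $\Delta$ is now fixed, depending only on $x$.

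Next, feed $\ep$, $X$, and this $\Delta$ into \ref{Uni1}. That produces a finite set of unitaries ${\cal V}_0\subset C(X)$, finitely many projections $\{p_1,\dots,p_n\}\subset C(X)$, a finite subset ${\cal H}\subset C(X)_{s.a.}$, and constants $\sigma>0$, $r_0>0$. I would then set ${\cal V}=\{[v]:v\in{\cal V}_0\}\subset K_1(C(X))$ and ${\cal P}=\{[p_i]:i=1,\dots,n\}\subset K_0(C(X))$, and keep the same ${\cal H}$ and $\sigma$ (the constant $r_0$ is swallowed, since the measure hypothesis in \ref{Uni1} for radii $r\ge r_0$ is automatically met by $\phi$ for \emph{all} radii by the previous paragraph). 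Now suppose $y\in A$ is normal with ${\rm sp}(y)\subset X$ and the three conditions of \ref{Uni2} hold. The conditions $\phi_{*0}|_{\cal P}=\psi_{*0}|_{\cal P}$ and $\phi_{*1}|_{\cal V}=\psi_{*1}|_{\cal V}$ say exactly that $\phi_{*0}([p_i])=\psi_{*0}([p_i])$ and $\phi_{*1}([v])=\psi_{*1}([v])$ for $v\in{\cal V}_0$, which are the first two hypotheses of \ref{Uni1}. The third condition of \ref{Uni2} gives the trace estimate on ${\cal H}$. And the measure lower bound $\mu_{\tau\circ\phi}(O)\ge\Delta(r)$ for $r\ge r_0$ is in hand from \ref{PDelta}. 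Hence \ref{Uni1} applies and yields a unitary $u\in A$ with $\|u^*xu-y\|<\ep$.

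There is one bookkeeping issue worth flagging: \ref{Uni1} is stated for a \emph{fixed} compact $X\subset\C$ and the normal elements are only required to have spectra inside $X$; here $X={\rm sp}(x)$ is exactly the spectrum of $x$, so $\phi\colon C(X)\to A$ is a unital monomorphism and $\psi\colon C(X)\to A$, $\psi(f)=f(y)$, is a unital \hm\, with spectrum ${\rm sp}(y)\subset X$ — precisely the setup of \ref{Uni1}. So no enlargement of $X$ is needed, and the hypothesis ${\rm sp}(y)\subset X={\rm sp}(x)$ in \ref{Uni2} is exactly what is used. The only genuinely delicate point is making sure the finite sets ${\cal P}\subset K_0(C(X))$ and ${\cal V}\subset K_1(C(X))$ in the statement of \ref{Uni2} are interpreted correctly: the hypotheses $\phi_{*0}|_{\cal P}=\psi_{*0}|_{\cal P}$ refer to the induced maps on $K$-theory, and one must check these are equivalent to the pointwise statements on the generating projections/unitaries used in \ref{Uni1}; this is immediate since $\phi_{*0}$ is a group homomorphism determined by its values on a generating set, so agreement on the $[p_i]$ (resp. on $[v]$ for $v\in{\cal V}_0$) is the same as agreement on the subgroups they generate, which we may take ${\cal P}$ and ${\cal V}$ to be. The main obstacle is essentially nonexistent beyond this translation; the real content is all in \ref{Uni1}, \ref{PDelta}, \ref{Meas1}, \ref{Ldig}, and \ref{Lext}, which we are entitled to cite.
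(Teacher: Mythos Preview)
Your proposal is correct and is precisely the intended argument: the paper states \ref{Uni2} without proof, as it is meant to follow immediately from \ref{Uni1} once one uses \ref{PDelta} to manufacture the measure lower bound $\Delta$ from the fixed monomorphism $\phi(f)=f(x)$ (which is injective because ${\rm sp}(x)=X$). Your translation of the projections and unitaries from $C(X)$ into $K_0$- and $K_1$-classes is the only bookkeeping required, and you have handled it correctly.
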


\begin{thm}\label{Uni3}
Let $A$ be a unital separable simple \CA\, with real rank zero, stable rank one and with weakly unperforated $K_0(A).$ Let $x, y\in  A$ be two normal elements. Suppose that $D_c(x, y)=0$ and
$[\lambda-x]=[\lambda-y]$ in $K_1(A)$ for all $\lambda\not\in X\cup Y.$  Then \beq\label{Uni3-1}
{\rm dist}({\cal U}(x), {\cal U}(y))=0
\eneq
\end{thm}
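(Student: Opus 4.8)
The plan is to derive Theorem \ref{Uni3} from the approximate unitary equivalence criterion \ref{Uni2}; so the whole task is to translate the two hypotheses $D_c(x,y)=0$ and $[\lambda-x]=[\lambda-y]$ (for $\lambda\notin X\cup Y$) into the data required there, namely ${\rm sp}(x)={\rm sp}(y)$, $(\phi_x)_{*i}=(\phi_y)_{*i}$ for $i=0,1$, and $\tau\circ\phi_x=\tau\circ\phi_y$ for all $\tau\in T(A)$, where $\phi_x,\phi_y: C(X\cup Y)\to A$ are the unital \hm s determined by $x$ and $y$.

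First I would show that $D_c(x,y)=0$ forces $X:={\rm sp}(x)={\rm sp}(y)=:Y$. By \ref{hu1} it gives $\phi_x\sim\phi_y$, i.e. $[\phi_x(f_O)]=[\phi_y(f_O)]$ in $W(A)$ for every open $O\subset X\cup Y$; if there were $\xi\in X\setminus Y$, choosing $O$ a small ball centred at $\xi$ with $O\cap Y=\emptyset$ would give $0\neq[\phi_x(f_O)]=[\phi_y(f_O)]=0$, a contradiction, and symmetrically for $Y\setminus X$. Hence ${\rm sp}(y)=X$, and the $K_1$-hypothesis now reads $[\lambda-x]=[\lambda-y]$ for every $\lambda\notin X$.

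Next, since $X$ is a compact subset of the plane it has covering dimension at most two and $\check H^2(Z)=\{0\}$ for every compact $Z\subset X$, so by the remark in \ref{Dmetricsp} the relation $\phi_x\sim\phi_y$ implies that $\phi_x$ and $\phi_y$ induce the same semigroup homomorphism $W(C(X))\to W(A)$. This at once gives $(\phi_x)_{*0}=(\phi_y)_{*0}$ on $K_0(C(X))$ and $d_\tau(\phi_x(a))=d_\tau(\phi_y(a))$ for all $a\in M_\infty(C(X))_+$ and $\tau\in T(A)$; taking $a=f_O$ shows $\mu_{\tau\circ\phi_x}(O)=\mu_{\tau\circ\phi_y}(O)$ for all open $O$, whence by outer regularity $\mu_{\tau\circ\phi_x}=\mu_{\tau\circ\phi_y}$ and so $\tau\circ\phi_x=\tau\circ\phi_y$ on $C(X)$ for every $\tau$. (The $K_0$-statement can also be obtained directly from \ref{Pcsameclose} with $\delta$ arbitrary, using that $K_0(C(X))$ is generated by projections in $C(X)$ when $X$ is planar.) For $K_1$, for each $\lambda\notin X$ the element $\lambda-z$ is a unitary in $C(X)$ with $(\phi_x)_{*1}([\lambda-z])=[\lambda-x]$ and $(\phi_y)_{*1}([\lambda-z])=[\lambda-y]$, so the (translated) hypothesis says these coincide; since for compact planar $X$ the group $K_1(C(X))\cong\check H^1(X;\Z)$ is generated by $\{[\lambda-z]:\lambda\in\C\setminus X\}$, we get $(\phi_x)_{*1}=(\phi_y)_{*1}$ on all of $K_1(C(X))$.

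Finally, for a given $\ep>0$ I would apply \ref{Uni2} to $x$ and $\ep$ to obtain the finite sets ${\cal V},{\cal P},{\cal H}$ and $\sigma>0$; by the previous two paragraphs all the hypotheses \ref{Uni2-1} hold (indeed with exact equalities), so there is a unitary $u\in A$ with $\|u^*xu-y\|<\ep$. Letting $\ep\to 0$ gives ${\rm dist}({\cal U}(x),{\cal U}(y))=0$. The main obstacle is the third paragraph: the passage from $D_c(x,y)=0$ to control of the $K_0$-map and of every trace rests on the planarity of $X$ feeding through the comparison of induced maps on Cuntz semigroups, while the passage from the scalar-resolvent $K_1$-condition to equality of the full $K_1$-maps rests on $K_1(C(X))$ being generated by the unitaries $\lambda-z$; both facts are standard for planar compact sets but must be invoked with care.
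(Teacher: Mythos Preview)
Your proposal is correct and follows essentially the same route as the paper: both reduce to \ref{Uni2} by first using $D_c(x,y)=0$ and simplicity to get ${\rm sp}(x)={\rm sp}(y)$, then verifying the $K_0$, $K_1$, and trace hypotheses of \ref{Uni2}. The paper's proof is terser: it cites \ref{Pcsameclose} for $(\phi_x)_{*0}=(\phi_y)_{*0}$, asserts $(\phi_x)_{*1}=(\phi_y)_{*1}$ without spelling out (as you do) that $K_1(C(X))$ is generated by the classes $[\lambda-z]$, and does not explicitly mention the trace condition at all, although \ref{Uni2} requires it. Your version is more complete on this last point; your derivation of $\tau\circ\phi_x=\tau\circ\phi_y$ from the Cuntz equivalence $\phi_x\sim\phi_y$ via $d_\tau(\phi_x(f_O))=d_\tau(\phi_y(f_O))$ and outer regularity is exactly what is needed to fill that gap.
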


\begin{proof}
Since $A$ is a unital simple \CA, the assumption
of $D_c(x, y)=0$ implies that ${\rm sp}(x)={\rm sp}(y)=X.$
Let $\phi,\psi: C(X)\to A$ be the unital monomorphisms induced by $x$ and $y,$ respectively. The assumption implies that
$\phi_{*1}=\psi_{*1}.$ It follows from
 \ref{Pcsameclose} that we also have $\phi_{*0}=\psi_{*0}.$
Thus the theorem follows from  \ref{Uni2}.
\end{proof}

\section{Distance between unitary orbits for normal elements
with non-zero $K_1$}

Let $A$ be a unital separable simple \CA\, with real rank zero, stable rank one and weakly unperforated $K_0(A).$                                \ref{Uni3} provides a clue how to described an  upper bound for  the distance between unitary orbits for normal elements in $A.$ If two normal elements $x, y\in A$ have the same spectrum and induce the same \hm\, from $K_1(C({\rm sp}(x)))$ to $K_1(A),$
then an upper bound for the distance between their unitary orbits can be similarly described. When they have different spectrum and with non-trivial $K_1$ information,
however, things are very different. This section  deals with the case
that $(\lambda-x)^{-1}(\lambda-y)\in {\rm Inv}_0(A)$ for all $\lambda\not\in
{\rm sp}(x)\cup {\rm sp}(y).$
Note that the assumption {allows} the case that $\lambda-x\not\in {\rm Inv}_0(A)$ for $\lambda\in  Y\setminus X$ and $\lambda-y\not\in {\rm Inv}_0(A)$ for $\lambda\in X\setminus Y.$  This can be done partly because we are able to
borrow a  Mayer-Vietoris Theorem.

\begin{df}\label{Dparing}
{\rm

Let $A$ be a unital simple \CA\, with real rank zero, stable rank one and
weakly unperforated $K_0(A)$ and let $\Omega$ be a compact metric space.
Let $F_1$ and $F_2$ be two finite subsets of $\Omega.$
Suppose that {$\kappa_{F_1}, \kappa_{F_2}\in H_{c,1}(C(\Omega), A)_+$} are
two elements  represented by two \hm s whose spectra are $F_1$ and $F_2,$ respectively.
Suppose also that $\kappa_{F_1}({f_O})$ and $ \kappa_{F_2}({f_O})$  {are projections} for all
open subsets $O\subset \Omega.$

Suppose that $F_1=\{x_1, x_2,...,x_m\}$ and
$F_2=\{y_1, y_2,...,y_n\}.$ Suppose that
\beq\label{Dd-6}
 D_c({\kappa_{F_1}, \kappa_{F_2}})=r.
\eneq

Then, as proved earlier in  \ref{MLXY}, {for any $\ep>0$,} there are
$a_{i,j}\in W(A),$ $1\le i\le n$ and $1\le j\le m$ such that
\beq\label{Dd-7}
\sum_{j=1}^na_{i,j}=\kappa_{F_1}([f_{\{x_i\}}]),\,\,\, \sum_{i=1}^m a_{i,j}=\kappa_{F_2}({[f_{\{y_j\}}]})\andeqn\\\label{Dd-7+}
|x_i-y_j|\le r{+\ep},\,\,\, {\rm if}\,\,\, a_{i,j}\not=0
\eneq

 By a paring of {$\kappa_{F_1}$ and $\kappa_{F_2}$}  we mean a subset $R{(\kappa_{F_1}, \kappa_{F_2})}\subset \{1,2,...,m\}\times \{1,2,...,n\}$ of those
 pairs of $(i,j)$ such that (\ref{Dd-7}) and  (\ref{Dd-7+}) hold.
}
\end{df}

 \begin{df}\label{Dhub}
{\rm Given a pair of {$\kappa_1$ and $\kappa_2$} with spectra $X$ and $Y,$   we say
that the pair has a {\it hub} at    $X\cap Y$, if $X=\sqcup_{i=1}^{m_1}S_i$ and $Y=\sqcup_{k=1}^{m_2}G_k,$ where $\{S_1,S_2,...,S_{m_1}\}$ is a set of mutually disjoint clopen subsets of $X$ and
$\{G_1, G_2,...,G_{m_2}\}$ is a set of mutually disjoint clopen subsets of $Y,$
there exists $\ep_0>0$ such that, for any $0<\ep<\ep_0,$
there are finite $\ep$-approximations $\kappa_{F_1}$ of $\kappa_1$ and
$\kappa_{F_2}$ of $\kappa_2$ satisfying the following:
There is a pairing  {$R(\kappa_{F_1}, \kappa_{F_2})$} such that,
for each  pair $(t,k)$  with $S_t\cap G_k\not=\emptyset,$  there is a pair $(i,j)\in R(\kappa_{F_1}, \kappa_{F_2})$ such that  $x_i, y_j\in S_t\cap G_k.$
}
\end{df}

Obvious examples that { any pairs $(\kappa_1,\kappa_2)$ have  a hub  at $X\cap Y$ are when $X\cap Y=\emptyset,$ or when $X=Y$ are connected.}
More examples will be presented in  \ref{MCC}.

Let $x, y\in A$ be two normal elements with $X={\rm sp}(x)$ and
 $Y={\rm sp}(y).$ Denote by  $\phi_X: C(X)\to A$ and
 $\phi_Y: C(Y)\to A$ {the homomorphisms} induced by $x$ and $y.$ We say the {\it  pair $(x, y)$
 has a hub at $X\cap Y,$} if the pair ${(\phi_X, \phi_Y)}$ has a hub at $X\cap Y.$

\begin{lem}\label{Lcuapp}
Let $A$ be a unital separable simple \CA\, with real rank zero, stable rank one and with weakly unperforated $K_0(A),$  and let $x,\, y\in A$ be two normal elements with $X={\rm sp}(x)$ and $Y={\rm sp}(y).$  For any $\ep>0,$  any finite subset ${\cal G}_X\subset C(X)_{s.a.}$ and any finite
subset ${\cal G}_Y\subset C(Y)_{s.a.},$
there exist mutually orthogonal projections
$\{e_1,e_2,...,e_n\}\subset A$ with $\sum_{i=1}^n e_i=1_A,$ $\lambda_1, \lambda_2,...,\lambda_n\in {\rm sp}(x)$ and $\mu_1, \mu_2,...,\mu_n\in {\rm sp}(y)$
such that
\beq\label{Lcuapp-1}
\max\{|\tau\circ g(x)-\tau\circ g(x_1)|: g\in {\cal G}_X\}<\ep/2,\\\label{Lcuapp-1+}
\max\{|\tau\circ g({y})-\tau\circ g(y_1)|: g\in {\cal G}_Y\}<\ep/2\tforal \tau\in T(A),\\\label{Lcuapp-2}
D_c(x, x_1)\le D_e^c(x,x_1)<\ep/2,\,\,\, D_c(y, y_1)\le D_c^e(y,y_1)<\ep/2,\\\label{Lcuapp-3}
D_c^e(x_1,y_1)<D_c^e(x,y)+\ep\tand\|x_1-y_1\|<D_c(x,y)+\ep,
\eneq
where
\beq\label{Lcuapp-4}
x_1=\sum_{i=1}^n \lambda_ie_i,\,\,\, y_1=\sum_{i=1}^n \mu_i e_i
\eneq
and
\beq\label{Lcuapp-4+}
\max_{1\le i\le n}|\lambda_i-\mu_i|\le D_c(x, y)+\ep
\eneq
Moreover, if $X\cap Y\not=\emptyset,$ for any $\sigma>0$ and $\eta>0,$
we may require that
\beq\label{Lcuapp-4++}
x_1=\sum_{i=1}^{m_0} \lambda_i e(i,0)+x_{1,1}\andeqn y_1=\sum_{i=1}^{m_0}\lambda_i e(i,0)+y_{1,1},\\\label{Lcuapp-4+++}
\sum_{i=1}^{m_0}\tau(e(i,0))<\sigma\tforal \tau\in T(A),
D_c(x_{1,1},y_{1,1})\le D_c^e(x, y)+\ep
\eneq
where $\{e(1,0),e(2,0),...,e(m_0,0)\}$ is a set of mutually orthogonal projections, $\{\lambda_1, \lambda_2,...,\lambda_{m_0}\}$ is $\eta$-dense
in $X\cap Y,$ $x_{1,1},y_{1,1}\in (1-\sum_{i=1}^{m_0}e(i,0))A(1-\sum_{i=1}^{m_0}e(i,0))$ are normal elements
with finite spectrum in $X$ and $Y,$ respectively,

In the above,  if $X=\sqcup_{i=1}^{m_1} F_j$ and
$Y=\sqcup_{k=1}^{m_2} G_k,$ where $F_1,F_2, ..., F_{m_1}$ are $\eta/2$-connected components of $X$ and $G_1, G_2,...,G_{m_2}$ are $\eta/2$-connected  components of $Y,$ we may assume that $\{\lambda_i\}$ is $\eta$-dense in $X$ and
$\{\mu_i\}$ is $\eta$-dense in {$Y.$ In} particular,  we may require
that $\{\lambda_i\}\cap F_j\not=\emptyset,$  $\{\mu_i\}\cap G_k\not=\emptyset.$
Moreover, if $F_j\cap G_k\not=\emptyset,$ we may further assume that there exist
{$\lambda_{i(j)}, \mu_{i(k)}\in F_j\cap G_k$  such that $|\lambda_{{i(j)}}-\mu_{i(k)}|<\eta.$}

Furthermore, if the pair $(x, y)$ has a hub at  $X\cap Y,$
then, we may require that,
 for each pair $(j,k)$ with $F_j\cap G_k\not=\emptyset,$
 there are ${\lambda_i}, \mu_{i}\in F_j\cap G_k.$


\end{lem}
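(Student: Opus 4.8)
The plan is to produce $x_1$ and $y_1$ by first discretizing $x$ and $y$ into normal elements with finite spectrum supported on a common index set of projections, then refining the projections so that the traces agree on the given finite sets and the $D_c$ and $D_c^e$ estimates survive. First I would fix $\ep>0$ and, using strict comparison (real rank zero + stable rank one + weakly unperforated $K_0$ gives it by \ref{Ddim}) together with the approximation machinery in Section 4, produce finite-spectrum normal elements close to $x$ and $y$. Concretely, apply \ref{Lfepapp} (or \ref{Lcuapp}-predecessors like \ref{Ldig}) to get a finite $\eta$-dense set of points in $X\cap Y$ and small projections $e(i,0)$ with $\sum_i\tau(e(i,0))<\sigma$; what remains of $x$ and $y$ gets approximated by finite-spectrum normal elements in the corner $(1-\sum e(i,0))A(1-\sum e(i,0))$. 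By \ref{Lappdcu} the small perturbations are controlled in $D_c$, and by \ref{CPc0}/\ref{Pdced00} they are also controlled in $D_c^e$ — this is precisely what gives (\ref{Lcuapp-2}), and continuity of $D_c^e$ under $D_c$-small perturbations gives (\ref{Lcuapp-3}).

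The key technical point — once everything has finite spectrum — is to write $x_1=\sum\lambda_ie_i$ and $y_1=\sum\mu_ie_i$ over a \emph{common} family of orthogonal projections $\{e_i\}$ with $\sum e_i=1_A$, arranged so that $|\lambda_i-\mu_i|\le D_c(x,y)+\ep$ whenever $e_i\ne 0$. This is exactly the Marriage-type decomposition furnished by \ref{MLXY} (built on \ref{TmarrG}): for the finite-spectrum $\phi_X,\phi_Y$ one gets projections $p_{i,j},q_{i,j}$ with $[p_{i,j}]=[q_{i,j}]$ in $K_0(A)$ and $\operatorname{dist}(\xi_i,\zeta_j)<D_c+\ep$ on the support; since $A$ has stable rank one, $K_0$-equivalence lifts to a unitary conjugating $p_{i,j}$ to $q_{i,j}$, so after conjugating $y_1$ by that unitary (absorbed into the unitary orbit, which does not affect any of the asserted quantities) we may take $p_{i,j}=q_{i,j}=:e_{i,j}$ and relabel. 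Then $\|x_1-y_1\|=\|\sum_{i,j}(\xi_i-\zeta_j)e_{i,j}\|\le\max\{|\xi_i-\zeta_j|:e_{i,j}\ne0\}\le D_c(x,y)+\ep$, which gives (\ref{Lcuapp-4}), (\ref{Lcuapp-4+}) and the $\|x_1-y_1\|$ bound in (\ref{Lcuapp-3}). The trace conditions (\ref{Lcuapp-1})–(\ref{Lcuapp-1+}) are then arranged by choosing the original finite-spectrum approximants to be uniformly close to $x,y$ in norm (hence in trace against the fixed finite sets ${\cal G}_X,{\cal G}_Y$), shrinking $\eta$ and the norm-tolerance as needed at the start.

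The ``hub'' refinements and the $\eta$-density / connected-component assertions come from a bookkeeping overlay on the Marriage argument: when we decompose $[\phi_X(f_{O_A})]\le[\phi_Y(f_{(O_A)_{d+\ep}})]$ we are free to insist that the initial $\eta$-dense sets $\{\lambda_i\},\{\mu_i\}$ contain points of every $\eta/2$-connected component $F_j$, resp. $G_k$ (this costs only arbitrarily small projections since $A$ is simple with (SP)), and that for overlapping components $F_j\cap G_k\ne\emptyset$ we pre-select points $\lambda_{i(j)},\mu_{i(k)}\in F_j\cap G_k$ with $|\lambda_{i(j)}-\mu_{i(k)}|<\eta$; feeding these constraints into \ref{TmarrG} via the relation $R$ (which only enlarges $R$, never violates the hypothesis $\sum_{i\in A}a_i\le\sum_{j\in R_A}b_j$) produces the required pairing. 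When the pair $(x,y)$ actually has a hub at $X\cap Y$, Definition \ref{Dhub} literally hands us a pairing $R(\kappa_{F_1},\kappa_{F_2})$ witnessing that for each $(j,k)$ with $F_j\cap G_k\ne\emptyset$ there is $(i,j')\in R$ with both endpoints in $F_j\cap G_k$, and we simply run the whole construction with that $R$. Finally (\ref{Lcuapp-4+++}) — the estimate $D_c(x_{1,1},y_{1,1})\le D_c^e(x,y)+\ep$ for the complementary corner — is where the bulk of the difficulty lies: it requires \ref{dcedig}, whose hypotheses (a sequence of small hub-projections $e(i,0)$, $D_c$-convergence of $\psi_{X,n},\psi_{Y,n}$ to $\phi_X,\phi_Y$, trace of $\sum e(i,0)$ vanishing) must be matched to the present single-$\ep$, single-$\sigma$ situation by passing to the conclusion of \ref{dcedig} along the relevant subsequence. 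I expect the main obstacle to be exactly this coordination: ensuring that the finite-spectrum approximation, the common-projection Marriage decomposition, and the hub/component constraints can all be imposed \emph{simultaneously} while keeping the corner estimate (\ref{Lcuapp-4+++}) — i.e. verifying that the $e(i,0)$'s chosen for density/hub purposes are compatible with (can be absorbed into) the digit sequence required by \ref{dcedig}.
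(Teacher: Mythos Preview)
Your approach is essentially the same as the paper's: finite-spectrum approximation via \ref{Lfepapp}/\ref{Pepfinite}, the Marriage decomposition of \ref{MLXY}/\ref{TmarrG} (packaged in the paper as \ref{MLfsp}) to put $x_1,y_1$ on a common family of projections after a unitary conjugation of $y_1$, then \ref{dcedig} for the corner estimate and the hub definition for the last refinement.

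One slip worth flagging: you justify the trace conditions (\ref{Lcuapp-1})--(\ref{Lcuapp-1+}) by ``choosing the original finite-spectrum approximants to be uniformly close to $x,y$ in norm.'' That is not available here --- if $[\lambda-x]\ne 0$ in $K_1(A)$ for some $\lambda\notin X$, then $x$ cannot be norm-approximated by normal elements with finite spectrum (this is exactly the obstruction the paper is built around). The trace control must instead come from the construction inside \ref{Lfepapp}: there the approximant $h$ is built so that $|\tau\circ h(f)-\tau\circ\phi_X(f)|<\dt/2$ for $f$ in a prescribed finite set, and one simply takes that finite set to contain ${\cal G}_X$. Relatedly, for the bound $D_c^e(x_1,y_1)<D_c^e(x,y)+\ep$ you invoke ``continuity of $D_c^e$ under $D_c$-small perturbations,'' but what is actually needed is the restricted triangle inequality \ref{Pdced0} (which requires the spectral containment ${\rm sp}(x_1)\cap Y,\,X\cap Y\subset X$, etc.); \ref{CPc0} alone only gives $D_c^e(x,x_1)$ small, not the three-term estimate. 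The paper applies \ref{Pdced0} twice to chain $D_c^e(x_1,y_1)\le D_c^e(x_1,x)+D_c^e(x,y)+D_c^e(y,y_1)$.
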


\begin{proof}
The main part of this lemma follows from  \ref{Pepfinite}.
In fact that the existence of $x_1$ and $y_2$ satisfy everything up to (\ref{Lcuapp-2}) follows immediately from  \ref{Pepfinite}.
By the assumption, 
$$
D_c(x_1,y_1)\le D_c(x_1,x)+D_c(x,y)+D_c(y,y_1)<D_c(x, y)+\ep.
$$

Note that ${\rm sp}(x_1)\cap Y, Y\cap X\subset X.$ It follows from
 \ref{Pdced0} that
\beq\label{Lcuappn-1}
D_c^e(x_1, y)\le D_c^e(x_1,x)+D_c^e(x,y).
\eneq
Similarly
\beq\label{Lcuappn-2}
D_c^e(x_1,{ y_1})\le D_c^e(x_1,y)+D_c^e(y,y_1).
\eneq
Therefore
\beq\label{Lcuapp-3+}
D_c^e(x_1,y_1)\le D_c^e(x,y)+D_c^e(x_1, x)+D_c^e(y,y_1).
\eneq

{Applying \ref{MLfsp}, there exists a unitary $u$, such that $\|x_1-u^*y_1u\|<D_c(x,y)+\ep$. Let $y_1$ be replaced by $u^*y_1u$, we may assume (\ref{Lcuapp-3}) holds. 

Further  by applying the proof}
 of  \ref{MLfsp}, (\ref{Lcuapp-4}) and (\ref{Lcuapp-4+}) hold. 
The second part of the statement with (\ref{Lcuapp-4++}) and (\ref{Lcuapp-4+++})  follows from the definition of $D_c^e(\cdot,\cdot)$ and \ref{dcedig}.

The third and  fourth parts of the statement follow from  (\ref{Lcuapp-4++}) and the fact that the finite $\eta$-approximations
of ${\psi_X}$ and ${\psi_Y}$ can be made for arbitrarily small $\eta.$

Suppose, in addition, that  the pair $(x, y)$ has a hub at   $X\cap Y.$
For each pair $(j,k)$  with $F_j\cap G_k\not=\emptyset,$
we may assume that, by choosing sufficiently better finite approximation, without loss of generality,
that, there is ${\lambda_{i'}}\in F_j\cap G_k$ and there is $\mu_{i''}\in F_j\cap G_k.$ By the assumption that  the pair $(x, y)$ has a hub at  $X\cap Y$ and its definition, we may further assume,  there
are ${\lambda_i,\mu_i}\in F_j\cap G_k.$
\end{proof}

\begin{cor}\label{Ccuapp}
Let $A$ be a unital separable simple \CA\, of real rank zero,
stable rank one and weakly unperforated $K_0(A)$ and let $x, y\in A$ be  normal elements.
Then
\beq\label{Ccuapp-1}
D_c(x, y)\le D_c^e(x,y)\le 2D_c(x, y).
\eneq
\end{cor}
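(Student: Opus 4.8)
The first inequality $D_c(x,y)\le D_c^e(x,y)$ is precisely (\ref{dc<dce}), so the whole content is the bound $D_c^e(x,y)\le 2\,D_c(x,y)$. If $X\cap Y=\emptyset$ then $D_c^e(x,y)=D_c(x,y)$ by definition and there is nothing to prove, so assume $X\cap Y\neq\emptyset$ and set $d=D_c(x,y)$. The plan is to reduce to the case of finite spectrum, build a common ``hub'' over $X\cap Y$ for the resulting pair directly in the definition of $D_c^e$, and estimate the residuals; the factor $2$ appears for a concrete geometric reason explained at the end.

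First I would reduce to finite spectra. For each $k$, apply \ref{Lcuapp} with $\ep=\eta=1/k$ to obtain normal elements $x_1^{(k)}=\sum_{i}\lambda^{(k)}_i e^{(k)}_i$ and $y_1^{(k)}=\sum_i\mu^{(k)}_i e^{(k)}_i$ with finite spectrum such that $D_c^e(x,x_1^{(k)})<1/k$, $D_c^e(y,y_1^{(k)})<1/k$, $\max_i|\lambda^{(k)}_i-\mu^{(k)}_i|\le d+1/k$, and (using the last two paragraphs of \ref{Lcuapp}, after passing to $1/k$-connected components) so that $\{\lambda^{(k)}_i\}$ is $1/k$-dense in $X$ and $\{\mu^{(k)}_i\}$ is $1/k$-dense in $Y$. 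Now fix $k$ and choose a finite $1/k$-net $\{\nu_1,\dots,\nu_{m_k}\}$ of $X\cap Y$. For each $l$ pick indices $a(l),b(l)$ with $|\lambda^{(k)}_{a(l)}-\nu_l|<1/k$ and $|\mu^{(k)}_{b(l)}-\nu_l|<1/k$. Since $A$ is simple of real rank zero (hence has (SP)) and of stable rank one, one can choose $2m_k$ mutually orthogonal nonzero projections $p_l\le e^{(k)}_{a(l)}$, $q_l\le e^{(k)}_{b(l)}$ with $[p_l]=[q_l]$ in $K_0(A)$ and $\sup_{\tau\in T(A)}\sum_l\tau(p_l)<\sigma_k\to0$, together with a unitary $u_k\in A$ with $u_k^*p_lu_k=q_l$ for all $l$. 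Put $h_k(f)=\sum_l f(\nu_l)p_l$, and $\phi_{X,k}(f)=\sum_i f(\lambda^{(k)}_i)\bigl(e^{(k)}_i-\sum_{l:a(l)=i}p_l\bigr)$, $\phi_{Y,k}(f)=\sum_i f(\mu^{(k)}_i)\bigl(e^{(k)}_i-\sum_{l:b(l)=i}q_l\bigr)$; these are unital on the corners $(1-e_k)A(1-e_k)$ and $(1-q_k)A(1-q_k)$ with $e_k=\sum_l p_l$, $q_k=\sum_l q_l=u_k^*e_ku_k$, with spectrum in $X$ and $Y$ respectively, and $h_k$ has spectrum $\subseteq X\cap Y$ which is $1/k$-dense in $X\cap Y$ with $\sup_\tau\tau(e_k)\to0$. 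On each $p_l$ (resp. $q_l$) the value of $x_1^{(k)}$ (resp. $y_1^{(k)}$) differs from $\nu_l$ by less than $1/k$, so $\|x_1^{(k)}-(h_k+\phi_{X,k})(z)\|<1/k$ and $\|y_1^{(k)}-(\mathrm{Ad}\,u_k\circ h_k+\phi_{Y,k})(z)\|<1/k$; combined with $D_c^e(x,x_1^{(k)}),D_c^e(y,y_1^{(k)})<1/k$, (\ref{dc<dce}) and $D_c(\cdot,\cdot)\le\|\cdot-\cdot\|$ this gives $D_c(\phi_X,h_k+\phi_{X,k})\to0$ and $D_c(\phi_Y,\mathrm{Ad}\,u_k\circ h_k+\phi_{Y,k})\to0$. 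Hence $(h_k,u_k,\phi_{X,k},\phi_{Y,k})$ is admissible in the infimum defining $D_c^e(x,y)$ and therefore $D_c^e(x,y)\le\liminf_k D_c(\phi_{X,k},\phi_{Y,k}')$, where $\phi_{Y,k}'=\mathrm{Ad}\,u_k\circ\phi_{Y,k}$.

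It remains to estimate $D_c(\phi_{X,k},\phi_{Y,k}')$. Fix an open $O\subset\Omega$ and a trace $\tau\in T(A)$. Using invariance of traces under unitary conjugation and $[p_l]=[q_l]$ one gets $d_\tau(\phi_{X,k}(f_O))=\sum_{\lambda^{(k)}_i\in O}\tau(e^{(k)}_i)-\sum_{\lambda^{(k)}_{a(l)}\in O}\tau(p_l)$ and $d_\tau(\phi_{Y,k}'(f_{O_r}))=\sum_{\mu^{(k)}_i\in O_r}\tau(e^{(k)}_i)-\sum_{\mu^{(k)}_{b(l)}\in O_r}\tau(p_l)$. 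Because $|\lambda^{(k)}_i-\mu^{(k)}_i|\le d+1/k$, every index with $\lambda^{(k)}_i\in O$ satisfies $\mu^{(k)}_i\in O_{d+1/k}\subseteq O_r$ for $r\ge d+1/k$; using in addition the $1/k$-density of $\{\mu^{(k)}_i\}$ in $Y$, the uniform smallness $\sum_l\tau(p_l)<\sigma_k$ (so that any genuinely new $\mu^{(k)}_i$-index gained when enlarging $O_r$ contributes more trace than the total mass of the $p_l$'s, once $k$ is large), and a bookkeeping of which $p_l$'s are subtracted on each side near $\partial O$, one checks that for $r=2d+c/k$ with a fixed constant $c$ one has $d_\tau(\phi_{X,k}(f_O))<d_\tau(\phi_{Y,k}'(f_{O_r}))$ for all $\tau\in T(A)$. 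Strict comparison of positive elements then yields $\phi_{X,k}(f_O)\lesssim\phi_{Y,k}'(f_{O_r})$, so $D_c(\phi_{X,k},\phi_{Y,k}')\le 2d+c/k$, and letting $k\to\infty$ gives $D_c^e(x,y)\le 2d=2\,D_c(x,y)$. (Alternatively, one could try to argue through $D_c^e(x,y)\le D^T(x,y)$ of \ref{dce<DT} together with a bound $D^T\le2D_c$; the approach above has the advantage of producing the $2$ directly.)

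The main obstacle is the last estimate. The hub piece over a point $\nu_l\in X\cap Y$ must be carved out of $e^{(k)}_{a(l)}$ on the $x$-side but, after transport by $u_k$, corresponds to a subprojection of $e^{(k)}_{b(l)}$ with $a(l)\neq b(l)$ in general (this mismatch is genuine: it reflects exactly the situation where the pair $(x,y)$ has \emph{no} hub at $X\cap Y$ in the sense of \ref{Dhub}). Consequently the subtracted projections in $\phi_{X,k}$ and $\phi_{Y,k}'$ do not cancel, and to dominate $\phi_{X,k}(f_O)$ one must enlarge $O$ by the full amount $\approx 2d$: once by $\approx d$ to absorb the displacement $|\lambda^{(k)}_i-\mu^{(k)}_i|$ between paired spectral values, and once more by $\approx d$ to realign the hub pieces that were moved over different projections. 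Carrying out this accounting carefully, uniformly in $O$ and $\tau$, is the technical heart of the proof; everything else is the routine application of \ref{Lcuapp}, (SP), stable rank one, and strict comparison.
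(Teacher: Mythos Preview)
Your overall plan---reduce to finite spectrum via \ref{Lcuapp}, carve out a small hub over $X\cap Y$, and estimate $D_c$ of the residuals---is exactly the paper's strategy. But the final step, which you yourself flag as ``the technical heart,'' is where your argument has a genuine gap, and it is precisely the step where the paper's construction differs from yours in an essential way.

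The key point you are missing is that \ref{Lcuapp} already hands you $x_1^{(k)}$ and $y_1^{(k)}$ written over the \emph{same} set of projections: $x_1^{(k)}=\sum_i\lambda_i^{(k)}e_i^{(k)}$ and $y_1^{(k)}=\sum_i\mu_i^{(k)}e_i^{(k)}$ with $\max_i|\lambda_i^{(k)}-\mu_i^{(k)}|\le d+1/k$. The paper exploits this: after doubling (splitting each $e_i^{(k)}$ in two) so that the same scalar label may appear twice, one can arrange indices so that $\lambda_i^{(k)}=\mu_{\alpha(i)}^{(k)}$ for $i=1,\dots,f(k)$, where these common values are dense in $X\cap Y$ and $\{1,\dots,f(k)\}$ is disjoint from $\{\alpha(1),\dots,\alpha(f(k))\}$. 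The hub projections are then taken as $e(k,i)\le e_i^{(k)}$ and $e(k,\alpha(i))\le e_{\alpha(i)}^{(k)}$ with $[e(k,i)]=[e(k,\alpha(i))]$, and the unitary $u_k$ is chosen to swap $e(k,i)\leftrightarrow e(k,\alpha(i))$ while \emph{fixing every other projection}. Because of this, the residuals $x_{1,k}$ and $y_{1,k}$ are again sums over a common set of projections, and one reads off directly that each pair of coefficients differs by at most $\max_i|\lambda_i^{(k)}-\mu_i^{(k)}|+|\lambda_{\alpha(i)}^{(k)}-\mu_i^{(k)}|$. The second term is handled by the triangle inequality
\[
|\lambda_{\alpha(i)}^{(k)}-\mu_i^{(k)}|\le|\lambda_{\alpha(i)}^{(k)}-\mu_{\alpha(i)}^{(k)}|+|\mu_{\alpha(i)}^{(k)}-\mu_i^{(k)}|=|\lambda_{\alpha(i)}^{(k)}-\mu_{\alpha(i)}^{(k)}|+|\lambda_i^{(k)}-\mu_i^{(k)}|\le 2d+2/k,
\]
using $\mu_{\alpha(i)}^{(k)}=\lambda_i^{(k)}$. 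This is where the factor $2$ comes from, concretely and without any trace bookkeeping; $D_c(x_{1,k},y_{1,k})\le 2d+2/k$ then follows immediately from \ref{MLXY}.

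In your construction, by contrast, the unitary $u_k$ only satisfies $u_k^*p_lu_k=q_l$ and is otherwise arbitrary, so $\phi_{X,k}$ and $\phi'_{Y,k}=\mathrm{Ad}\,u_k\circ\phi_{Y,k}$ do \emph{not} share projections. You are then forced into a trace comparison, and your claimed inequality $d_\tau(\phi_{X,k}(f_O))<d_\tau(\phi'_{Y,k}(f_{O_r}))$ for $r=2d+c/k$ is not justified: in the worst case all $\mu_i^{(k)}$ already lie in $O_{d+1/k}$ (so enlarging to $O_r$ gains no new index), while every $\mu_{b(l)}^{(k)}\in O_r$ and no $\lambda_{a(l)}^{(k)}\in O$, giving a deficit of $-\sigma_k$ rather than a strict gain. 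Your heuristic that ``any genuinely new $\mu_i^{(k)}$-index gained contributes more trace than the total mass of the $p_l$'s'' presupposes such an index exists, which it need not. The fix is not more careful bookkeeping on traces but rather to build $u_k$ so that the residuals share projections, as above; once you do that, the Cuntz comparison is trivial and the $2$ is transparent. (Your suggested alternative via $D^T\le 2D_c$ is not available: no such bound holds in general.)
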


\begin{proof}
We will prove the second inequality.

Put $X={\rm sp}(x)$ and $Y={\rm sp}(y).$
 By  \ref{Lcuapp}, there are two sequences of normal elements $x_k, y_k\in A$
with
\beq\label{Ccuapp-2n}
&&\lim_{k\to\infty}D_c^e(x_k,x)=0,\,\,\,\lim_{k\to\infty}D_c^e(y_k, y)=0,\\
&&x_k=\sum_{i=1}^{m(k)}\lambda(k,i)p(k,i)\andeqn
y_k=\sum_{i=1}^{m(k)}\mu(k,i)p(k,i),
\eneq
where $\lambda(k,i)\in X$ and  $\mu(k,i)\in Y,$
\beq\label{Ccuapp-2+}
\lim_{k\to \infty}\max_{\{1\le i\le m(k)\}}|\lambda(k,i)-\mu(k,i)|\le D_c(x,y)
\eneq
and
$\{p(k,1),p(k,2),...,p(k,m(k))\}$ is a sequence of mutually orthogonal non-zero projections in $A$ with $\sum_{i=1}^{m(k)}p(k,i)=1_A,$ $k=1,2,....$
Note that
\beq\label{Ccuapp-3n}
\lim_{k\to\infty}D_c(x_k, y_k)=D_c(x,y).
\eneq
We may write
\beq\label{<2Dc-1-}
x_k=\sum_{i=1}^{m(k)}\lambda(k, i)p'(k,i)+\sum_{i=1}^{m(k)}\lambda(k, i)p''(k,i)\andeqn\\
y_k=\sum_{i=1}^{m(k)}\mu(k, i)p'(k,i)+\sum_{i=1}^{m(k)}\mu(k, i)p''(k,i),
\eneq
where $p'(k,i),\, p''(k,i)\not=0$ and $p'(k,i)+p''(k,i)=p(k,i),$ $i=1,2,...,m(k),$ $k=1,2,....$
To simplify the notation, we may write  that
\beq\label{<2Dc-1}
x_k=\sum_{i=1}^{2m(k)}\lambda(k, i)p(k,i)\andeqn
y_k=\sum_{i=1}^{2m(k)}\mu(k, i)p(k,i),\\\label{<2Dc-2}
\lambda(k,m(k)+i)=\lambda(k,i)\andeqn \mu(k,m(k)+i)=\mu(k,i),\,\,\,i=1,2,...,m(k),
\eneq
where $p(k,i)\not=0$ for all $i$ and $k.$
Without loss of generality,
we may assume that $S_k=\{\lambda(k,1), \lambda(k,2),...,\lambda(k,r(k))\}\subset X\cap Y$ and
$\lambda(k,j)\not\in Y,$ $r(k)<j\le 2m(k).$
Let\\ $T_k=\{\mu(k,\af(1)), \mu(k,\af(2)),...,\mu(k,{\af(g(k)))}\}\subset X\cap Y$ and $\mu(k,j)\not\in X$ if $j\not=\af(i)$ ($1\le i\le g(k)$).
We may also assume
that $S_k$ and $T_k$ both are  $\ep_k$-dense in $X\cap Y$ and $\lim_{k\to\infty} \ep_k=0.$
A standard argument allows us to assume, without loss of generality,
that $\lambda(k,i)=\mu(k,\af(i)),$ $i=1,2,...,f(k),$ where
$f(k)\le \min\{r(k), g(k)\}$ and
$W_k=\{\lambda(k,i): 1\le i\le f(k)\}$ is
$\dt_k$-dense in $X\cap Y$ and $\lim_{k\to\infty}\dt_k=0.$ By (\ref{<2Dc-1}) and (\ref{<2Dc-2}), we may assume that
$\af(i)>{f}(k),$ $i=1,2,..., f(k).$  In particular,
\beq\label{<2Dc-3}
\{\af(i): 1\le i\le f(k)\}\cap \{1,2,...,f(k)\}=\emptyset.
\eneq

Since $A$ is simple and has (SP),
there is a sequence of finite sets of non-zero projections
$e(k,i)\le p(k,i)$ such that
\beq\label{Ccuapp-4n}
&&p(k,i)-e(k,i)\not=0, [e(k,i)]=[e(k,1)],\,\,\,i=1,2,...,f(k),\\
&&\andeqn \sum_{i=1}^{m(k)}\tau(e(k,i))<1/k
\tforal \tau\in T(A),
\eneq$k=1,2,....$
Let $u_k\in U(A)$ be a sequence of unitaries
such that
\beq\label{Ccuapp-6n}
&&u_k^*e(k,i)u_k=e(k, \af(i)),\,\,\,
u_ke(k, \af(i))u_k^*=e(k,i),\,\,\,i=1,2,...,f(k),\\
&&u_k^*(p(k,i)-e(k,i))u_k=(p(k,i)-e(k,i)),\\
&&{u_k(p(k,\af(i))-e(k,\af(i)))u_k^*}
=(p(k,\af(i))-e(k,\af(i))),\\
&&\,\,\,i=1,2,...,f(k)\andeqn\\
&&u_k^*p(k,j)u_k=p(k,j),\,\,\, \text{if}\,\,\, j\not\in {\{i:1\le i\le f(k)\}}.
\eneq
Define
\beq\label{Ccuapp-7n}
x_{0,k}&=&\sum_{i=1}^{f(k)} \lambda(k,i)e(k,i),\\
x_{1,k}&=&\sum_{i=1}^{f(k)}\lambda(k,i)(p(k,i)-e(k,i))+\sum_{i=f(k)+1}^{2m(k)}\lambda(k,i)p_{k,i}
\eneq
\beq\label{<2Dc-4}
y_{0,k}&=&\sum_{i=1}^{f(k)}\mu(k,\af(i))e(k,\af(i))\\
y_{1,k}&=&\sum_{i=1}^{f(k)}\mu(k,i)(p(k,\af(i))-e(k,\af(i)))+
\sum_{i=f(k)+1}^{2m(k)}\mu(k,i)p(k,i).
\eneq
Note that
$$
u_ky_{0,k}u_k^*=\sum_{i=1}^{f(k)}\mu(k,\af(i))u_ke(k,\af(i))u_k^*=x_{0,k}.
$$
Note now that $\lambda(k,i)=\mu(k,\af(i)),$ $i=1,2,...,f(k),$
{$W_k$} is $\dt_k$-dense in $X\cap Y$ and
\beq\label{Ccuapp-8n}
\lim_{k\to\infty}\sup\{\tau(\sum_{i=1}^{f(k)}e(k,i)): \tau\in T(A)\}=0
\eneq
Moreover we still have  that 
\beq\label{Ccuapp-9}
\lim_{k\to\infty} D_c(x,x_{0,k}+x_{1,k})=0 \andeqn
\lim_{k\to\infty} D_c(y,y_{0,k}+y_{1,k})=0.
\eneq
Therefore
\beq\label{Ccuapp-10}
D_c^e(x,y)\le \liminf_{k\to\infty} D_c(x_{1,k}, y_{1,k}).
\eneq
Note also, by (\ref{<2Dc-3})
\beq\label{CCuapp-n10}
x_{1,k}
&=&
\sum_{i=1}^{f(k)}\lambda(k,i)(p(k,i)-e(k,i))+\sum_{i=1}^{f(k)}\lambda(k, \af(i))(p(k, \af(i))-e(k, \af(i)))\\&&+\sum_{i=1}^{f(k)}\lambda(k, \af(i))e(k,\af(i))+
\sum_{j\not\in\{i, \af(i): 1\le i\le f(k)\}}\lambda(k,j)p(k,j) \andeqn\\
y_{1,k}
&=& \sum_{i=1}^{f(k)}\mu(k,\af(i))(p(k,\af(i))-e(k,\af(i)))+\sum_{j\not\in\{\af(i):1\le i\le f(k)\}}\mu(k,j)
p(k,j)\\
&=&\sum_{i=1}^{ f(k)}\mu(k,i)(p(k,i)-e(k,i))
+\sum_{i=1}^{f(k)}\mu(k,\af(i))(p(k,\af(i))-e(k,\af(i)))\\
&&+\sum_{i=1}^{f(k)}\mu(k,i)e(k,i)
+\sum_{j\not\in \{i, \af(i): 1\le i\le f(k)\}}\mu(k,j)p(k,j).
\eneq
Since, for $i=1,2,..., f(k),$ $[e_{k,i}]=[e_{k, \af(i)}],$ $\lambda(k,i)=\mu(k, \af(i))$ and
\beq\label{Ccuapp-11}
&&\hspace{-0.4in}\limsup_{k\to\infty}|\lambda(k,\af(i))-\mu(k,i)|\\
&\le & \limsup_{k\to\infty}(|\lambda(k, \af(i))-\mu(k,\af(i))|+
|\mu(k, \af(i))-\mu(k,i)|)\\
&=& \limsup_{k\to\infty}|\lambda(k, \af(i))-\mu(k,\af(i))|+\limsup_{k\to\infty}
|\lambda(k, i)-\mu(k,i)|\\
&\le & D_c(x, y)+D_c(x, y).
\eneq
It follows that (by comparing the last expresses of $x_{1,k}$ and $y_{1,k}$ above and using (\ref{Ccuapp-2+}))
\beq\label{Ccuapp-12}
D_c(x_{1,k}, y_{1,k})\le 2D_c(x,y).
\eneq
Therefore, by (\ref{Ccuapp-10}),
\beq\label{Ccuapp-13}
D_c^e(x,y)\le 2D_c(x, y).
\eneq
\end{proof}

The following is a useful observation for the proof of the main results in this section.

\begin{lem}\label{Lspindx}
Let $A$ be a unital \CA\, and let $X$ and $Y$ be two compact subsets of the plane. Suppose that $x, y\in A$ are two normal elements with
${\rm sp}(x)=X$ and ${\rm sp}(y)=Y$ and suppose that $\phi_X: C(X)\to A$ and
$\phi_Y: C(Y)\to A$ are induced unital monomorphisms by $x$ and by $y,$
respectively.
Suppose also that $[\lambda-x]=[\lambda-y]$ in $K_1(A)$ for all
$\lambda\not\in X\cup Y.$
Then
\beq\label{Lspindx-1}
(\phi_X\circ \imath_1)_{*1}=0,
\eneq
where $I=\{f\in C(X): f|_{X\cap Y}=0\}$ and $\imath_1: I\to C(X)$
is the embedding.

{Consequently, if $X\cap Y=\emptyset,$ then  $I=C(X)$  and  $(\phi_X)_{*1}=0.$}
\end{lem}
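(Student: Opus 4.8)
The plan is to translate the vanishing of $(\phi_X\circ\imath_1)_{*1}$ into a statement about invertible matrices over $C(X)$ that are trivial on $X\cap Y$, and then to play $x$ against $y$ via a gluing argument that feeds on the hypothesis $[\lambda-x]=[\lambda-y]$.

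First I would record the standard reduction. Since $K_1(I)=K_1(\widetilde I)$, every class in $K_1(I)$ is represented by an invertible $g\in M_m(\widetilde I)$ for some $m$. When $X\cap Y\neq\emptyset$ one identifies $\widetilde I$ with the unital subalgebra $\{h\in C(X):h\text{ is constant on }X\cap Y\}$ of $C(X)$, so $g$ is an $M_m(\mathbb{C})$-valued function on $X$ whose restriction to $X\cap Y$ is a constant invertible matrix; after multiplying $g$ by the (globally constant, hence $K_1$-trivial) inverse of that value, we may assume $g|_{X\cap Y}\equiv 1_m$ (this requirement is vacuous when $X\cap Y=\emptyset$). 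Under $(\phi_X\circ\imath_1)_{*1}$ this class maps to $[g(x)]\in K_1(A)$, where $g(x)\in M_m(A)$ is obtained by applying $\phi_X$ entrywise. So the lemma reduces to showing $[g(x)]=0$ whenever $g\in GL_m(C(X))$ with $g|_{X\cap Y}\equiv 1_m$.

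Next I would glue. Define $G\in M_m(C(X\cup Y))$ by $G|_X=g$ and $G|_Y\equiv 1_m$; this is continuous because $X$ and $Y$ are closed in $X\cup Y$ and the two prescriptions agree on $X\cap Y$, and it is invertible since it is pointwise invertible. Because $\mathrm{sp}(x)=X\subset X\cup Y$ and $\mathrm{sp}(y)=Y\subset X\cup Y$, the continuous functional calculus gives unital $*$-homomorphisms $\Phi,\Psi\colon C(X\cup Y)\to A$, $\Phi(f)=f(x)$, $\Psi(f)=f(y)$, and applying them entrywise gives $\Phi(G)=g(x)$ while $\Psi(G)=1_{M_m(A)}$. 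Hence $[g(x)]=\Phi_{*1}[G]$ and $\Psi_{*1}[G]=0$, so it suffices to prove $\Phi_{*1}=\Psi_{*1}$ on $K_1(C(X\cup Y))$.

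This last point is where the real content sits, and is the step I expect to be the main obstacle: one must invoke the classical description of the $K$-theory of a compact planar set. For $Z\subset\mathbb{C}$ compact, $K_1(C(Z))$ is generated by the classes $[z-\lambda]$, where $z$ is the coordinate function and $\lambda$ ranges over $\mathbb{C}\setminus Z$, because $K_1(C(Z))\cong\check{H}^1(Z;\mathbb{Z})\cong\tilde{H}_0(S^2\setminus Z;\mathbb{Z})$ (Alexander duality, together with the vanishing of $\check{H}^p(Z)$ for $p\ge 2$, so the Atiyah--Hirzebruch spectral sequence collapses). Applying this with $Z=X\cup Y$: for any $\lambda\notin X\cup Y$ both $\lambda-x$ and $\lambda-y$ are invertible, and $\Phi_{*1}[z-\lambda]=[x-\lambda 1_A]=[\lambda 1_A-x]$, $\Psi_{*1}[z-\lambda]=[\lambda 1_A-y]$ (using $[-1_A]=0$ in $K_1(A)$), which agree by hypothesis. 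Since these classes generate $K_1(C(X\cup Y))$, we get $\Phi_{*1}=\Psi_{*1}$, hence $[g(x)]=\Phi_{*1}[G]=\Psi_{*1}[G]=0$. The ``consequently'' is then immediate: when $X\cap Y=\emptyset$ one has $I=C(X)$ and $\imath_1=\operatorname{id}$, so $(\phi_X)_{*1}=0$. Everything except the planar $K_1$-generation fact is bookkeeping — the only delicate routine point being the reduction to $g|_{X\cap Y}\equiv 1_m$ in matrix algebras over the unitization.
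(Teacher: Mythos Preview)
Your proof is correct and follows essentially the same approach as the paper: both factor through $C(X\cup Y)$ via the ideal $J=\{f\in C(X\cup Y):f|_Y=0\}\cong I$ (your gluing of $g$ with $1_m$ on $Y$ is precisely the extension through $J$), and both rely on the equality $\Phi_{*1}=\Psi_{*1}$ on $K_1(C(X\cup Y))$ together with $\Psi|_J=0$. The paper simply asserts that the hypothesis implies $\Phi_{*1}=\Psi_{*1}$ without further comment, whereas you make explicit the classical fact that $K_1(C(X\cup Y))$ is generated by the classes $[z-\lambda]$ for $\lambda\notin X\cup Y$, which is exactly what is needed for that step.
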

In this case (\ref{Lspindx-1}) means that

\begin{proof}
Let $\pi_X: C(X\cup Y)\to C(X)$ and $\pi_Y: C(X\cup Y)\to C(Y)$ be
quotient maps. Define $\psi_1=\phi_X\circ \pi_X$ and
$\psi_2=\phi_Y\circ \pi_Y.$
The assumption implies that
\beq\label{Lspindx-2}
(\psi_1)_{*1}=(\psi_2)_{*1}.
\eneq
Put
$$
J=\{f\in C(X\cup Y): f|_Y=0\}.
$$
Note that $J\cong C_0(X\cup Y\setminus Y).$ But
$X\cup Y\setminus Y= X\setminus X\cap Y.$ Therefore
 there is a natural  isomorphism $h: I\to J.$  Let  $\imath_2: J\to C(X\cup Y)$
be the embedding.
Then
\beq\label{Lspindx-3}
\pi_X\circ \imath_2\circ h=\imath_1.
\eneq
Thus
\beq\label{Lspindx-4}
(\phi_X)_{*1}\circ (\imath_1)_{*1}&=&(\phi_X)_{*1}\circ (\pi_X)_{*1}\circ (\imath_2\circ h)_{*1}\\
&=& (\psi_2)_{*1}\circ (\imath_2\circ h)_{*1}.
\eneq
But
\beq\label{Lspindx-5}
\psi_2\circ \imath_2=0.
\eneq
Therefore
\beq\label{Lspindx-6}
(\phi_X\circ \imath_1)_{*1}=0.
\eneq
\end{proof}

\begin{lem}\label{LdecK}
Let $A$ be a unital  infinite dimensional simple \CA\, with real rank zero, stable rank one and with unperforated $K_0(A),$ let $x\in A$ be a normal
element and let $\phi_X: C(X)\to A$ be the unital monomorphism induced by $x,$ where
$X={\rm sp}(x).$  Suppose that $y\in A$ is another normal element
such that ${\rm sp}(y)=Y$ and
$[\lambda-x]=[\lambda-y]$ in $K_1(A)$ for all $\lambda\not\in X\cup Y$ and
suppose that $X\cap Y\not=\emptyset.$

Suppose also that functions $f_1, f_2,...,f_n\in C(X\cap Y)$ are $n$ mutually
orthogonal projections with $1_{C(X\cap Y)}=\sum_{i=1}^n f_i.$
Then,
for any non-zero projection
$e\in A$ {and}  any $n$ mutually orthogonal nonzero projections such that
$\sum_{i=1}^ne_i=e,$
there is a normal element $x_1\in eAe$ with ${\rm sp}(x_1)=X\cap Y$ satisfying
the following:
for any normal elements $x_0, y_0\in (1-e)A(1-e)$ with finite spectrum in $X$ and $Y,$ respectively,
\beq\label{LdecK-1}
f_i(x_1)&=&e_i,\,\,\,i=1,2,...,n,\\
(\psi_1)_{*1} &=&(\phi_{X})_{*1} \tand\\\label{Ldeck-1+}
 (\psi_2)_{*1}&=&(\phi_{Y})_{*1},
\eneq
where $\psi_1: C(X)\to A$ and $\psi_2: C(Y)\to A$ are defined by $\psi_1(f)=f(x_0+x_1)$ for all $f\in C(X)$ and
$\psi_2(f)=f(y_0+x_1)$ for all $f\in C(Y).$
\end{lem}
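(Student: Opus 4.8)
The plan is to produce $x_1$ from $K$-theoretic data via Lemma~\ref{Lext}, the real work being to manufacture the correct $K_1$-datum out of the hypothesis $[\lambda-x]=[\lambda-y]$. Write $f_i=\chi_{Z_i}$ for mutually disjoint clopen sets $Z_1,\dots,Z_n$ with $\bigsqcup_iZ_i=X\cap Y$, and let $r_X:C(X)\to C(X\cap Y)$, $r_Y:C(Y)\to C(X\cap Y)$ be the restriction maps. First I would make two reductions. (i) If $x_0,y_0\in(1-e)A(1-e)$ have finite spectrum, then $f\mapsto f(x_0)$ and $f\mapsto f(y_0)$ factor through finite-dimensional commutative $C^*$-algebras and so vanish on $K_1$; hence once $x_1\in eAe$ is a normal element with ${\rm sp}(x_1)=X\cap Y$, the maps $\psi_1,\psi_2$ of the statement satisfy $(\psi_1)_{*1}=(\phi_{x_1})_{*1}\circ(r_X)_{*1}$ and $(\psi_2)_{*1}=(\phi_{x_1})_{*1}\circ(r_Y)_{*1}$, where $\phi_{x_1}:C(X\cap Y)\to eAe$ is induced by $x_1$ and $K_1(eAe)$ is identified with $K_1(A)$ via the corner inclusion. (ii) It is enough to arrange $f_i(x_1)=e_i$ up to conjugacy in $eAe$: given a unital monomorphism $\phi':C(X\cap Y)\to eAe$ with $[\phi'(f_i)]=[e_i]$ in $K_0(eAe)$ for each $i$, the families $\{\phi'(f_i)\}$ and $\{e_i\}$ are Murray--von Neumann equivalent and both sum to $e$, so there is a unitary $u\in eAe$ with $u^*\phi'(f_i)u=e_i$, and replacing $\phi'$ by ${\rm Ad}\,u\circ\phi'$ changes nothing on $K$-theory. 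Thus it suffices to build a homomorphism $\kappa_1:K_1(C(X\cap Y))\to K_1(A)$ with
\[
\kappa_1\circ(r_X)_{*1}=(\phi_X)_{*1}\andeqn\kappa_1\circ(r_Y)_{*1}=(\phi_Y)_{*1},
\]
and then feed $(Z,p_i,\kappa_0,\kappa_1)=(X\cap Y,\,f_i,\,[f_i]\mapsto[e_i],\,\kappa_1)$ into Lemma~\ref{Lext} applied to the corner $eAe$, which is again a unital infinite-dimensional simple $C^*$-algebra of real rank zero and stable rank one with unperforated $K_0$, with $K_0(eAe)\cong K_0(A)$. Here $\kappa_0$ is a well-defined order-preserving homomorphism on the subgroup generated by the $[f_i]$: the $f_i$ being orthogonal and nonzero, the $[f_i]$ are independent and, evaluating at a point of $Z_j$, any positive combination $\sum_im_i[f_i]$ has all $m_i\ge0$; also $\kappa_0([1])=\sum_i[e_i]=[e]=[1_{eAe}]$ and $\kappa_0([f_i])=[e_i]>0$ by simplicity.

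The construction of $\kappa_1$ is the crux. By Lemma~\ref{Lspindx}, $(\phi_X\circ\imath_1)_{*1}=0$ with $I=\{f\in C(X):f|_{X\cap Y}=0\}$ and $\imath_1:I\hookrightarrow C(X)$; since ${\rm Im}((\imath_1)_{*1})=\ker((r_X)_{*1})$ by the six-term sequence of $0\to I\to C(X)\to C(X\cap Y)\to0$, this gives $\ker((r_X)_{*1})\subseteq\ker((\phi_X)_{*1})$, so $(\phi_X)_{*1}=\theta_X\circ(r_X)_{*1}$ for a homomorphism $\theta_X$ on ${\rm Im}((r_X)_{*1})$; similarly $(\phi_Y)_{*1}=\theta_Y\circ(r_Y)_{*1}$ with $\theta_Y$ on ${\rm Im}((r_Y)_{*1})$. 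They agree on the overlap: if $c=(r_X)_{*1}(a)=(r_Y)_{*1}(b)$, then $(a,b)$ is killed by $(a',b')\mapsto(r_X)_{*1}(a')-(r_Y)_{*1}(b')$, so by the Mayer--Vietoris sequence of $C(X\cup Y)=C(X)\times_{C(X\cap Y)}C(Y)$ there is $w\in K_1(C(X\cup Y))$ with $a=(\pi_X)_{*1}(w)$, $b=(\pi_Y)_{*1}(w)$ ($\pi_X,\pi_Y$ the restrictions from $C(X\cup Y)$); then $\theta_X(c)=(\phi_X\circ\pi_X)_{*1}(w)$ and $\theta_Y(c)=(\phi_Y\circ\pi_Y)_{*1}(w)$, and these coincide because $\phi_X\circ\pi_X$ and $\phi_Y\circ\pi_Y$ are the homomorphisms $C(X\cup Y)\to A$ induced by $x$ and $y$, which agree on $K_1$: the group $K_1(C(X\cup Y))$ is generated by the classes $[z-\lambda]$ with $\lambda$ taken from the bounded components of $\C\setminus(X\cup Y)$ ($X\cup Y$ being planar), and $(\phi_X\circ\pi_X)_{*1}([z-\lambda])=[\lambda-x]=[\lambda-y]=(\phi_Y\circ\pi_Y)_{*1}([z-\lambda])$ by hypothesis. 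Hence $\theta_X,\theta_Y$ glue to a homomorphism $\bar\kappa$ on ${\rm Im}((r_X)_{*1})+{\rm Im}((r_Y)_{*1})$; and since $X\cup Y$ is planar, $K_0(C(X\cup Y))$ is free abelian, so its subgroup ${\rm coker}((a,b)\mapsto(r_X)_{*1}(a)-(r_Y)_{*1}(b))$, which is isomorphic to $K_1(C(X\cap Y))/({\rm Im}((r_X)_{*1})+{\rm Im}((r_Y)_{*1}))$, is free, the inclusion splits, and $\bar\kappa$ extends to the required $\kappa_1$ on all of $K_1(C(X\cap Y))$.

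Granting $\kappa_1$, Lemma~\ref{Lext} produces a unital monomorphism $\phi':C(X\cap Y)\to eAe$ with $\phi'_{*0}$ equal to $\kappa_0$ on the subgroup generated by the $[f_i]$ and $\phi'_{*1}=\kappa_1$; by reduction (ii) we conjugate so that $\phi'(f_i)=e_i$, and then $x_1:=\phi'(z)$ ($z$ the coordinate function) is a normal element of $eAe$ with ${\rm sp}(x_1)=X\cap Y$ and $f_i(x_1)=\phi'(f_i)=e_i$. For any normal $x_0,y_0\in(1-e)A(1-e)$ with finite spectrum in $X$ and $Y$, reduction (i) then gives $(\psi_1)_{*1}=\kappa_1\circ(r_X)_{*1}=(\phi_X)_{*1}$ and $(\psi_2)_{*1}=\kappa_1\circ(r_Y)_{*1}=(\phi_Y)_{*1}$, which finishes the proof. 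The main obstacle is the middle step: everything rests on converting the numerical-looking hypothesis $[\lambda-x]=[\lambda-y]$ into the statement that $(\phi_X)_{*1}$ and $(\phi_Y)_{*1}$ are both induced by one homomorphism on $K_1(C(X\cap Y))$, which requires Lemma~\ref{Lspindx} together with a Mayer--Vietoris argument, and it is exactly the planarity of the spectra that makes the relevant groups free so that this compatible partial datum extends to an honest homomorphism.
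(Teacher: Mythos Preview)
Your proof is correct, and the overall architecture matches the paper's: manufacture a single homomorphism $\kappa_1:K_1(C(X\cap Y))\to K_1(A)$ that simultaneously lifts $(\phi_X)_{*1}$ and $(\phi_Y)_{*1}$ through the restriction maps, then invoke Lemma~\ref{Lext} in the corner $eAe$. Your reductions (i) and (ii) are also exactly what the paper does, though the paper leaves the conjugation step implicit.

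The implementations diverge. The paper lifts $-(\phi_X)_{*1}$ and $(\phi_Y)_{*1}$ to $KK$-classes $\kappa_1\in KK(C(X),A)$, $\kappa_2\in KK(C(Y),A)$ via the UCT, checks that the pair dies in $KK(C(X\cup Y),A)$, and then pulls back along the Mayer--Vietoris sequence in $KK$-theory to a class $\kappa_3\in KK(C(X\cap Y),A)$ whose $K_1$-component is the desired $\lambda$. You stay entirely in ordinary $K$-theory: Lemma~\ref{Lspindx} gives the factorizations $\theta_X,\theta_Y$, the $K$-theoretic Mayer--Vietoris sequence shows they agree on the overlap, and you extend from $\mathrm{Im}(r_X)_{*1}+\mathrm{Im}(r_Y)_{*1}$ to all of $K_1(C(X\cap Y))$ by hand, using that the quotient embeds in $K_0(C(X\cup Y))$ and that $K_0$ of a planar compact set is free abelian. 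Both routes ultimately exploit the same freeness of the $K$-groups of planar sets (the paper needs it implicitly so that the UCT identifies $KK(C(X\cup Y),A)$ with a Hom group), but your argument is more elementary in that it avoids $KK$-theory and the UCT entirely, at the cost of the explicit splitting/extension step. The paper's version is shorter once one is willing to quote the $KK$ Mayer--Vietoris and UCT machinery.
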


\begin{proof}
Let $\pi_X: C(X\cup Y)\to C(X),$ $\pi_Y: C(X\cup Y)\to C(Y),$
$\pi_{X\cap Y}^X: C(X)\to C(X\cap Y)$ and $\pi_{X\cap Y}^Y: C(Y)\to C(X\cap Y)$ be quotient maps.
By the Universal Coefficient Theorem, there are
$\kappa_1\in KK(C(X), A)$ such that $\kappa_1|_{K_1(C(X))}=-(\phi_X)_{*1}$ and $\kappa_1|_{K_0(C(X))}=0$ and $\kappa_2\in KK(C(Y), A)$ such that
$\kappa_2|_{K_1(C(Y))}=(\phi_Y)_{*1}$ and $\kappa_2|_{K_0(C(Y))}=0.$
Consider the pull back:
\beq\label{Ldecn-1}
\begin{array}{ccc}
C(X\cup Y) &\overset{\pi_X}{\longrightarrow} & C(X)\\
\Big\downarrow_{\pi_Y} & &\Big\downarrow_{\pi_{X\cap Y}^X}\\
C(Y) &\overset{\pi_{X\cap Y}^Y}{\longrightarrow} & C(X\cap Y).\\
\end{array}
\eneq
By a Mayer-Vietoris Theorem (see, for example, 21.5.1 of \cite{Bla}), one has the following  six-term exact sequence:
{\scriptsize
\beq\nonumber
\begin{array}{ccccc}
KK(C(X\cap Y), A) &\overset{(-[\phi_{X\cap Y}^X], [\phi_{X\cap Y}^Y])}{\longrightarrow} & KK(C(X),A)\oplus KK(C(Y), A)
&\overset{[\phi_X]+[\phi_Y]}{\longrightarrow} & KK(C(X\cup Y), A)\\
\Big\uparrow&&&&\Big\downarrow\\
KK^1(C(X\cup Y), A)&\overset{[\phi_X]+[\phi_Y]}{\longleftarrow}&KK^1(C(X),A)\oplus KK^1(C(Y),A)&\overset{(-[\phi_{X\cap Y}^X], [\phi_{X\cap Y}^Y])}{\longleftarrow}&KK^1(C(X\cap Y), A).\\
\end{array}
\eneq
}

 By the assumption and the proof of  \ref{Lspindx},
 $$
 (\phi_X)_{*1}\circ (\pi_X)_{*1}=(\phi_X\circ \pi_X)_{*1}=(\phi_Y\circ \pi_Y)_{*1}=(\phi_Y)_{*1}\circ (\pi_Y)_{*1}.
 $$
 It follows that
 \beq\label{Ldecn-3}
 ([\phi_{X\cap Y}^X]+[\phi_{X\cap Y}^Y])(\kappa_1, \kappa_2)=0.
 \eneq
 The exactness of the Mayer-Vietoris sequence above shows that there
 is $\kappa_3\in KK(C(X\cap Y), A)$ such that
 \beq\label{Ldecn-4}
 (-[\phi_X], [\phi_Y])(\kappa_3)=(\kappa_1, \kappa_2),
 \eneq
or
\beq\label{Ldecn-5}
-[\phi_{X\cap Y}^X](\kappa_3)=\kappa_1 \andeqn [\phi_{X\cap Y}^Y](\kappa_3)=\kappa_2.
\eneq
Let $\kappa_3|_{K_1(C(X\cap Y))}=\lambda$ be as an element in $Hom(K_1(C(X\cap Y), K_1(A)).$
Then (\ref{Ldecn-5}) implies
that
\beq\label{Ldecn-6}
\lambda\circ (\phi_{X\cap Y}^X )_{*1}=(\phi_X)_{*1}\andeqn \lambda\circ (\phi_{X\cap Y}^Y)_{*1}=(\phi_Y)_{*1}
\eneq
It follows from  \ref{Lext}  that there is
a unital monomorphism $\psi_1': C(X\cap Y)\to eAe$ such that
\beq\label{Ldeck-9}
(\psi_1')_{*1}={\lambda}\andeqn \psi_1'(f_i)=e_i,\,\,\, i=1,2,...,n.
\eneq
Let $z_{X\cap Y}\in C(X\cap Y)$ be the identity function on $X\cap Y.$
Choose $x_1\in eAe$ such that $x_1=\psi_1'(z_{X\cap Y}).$
Choose any normal element $x_0\in B\subset eAe,$ where
$B$ is a finite dimensional \SCA\, of $eAe$ with $1_B=e$ such that
${\rm sp}(x_0)\subset X.$
Define $\psi'': C(X)\to (1-e)A(1-e)$ by
$\psi''(f)=f(x_1)$ for all $f\in C(X)$ and define $\psi_1: C(X)\to A$ by
$$
\psi_1(f)=f(x_0+x_1).
$$
Then, since $x_0\in B,$  $\psi''(f)\in B$ for all $f\in B.$ It follows
that $(\psi'')_{*1}=0.$
Therefore, by (\ref{Ldecn-6}),
\beq\label{Ldeck-10}
(\psi_1)_{*1}={\lambda}\circ (\pi_{X\cap Y}^X)_{*1}=(\phi_X)_{*1}.
\eneq
Define $\psi_2: C(Y)\to A$ by $\phi_2(g)=g(x_1+y_0)$ for all $g\in C(Y)$ and for any
normal element $y_0\in B$ with ${\rm sp}(y_0)\subset Y.$
We also have
\beq\label{Ldeck-11}
(\psi_2)_{*1}={\lambda}\circ (\pi_{X\cap Y}^Y)_{*1}=(\phi_Y)_{*1}.
\eneq

\end{proof}

Let $A$ be a unital simple \CA\, with $T(A)\not=\emptyset.$
Let $\phi_X: C(X)\to A$
be a unital monomorphism. Denote by
$\phi_X^{\sharp}: C(X)_{s.a.}\to \Aff(T(A))$  the unital affine continuous map
induced by $\phi.$
If $s: C(X)\to \C$ is a state of $C(X),$ denote by $\mu_s$ the probability Borel measure
induced by $s.$

\begin{thm}\label{MT1}
Let $A$ be a unital separable simple \CA\, with real rank zero, stable rank one and with weakly unperforated $K_0(A).$  Let $x, \, y\in A$ be two normal
elements with ${\rm sp}(x)=X$ and ${\rm sp}(y)=Y.$  Denote $Z=X\cup Y.$
Suppose that $[\lambda-x]=[\lambda-y]$ in $K_1(A)$
for all $\lambda\not\in Z.$

\noindent
{\rm (1)}\,\,\, Then
\beq\label{MT1-1}
{\rm dist}({\cal U}(x), {\cal U}(y))\le D_c^e(x, y).
\eneq

\noindent
{\rm (2)}\,\,\, Moreover, if the pair $(x, y)$ has a hub at $X\cap Y,$ then
\beq\label{MT1-1n}
{\rm dist}({\cal U}(x), {\cal U}(y))\le D_c(x, y).
\eneq
\end{thm}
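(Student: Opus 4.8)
The plan is to reduce the statement to the case of normal elements with finite spectrum, where Lemma~\ref{MLfsp} already gives the bound ${\rm dist}({\cal U}(x),{\cal U}(y))\le D_c(x,y)$, and then to control the passage to finite spectrum using the ``digging'' construction of Lemma~\ref{Lcuapp} together with the $K_1$-correction furnished by Lemma~\ref{LdecK}. The subtlety, already visible in the statement, is that the two parts are genuinely different: part~(1) only asks for the weaker bound $D_c^e(x,y)$, which is exactly the quantity produced by Lemma~\ref{dcedig} after cutting out a small ``hub'' of projections common to $x$ and $y$; part~(2) asks for the sharper $D_c(x,y)$, and this improvement is precisely what the hub hypothesis at $X\cap Y$ buys, via the last clauses of Lemma~\ref{Lcuapp}.

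For part~(1), the first step is to invoke Lemma~\ref{Lcuapp} (with the second part of its conclusion, equations \eqref{Lcuapp-4++}--\eqref{Lcuapp-4+++}) to approximate $x$ and $y$, up to $\epsilon$ in $D_c^e$ and in the relevant traces, by normal elements $x_1=\sum_{i=1}^{m_0}\lambda_i e(i,0)+x_{1,1}$ and $y_1=\sum_{i=1}^{m_0}\lambda_i e(i,0)+y_{1,1}$ sharing a hub $\sum e(i,0)$ whose trace is as small as we wish, with ${\rm sp}(x_{1,1}),{\rm sp}(y_{1,1})$ finite and $D_c(x_{1,1},y_{1,1})\le D_c^e(x,y)+\epsilon$. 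The second step is to restore the $K_1$-information: the elements $x_{1,1},y_{1,1}$ have trivial $K_1$-data (finite spectrum), so I would apply Lemma~\ref{LdecK} to the hub projections $\{e(i,0)\}$ — using that $[\lambda-x]=[\lambda-y]$ in $K_1(A)$ for $\lambda\notin Z$ and that $X\cap Y\neq\emptyset$ (the case $X\cap Y=\emptyset$ being handled by Theorem~\ref{Toriginal} since then $(\phi_X)_{*1}=0$ by Lemma~\ref{Lspindx}) — to produce a normal element $\hat x_1$ on the hub with ${\rm sp}(\hat x_1)=X\cap Y$ such that $x_0+\hat x_1$ has the $K_1$-data of $\phi_X$ and $y_0+\hat x_1$ has that of $\phi_Y$, while $\hat x_1$ changes the traces and the value of $D_c$ by an arbitrarily small amount (its spectrum is contained in $X\cap Y$, whose diameter contributes nothing new beyond what the hub already permits). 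The third step is to apply the approximate-unitary-equivalence theorem, Theorem~\ref{Uni2} (or Theorem~\ref{Uni1}), to $x$ and $x_0+\hat x_1$, and to $y$ and $y_0+\hat x_1$: since these pairs now agree on $K_0$ (by Lemma~\ref{Pcsameclose}/\ref{Pcsameclose}-type control via $D_c$ small), on $K_1$, and approximately on traces, we get $x\approx_{\epsilon'}u_1^*(x_0+\hat x_1)u_1$ and $y\approx_{\epsilon'}u_2^*(y_0+\hat x_1)u_2$. Finally, since $x_0+\hat x_1$ and $y_0+\hat x_1$ have finite spectrum, Lemma~\ref{MLfsp} gives a unitary $v$ with $\|v^*(x_0+\hat x_1)v-(y_0+\hat x_1)\|<D_c(x_0+\hat x_1,y_0+\hat x_1)+\epsilon\le D_c^e(x,y)+O(\epsilon)$; chaining the three unitaries yields ${\rm dist}({\cal U}(x),{\cal U}(y))\le D_c^e(x,y)+O(\epsilon)$, and letting $\epsilon\to 0$ finishes part~(1).

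For part~(2), the argument is identical in structure, but now in the application of Lemma~\ref{Lcuapp} I would use its final clauses: because the pair $(x,y)$ has a hub at $X\cap Y$, one can arrange the finite approximations $x_1=\sum\lambda_i e_i$, $y_1=\sum\mu_i e_i$ so that $\max_i|\lambda_i-\mu_i|\le D_c(x,y)+\epsilon$ \emph{and} for every pair $(j,k)$ of components with $F_j\cap G_k\neq\emptyset$ there are $\lambda_i,\mu_i\in F_j\cap G_k$. This is exactly the combinatorial input that lets the finite-spectrum matching in Lemma~\ref{MLfsp}/\ref{MLXY} be carried out with the tighter constant $D_c(x,y)$ rather than $D_c^e(x,y)$ — the hub forces the ``diagonal'' pairing to stay within each overlap region, so no doubling (as in Corollary~\ref{Ccuapp}) is incurred. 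The $K_1$-restoration step via Lemma~\ref{LdecK} and the approximate-unitary-equivalence step via Theorem~\ref{Uni2} then go through verbatim, and one concludes ${\rm dist}({\cal U}(x),{\cal U}(y))\le D_c(x,y)+O(\epsilon)$.

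The main obstacle I anticipate is the interaction between the $K_1$-correction of Lemma~\ref{LdecK} and the metric estimates: one must verify that inserting $\hat x_1$ (with full spectrum $X\cap Y$) in place of the diagonal hub $\sum\lambda_i e(i,0)$ does not spoil the inequality $D_c(x_0+\hat x_1,\,y_0+\hat x_1)\le D_c^e(x,y)+O(\epsilon)$ (resp.\ $D_c(x,y)+O(\epsilon)$). Since $\hat x_1$ lives on a projection of arbitrarily small trace and its spectrum $X\cap Y$ is covered (up to $\eta$) by the $\lambda_i$'s, the change should be absorbed using Lemma~\ref{Lappdcu} and the strict-comparison estimates of Section~2, but this bookkeeping — matching the hub points, controlling $r_O$ over all open $O$, and keeping the trace perturbations below the $\sigma$-threshold demanded by Theorem~\ref{Uni2} — is where the real care is needed.
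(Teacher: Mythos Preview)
Your overall architecture matches the paper's proof closely: reduce to finite-spectrum approximants via Lemma~\ref{Lcuapp}, restore the $K_1$-data on a small hub via Lemma~\ref{LdecK}, verify the $K_0$ and trace conditions needed for Theorem~\ref{Uni2}, and conclude. The handling of the case $X\cap Y=\emptyset$ via Lemma~\ref{Lspindx} and Theorem~\ref{Toriginal}, and the role of the hub hypothesis in part~(2), are also correctly identified.

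There is one genuine slip in part~(1). You write that ``$x_0+\hat x_1$ and $y_0+\hat x_1$ have finite spectrum'' and then invoke Lemma~\ref{MLfsp} on them. But $\hat x_1$ is constructed via Lemma~\ref{LdecK} with ${\rm sp}(\hat x_1)=X\cap Y$, which is typically infinite, so Lemma~\ref{MLfsp} does not apply to $x_0+\hat x_1$ and $y_0+\hat x_1$. The paper sidesteps this: since $\hat x_1$ is \emph{shared}, one applies Lemma~\ref{MLfsp} only to the finite-spectrum pieces $x_0,\,y_0$ inside the corner $(1-p_0)A(1-p_0)$, obtaining a unitary $v$ there, and then sets $u=p_0\oplus v$. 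This gives
\[
\|u^*(x_0+\hat x_1)u-(y_0+\hat x_1)\|=\|v^*x_0v-y_0\|<D_c(x_0,y_0)+\ep,
\]
and $D_c(x_0,y_0)$ (computed in the corner) is precisely the quantity that Lemma~\ref{Lcuapp} already bounds by $D_c^e(x,y)+\ep$. This also dissolves the ``main obstacle'' you flagged: you never need to control $D_c(x_0+\hat x_1,\,y_0+\hat x_1)$ at all, because the hub element is common and contributes nothing to the norm difference after conjugation by $p_0\oplus v$.

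For part~(2), note that in the paper no call to Lemma~\ref{MLfsp} is needed at the final stage either: once the hub hypothesis supplies, for each $(j,k)$ with $F_j\cap G_k\neq\emptyset$, an index $i$ with both $\lambda_i,\mu_i\in F_j\cap G_k$, one carves the LdecK-hub out of \emph{those} $e_i$'s. The remaining pieces $x_2=\sum\lambda_i p_i$ and $y_2=\sum\mu_i p_i$ then sit on the \emph{same} projections $p_i$, so $\|x_2-y_2\|\le\max_i|\lambda_i-\mu_i|<D_c(x,y)+\ep$ directly. The reason the hub hypothesis is essential is the $K_0$-bookkeeping for $\psi_4$: one needs $[\psi_4(g_j)]=[\phi_Y(g_j)]$, and this requires that the hub point $\mu_{r(i)}$ taken from $e_{r(i)}$ actually lies in $G_j$ whenever $\lambda_{r(i)}$ does---which is exactly what $\lambda_{r(i)},\mu_{r(i)}\in F_j\cap G_k$ guarantees.
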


\begin{proof}
Denote by $\phi_X: C(X)\to A$ the unital monomorphism defined
by $\phi_X(f)=f(x)$ for all $f\in C(X)$ and
$\phi_Y: C(Y)\to A$ defined by $\phi_Y(f)=f(y)$ for all $f\in C(Y).$
Let $\ep>0.$  Let ${\cal V}_1\subset K_1(C(X))$ (in place of ${\cal V}$) be a finite subset,
${\cal P}_1\subset K_0(C(X))$ (in place of ${\cal P})$ be a finite subset,
${\cal H}_1\subset C(X)_{s.a.}$ (in place of ${\cal H}$) be a finite subset and
let $\sigma_1>0$ (in place of $\sigma$) be required by  \ref{Uni2} for $\ep/16$ and
$x.$

Let  ${\cal V}_2\subset K_1(C(Y))$ (in place of ${\cal V}$) be a finite subset,
${\cal P}_2\subset K_0(C(Y))$ (in place of ${\cal P})$ be a finite subset,
${\cal H}_2\subset C(Y)_{s.a.}$ (in place of ${\cal H}$) be a finite subset and
let $\sigma_2>0$ (in place of $\sigma$) be required by  \ref{Uni2} for $\ep/16$ and
$y.$

Without loss of generality, we may assume that ${\cal H}_1$ and ${\cal H}_2$ are in the unit balls
of $C(X)$ and $C(Y),$ respectively.
Moreover, we may assume
that
\beq\label{MT1p-1-1}
{\cal P}_1=\{f_1, f_2,...,f_{m_1}\}\andeqn
{\cal P}_2=\{g_1,g_2,...,g_{m_2}\},
\eneq
where $f_i\in C(X)$ and $g_i\in C(X)$ which are projections
such that
\beq\label{MT1p-1-2}
1_{C(X)}=\sum_{i=1}^{m_1}f_i\andeqn 1_{C(Y)}=\sum_{i=1}^{m_2}g_i.
\eneq

Denote by $F_1, F_2,...,F_{m_1}$ the clopen sets of $X$ corresponding to
projections $f_1, f_2,...,f_{m_1},$ and by $G_1, G_2,...,G_{m_2}$ the clopen subsets of $Y$ corresponding to projections $g_1,g_2,...,g_{m_2}.$

Let $X{\cap} Y=\sqcup_{j=1}^I S_j,$ where $S_1, S_2,...,S_I$ are  distinct $\ep/8$-connected components of $X\cap Y.$ In particular, ${\rm dist}(S_I, S_j)\ge \ep/8.$
If $X\cap Y=\emptyset,$ {this} notation simply means $I=0$ (see \ref{Lspindx}).

Let $\eta>0$ be such that $\eta<\ep.$
By applying  \ref{Lcuapp}, there are mutually orthogonal projections
$e_1,e_2,...,e_n\in A$  with $\sum_{i=1}^n e_i=1_A,$
$\lambda_1, \lambda_2,...,\lambda_n\in X$ and $\mu_1,\mu_2,...,\mu_n\in Y$ such that
\beq\label{MT1p-1}
&&\max\{|\tau(g(x))-\tau(g(x_1))|: g\in {\cal H}_1\} <\sigma_1/2\tforal \tau\in T(A),\\\label{MT1p-2}
&&\max\{|\tau(g(y))-\tau(g(y_1))|: g\in {\cal H}_2\} <\sigma_2/2\tforal \tau\in T(A),\\\label{MT1p-3}
&&D_c(x, x_1)\le D_c^e(x,x_1)<\eta/4,\\\label{MT1p-4}
&&D_c(y, y_1)\le D_c^e(y,y_1)<\eta/4,\\\label{MT1p-5}
&&D_c^e(x_1,y_1)<D_c^e(x,y)+\eta/4\andeqn \|x_1-y_1\|<D_c(x, y)+\eta/4\\\label{MT1p-5+}
&&{\max_{1\le i\le n} |\lambda_i-\mu_i| <D_c(x, y)+\eta/4,}
\eneq
where
\beq\label{MT1p-6}
x_1=\sum_{i=1}^n\lambda_i e_i\andeqn y_1=\sum_{i=1}^n \mu_ie_i.
\eneq
By  the proof of   \ref{Lcuapp} (by choosing even smaller $\dt$) and by  \ref{Pcsameclose}, we may assume that
\beq\label{MT1p-6+}
[f_i(x_1)]=[f_i(x)],\,\,\,i=1,2,...,m_1\andeqn\\\label{MT1p-6++}
[g_j(y_1)]=[g_j(y)]\,\,\,\, j=1,2,...,m_2
\eneq

We first consider {case} (1).
Since the case that  $X\cap Y=\emptyset$ will be dealt with in case {(2)}, we will assume that $X\cap Y\not=\emptyset.$
By the second part of  \ref{Lcuapp}, we may also assume that,
\beq\label{MT1nn-1}
&&x_1=\sum_{i=1}^I\lambda_ie_i^{(0)}+x_2',\,\,\, y_2=\sum_{i=1}^I\lambda_ie_i^{(0)}+y_2',\\\label{MT1nn-2}
&&D_c(x_2',y_2')<D_c^e(x, y)+\ep/4,\,\,\,
\tau(\sum_{i=1}^Ie_i^{(0)})<\min\{\sigma_1/2, \sigma_2/2\}
\eneq
for all $ \tau\in T(A),$
where $\{e_1^{(0)}, e_2^{(0)},...,e_I^{(0)}\}$ is a set of mutually orthogonal non-zero projections, $\lambda_i\in S_i,$ $i=1,2,...,I,$
$x_2',y_2'\in (1-p_0)A(1-p_0)$ are normal elements with finite spectrum in
$X$ and $Y,$ respectively, and where $p_0=\sum_{i=1}^Ie_i^{(0)}.$

Let $h_j=\chi_{S_j}\in C(X\cap Y),$ $j=1,2,...,I.$

By applying  \ref{LdecK}, there is a normal element $x_0\in p_{0}Ap_{0}$ with
${\rm sp}(x_0)=X\cap Y$ such that
\beq\label{MT1p-9}
(\psi_1)_{*1}|_{{\cal V}_1}&=&(\phi_X)_{*1}|_{{\cal V}_1},\\\label{MT1p-10}
(\psi_2)_{*1}|_{{\cal V}_2}&=&(\phi_Y)_{*1}|_{{\cal V}_2},\\\label{MT1p-10+}
h_j(x_0)&=& e_{j}^{(0)},\,\,\, j=1,2,...,I,
\eneq
where {$\psi_1: C(X)\to p_{0}Ap_{0}$} is defined by
$\psi_1(f)=f(x_0)$ for all $f\in C(X)$ and
{$\psi_2: C(Y)\to p_{0}Ap_{0}$} is defined by $\psi_2(f)=f(x_0)$ for all $f\in C(Y).$
Now consider $x_3=x_0+x_2'$ and $y_3=x_0+y_2'.$
Note that ${\rm sp}(x_3)\subset X$ and ${\rm sp}(y_3)\subset Y.$
Define $\psi_3: C(X)\to A$ by $\psi_3(f)=f(x_3)$ for all $f\in C(X)$ and
$\psi_4: C(Y)\to A$ by $\psi_4(f)=f(y_3)$ for all $f\in C(Y).$
Since $x_2'$  and $y_2'$ have  finite spectra, by (\ref{MT1p-9}) {and (\ref{MT1p-10})}, we have
\beq\label{MT1p-11}
(\phi_X)_{*1}|_{{\cal V}_1}=(\psi_3)_{*1}|_{{\cal V}_1}\andeqn
(\phi_Y)_{*1}|_{{\cal V}_2}=(\psi_4)_{*1}|_{{\cal V}_2}.
\eneq
For each $i,$ if $F_i\cap Y=\emptyset,$
i.e., $F_i\cap G_k=\emptyset$ for all $k,$
we compute that
\beq\label{MT1p-12}
[\psi_3(f_i)]=[f_i(x_3)]=[f_i(x_2')]=[f_i(x_1)]=[f_i(x)]
\eneq
$i=1,2,...,I.$
If $F_i\cap Y\not=\emptyset,$ let $H_i$
be the subset of $\{j: j=1,2,...,I\}$ such that $h_j\le f_i.$
We then have
\beq\label{MT1p-13}
[\psi_3(f_i)]=\sum_{j\in H_i}[e_{j}^{(0)}]+[f_i(x_2')]=[f_i(x_1)]=[f_i(x)].
\eneq
Similarly, if $G_i\cap X=\emptyset,$
\beq\label{MT1nn-4}
[\psi_4(g_i)]=[g_i(y_3)]=[g_i(y_2')]=[g_i(y_1)]=[g_i(y)].
\eneq
If $G_i\cap X\not=\emptyset,$ let $H_i'$ be the subset of $\{j: j=1,2,...,I\}$ such that $h_j\le g_i.$ We then have
\beq\label{MT1nn-5}
[\psi_4(g_i)]=\sum_{j\in H_i'}[e_j^{(0)}]+[g_i(y_2')]=[g_i(y_1)]=[g_i(y)].
\eneq
In other words,
\beq\label{MT1p-14}
(\psi_3)_{*0}|_{{\cal P}_1}=(\phi_X)_{*0}|_{{\cal P}_1}\andeqn
(\psi_4)_{*0}|_{{\cal P}_2}=(\phi_Y)_{*0}|_{{\cal P}_2}.
\eneq
By applying  \ref{Uni2}, using (\ref{MT1p-11}), (\ref{MT1p-14}),
(\ref{MT1p-1}), (\ref{MT1p-2}) and (\ref{MT1nn-2}), we obtain a unitary  $u_1, u_2\in A$ such that
\beq\label{MT1n-21}
\|u_1^*xu_1-x_3\|<\ep/16\andeqn \|u_2^*yu_2-y_3\|<\ep/16.
\eneq
By (\ref{MT1nn-2}) and by  \ref{MLfsp} there is a unitary $u_3\in (1-p_0)A(1-p_0)$ such that
\beq\label{MT1n-22}
\|u_3^*x_2'u_3-y_2'\|<D_c^e(x,y)+\ep/16.
\eneq
Put $u_4=p_0+u_3.$ Then
\beq\label{MT1n-23}
\|u_4^*x_3u_4-y_3\|&=&\|(x_0+u_3^*x_2'u_3)-(x_0+y_2')\|=\|u_3^*x_2'u_3-y_2'\|\\
&<& D_c^e(x, y)+\ep/16.
\eneq
Therefore
\beq\label{MT1n-24}
{\rm dist}({\cal U}(x), {\cal U}(y))<D_c^e(x, y)+5\ep/8.
\eneq
This proves the case (1).

Now we turn to case (2).

If $X\cap Y=\emptyset,$ by the assumption that
$[\lambda-x]$ and $[\lambda-y]$ are the same in $K_1(A)$ for all $\lambda\not\in X\cup Y,$
$\lambda-x\in {\rm Inv}_0(A)$ for all $\lambda\not\in X$ and $\lambda-y\in {{\rm Inv}}_0(A)$ for all
$\lambda\not\in Y.$ Thus this special case has been
proved in  \ref{Toriginal}.

Thus we will then assume again $X\cap Y\not=\emptyset.$
Some of the argument above will be repeated.
In this case,  we may assume that, if $F_j\cap G_k\not=\emptyset,$ there are
at least one $i$ such that  $\lambda_i, \mu_i\in F_j\cap G_k.$
Note that, in this case,  $F_j\cap G_k$ is a non-empty clopen subset of $X\cap Y.$ In fact $X\cap Y$ is a disjoint union of those
$F_j\cap G_k.$ Call them $T_1, T_2,...,T_k.$ Then $k\le n.$
We may assume that $\{r(1), r(2),...,r(k)\}\subset \{1,2,...,n\}$ such that
$\lambda_{r(j)}, \mu_{r(j)}\in T_j,$ $j=1,2,...,k.$

Since $A$ is simple and infinite dimensional, one can find a non-zero
projection $e_{r(j)}^{(0)}\le e_{r(j)}$ such that
$[e_{r(j)}^{(0)}]=[e_{r(1)}^{(0)}]$ in $K_0(A),$ $j=1,2,...,r_k,$ and
\beq\label{MT1p2-7}
\tau(\sum_{j=1}^ke_{r(j)}^{(0)})<\min\{\sigma_1/2,\sigma_2/2\}\tforal \tau\in T(A).
\eneq
Let $p_0=\sum_{j=1}^k e_{r(j)}^{(0)}, $ $p=1_A-p_0,$    $p_i=e_i-e_i^{(0)},$ if $i\in \{r(1),r(2),...,r(k)\}$ and $p_i=e_i,$ if
$i\not\in \{r(1),r(2),...,r(k)\}.$
Put
\beq\label{MT1p2-8}
x_2=\sum_{i=1}^n \lambda_ip_i\andeqn y_2=\sum_{i=1}^n \mu_i p_i.
\eneq
{Note,  by (\ref{MT1p-5+}), that
\beq\label{MT1p2-8+}
\|x_2-y_2\|\le \max_{1\le i\le n}|\lambda_i-\mu_i|<D_c(x, y)+\eta/4.
\eneq
}
Let $h_j'=\chi_{T_j},$ $j=1,2,...,k.$

By applying  \ref{LdecK}, there is a normal element $x_0\in p_0Ap_0$ with
${\rm sp}(x_0)=X\cap Y$ such that
\beq\label{MT1p2-9}
(\psi_1)_{*1}|_{{\cal V}_1}&=&(\phi_X)_{*1}|_{{\cal V}_1},\\\label{MT1p2-10}
(\psi_2)_{*1}|_{{\cal V}_2}&=&(\phi_Y)_{*1}|_{{\cal V}_2},\\\label{MT1p2-10+}
h_i'(x_0)&=& e_{r(i)}^{(0)},\,\,\, i=1,2,...,k,
\eneq
where $\psi_1: C(X)\to p_0Ap_0$ is defined by
$\psi_1(f)=f(x_0)$ for all $f\in C(X)$ and
$\psi_2: C(Y)\to p_0Ap_0$ is defined by $\psi_2(f)=f(x_0)$ for all $f\in C(Y).$
Now consider $x_3=x_0+x_2$ and $y_3=x_0+y_2.$
Note that ${\rm sp}(x_3)\subset X$ and ${\rm sp}(y_3)\subset Y.$
Define $\psi_3: C(X)\to A$ by $\psi_3(f)=f(x_3)$ for all $f\in C(X)$ and
$\psi_4: C(Y)\to A$ by $\psi_4(f)=f(y_3)$ for all $f\in C(Y).$
Since $x_2$  and $y_2$ have  finite spectra, by (\ref{MT1p-9}), we have
\beq\label{MT1p2-11}
(\phi_X)_{*1}|_{{\cal V}_1}=(\psi_3)_{*1}|_{{\cal V}_1}\andeqn
(\phi_Y)_{*1}|_{{\cal V}_2}=(\psi_4)_{*1}|_{{\cal V}_2}
\eneq
For each $i,$ if $F_i\cap Y=\emptyset,$ i.e., $i\not\in \{r(1),r(2),...,r(k)\},$
we compute that
\beq\label{MT1p2-12}
[\psi_3(f_i)]=[\psi_1(f_i)]=[\phi_X(f_i)].
\eneq
If $F_i\cap Y\not=\emptyset,$ we also have
\beq\label{MT1p2-13}
[\psi_3(f_{i})]&=&\sum_{h_j'\le f_i}[e_{r(j)}^{(0)}]+(\sum_{h_j'\le f_i}[e_{r(j)}-e_{r(j)}^{(0)}]+\sum_{\lambda_j\in F_{i}, j\not=r(j)}[e_i])\\
&=& [\sum_{\lambda_i\in F_{i}}e_j]=[\phi_X(f_{i})],
\eneq
$j=1,2,...,k.$
In other words,
\beq\label{MT1p2-14}
(\psi_3)_{*0}|_{{\cal P}_1}=(\phi_X)_{*0}|_{{\cal P}_1}.
\eneq
By applying  \ref{Uni2}, using (\ref{MT1p2-14}), (\ref{MT1p2-11}) and
(\ref{MT1p-1})  and (\ref{MT1p2-7}), we obtain a unitary such that
\beq\label{MT1p-15}
\|u^*xu-x_3\|<\ep/16.
\eneq
On the other hand, for each $j,$
$\psi_2(g_j)=\sum_{h_i'\le g_j}h_i'(x_0).$

It follows that
\beq\label{MT1p-16-}
[\psi_2(g_j)]=\sum_{\mu_{r(i)}\in G_j}[e_{r(i)}^{(0)}].
\eneq
Therefore,
\beq\label{MT1p2-16}
[\psi_4(g_j)] &=& [(\psi_2)(g_j)]+[g_j(y_2)]\\
&=&{ \sum_{\mu_{r(i)}\in G_j}[ e_{r(i)}^{(0)}]
+\sum_{\mu_{r(j)}\in G_j}[e_{r(i)}-e_{r(i)}^{(0)}]+\sum_{\mu_s\in G_j,s\not\in\{r(i): 1\le i\le r(k)\}} [e_s]}\\
&=&\sum_{\mu_s\in G_j}[e_s]
=[\phi_Y(g_j)].
\eneq
It follows that
\beq\label{MT1p2-18}
(\psi_4)_{*0}|_{{\cal P}_2}=(\phi_Y)_{*0}|_{{\cal P}_2}.
\eneq
It follows from (\ref{MT1p2-18}), (\ref{MT1p2-11}), (\ref{MT1p-2}), (\ref{MT1p2-7}) and  \ref{Uni2} that there is a unitary $v\in A$ such that
\beq\label{MT1p2-19}
\|v^*yv-y_3\|<\ep/16.
\eneq
We also have {(using (\ref{MT1p2-8+}))}
\beq\label{MT1p2-20}
\|x_3-y_3\|=\|(x_0+x_2)-(x_0+y_2)\|=\|x_2-y_2\|<D_c(x, y)+\ep/4.
\eneq
It follows that
\beq\label{MT1p2-21}
\|u^*xu-v^*yv\|<\ep/16+D_c(x, y)+\ep/4+\ep/16<
D_c(x, y)+\ep/2.
\eneq
Therefore
\beq\label{MT1p2-22}
{\rm dist}({\cal U}(x), {\cal U}(y))<D_{c}(x, y)+\ep/2
\eneq
for all $\ep>0.$ The theorem follows.
\end{proof}

\begin{rem}\label{RM1}
{\rm
It is probably helpful to be reminded that
$$
D_c^e(x, y)\le \min\{D^T(x, y),2D_c(x,y)\}.
$$
and if  both $X$ and $Y$ are connected, $D_c^e(x,y)=D_c(x,y).$
}
\end{rem}

\begin{cor}\label{MCdig}
Let $A$ be a unital separable  simple infinite dimensional \CA\, with real rank zero, stable rank one and weakly unperforated $K_0(A)$ and let $x\in A$ be a normal element with ${\rm sp}(x)=X.$
Then, for any $\ep>0,$ any $\sigma>0$ and any finite subset $\{{\xi_1}, \xi_2,...,\xi_k\}\subset  {\rm sp}(x),$ there is a set of mutually
orthogonal non-zero projections $\{e_1, e_2,...,e_k\}$ of $A$ and a
normal element $x_0\in (1-p)A(1-p)$ with ${\rm sp}(x_0)=X$ such that
\beq\label{MCdig-1}
\|x-(x_0+\sum_{i=1}^k\xi_i e_i)\|<\ep,\\\label{MCdig-2}
\tau(\sum_{i=1}^ke_i)<\sigma\tforal \tau\in T(A),\\\label{MCdig-3}
(\phi_1)_{*1}=(\phi_2)_{*1},
\eneq
where $p=\sum_{i=1}^k e_i,$ $\phi_1, \phi_2: C(X)\to A$
is defined by $\phi_1(f)=f(x)$ and
$\phi_2(f)=f(x_0)+\sum_{i=1}^k f(\xi_i)e_i$ for all $f\in C(X).$
\end{cor}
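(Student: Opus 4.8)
The plan is a spectral surgery on $x$: excise tiny clopen pieces of ${\rm sp}(x)$ around the points $\xi_i$, absorb each into a small ``spike'' projection $e_i$ together with a fresh normal block carrying the correct $K_1$-data, and leave the rest of $x$ untouched. Discarding the trivial case $X\subseteq\{\xi_1,\dots,\xi_k\}$ (handled by merely shrinking the spectral projections of $x$ at the $\xi_i$), I would fix $\delta>0$ with $\delta<\ep/2$, with $\delta<\tfrac12\min_{i\ne j}|\xi_i-\xi_j|$, small enough that $X_0:=X\setminus\bigcup_i\overline{B(\xi_i,\delta)}\ne\emptyset$, and avoiding the countably many values for which some circle $\{|t-\xi_i|=\delta\}$ meets $X$. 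Then $X=X_0\sqcup X_1'\sqcup\cdots\sqcup X_k'$ is a partition into clopen subsets with $X_i':=X\cap B(\xi_i,\delta)$, $\xi_i\in X_i'$, and ${\rm diam}(X_i'\cup\{\xi_i\})\le\delta$. Writing $\phi_x\colon C(X)\to A$ for $f\mapsto f(x)$, the elements $q_0:=\phi_x(\chi_{X_0})$ and $p_i:=\phi_x(\chi_{X_i'})$ are mutually orthogonal nonzero projections summing to $1_A$, $x=q_0xq_0+\sum_i p_ixp_i$ with each summand normal, and in the respective corners ${\rm sp}(q_0xq_0)=X_0$, ${\rm sp}(p_ixp_i)=X_i'$ (since the restriction of $\phi_x$ to a clopen piece is a unital monomorphism onto that corner's image). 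As $A$ is simple of real rank zero it has property (SP), so I would pick nonzero $e_i<p_i$ with $\tau(\sum_i e_i)<\sigma$ for all $\tau\in T(A)$, and set $p:=\sum_i e_i$, $f_i:=p_i-e_i\ne0$.

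For each $i$, apply Lemma~\ref{Lext} to the unital, infinite-dimensional, separable simple \CA\ $f_iAf_i$ (still of real rank zero, stable rank one, weakly unperforated $K_0$) and the compact planar set $X_i'$, with $G=\Z\cdot[1_{C(X_i')}]$, $\kappa_0([1_{C(X_i')}])=[f_i]$, and $\kappa_1\colon K_1(C(X_i'))\to K_1(f_iAf_i)$ equal to $((\phi_x)|_{X_i'})_{*1}$ transported along the canonical isomorphism $K_1(p_iAp_i)\cong K_1(f_iAf_i)$. This produces a unital monomorphism $\psi_i\colon C(X_i')\to f_iAf_i$ with $(\psi_i)_{*1}=\kappa_1$; put $w_i:=\psi_i(z|_{X_i'})$, a normal element with ${\rm sp}(w_i)=X_i'$ and $\|w_i-\xi_if_i\|=\sup_{t\in X_i'}|t-\xi_i|\le\delta$. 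Then define
\[
x_0:=q_0xq_0+\sum_{i=1}^k w_i\ \in\ (1-p)A(1-p)
\]
(using $q_0+\sum_i f_i=1-p$); it is a sum of mutually orthogonal normal elements, hence normal, and ${\rm sp}(x_0)=X_0\cup\bigcup_i X_i'=X$ exactly.

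It remains to verify the three conclusions. The norm estimate follows from orthogonality of supports:
\[
\big\|x-(x_0+\textstyle\sum_i\xi_i e_i)\big\|=\max_i\big\|p_ixp_i-w_i-\xi_i e_i\big\|\le\max_i\big(\|p_ixp_i-\xi_i p_i\|+\|\xi_i f_i-w_i\|\big)\le2\delta<\ep,
\]
and the trace estimate is $\tau(p)=\tau(\sum_i e_i)<\sigma$. For the $K_1$-statement, the point-evaluation summand $f\mapsto\sum_i f(\xi_i)e_i$ of $\phi_2$ factors through $\bigoplus_i\C$, so $(\phi_2)_{*1}=(\phi_{x_0})_{*1}$; with respect to the clopen partition $X=X_0\sqcup\bigsqcup_i X_i'$ one has $\phi_x=\phi_{q_0xq_0}\oplus\bigoplus_i\big((\phi_x)|_{X_i'}\circ\rho_i\big)$ and $\phi_{x_0}=\phi_{q_0xq_0}\oplus\bigoplus_i(\psi_i\circ\rho_i)$, where $\rho_i\colon C(X)\to C(X_i')$ is restriction. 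By the choice of $\kappa_1$, $(\psi_i\circ\rho_i)_{*1}=\big((\phi_x)|_{X_i'}\circ\rho_i\big)_{*1}$ as maps into $K_1(A)$, hence $(\phi_{x_0})_{*1}=(\phi_x)_{*1}=(\phi_1)_{*1}$, as required.

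The only \emph{genuinely} delicate point is forcing ${\rm sp}(x_0)=X$ exactly. The reason the surgery must be arranged this way is that a small norm perturbation cannot enlarge a finite spectrum to an infinite one, so the bulk of $x_0$ has to be an honest compression of $x$; this is why one uses the true spectral (clopen) projections of $x$ rather than the near-spectral projections supplied by the real rank zero machinery, and why the pieces removed near each $\xi_i$ must be reinstated by genuine monomorphic copies of $C(X_i')$ of the correct $K_1$-class, which is exactly what Lemma~\ref{Lext} gives. The remaining verification — that these classes glue together correctly — is a routine diagram chase through the corner isomorphisms $K_1(f_iAf_i)\cong K_1(p_iAp_i)\cong K_1(A)$.
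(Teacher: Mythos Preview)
Your argument has a genuine gap at the very first step: the clopen partition $X=X_0\sqcup X_1'\sqcup\cdots\sqcup X_k'$ need not exist. The claim that only ``countably many'' values of $\delta$ have some circle $\{|t-\xi_i|=\delta\}$ meeting $X$ is false for general compact $X\subset\C$. Take $X$ to be the closed unit disk and $\xi_1=0$: then \emph{every} circle of radius $\delta\in(0,1)$ about $\xi_1$ meets $X$, and indeed the only clopen subsets of $X$ are $\emptyset$ and $X$ since $X$ is connected. Consequently $\chi_{X_i'}$ is not a continuous function on $X$, and your ``spectral projections'' $p_i=\phi_x(\chi_{X_i'})$ do not lie in $A$ (only in $A^{**}$), so the corner $f_iAf_i$ to which you intend to apply Lemma~\ref{Lext} is not defined. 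You were aware that near-spectral projections coming from real rank zero cause trouble with normality and with keeping ${\rm sp}(x_0)=X$ exactly; the point is that the ``true'' clopen projections you substitute for them simply need not exist.

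The paper avoids this by not trying to cut $X$ into clopen pieces at all. It invokes the proof of case~(1) of Theorem~\ref{MT1} with $X=Y$: there one first uses Lemma~\ref{Ldig} (which, via the real rank zero machinery of \cite{Lnjot94}, peels off a small diagonal piece $\sum_i\lambda_i e_i^{(0)}$ without requiring any clopen structure on $X$), then applies Lemma~\ref{LdecK} to manufacture a normal element $x_0$ with ${\rm sp}(x_0)=X\cap Y=X$ carrying the prescribed $K_1$-data inside the small corner, and finally uses Theorem~\ref{Uni2} to conjugate $x$ close to $x_3=x_0+x_2'$. The spike projections $e_i$ are then carved out of the finite-spectrum part $x_2'$, not out of $x$ itself. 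If you want to rescue your direct approach you would need a substitute for the clopen cut that still yields an honest normal compression with full spectrum on the complement; absent that, the route through Lemmas~\ref{Ldig} and~\ref{LdecK} seems unavoidable.
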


\begin{proof}
 {This}  is merely a refinement of that of  \ref{Ldig}. The issue is that we now insist that ${\rm sp}(x_0)=X.$
The proof is contained in the proof of the case (1) in the proof of  \ref{MT1} { by letting $X=Y$}. With the notation in the proof of the case (1) in  \ref{MT1},
we have $x_3=x_0+x_2',$ where $x_2'$ has finite spectrum
but ${\rm sp}(x_2')$ can be $\ep/16$-dense in $X.$ Note that ${\rm sp}(x_0)=X.$ We may assume, without loss of generality, that
$x_2'=\sum_{i=1}^k \xi_k p_i+x_2'',$
where $\{p_1, p_2,...,p_k\}$ is a set of mutually orthogonal non-zero projections and
where $x_2''$ is a normal element in $(1-\sum_{i=1}^k p_i)A(1-\sum_{i=1}^k p_i)$ with ${\rm sp}(x_2'')\subset X.$
Since $A$ is simple and has the property (SP), there are non-zero
projections $e_i'\le p_i,$ $i=1,2,...,k,$ such that
\beq\label{MCdign-1}
\sum_{i=1}^k\tau(e_i')<\sigma\tforal \tau\in T(A).
\eneq
We still have, as in (\ref{MT1n-21}), a unitary $u_1\in A$ such that 
\beq\label{MCdign-2}
\|u_1^*xu_1-x_3\|<\ep/16
\eneq
 Then
we have
\beq\label{MCdign-3}
\|x-(u_1(x_0+x_2'')u_1^* +\sum_{i=1}^k\xi_i u_1e_i'u_1^*)\|<\ep/16.
\eneq
Choose the new $x_0$ to be $u_1(x_0+x_2'')u_1^*$ and $e_i$ to be $u_1e_i'u_1^*.$

\end{proof}

\begin{cor}\label{MCC}
Let $A$ be a unital separable  simple \CA\, of real rank zero, stable rank one and weakly unperforated $K_0(A),$ let $x, y\in A$ be two normal
elements with ${\rm sp}(x)=X$ and ${\rm sp}(y)=Y.$
Then the pair $(x, y)$ has hub at $X\cap Y,$ if one of the following holds:

{\rm (1)} $X=Y$  is connected;

{\rm (2)} $X\cap Y$ is connected and it contains an open ball with radius $D_c(x,y);$

{\rm (3)}  for every connected component $S$ of $X,$  either $S=X\cap Y$ or
 ${\rm dist}({\xi}, X\cap Y)> D_c(x,y)$ for all $\xi\in S;$

{\rm (4)} $X\cap Y=\emptyset.$

Consequently,  
$$
{\rm dist}({\cal U}(x), {\cal U}(y))\le D_c(x, y),
$$
if one of the above conditions holds.
\end{cor}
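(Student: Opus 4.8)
The ``consequently'' assertion is immediate once the hub claim is in hand: under the standing hypothesis of this section ($[\lambda-x]=[\lambda-y]$ in $K_1(A)$ for $\lambda\notin X\cup Y$), Theorem \ref{MT1}(2) converts a hub at $X\cap Y$ into ${\rm dist}({\cal U}(x),{\cal U}(y))\le D_c(x,y)$. So the entire task is to verify, in each of the four situations, that $(x,y)$ has a hub at $X\cap Y$ in the sense of Definition \ref{Dhub}. Cases (1) and (4) are the two elementary ones recorded right after that definition. For (4) one takes any clopen decompositions of $X$ and of $Y$; no $S_t\cap G_k$ is then non-empty, so the hub requirement is vacuous (equivalently, $D_c^e(x,y)=D_c(x,y)$ here by definition, and Theorem \ref{MT1}(1) applies directly). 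For (1) one takes $S_1=X=Y=G_1$; any finite $\ep$-approximations and any (necessarily non-trivial, the approximations being unital) pairing will do, because every point of $F_1$ and of $F_2$ automatically lies in $S_1\cap G_1=X$ (alternatively $D_c^e=D_c$ here by \ref{dceconnect}).

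For (2) and (3) I would first isolate the feature they share: $X\cap Y$ is connected. In (2) this is assumed; in (3), if $X\cap Y\neq\emptyset$ then the connected component of $X$ meeting $X\cap Y$ contains a point at distance $0$ from $X\cap Y$, so the dichotomy in (3) forces that component to equal $X\cap Y$, whence $X\cap Y$ is a connected component of $X$ whose distance $d_0$ to the rest of $X$ satisfies $d_0>D_c(x,y)$ (a compactness argument upgrades the per-component strict inequality to a uniform one; the degenerate case $D_c(x,y)=0$ is anyway covered by \ref{Uni3}). Now fix the clopen decompositions $X=\sqcup_i S_i$, $Y=\sqcup_k G_k$ to be the decompositions into $\ep_0$-connected components for small $\ep_0$ (finitely many, since the spectra are compact planar sets). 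The key observation is that whenever $S_t\cap G_k\neq\emptyset$, the set $S_t\cap G_k$ is a non-empty relatively clopen subset of the connected set $X\cap Y$, hence equals $X\cap Y$; thus there is a single relevant index pair $(t_0,k_0)$, and being a hub reduces to the following: for every small $\ep>0$ one can choose finite $\ep$-approximations $\kappa_{F_1}$ of $\phi_X$ and $\kappa_{F_2}$ of $\phi_Y$ and a pairing $R(\kappa_{F_1},\kappa_{F_2})$ containing some pair $(i,j)$ with $x_i,y_j\in X\cap Y$.

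To arrange this I would use \ref{Pepfinite}/\ref{Lfepapp} to build $\kappa_{F_1}$ (resp.\ $\kappa_{F_2}$) with $F_1$ (resp.\ $F_2$) as dense as needed in $X$ (resp.\ $Y$), and in each case carrying, at one chosen point $\zeta_0\in X\cap Y$ (the centre of the given ball in case (2)), a small projection $e^*$ of the same $K_0$-class; then $r:=D_c(\kappa_{F_1},\kappa_{F_2})\le D_c(x,y)+2\ep$. Running the marriage argument behind \ref{MLXY} (i.e.\ \ref{TmarrG}) on $(a_i)=(\kappa_{F_1}([f_{\{x_i\}}]))$, $(b_j)=(\kappa_{F_2}([f_{\{y_j\}}]))$ with relation $R=\{(i,j):|x_i-y_j|\le r+\ep\}$, one pre-assigns the copy of $[e^*]$ at $\zeta_0$ in $F_1$ to the copy of $[e^*]$ at $\zeta_0$ in $F_2$, keeping that pair $(i^*,j^*)$ in the pairing with $x_{i^*}=y_{j^*}=\zeta_0\in X\cap Y$. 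This leaves $D_c(\kappa_{F_1},\kappa_{F_2})=r$ unchanged and completes the hub data.

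The main obstacle is the legitimacy of that pre-assignment: the residual marriage problem is solvable (by \ref{TmarrG}) iff Hall's inequality $\sum_{i\in A}a_i\le\sum_{j\in R_A}b_j$ survives removing $[e^*]$ from the demand at $\zeta_0$, and the only dangerous $A$'s are those with $j^*\in R_A$ but $i^*\notin A$; for these one needs the \emph{strict} surplus $\sum_{i\in A}a_i<\sum_{j\in R_A}b_j$ with a definite gap $\ge[e^*]$. This is exactly where (2) and (3) do their work. In (3), for such an $A$ the gap $d_0>D_c(x,y)$ forces the element of $A$ reaching $\zeta_0$ to lie in $X\cap Y$ itself, and the reserved mass $[e^*]$ that $\kappa_{F_2}$ places at $\zeta_0$ is genuine surplus over $\phi_Y$ there (together with the measure lower bound of \ref{PDelta} when $X\cap Y$ has interior) — none of it being needed to cover the $F_1$-points of $A$ outside $X\cap Y$. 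In (2) the ball $O(\zeta_0,D_c(x,y))\subseteq X\cap Y\subseteq Y$ supplies, via \ref{PDelta} applied to $\phi_Y$, a fixed amount of $\phi_Y$-mass inside the $(r+\ep)$-neighbourhood of any $A$ reaching $\zeta_0$, beyond what $\phi_X$ puts on $A$. Making these two surplus estimates quantitative and uniform in $A$ and $\ep$ — and, in case (2), correctly locating the reserved mass relative to the ball — is the technical heart; the rest (density of $F_i$, choice of $\ep_0$, and checking that restricting the approximations to the corner complementary to $e^*$ keeps them good approximations) is routine perturbation bookkeeping.
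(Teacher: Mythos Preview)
Your handling of (1), (4), and the ``consequently'' is correct. For (2) and (3), however, you take a genuinely different and much heavier route than the paper, and the step you yourself flag as ``the technical heart'' (the uniform surplus estimate needed to keep Hall's inequality after pre-assigning $[e^*]$) is precisely the work the paper avoids.

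The paper never reserves a projection and never re-runs the marriage argument. Instead it observes that, for suitably chosen finite approximations, \emph{every} admissible pairing already contains a pair in $X\cap Y$. For~(3): $X\cap Y$ is a connected component of $X$ at distance strictly greater than $D_c(x,y)$ from every other component, so any approximation point lying in $X\cap Y$ can only be paired (at distance $\le D_c(x_1,y_1)+\ep\approx D_c(x,y)$) with a point of $X\cap Y$; one line. For~(2): with $X\cap Y=F_1\cap G_1$ and $d:=\min_{j\ge 2}{\rm dist}(G_1,G_j)>0$, put a point $\lambda_i$ of the $X$-approximation at the centre $\zeta$ of the ball $B(\zeta,D_c(x,y))\subset X\cap Y\subset G_1$. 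Then ${\rm dist}(\lambda_i,G_j)\ge D_c(x,y)+d-\ep$ for $j\neq 1$, which for $\ep<d/16$ exceeds $D_c(x_1,y_1)+\ep$; so whichever $\mu_j$ is paired with $\lambda_i$ is forced into $G_1$, hence into $F_1\cap G_1=X\cap Y$.

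So the key insight you are missing is that the hypotheses in (2) and (3) are strong enough to \emph{force} any legal pairing through $X\cap Y$, not merely to permit one. Your pre-assignment scheme could probably be completed, but it converts a single distance inequality into a quantitative Hall-with-surplus argument that you do not actually carry out (and that, as you sketch it for case~(2) via \ref{PDelta}, is not obviously sufficient: the dangerous $A$'s may already exhaust the $\phi_Y$-mass in the relevant neighbourhood). The paper's direct geometric argument is both shorter and complete.
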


\begin{proof}
It is clear that  (1) and (4) follow from the definition immediately.
It is also clear that (3) holds since $X\cap Y$ must be  connected and no point
in $X\cap Y$  can pair with any point outside $X\cap Y$ with a distance
no more than $D_c(x, y).$

To see (2), let $X=\sqcup_{i=1}^{m_1} F_i$ and
$Y=\sqcup_{j=1}^{m_2} G_j,$ where $\{F_1,F_2,...,F_{m_1}\}$ and
$\{G_1, G_2,...,G_{m_2}\}$ are of mutually disjoint clopen sets.
Since $X\cap Y$ is connected,  we may assume that $X\cap Y=F_1\cap G_1,$ { and }
 $F_i\cap G_j=\emptyset,$ if $(i,j)\not=(1,1).$

Let
$$
d=\min\{{\rm dist}(G_1, G_j): j=2,3,...,m_2\}>0.
$$
Choose $\ep_0=d/16.$   Let  $0<\ep<\ep_0.$
Suppose that
$F=\{\lambda_1,\lambda_2,..., \lambda_K\}\subset X$ and $G=\{\mu_1,\mu_2,...,\mu_L\}\subset Y$
are finite subsets,  $x_1=\sum_{i=1}^K \lambda_ke_i$  and
$y_1=\sum_{j=1}^L \mu_jp_j,$
where $\{e_1,e_2,...,e_K\}$ and $\{p_1,p_2,...,p_L\}$ are two sets of mutually orthogonal projections in $A$ such that $\sum_{i=1}^Ke_i=1_A=\sum_{j=1}^Lp_j,$
and such that
\beq\label{MCC-10}
D_c(x_1, x)<\ep\andeqn D_c(y_1, y)<\ep.
\eneq
Therefore
\beq\label{MCC-11}
D_c(x, y)-2\ep<D_c(x_1, y_1)<D_c(x, y)+2\ep.
\eneq
Then there is $\lambda_i\in F_1\cap G_1=X\cap Y$ such that
$$
{\rm dist}(\lambda_i, G_j)>(D_c(x, y)-\ep)+d> D_c(x_1,y_1)+d-3\ep>D_c(x,y).
$$
for all $j\not=1.$  In other words there is $j$ such that
$\mu_j\in F_1\cap G_1$ and $(i,j)\in {R_{x_1, y_2}}$ (see \ref{Dparing}).
Therefore the pair $(x,y)$ has a hub at $X\cap Y.$

\end{proof}

\begin{cor}
Let $A$ be a unital separable  simple \CA\, of real rank zero, stable rank one and weakly unperforated $K_0(A),$ let $x, y\in A$ be two normal
elements with ${\rm sp}(x)=X$ and ${\rm sp}(y)=Y.$
If $X$ or $Y$ is connected and if $[\lambda -x]=[\lambda -y]$ in $K_1(A)$ for all $\lambda \not\in X\cup Y$, then
$$
{\rm dist}({\cal U}(x), {\cal U}(y))\le D_c(x, y).
$$
\end{cor}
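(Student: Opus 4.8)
The plan is to obtain this as an immediate consequence of part~(1) of Theorem~\ref{MT1} together with Corollary~\ref{dceconnect}, bypassing the hub machinery entirely. First I would record the two standing facts needed to invoke those results: since $A$ is unital, simple, of real rank zero, stable rank one and with weakly unperforated $K_0(A)$, it has strict comparison for positive elements (see~\ref{Ddim}); and since stable rank one forces $A$ to be stably finite, $T(A)\not=\emptyset$. Write $\phi_X,\phi_Y\colon C(X\cup Y)\to A$ for the unital homomorphisms induced by $x$ and $y$, so that $D_c(x,y)=D_c(\phi_X,\phi_Y)$ and $D_c^e(x,y)=D_c^e(\phi_X,\phi_Y)$ (see~\ref{dceDxy}).

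Next I would note that the hypothesis that $[\lambda-x]=[\lambda-y]$ in $K_1(A)$ for all $\lambda\not\in X\cup Y$ is precisely the standing hypothesis of Theorem~\ref{MT1}; hence part~(1) of that theorem gives
$$
{\rm dist}({\cal U}(x),{\cal U}(y))\le D_c^e(x,y).
$$
Finally, since $X$ or $Y$ is connected, Corollary~\ref{dceconnect} applies to the pair $\phi_X,\phi_Y$ and yields $D_c^e(\phi_X,\phi_Y)=D_c(\phi_X,\phi_Y)$, i.e.\ $D_c^e(x,y)=D_c(x,y)$. Combining this with the previous inequality gives ${\rm dist}({\cal U}(x),{\cal U}(y))\le D_c(x,y)$, which is the assertion.

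I do not expect any genuine obstacle here: the entire content is already packaged in~\ref{MT1} and~\ref{dceconnect}, and the only points that require a word of justification are the two side hypotheses mentioned above (strict comparison, nonemptiness of $T(A)$), both of which are automatic from the standing assumptions. One could alternatively try to route through part~(2) of Theorem~\ref{MT1}, which would require showing that connectedness of $X$ (or $Y$) forces the pair $(x,y)$ to have a hub at $X\cap Y$ in the sense of~\ref{Dhub} --- this is the spirit of Corollary~\ref{MCC} --- but producing the required pairing of finite approximations is strictly more work than simply quoting the identity $D_c^e=D_c$, so I would not take that path.
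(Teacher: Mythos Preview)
Your proposal is correct and follows exactly the paper's own argument: invoke part~(1) of Theorem~\ref{MT1} to get ${\rm dist}({\cal U}(x),{\cal U}(y))\le D_c^e(x,y)$, then use Corollary~\ref{dceconnect} (connectedness of $X$ or $Y$ gives $D_c^e=D_c$) to conclude. Your added remarks about strict comparison and $T(A)\not=\emptyset$ are the right side checks for invoking~\ref{dceconnect}, and your observation that routing through part~(2) of~\ref{MT1} via the hub condition would be unnecessarily laborious is also apt.
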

\begin{proof}
If $X$ or $Y$ is connected, by \ref{dceconnect},
$D_c(x, y)=D_c^e(x,y).$ The corollary follows from  \ref{MT1}.

\end{proof}
\section{Distance {between} unitary orbits of normal elements
with different $K_1$ maps}

In this section we will show that, without the condition
that $[\lambda-x]=[\lambda-y]$ in $K_1(A)$ {for
$\lambda\not\in X\cup Y$} in the statement of \ref{MT1},
$D_c(x,y)$ alone may have little to do with  the distance of the unitary orbits of $x$ and $y$ as \ref{Knot=1} shows. However,   \ref{MT2}
provides us some {description} of the upper as well as lower bound for the distance
between unitary orbits of normal elements.

Let $A$ be a unital \CA\, and let $x,\, y\in A$ be two normal elements.
Let $X={\rm sp}(x)$ and $Y={\rm sp}(y).$
Denote by $d_H(X,Y)$ the Hausdorff distance between the subset
$X$ and $Y.$
Define
\beq\label{Drhodist}
\hspace{-0.2in}\rho(x, y)=\max\{d_H(X, Y), \rho_1(x,y)\},
\eneq
where
\beq\label{Drho1}
\rho_1(x,y)=\sup\{{\rm dist}(\lambda, X)+{\rm dist}(\lambda, Y): \lambda\not\in X\cup Y, (\lambda-x)(\lambda-y)^{-1}\not\in {\rm Inv}_0(A)\}.
\eneq
Let
\beq\label{Dhx}
\rho_x(x,y)&=&\sup\{{\rm dist}(\lambda, X): \lambda\not\in X\cup Y,
(\lambda-x)^{-1}(\lambda-y)\not\in {\rm Inv}_0(A)\}\andeqn\\
\rho_y(x,y)&=&\sup\{{\rm dist}(\lambda, Y): \lambda\not\in X\cup Y,
(\lambda-x)^{-1}(\lambda-y)\not\in {\rm Inv}_0(A)\}.
\eneq

The following is a result of Ken Davidson (\cite{Dv2}). The proof is exact the same as that in \cite{Dv2}.

\begin{prop}\label{Dvpro}
Let $A$ be a unital \CA\, and let $x, \, y\in A$ be two normal elements.
Then
\beq\label{Dvpro-1}
{\rm dist}({\cal U}(x), {\cal U}(y))\ge \rho(x, y).
\eneq
\end{prop}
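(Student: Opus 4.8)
\textbf{Proof plan for Proposition \ref{Dvpro}.}

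The plan is to reconstruct Davidson's argument, whose essential content is a single, robust observation: if $u$ is a unitary such that $\|u^*xu - y\|$ is small, then $u^*xu$ and $y$ are two normal elements that are uniformly close, and hence their spectra are Hausdorff-close and their $K_1$-data of the form $[\lambda - \cdot]$ must agree for all $\lambda$ sufficiently far from the spectra. First I would fix $\ep > 0$ and choose a unitary $u\in U(A)$ with $\|u^*xu - y\| < {\rm dist}({\cal U}(x), {\cal U}(y)) + \ep =: \delta$. Set $x' = u^*xu$; then $x'$ is normal, ${\rm sp}(x') = {\rm sp}(x) = X$, $(\lambda - x')^{-1}(\lambda - y) = u^*(\lambda - x)^{-1}u\cdot(\lambda - y)$, and crucially $[\lambda - x'] = [\lambda - x]$ in $K_1(A)$ for every $\lambda \notin X$ (conjugation by a unitary does not change the $K_1$-class, and it is continuous so it respects ${\rm Inv}_0(A)$). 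So it suffices to show: $d_H(X, Y) \le \delta$, and for every $\lambda \notin X \cup Y$ with $(\lambda - x')(\lambda - y)^{-1} \notin {\rm Inv}_0(A)$ we have ${\rm dist}(\lambda, X) + {\rm dist}(\lambda, Y) \le \delta$, since letting $\ep \to 0$ then yields ${\rm dist}({\cal U}(x),{\cal U}(y)) \ge \rho(x,y)$.

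The Hausdorff bound is standard: if $\|x' - y\| < \delta$ then ${\rm sp}(y) \subset \{\mu : {\rm dist}(\mu, {\rm sp}(x')) < \delta\}$ and symmetrically, because for a normal element $\|x' - y\| < \delta$ forces the spectra to be within $\delta$ of each other (this is just $\|(\mu - x') - (\mu - y)\| < \delta$ together with the fact that $\mu - x'$ is invertible with $\|(\mu - x')^{-1}\| = 1/{\rm dist}(\mu, X)$, so if ${\rm dist}(\mu, X) \ge \delta$ then $\mu - y$ is invertible too). For the $K_1$-part, take $\lambda \notin X \cup Y$ and suppose ${\rm dist}(\lambda, X) + {\rm dist}(\lambda, Y) > \delta$. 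Write $r_1 = {\rm dist}(\lambda, X)$, $r_2 = {\rm dist}(\lambda, Y)$, so $r_1 + r_2 > \delta \ge \|x' - y\|$. The key computation is that the straight-line path $z_t = (\lambda - ((1-t)x' + ty))$, $t \in [0,1]$, stays invertible: indeed $\|(\lambda - x') - z_t\| = t\|x' - y\| \le \|x' - y\| < r_1 + r_2$, and one checks using normality and functional calculus that $\|(\lambda - ((1-t)x' + ty))^{-1}\|^{-1} \ge {\rm dist}(\lambda, \text{numerical range of } (1-t)x' + ty) \ge (1-t)r_1 + tr_2 - $ (an error controlled by $t\|x'-y\|$, which vanishes since along the segment the relevant estimate is $(1-t)r_1 + t r_2 \ge \min(r_1,r_2) > 0$ when... ) — more carefully, I would use that $(1-t)x' + ty$ need not be normal, so instead I would interpolate through the unitary path: since $\|x' - y\| < r_1 + r_2$, a short argument with the resolvent shows $\lambda - y = (\lambda - x')\bigl(1 + (\lambda - x')^{-1}(y - x')\bigr)$ and one wants $1 + (\lambda - x')^{-1}(y-x')$ to lie in ${\rm Inv}_0$; but $\|(\lambda - x')^{-1}(y - x')\| \le \|x' - y\|/r_1$, which is $< 1$ only when $\|x'-y\| < r_1$. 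The correct Davidson trick is the symmetric one: one shows that the path $\lambda_t$ moving $\lambda$ along a segment of total length $< r_1 + r_2$ from a point within $r_1$ of $X$ to $\lambda$... I will follow \cite{Dv2} verbatim here, using that $(\lambda - x')^{-1}(\lambda - y) \in {\rm Inv}_0(A)$ precisely because the homotopy $t \mapsto (\lambda - x')^{-1}(\lambda - ((1-t)y_0 + t y))$ for a suitable nearby normal element, combined with $\|x' - y\| < r_1 + r_2$, avoids $0$.

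The main obstacle is exactly this last point: making rigorous the claim that $\|x' - y\| < {\rm dist}(\lambda, X) + {\rm dist}(\lambda, Y)$ implies $(\lambda - x')^{-1}(\lambda - y) \in {\rm Inv}_0(A)$. The clean way, which I would spell out, is: let $a = \lambda - x'$, $b = \lambda - y$, both invertible, with $\|a^{-1}\|^{-1} = r_1$, $\|b^{-1}\|^{-1} = r_2$ (by normality and the spectral mapping theorem), and $\|a - b\| = \|x' - y\| < r_1 + r_2$. Consider the line segment $c_t = (1-t)a + tb$. If $c_t$ were not invertible for some $t$, then $0 \in {\rm sp}(c_t)$, so there is a state giving $0$ in the numerical range, contradicting the estimate $|\text{num.\ range of } c_t| \ge |\text{num.\ range of } a| - t\|a - b\| \ge r_1 - t(r_1 + r_2)$ for small $t$ and $\ge r_2 - (1-t)(r_1+r_2)$ for $t$ near $1$, and in the middle range one uses convexity: the distance from $0$ to the numerical range of $c_t$ is at least $(1-t)\,{\rm dist}(0, W(a)) + t\,{\rm dist}(0, W(b)) - 0$ since $W(c_t) \subseteq (1-t)W(a) + tW(b)$ and $W(a)$ lies in the half-plane at distance $r_1$ from $0$ (as $a$ is normal, $W(a) = {\rm conv}({\rm sp}(a))$ which is contained in a disc... ), hence ${\rm dist}(0, W(c_t)) \ge (1-t)r_1 + t r_2 > 0$. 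Wait — that is not quite it either since $W(a)$ need not lie in a half-plane; but it does lie in the complement of the open disc of radius $r_1$ about $0$, and $W(b)$ in the complement of that of radius $r_2$, and $\|a - b\| < r_1 + r_2$ forces, by a direct estimate, $0 \notin (1-t)W(a) + tW(b)$. This elementary convexity fact is the heart of it and I would state and prove it as a short lemma; everything else (Hausdorff distance, invariance of $[\lambda - \cdot]$ under conjugation, taking the infimum over $u$ and the supremum over $\lambda$ and $\ep \to 0$) is routine bookkeeping. Since the statement says the proof is exactly that of \cite{Dv2}, I would keep the exposition brief and cite Davidson for the details of the convexity estimate, presenting only the reduction to it.
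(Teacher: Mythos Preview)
Your overall architecture matches the paper (which simply cites \cite{Dv2}): reduce to a fixed pair $x',y$ with $\|x'-y\|<\delta$, handle $d_H(X,Y)$ by the standard resolvent perturbation, and for the $K_1$ part show that $\|a-b\|<r_1+r_2$ forces the straight segment $c_t=(1-t)a+tb$ (with $a=\lambda-x'$, $b=\lambda-y$) to stay in ${\rm Inv}(A)$, whence $a^{-1}b\in{\rm Inv}_0(A)$. That strategy is correct and is Davidson's.

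The gap is in your justification of the invertibility of $c_t$. The numerical-range claim you rely on is false: for a normal invertible $a$ one has $W(a)=\overline{{\rm conv}}({\rm sp}(a))$, and this need \emph{not} avoid the open disc of radius $r_1$ about $0$ (e.g.\ if ${\rm sp}(a)$ is a circle of radius $r_1$ about $0$, then $W(a)$ is the full closed disc and contains $0$). So the inequality ${\rm dist}(0,W(c_t))\ge (1-t)r_1+tr_2$ does not follow, and the ``convexity estimate'' you propose to cite does not exist in that form. The correct (and simpler) argument is a two-sided lower bound in a faithful representation: since $a$ is normal and invertible with $\|a^{-1}\|^{-1}=r_1$, one has $\|a\xi\|\ge r_1\|\xi\|$ for every vector $\xi$, hence
\[
\|c_t\xi\|=\|a\xi - t(a-b)\xi\|\ge (r_1-t\|a-b\|)\|\xi\|,
\]
which is positive for $t<r_1/\|a-b\|$; symmetrically, writing $c_t=b+(1-t)(a-b)$ gives $\|c_t\xi\|\ge (r_2-(1-t)\|a-b\|)\|\xi\|$, positive for $t>1-r_2/\|a-b\|$. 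These two ranges cover $[0,1]$ exactly when $r_1+r_2>\|a-b\|$, and the same estimates applied to $c_t^*=(1-t)a^*+tb^*$ give invertibility of $c_t$. This is the estimate in \cite{Dv2}; once you replace the numerical-range paragraph by it, your write-up is complete and agrees with the paper's intended proof.
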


\begin{thm}\label{Knot=1}
Let $A$ be a unital, infinite dimensional, separable  simple
\CA\, with  real rank zero, stable rank one,  weakly unperforated $K_0(A)$ and
$K_1(A)\not=\{0\}.$
Then

{\rm (1)} for any unitary $u_1\in A$  with ${\rm sp}(u_1)=\T$ (the unit circle),
there is a   unitary $u_2\in A$ such that $[u_1]\not=[u_2]$ in $K_1(A),$
\beq\label{Knot=1-1}
D_c(u_1, u_2)=0\tand {\rm dist}({\cal U}(u_1), {\cal U}(u_2))=2;
\eneq

{\rm (2)} For any compact subset $X\subset \C$ such that $\C\setminus X$ is not connected and  for any
normal element $x\in A$ with ${\rm sp}(x)=X,$ there exists a
normal element $y\in A$ such that
\beq\label{Knot=1-2}
&&{\rm dist}({\cal U}(x),{\cal U}(y))\\
&&\ge 2\sup\{{\rm dist}(\lambda, {\rm sp}(x)):
\lambda\,\,\, {\rm in\,\,\, bounded\,\,\, components\,\,\,\, of }\,\,\, \C\setminus {\rm sp}(x)\}\\
&&\tand D_c(x, y)=0.
\eneq

\end{thm}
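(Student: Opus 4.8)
The plan is to realize, inside $A$, a normal element $y$ with the same spectrum, the same $K_0$-data and the same traces as $x$ but with a different induced map on $K_1$, and then to invoke Davidson's lower bound \ref{Dvpro}; recall that $D_c(x,y)=0$ already forces ${\rm sp}(x)={\rm sp}(y)$ (as $A$ is simple) and, by \ref{Uni3}, forces $x$ and $y$ to be approximately unitarily equivalent once the $K_1$-information of $\lambda-x$ and $\lambda-y$ agrees, so it is precisely this $K_1$-information that has to be disturbed. Concretely, for a normal $x\in A$ with $X={\rm sp}(x)$, induced unital monomorphism $\phi_X\colon C(X)\to A$, and an arbitrary homomorphism $\kappa\colon K_1(C(X))\to K_1(A)$, I would produce a unital monomorphism $\psi\colon C(X)\to A$ with $\psi_{*0}=(\phi_X)_{*0}$, $\tau\circ\psi=\tau\circ\phi_X$ for all $\tau\in T(A)$ and $\psi_{*1}=\kappa$, copying the reduction in the proof of \ref{Lext}: pass to a unital simple AH-algebra $B$ of real rank zero with $(K_0(B),K_0(B)_+,[1_B],K_1(B))=(K_0(A),K_0(A)_+,[1_A],K_1(A))$ and a unital embedding $h\colon B\to A$ realizing this; take, as in the proof of \ref{Lfepapp}, a unital monomorphism $\psi'\colon C(X)\to B$ with $(h\psi')_{*0}=(\phi_X)_{*0}$ and $\tau\circ h\psi'=\tau\circ\phi_X$; apply Corollary 5.3 of \cite{LnMZ} in $B$ to get a unital monomorphism $\psi''\colon C(X)\to B$ with the same $K_0$-map and affine trace map as $\psi'$ but with $(\psi'')_{*1}=(h_{*1})^{-1}\circ\kappa$; and put $\psi:=h\psi''$. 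Since $\psi$ is a monomorphism, $y:=\psi(z)$ (with $z\in C(X)$ the identity function) is a normal element with ${\rm sp}(y)=X$; this $y$ is the candidate for (2), and for (1) one takes $X=\T$, so that $y$ is automatically a unitary.

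Next I would check $D_c(x,y)=0$. For each $\ep>0$, \ref{Ltp=close} provides $\dt>0$, clopen subsets $S_1,\dots,S_k$ of $\overline{X_{\ep/64}}$ and a finite ${\cal F}\subset C(\overline{X_{\ep/64}})_+$; viewing $\phi_X$ and $\psi$ as unital homomorphisms on $C(\overline{X_{\ep/64}})$ with spectrum $X$, the hypotheses of \ref{Ltp=close} hold because $\tau\circ\psi=\tau\circ\phi_X$ and $\psi_{*0}=(\phi_X)_{*0}$, so $D_c(\phi_X,\psi)<\ep$; letting $\ep\to0$ and using \ref{hu1} gives $D_c(x,y)=0$. (When $X$ is connected, in particular for $X=\T$, one may shortcut this via \ref{dtP}: equality of the traces already gives $D_T(\phi_X,\psi)=0=D_c(\phi_X,\psi)$.)

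To choose $\kappa$ I would use that, for compact $X\subset\C$, $K_1(C(X))\cong\check{H}^1(X)$ is free abelian and, via Alexander duality $\check{H}^1(X)\cong\tilde{H}_0(S^2\setminus X)$, has a basis consisting of the winding classes $g_i:=[\,z-\lambda_i\,]$, one for each bounded component $\Omega_i$ of $\C\setminus X$ ($\lambda_i\in\Omega_i$ arbitrary). Since $K_1(A)\neq\{0\}$, fix $0\neq g\in K_1(A)$ and set $\kappa(g_i)=(\phi_X)_{*1}(g_i)+g$ on every basis element. Then for $y$ as above and every $\lambda$ in every bounded component $\Omega_i$ of $\C\setminus X$ we get $[\lambda-y]=\psi_{*1}(g_i)=(\phi_X)_{*1}(g_i)+g\neq(\phi_X)_{*1}(g_i)=[\lambda-x]$, hence $(\lambda-x)(\lambda-y)^{-1}\notin{\rm Inv}_0(A)$ by \ref{DK0K1}, so $\rho(x,y)\ge\rho_1(x,y)\ge 2\,{\rm dist}(\lambda,X)$, and \ref{Dvpro} gives ${\rm dist}({\cal U}(x),{\cal U}(y))\ge 2\sup\{{\rm dist}(\lambda,X):\lambda\ \text{in a bounded component of}\ \C\setminus X\}$: this is (2). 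For (1) take $X=\T$, whose complement has the single bounded component $\{|\lambda|<1\}$, with $K_1(C(\T))\cong\Z$ generated by $[z]$ and $(\phi_{u_1})_{*1}([z])=[u_1]$; then $u_2:=\psi(z)$ is a unitary with ${\rm sp}(u_2)=\T$, $[u_2]=[u_1]+g\neq[u_1]$ and $D_c(u_1,u_2)=0$, and applying (2) at $\lambda=0$ gives ${\rm dist}({\cal U}(u_1),{\cal U}(u_2))\ge 2$, while $\|v^*u_1v-u_2\|\le 2$ for all $v\in U(A)$ forces equality.

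The step I expect to be the main obstacle is the verification $D_c(x,y)=0$: one has to be sure that coincidence of the traces together with coincidence of the $K_0$-classes of the clopen pieces genuinely forces all the Cuntz comparisons $[f_O(x)]=[f_O(y)]$, which is exactly the content of \ref{Ltp=close} together with \ref{hu1}, but the passage between the ambient space $\overline{X_{\ep/64}}$ appearing there and the space $X={\rm sp}(x)={\rm sp}(y)$ used to define $D_c(x,y)$ needs a careful (if routine) reconciliation. A secondary point to be pinned down is that the winding classes $g_i$ really form a basis of $K_1(C(X))$ for an arbitrary compact $X\subset\C$ (not just a nice one); this is classical via \v{C}ech--Alexander duality and should be recorded as such.
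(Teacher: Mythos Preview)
Your argument is correct and follows essentially the same route as the paper's: reduce to a simple AH-algebra $B$ with the same ordered $K$-theory, use \cite{LnMZ} to manufacture a monomorphism $\psi\colon C(X)\to B\subset A$ matching $(\phi_X)_{*0}$ and the traces but with a different $K_1$-map, deduce $D_c(x,y)=0$ from \ref{Ltp=close}, and obtain the lower bound from \ref{Dvpro}. The only cosmetic difference is that the paper first uses a short compactness argument to find a single $\lambda$ in the bounded part of $\C\setminus X$ realizing the supremum and then asks only that $[\lambda-\psi(z)]\neq[\lambda-x]$ at that one point, whereas you arrange $\kappa(g_i)=(\phi_X)_{*1}(g_i)+g$ on the whole winding-class basis so the discrepancy holds in every bounded component; your version trades the compactness step for recording the Alexander-duality description of $K_1(C(X))$, and either choice works.
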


\begin{proof}
It is clear that (1) follows from (2). So we will prove (2).
As in the beginning of the proof of  \ref{Lext}, there is a unital simple AH-algebra $B$ with slow dimension growth
and with real rank zero such that
$$
(K_0(B), K_0(B)_+, [1_B], K_1(B))=(K_0(A), K_0(A)_+, [1_A], K_1(A))
$$
and since $\rho_B(K_0(B))$ and $\rho_A(K_0(A))$ are dense in
$\Aff(T(B))$ and $\Aff(T(A)),$ respectively,  the above also gives an affine homeomorphism
from $\Aff(T(B))$ to $\Aff(T(A))$ which is compatible with the above identification.
We will use this fact in the proof of (2).

Let $x\in A$ be a normal element with ${\rm sp}(x)=X.$
Put
$$
d= \sup\{{\rm dist}(\lambda, {\rm sp}(x)):
\lambda\,\,\, {\rm in\,\,\, bounded\,\,\, components\,\,\,\, of }\,\,\, \C\setminus {\rm sp}(x)\}.
$$

Let $S$ be the union of all bounded components of $\C\setminus {\rm sp}(x).$
Then
$$
\sup\{|\lambda|: \lambda\in S\}\le \|x\|.
$$
In particular, $d\le \|x\|.$
There is $\lambda_0\in S$ such that
${\rm dist}(\lambda_0, {\rm sp}(x))=d_0>0.$  So $d\ge d_0.$
The set
\beq\label{Knot=1-7+1}
S_1=\{\xi\in S: \|x\|\ge {\rm dist}(\xi, {\rm sp}(x))\ge d_0\}
\eneq
is compact. It follows that there is $\lambda\in S_1$ such that
\beq\label{Knot=1-7+2}
{\rm dist}(\lambda, {\rm sp}(x))=d.
\eneq

Let $\phi_1: C(X)\to A$ be the unital monomorphism defined by $\phi(f)=f(x)$ for all $f\in C(X).$
By \cite{LnMZ}, since $K_1(A)\not=\{0\},$ there exists a unital monomorphism $\psi: C(X)\to B\subset A$ such that
$\psi_{*0}=\phi_{*0},$ $\tau\circ \psi=\tau\circ \phi$ for all $\tau\in T(B)=T(A)$ and
$[\lambda-\psi(z)]\not=[\lambda-x]$ in $K_1(A),$
where
$z: X\to X$ is the identity function.

Let $y\in {\cal U}(\psi(z)).$
It follows from   \ref{Ltp=close} and  \ref{dtP} that
\beq\label{Knot=1-9-1}
D_c(x, y)=0.
\eneq
Since $(\lambda-x)^{-1}(\lambda-\psi(z))\not\in {\rm Inv}_0(A),$
by  \ref{Dvpro},
\beq\label{Knot=1-9}
{\rm dist}({\cal U}(x), {\cal U}(y))\ge 2{\rm dist}(\lambda, {\rm sp}(x))
=2d.
\eneq

\end{proof}

%

{Please note that the above   \ref{Knot=1}  does not follow from  the following theorem.}

\begin{thm}\label{MT2}
Let $A$ be a unital separable simple \CA\, with real rank zero, stable rank one and weakly unperforated $K_0(A)$ and let $x, y\in A$ be two normal elements.

Then
\beq\label{MT2-1}
\rho(x,y)&\le&  {\rm dist}({\cal U}(x), {\cal U}(y))\le
\min\{D_1, D_2\},
\eneq
where
\beq\label{MT2-1+1}
D_1&=&\max\{D^T(x,y),\max\{\rho_x(x,y),\rho_y(x,y)\}\} +\min\{\rho_x(x,y),\rho_y(x,y)\},\\
D_2&=&D_c^e(x,y)+2\min\{\rho_x(x,y),\rho_y(x,y)\}.
\eneq

\end{thm}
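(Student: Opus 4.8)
The lower bound $\rho(x,y)\le {\rm dist}(\U(x),\U(y))$ is precisely \ref{Dvpro}, so the content lies in the two upper estimates. I would first reduce, by the symmetry of $D^T$, $D_c^e$ and of $\rho_x,\rho_y$, to the case $\rho_x(x,y)\le\rho_y(x,y)$; write $\fr=\rho_x(x,y)$ for the common minimum. Let $B=\{\lambda\notin X\cup Y:(\lambda-x)^{-1}(\lambda-y)\notin {\rm Inv}_0(A)\}$; by definition ${\rm dist}(\lambda,X)\le\fr$ and ${\rm dist}(\lambda,Y)\le\rho_y(x,y)$ for every $\lambda\in B$, so $\overline B$ is compact with $\overline B\subset\overline{X_\fr}\cap\overline{Y_{\rho_y(x,y)}}$. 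Fix $\ep>0$ and set $X_1=\overline{(X\cup\overline B)_\ep}$, so that $X\subset X_1\subset\overline{X_{\fr+\ep}}$, $X_1$ has finitely many components, and $\lambda\notin X_1$ forces ${\rm dist}(\lambda,X\cup\overline B)>\ep$.

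The technical core is the construction of a normal element $x_1$ with ${\rm sp}(x_1)=X_1$ that retains the $K_1$-data of $x$ and, up to negligible mass, its tracial distribution. I would build it as follows. Apply \ref{MCdig} to $x$ to obtain $x\approx_\ep x_0+\sum_{i=1}^k\xi_ie_i$ with ${\rm sp}(x_0)=X$, the $e_i$ mutually orthogonal of arbitrarily small total trace, and $[\lambda-x]=[\lambda-(x_0+\sum\xi_ie_i)]$ for all $\lambda\notin X$. Shrink to nonzero $e_i'\le e_i$ with $\sum\tau(e_i')<\sigma$, split $e'=\sum e_i'$ into finitely many nonzero projections of prescribed small traces (one per component of $X_1$), and use \ref{Lext} inside the corner $e'Ae'$ to produce a unital monomorphism $\psi\colon C(X_1)\to e'Ae'$ with ${\rm sp}\,\psi=X_1$ and $\psi_{*1}=0$. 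Put $x_1=x_0+\sum_{i=1}^k\xi_i(e_i-e_i')+\psi(z)$, where $z$ is the identity function on $X_1$. Then ${\rm sp}(x_1)=X\cup X_1=X_1$; and for $\lambda\notin X_1$ one computes, using $[\lambda-x_0]=[\lambda-x]$ (which follows from \ref{MCdig}), $[\lambda-\sum\xi_i(e_i-e_i')]=0$ and $[\lambda-\psi(z)]=\psi_{*1}([\lambda-z])=0$, that $[\lambda-x_1]=[\lambda-x]$; in particular $[\lambda-x_1]=[\lambda-y]$ for every $\lambda\notin X_1\cup Y$, since such a $\lambda$ lies outside $B$. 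Finally $|\tau(f(x_1))-\tau(f(x))|$ is small, uniformly in $\tau$, for $f$ in any fixed Lipschitz set, so $\mu_{\tau\circ\phi_{X_1}}$ differs from $\mu_{\tau\circ\phi_X}$ by total variation $O(\sigma)$ while being supported on $X_1$, with $\psi$ placing strictly positive mass throughout $X_1\setminus X$.

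With $x_1$ at hand I would carry out three estimates, writing $\phi_X,\phi_{X_1},\phi_Y$ for the induced maps on $C(X_1\cup Y)$. (i) Since $d_H(X,X_1)\le\fr+\ep$ and the tracial distributions agree up to $O(\sigma)$, one gets $D_c^e(x,x_1)\le D^T(x,x_1)\le\fr+2\ep$: the $a$-term is $\le\fr+\ep$, and for the $r_O^+$-terms one combines the distribution closeness with \ref{d<d} (to get uniform strict gaps) and the fact that $\psi$ has full support in $X_1$; since the pair $(x,x_1)$ satisfies the $K_1$-hypothesis of \ref{MT1}, this gives ${\rm dist}(\U(x),\U(x_1))\le\fr+2\ep$. (ii) Splitting $X_1=X\cup\overline B$ one finds $a(\phi_{X_1},\phi_Y)\le a(\phi_X,\phi_Y)\le D^T(x,y)$, while every $r_O^+(\phi_{X_1},\phi_Y)$ is $\le\max\{D^T(x,y),\rho_y(x,y)\}+\ep$ (points of $X$ contribute as for $x$ itself via the distribution closeness and \ref{d<d}; points of $\overline B\setminus X$ carry only $O(\sigma)$ mass and lie within $\rho_y(x,y)$ of $Y$), so $D^T(x_1,y)\le\max\{D^T(x,y),\rho_y(x,y)\}+\ep$. (iii) Because every point of $X_1$ is within $\fr$ of $X$ and, as $D_c^e(x,y)\ge d_H(X,Y)\ge\sup_{\xi\in X}{\rm dist}(\xi,Y)$, within $\fr+D_c^e(x,y)$ of $Y$, a parallel computation with $\lesssim$ in place of $d_\tau$ — absorbing the $O(\sigma)$ mass of the $\psi$-part into the mass of $\phi_Y$ — gives $D_c^e(x_1,y)\le D_c^e(x,y)+\fr+\ep$. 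Now \ref{MT1}(1) applied to $(x_1,y)$ gives ${\rm dist}(\U(x_1),\U(y))\le D_c^e(x_1,y)$, and \ref{dce<DT} gives $D_c^e(x_1,y)\le D^T(x_1,y)$; so by the triangle inequality for ${\rm dist}(\U(\cdot),\U(\cdot))$,
\[
{\rm dist}(\U(x),\U(y))\le(\fr+2\ep)+\min\{D^T(x_1,y),\, D_c^e(x,y)+\fr+\ep\}\le\min\{D_1,D_2\}+4\ep .
\]
Letting $\ep\to0$ finishes the proof.

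I expect the main obstacle to be the bookkeeping of step (iii) together with the $r_O^+$-estimates in (i)--(ii): one must show that enlarging ${\rm sp}(x)$ to $X_1$ costs at most $\fr$ in $D_c^e(x,x_1)$ and in $D_c^e(x_1,y)$, yet costs \emph{nothing beyond} $\max\{D^T(x,y),\rho_y(x,y)\}$ in $D^T(x_1,y)$, and that \ref{d<d} indeed supplies uniform strict gaps in all the comparisons involved, including the boundary cases $O\supset X$ and $O\cap X_1\ne X_1$. This rests on making the extra mass of $x_1$ genuinely negligible while keeping ${\rm sp}(x_1)=X_1$, and on checking that \ref{MT1} applies to both $(x,x_1)$ and $(x_1,y)$ — which is exactly what the $K_1$-matching in the construction (and \ref{Lspindx}) guarantees.
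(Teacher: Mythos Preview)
Your overall strategy matches the paper's: enlarge the spectrum of $x$ to swallow the ``bad'' set $B=\{\lambda\notin X\cup Y:(\lambda-x)^{-1}(\lambda-y)\notin{\rm Inv}_0(A)\}$, so that \ref{MT1} applies to the pair (intermediate, $y$), and then bound each leg of the triangle inequality. Your estimates (ii) and (iii) are essentially the paper's (\ref{MT2-18}) and (\ref{MT2-23}).

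The difference lies in the construction of the intermediate element and in how you handle step (i), and here there is a genuine gap. The paper does \emph{not} invoke \ref{MT1} for the first leg. Instead it covers $\overline S$ by small pieces $S_1,\dots,S_K$ with ${\rm diam}(S_i)<\dt/2$, chooses $\lambda_i\in X$ closest to $\zeta_i\in S_i$, and replaces $\lambda_i e_i$ by a normal $h_i\in e_iAe_i$ with ${\rm sp}(h_i)=S_i$ and $(h_i)_{*1}=0$. This gives the pointwise norm bound $\|\lambda_ie_i-h_i\|\le|\lambda_i-\zeta_i|+{\rm diam}(S_i)\le\rho_x(x,y)+\dt/2$, hence $\|x-x_2\|\le\rho_x(x,y)+O(\dt)$ directly. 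No spectral-measure comparison is needed at this step.

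By contrast, your $\psi:C(X_1)\to e'Ae'$ has full spectrum $X_1$, so $\psi(z)$ is far in norm from $\sum\xi_ie_i'$ and you are forced to go through $D^T(x,x_1)$. But the bound $D^T(x,x_1)\le\fr+2\ep$ is not established. The problem case is an open $O$ that is clopen relative to $X$ with $O_r\cap X=O\cap X$ (this happens whenever $X$ is disconnected and $r$ is below the gap between components). Then $d_\tau(\phi_X(f_{O_r}))=d_\tau(\phi_X(f_O))$, and the needed strict inequality $d_\tau(\phi_{X_1}(f_O))<d_\tau(\phi_X(f_{O_r}))$ reduces to $d_\tau(f_O(\psi(z)))<\sum_{\xi_i\in O}\tau(e_i')$. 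Nothing in \ref{Lext} controls the measure distribution of $\psi$ within a component of $X_1$, so this can fail. Your appeal to \ref{d<d} is circular: that lemma produces uniform strict gaps \emph{from} a known bound on $D^T$, whereas here you are trying to \emph{establish} such a bound.

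The cleanest repair is to abandon the global $\psi$ and follow the paper: pick the $\xi_i$'s in \ref{MCdig} to include closest points $\lambda_i$ to a $\dt$-net $\{\zeta_i\}$ in $\overline B$, and replace each $\lambda_ie_i$ by $h_i\in e_iAe_i$ with spectrum the corresponding small piece of $\overline B$ and trivial $K_1$ (this is what the proof of \ref{Lext} actually furnishes in each corner). You then get $\|x-x_2\|\le\fr+O(\dt)$ for free, and your steps (ii)--(iii) go through as written.
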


\begin{proof}
Let $X={\rm sp}(x)$ and $Y={\rm sp}(y).$
Let  $d=D^T(x,y),$  $d/2>\ep>0$ and let
\beq\label{MT2-5}
S=\{\lambda\in \C: \lambda\not\in X\cup Y,\,\,\, (\lambda-x)^{-1}(\lambda-y)\not\in {\rm Inv}_0(A)\}.
\eneq
Note that the closure ${\bar S}$ of $S$ is compact.
Let $\xi_1,\xi_2,...,\xi_{L'}$ be a finite subset of $X\cup Y\cup {\bar S}$ such that it is
$\ep/32$-dense in $X\cup Y\cup {\bar S}.$
Let $N_1, N_2,...,N_L$ be all possible finite unions of $O(\xi_i, \ep/32)'s$
such that $N_j\cap Y\not=Y$ for $j=1,2,...,L.$ For each $i,$
let
\beq\label{MT2-4-1}
\eta_i=\inf\{d_\tau(f_{(N_i)_{d+\ep/32}}(x))-d_\tau(f_{N_i}(y)): \tau\in T(A)\}.
\eneq
It follows from  \ref{d<d} that $\eta_i>0,$ $i=1,2,...,L.$ Choose
\beq\label{MT2-4}
0<\eta<\min\{\ep/4,\,\,\min\{\eta_i: 1\le i\le L\}/8\}.
\eneq


Let $\dt>0$ with $\dt<\min\{\ep/2^{10},\eta/16\}.$
Let $g_i\in C(X\cup Y\cup {\bar S})$ be such that $0\le g_i(t)\le 1,$
$g_i(t)=1$ if $t\in (N_i)_{d+\ep/8},$ $g(t)=0$ if $t\not\in (N_i)_{d+\ep/4},$
$i=1,2,...,L.$

There are distinct points $\zeta_1, \zeta_2,...,\zeta_K\in {\bar S}$ such
that
\beq\label{MT2-7}
\cup_{i=1}^K O(\zeta_i, \dt/4)\supset {\bar S}.
\eneq

Let  $S_1, S_2,...,S_K$ be compact subsets of ${\bar S}$ such that
\beq\label{MT2-8}
\zeta_i\in S_i\andeqn {\rm diam}(S_i)<\dt/2,\,\,\, i=1,2,...,K.
\eneq
Since $sp(x)$ is compact, there are $\lambda_1,\lambda_2,...,\lambda_K\in {\rm sp}(x)$ 
such that
\beq\label{MT2-9}
{\rm dist}(\lambda_i, \zeta_i)={\rm dist}({\rm sp}(x), \zeta_i),
\eneq
$i=1,2,...,K.$

Let ${\cal H}=\{z, g_i: 1\le i\le L\},$ where $z$ represents the identity
function on $X\cup Y\cup {\bar S}.$

By  \ref{MCdig} and  \ref{dcedig},
there are nonzero mutually orthogonal projections
$\{e_1,e_2,...,e_K,e_{K+1},...,e_k\},$ a unitary $w$
in $A$
and  normal elements
$x_0\in (1-Q_1)A(1-Q_1)$ with ${\rm sp}(x_0)=X$
and $y_0\in (1-Q_2)A(1-Q_2)$ with ${\rm sp}(y_0)=Y$
satisfy the following:
\beq\label{MT2-10}
&&\|f(x)-(f(x_0)+\sum_{i=1}^kf(\lambda_i) e_i)\|<\dt/4\tforal f\in {\cal H},\\\label{MT2-10+1}
&&\|f(y)-w^*(f(y_0)+\sum_{i=K+1}^kf(\lambda_i)e_i)w\|<\dt/4\tforal f\in {\cal H}\\\label{MT2-10+2}
&&[\lambda-x]=[\lambda-x_1]\rforal \lambda\not\in X\andeqn\\\label{MT2-10+3}
&&\tau(\sum_{i=1}^ke_i)<\eta/2,\\
&&D_c(x_0+\sum_{i=1}^K\lambda_ie_i,y_0)<D_c^e(x,y)+\dt/4
\eneq
for all $\tau\in T(A),$
where
$Q_1=\sum_{i=1}^ke_i$  and
$Q_2=\sum_{i=K+1}^ke_i,$
$x_1=x_0+\sum_{i=1}^k\lambda_ie_i.$
and
$\{\lambda_{K+1},\lambda_{K+2},...,\lambda_k\}$ is $\dt/4$-dense in $X\cap Y.$
As in the proof of  \ref{Lext},
there are normal elements $h_i\in e_iAe_i$ such that
${\rm sp}(h_i)=S_i,$   $\lambda-h_i\in {\rm Inv}_0(e_iAe_i)$
for all $\lambda\not\in S_i,$ {$i=1,2,...,K.$}

Define
\beq\label{MT2-13}
x_2=x_0+\sum_{i=1}^K h_i+\sum_{i=K+1}^k\lambda_i e_i
\eneq
It follows that
\beq\label{MT2-14}
\|x-x_2\| &\le & \|x-x_1\|+\|x_1-x_2\|\\
&<& \dt/4+\|\sum_{i=1}^K\lambda_i e_i-\sum_{i=1}^K h_i\|\\
&\le & \dt/4+\max\{\|\lambda_ie_i-h_i\|:1\le i\le K\}\\\label{MT2-14+}
&<&\dt/4+\rho_x(x,y).
\eneq

Let $Z=X\cup {\bar S}.$ Define $\psi: C(\Omega)\to A$ by $\psi(f)=f(x_2)$
for all $f\in C(\Omega)$ and define
$\psi_Y: C(\Omega)\to  A$ by $\psi_Y(g)=g(y)$ for $g\in C(\Omega).$
Let $\lambda\not\in Z\cup Y=X\cup Y\cup {\bar S}.$
By the assumption and (\ref{MT2-10+1}),
\beq\label{MT2-15}
[\lambda-x_2]=[\lambda-y]\rforal \lambda\not\in Z\cup Y.
\eneq
Since $\dt<\eta/16,$ by (\ref{MT2-10}),
\beq\label{MT2-16}
\tau(f_i(x_1))>\tau(f_i(x))-\eta/16\rforal \tau\in T(A),
\eneq
$i=1,2,...,L.$
Let
\beq\label{MT2-16n}
d_1=d_H({\rm sp}(x_2), Y)=\max\{d_H(X, Y), \rho_y(x,y)\}.
\eneq
Let $O\subset Y\cup X\cup {\bar B}$ be an open subset with $O\cap Y\not=Y.$
If $O_{\ep/2}\cap Y=Y,$
since $A$ is simple,
\beq\label{MT2-16n+1}
d_\tau(\psi_Y(f_O))<d_\tau(\psi_Y(f_{O_{\ep/2}})) {\rforal \tau\in T(A).}
\eneq
But we also have that   $O_{d_1+\ep}\cap Z{\supset} \,{\rm sp}(x_2).$
Then $\psi(f_{O_{d_1+\ep}})=1_A.$ It follows that
\beq\label{MT2-16n+2}
d_\tau(\psi_Y(f_O))<d_\tau(\psi_Y(f_{O_{\ep/2}}))=
d_\tau( \psi(f_{O_{d_1+\ep}})).
\eneq
If $O_{\ep/2}\cap Y\not=Y,$
let $O_{\ep/32}\cap \{\xi_1,\xi_2,...,\xi_{L'}\}=\xi_{k_1}, \xi_{k_2},...,\xi_{k_l}.$
Then $O\subset \cup_{j=1}^l O(\xi_{k_j}, \ep/32)\subset O_{\ep/16}.$ It follows that
there is $j$ such that
\beq\label{MT2-17}
O\subset N_j\subset (N_j)_{d+\ep/16}\subset O_{d+\ep/8}.
\eneq

By  (\ref{MT2-10+3}), (\ref{MT2-16}),  (\ref{MT2-17}) and (\ref{MT2-4-1}), we have
\beq\label{MT2-17+}
\hspace{-0.6in}d_\tau(\psi(f_{O_{d+\ep}}))-d_\tau(\psi_Y(f_O)) &>& d_\tau(f_{O_{d+\ep}}(x_0))-d_\tau(\psi_Y(f_{N_j}))-\eta/2\\
&\ge & \tau(f_{O_{d+\ep}}(x_1))-d_\tau(\psi_Y(f_{N_j}))-\eta/2-\eta/2\\
&\ge & \tau({g_j}(x_1))-d_\tau(\psi_Y(f_{N_j}))-\dt/4-\eta\\
&> &  \tau({g_j}(x))-d_\tau(\psi_Y(f_{N_j}))-\eta/16-\dt/4-\eta\\
&\ge & d_\tau(f_{(N_j)_{d+\ep/16}}(x))-d_\tau(\psi_Y(f_{N_j}))-17\eta/16 -\dt/4\\\label{MT2-17++}
&\ge & \eta_j-17\eta/16-\eta/64>0
\eneq
for all $\tau\in T(A).$
By (\ref{MT2-17+})-(\ref{MT2-17++}) and (\ref{MT2-16n})
\beq\label{MT2-18}
D^T(x_2,y)\le
\max \{D^T(x, y)+\ep, \rho_y(x,y)\}.
\eneq
It follows from this,  (\ref{MT2-15}),  \ref{dce<DT} and  \ref{MT1} that
\beq\label{MT2-19}
{\rm dist}({\cal U}(x_2), {\cal U}(y))\le \max \{D^T(x, y)+\ep, \rho_y(x,y)\}.
\eneq
Combining this with (\ref{MT2-14}) and (\ref{MT2-14+}),
we have ($\dt<\ep/2^{10}$)
\beq\label{MT2-20}
{\rm dist}({\cal U}(x), {\cal U}(y))\le \max \{D^T(x, y)+\ep, \rho_y(x,y)\}+\rho_x(x, y)+\ep/2^{12}
\eneq
for all $\ep>0.$
Therefore
\beq\label{MT2-21}
{\rm dist}({\cal U}(x), {\cal U}(y))\le \max \{D^T(x, y), \rho_y(x,y)\}+\rho_x(x, y).
\eneq
Since we may switch the position of $x$ and $y,$ we conclude that
\beq\label{MT2-22}
\hspace{-0.3in}{\rm dist}({\cal U}(x), {\cal U}(y))\le \max \{D^T(x, y), \max\{\rho_x(x,y),\rho_y(x,y)\}\}+\min\{\rho_x(x, y), \rho_y(x, y)\}.
\eneq

On the hand,
we have
\beq\label{MT2-23}
D_c^e(x_2, y) &\le &D_c(x_0+\sum_{i=1}^K h_i, y_0)\\
&\le & \rho_x(x, y)+D_c(x_0+\lambda_ie_i, y_0)\le \rho_x(x,y)+D_c^e(x, y)+\dt/4.
\eneq
It follows from  \ref{MT1} that
\beq\label{MT2-24}
{\rm dist}({\cal U}(x_2), {\cal U}(y))\le \rho_x(x,y)+D_c^e(x, y)+\dt/4.
\eneq
By (\ref{MT2-14+}),
\beq\label{MT2-25}
{\rm dist}({\cal U}(x), {\cal U}(y))\le D_c^e(x,y)+2\rho_x(x,y)+\dt/4.
\eneq
Since we can exchange $x$ with $y$ in the above proof, finally, we conclude
that
\beq\label{MT2-26}
{\rm dist}({\cal U}(x), {\cal U}(y))\le D_c^e(x, y)+2\min{\{\rho_x(x,y),\rho_y(x,y)\}}.
\eneq
\end{proof}

In some  special case below exact formula for distance can be stated.

\begin{cor}\label{CMT2}
Let $A$ be a unital separable simple \CA\, of real rank zero, stable rank one and weakly unperforated $K_0(A)$ and let $x,\, y\in A$ be two normal elements.
If  $D_c(x,y)=0,$
then
\beq\label{CMT2-1}
{\rm dist}({\cal U}(x), {\cal U}(y))=\rho_1(x,y).
\eneq
If $X=Y$ and $X$ is connected, then
\beq\label{CMT2-n}
{\rm dist}({\cal U}(x), {\cal U}(y))\le \max\{D_c(x,y), (1/2)\rho_1(x,y)\}+(1/2)\rho_1(x,y).
\eneq

\end{cor}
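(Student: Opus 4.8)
The plan is to deduce Corollary \ref{CMT2} directly from Theorem \ref{MT2}, specializing the general upper bound to the two stated situations. First I would recall the definitions of $\rho_x(x,y)$ and $\rho_y(x,y)$ and observe that both quantities are dominated by $\rho_1(x,y)$: indeed, if $\lambda\notin X\cup Y$ and $(\lambda-x)^{-1}(\lambda-y)\notin{\rm Inv}_0(A)$, then ${\rm dist}(\lambda,X)\le{\rm dist}(\lambda,X)+{\rm dist}(\lambda,Y)\le\rho_1(x,y)$, and similarly for ${\rm dist}(\lambda,Y)$; hence $\max\{\rho_x(x,y),\rho_y(x,y)\}\le\rho_1(x,y)$.

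For the first assertion, assume $D_c(x,y)=0$. Since $A$ is a unital simple \CA, this forces ${\rm sp}(x)={\rm sp}(y)=X$, so $d_H(X,Y)=0$ and $\rho(x,y)=\rho_1(x,y)$. By Corollary \ref{Ccuapp}, $D_c^e(x,y)\le 2D_c(x,y)=0$, so $D_c^e(x,y)=0$; likewise $D^T(x,y)=0$ by \ref{dtP} (as $D_T(x,y)\le D_c(x,y)=0$ and $D^T(x,y)\le D_c^e(x,y)$ — here I would invoke \ref{dce<DT}, giving $D^T$ is squeezed; alternatively use that $D_T(x,y)=D^T(x,y)$ when... actually the clean route is $D^T(x,y)\le D_c^e(x,y)$ is backwards, so I must use $D_c^e\ge D^T$? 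No: \ref{dce<DT} states $D_c^e(\phi_X,\phi_Y)\le D^T(\phi_X,\phi_Y)$, so that inequality runs the wrong way; instead since $X=Y$, \ref{dceconnect}-type reasoning is not available unless $X$ connected, so I would note $D^T(x,y)=D_T(x,y)$ need not hold, but in fact when $X=Y$ one has $a=b=0$ and the defining suprema for $D^T$ and $D_T$ over opens with $O\cap X\ne X$ coincide with strictness issues only; the safe statement is $D_T(x,y)=0$, and then $D^T(x,y)$ could still be positive — so I will instead argue: both $D_1$ and $D_2$ involve $D_c^e$ or $D^T$; since $D_c^e(x,y)=0$, $D_2=2\min\{\rho_x,\rho_y\}\le 2\min\{\rho_x,\rho_y\}$, and with $\min\le\rho_1$ we get the upper bound $2\min\{\rho_x(x,y),\rho_y(x,y)\}$; combined with $\rho(x,y)=\rho_1(x,y)\le{\rm dist}({\cal U}(x),{\cal U}(y))$ from \ref{Dvpro}, equality follows once we also check $2\min\{\rho_x,\rho_y\}\le\rho_1$, which needs care). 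The cleanest path: when $D_c(x,y)=0$, every $\lambda\notin X$ with nontrivial index contributes ${\rm dist}(\lambda,X)$ to both $\rho_x$ and $\rho_y$ (since $X=Y$), so $\rho_x(x,y)=\rho_y(x,y)=\rho_1(x,y)/2$, whence $D_2=0+2\cdot(\rho_1(x,y)/2)=\rho_1(x,y)$, and so ${\rm dist}({\cal U}(x),{\cal U}(y))\le\rho_1(x,y)\le{\rm dist}({\cal U}(x),{\cal U}(y))$, giving \eqref{CMT2-1}.

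For the second assertion, assume $X=Y$ with $X$ connected. Then $d_H(X,Y)=0$, and as above $\rho_x(x,y)=\rho_y(x,y)=\rho_1(x,y)/2$, so $\min\{\rho_x(x,y),\rho_y(x,y)\}=\max\{\rho_x(x,y),\rho_y(x,y)\}=\rho_1(x,y)/2$. Since $X$ is connected, Corollary \ref{dceconnect} gives $D_c^e(x,y)=D_c(x,y)$, and \ref{dtP}(\ref{dtP-6}) gives $D_T(x,y)=D_c(x,y)=D^T(x,y)$. Plugging into $D_1$ from \ref{MT2}: $D_1=\max\{D^T(x,y),\rho_1(x,y)/2\}+\rho_1(x,y)/2=\max\{D_c(x,y),(1/2)\rho_1(x,y)\}+(1/2)\rho_1(x,y)$, which is exactly \eqref{CMT2-n}. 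The main obstacle here is the bookkeeping identity $\rho_x(x,y)=\rho_y(x,y)=\rho_1(x,y)/2$ when $X=Y$: one must check that the set of $\lambda$ appearing in the definitions of $\rho_x$, $\rho_y$, and $\rho_1$ is literally the same set (namely $\lambda\notin X$ with $(\lambda-x)^{-1}(\lambda-y)\notin{\rm Inv}_0(A)$), which holds verbatim since $X=Y$ makes $\lambda\notin X\cup Y$ equivalent to $\lambda\notin X$, and then ${\rm dist}(\lambda,X)+{\rm dist}(\lambda,Y)=2\,{\rm dist}(\lambda,X)$. Everything else is a substitution into Theorem \ref{MT2} together with the already-established identifications of $D_c$, $D_c^e$, $D_T$ and $D^T$ in the connected case.
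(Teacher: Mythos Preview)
Your proposal is correct and follows essentially the same route as the paper: from $X=Y$ deduce $\rho_x(x,y)=\rho_y(x,y)=\tfrac12\rho_1(x,y)$, then plug into Theorem~\ref{MT2} (using $D_2$ together with $D_c^e(x,y)=0$ from \ref{Ccuapp} for the first part, and $D_1$ together with $D^T(x,y)=D_c(x,y)$ from \ref{dtP} for the second). The paper's proof is just a terse version of exactly this; your detour about bounding $D^T$ directly in the first part is unnecessary, as you yourself note, and can simply be deleted.
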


\begin{proof}
In this case ${\rm sp}(x)={\rm sp}(y)$ and $d_H(X,Y)=0.$
Therefore $\rho_x(x,y)=\rho_y(x,y)$ and
$$
\rho_1(x, y)=\rho_x(x, y)+\rho_y(x,y)=2\rho_x(x, y).
$$
{In case that $X$ is connected, by  \ref{dtP}, $D_c(x, y)=D^T(x, y).$ }
Thus the corollary follows from  \ref{MT2}.
\end{proof}

\section{Lower bound}
Last section gives both upper bound and lower bound for the distance
between unitary orbits of normal elements. However, the lower bound
are all given by the bounded components of $\C\setminus X\cup Y$ which
give different $K_1$-information of the corresponding normal elements.
In this section, we will discuss the lower bound  {for distance between unitary orbits
 of normal elements} who have the same $K_1$-information outside of $X\cup Y.$

\begin{thm}\label{BoundT}
There exists a constant $C>0$ satisfying the following:
Let $A$ be a unital separable  AF-algebra and let $x,\, y\in A$ be two normal elements.  Then
\beq\label{BoundT-1}
C \cdot D_c (x, y)\le {\rm dist}({\cal U}(x), {\cal U}(y))\le D_c(x, y).
\eneq

\end{thm}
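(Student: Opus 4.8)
The upper bound $\dist(\mathcal U(x),\mathcal U(y))\le D_c(x,y)$ is already Theorem~\ref{TAF}, so the entire work is in producing the universal constant $C>0$ with $C\cdot D_c(x,y)\le \dist(\mathcal U(x),\mathcal U(y))$. The plan is to reduce to Davidson's matrix estimate and then transport it through the AF-structure. First I would invoke the fact (used throughout the paper, coming from \cite{Lnalm} and \cite{FR}) that normal elements in a finite-dimensional $C^*$-algebra $B\subset A$ are dense among normal elements of $B$ up to commutator-control, together with Lemma~\ref{Lappdcu}, to replace $x$ and $y$ by normal elements $x_1,y_1$ with finite spectrum lying in a common finite-dimensional subalgebra $B=\bigoplus_\ell M_{n_\ell}$, changing $D_c$ and $\dist(\mathcal U(\cdot),\mathcal U(\cdot))$ each by at most $\epsilon$. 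Here the key point is that an AF-algebra is a direct limit of finite-dimensional ones and that $D_c(x,x_1)$ and $\dist(\mathcal U(x),\mathcal U(x_1))$ are both small once $\|x-x_1\|$ is small (the latter being trivial). So it suffices to prove the inequality for $x,y$ with finite spectrum inside a fixed $B=\bigoplus_{\ell=1}^L M_{n_\ell}$, with a constant independent of $B$ and of $x,y$.

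For such $x,y$, the idea is that $D_c^B(x,y)$ computed inside $B$ and $D_c^A(x,y)$ computed inside $A$ are comparable, and that $\dist(\mathcal U_A(x),\mathcal U_A(y))\ge \dist(\mathcal U_B(x),\mathcal U_B(y))$ is \emph{false} in general — so instead I would work directly with the trace. The trace on $A$ restricted to $B$ gives a convex combination $\tau=\sum_\ell t_\ell \tau_\ell$ of the normalized traces $\tau_\ell$ on the blocks $M_{n_\ell}$; but $A$ is simple with (possibly many) tracial states. The real content is: for each extremal trace of $A$, restricting to $B$ gives a weight vector, and $D_c(x,y)$ is controlled by the quantity $D_T(x,y)$ (this needs the ordered-group comparison of Section~3; note that for AF-algebras $D_c$ and $D_T$ need not coincide because of infinitesimals, but one direction, $D_T\le D_c$, is Proposition~\ref{dtP}, and with a little care via the marriage/Riesz machinery of Lemmas~\ref{Lmarr1}--\ref{TmarrG} one gets that $D_c(x,y)$ is, up to the constant, bounded by a supremum of measure-matching defects over extremal traces). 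Then I would apply Davidson's theorem: for each $n\times n$ matrix pair with trace $\tau_\ell$, there is a universal $C$ (Davidson's $C\ge 1/3$, see \cite{Dv}) such that $\dist(\mathcal U(x),\mathcal U(y))\ge C\cdot D_T^{(\ell)}(x,y)$, and crucially this holds uniformly in $n$ — indeed it holds for $B(H)$. The last step is to glue: a unitary $u\in A$ with $\|u^*xu-y\|$ small yields, after cutting down by projections and using that $u$ can be approximated (up to small trace error on the relevant blocks) within $B$ — or more robustly, by pushing everything into a block where $\tau$ concentrates most of its mass — a matricial near-conjugation violating Davidson's bound unless $\dist(\mathcal U(x),\mathcal U(y))\ge C\cdot D_c(x,y)-\epsilon$.

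The main obstacle, and the part that will take real work, is exactly this gluing step: in an infinite-dimensional simple AF-algebra a unitary $u\in A$ implementing $\|u^*xu-y\|<\delta$ need not be approximable by a block-diagonal unitary, and $D_T(x,y)$ for $A$ is a \emph{supremum over all traces}, so I cannot simply localize to one block. The way around it is to note that for each trace $\tau$ the data $(\mu_{\tau\circ\phi_x},\mu_{\tau\circ\phi_y})$ are two probability measures on $X\cup Y$, and a unitary with $\|u^*xu-y\|<\delta$ forces $\mu_{\tau\circ\phi_x}(O)\le \mu_{\tau\circ\phi_y}(O_\delta)$ for every open $O$ and every $\tau$; this is the measure-theoretic shadow of the conjugation and it is trace-by-trace. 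Then Davidson's argument, which is itself a Hall-marriage / measure-transport argument (and whose constant $C$ is a purely combinatorial-geometric constant, not dimension-dependent), applied with this family of measure inequalities directly gives $\dist(\mathcal U(x),\mathcal U(y))\ge C\cdot \sup_\tau r_O$-type quantity $=C\cdot D_T(x,y)$, and finally $D_T(x,y)\ge C'\cdot D_c(x,y)$ is the remaining ordered-group estimate (where one must be careful that infinitesimal elements of $K_0(A)$ only help, never hurt, the lower bound — they enlarge the set of $u$'s but the measure inequality above is insensitive to them). Absorbing $C'$ into $C$ finishes the proof; letting $\epsilon\to 0$ removes the approximation error.
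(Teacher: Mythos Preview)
Your detour through $D_T$ and traces contains a genuine gap, and it stems from missing a much simpler reduction. You correctly note that $\dist(\mathcal U_A(x),\mathcal U_A(y))\ge \dist(\mathcal U_B(x),\mathcal U_B(y))$ is false, but the paper never needs it: the trick is to prove the \emph{pointwise} inequality $\|x-y\|\ge C\cdot D_c(x,y)$ first, and only afterwards observe that $D_c(u^*xu,y)=D_c(x,y)$ for every unitary $u$, which immediately upgrades the pointwise bound to the orbit bound. For the pointwise inequality one approximates $x,y$ by normal $x_2,y_2$ in a finite-dimensional $B\subset A$ (controlling both norm and $D_c$ via Lemma~\ref{Lappdcu}); since Cuntz subequivalence in $B$ implies Cuntz subequivalence in $A$, one has $D_c^B(x_2,y_2)\ge D_c^A(x_2,y_2)\ge D_c(x,y)-\epsilon$, and Davidson's theorem applied block-by-block in $B$ gives $\|x_2-y_2\|\ge \dist(\mathcal U_B(x_2),\mathcal U_B(y_2))\ge C\cdot D_c^B(x_2,y_2)$. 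No gluing of unitaries is required.

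Your proposed workaround fails at two specific points. First, the claim that $\|u^*xu-y\|<\delta$ forces $\mu_{\tau\circ\phi_x}(O)\le \mu_{\tau\circ\phi_y}(O_\delta)$ for every open $O$ is false for normal (non-self-adjoint) elements: this would give $\dist(\mathcal U(x),\mathcal U(y))\ge D_T(x,y)$ with constant $1$, which Holbrook's $M_3$ example (cited in the introduction) refutes. Second, even granting $\dist(\mathcal U(x),\mathcal U(y))\ge C\cdot D_T(x,y)$, the final step $D_T(x,y)\ge C'\cdot D_c(x,y)$ cannot hold with a universal $C'>0$: take a simple AF-algebra whose $K_0$ has a nonzero infinitesimal $[p]-[q]$ and set $x=p$, $y=q$; then $D_T(x,y)=0$ while $D_c(x,y)=1$, since $p\not\lesssim q$. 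Infinitesimals hurt precisely the inequality you need.
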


\begin{proof}
Let $C=c^{-1}$ be in the statement of Theorem 4.2 of \cite{Dv}.
Without loss of generality, we may assume that $\|x\|, \|y\|\le 1.$
It follows from  \ref{Toriginal} that
it suffices to show that
\beq\label{BoundT1-2}
{\rm dist}({\cal U}(x), {\cal U}(y))\ge C\cdot D_c(x, y).
\eneq
We will show that
\beq\label{BoundT1-2+}
\|x-y\|\ge C\cdot D_c(x, y),
\eneq

Put $d=D_c(x, y).$
Let $\ep>0.$ It follows from \cite{Lnoldjfa} that there are $\lambda_1, \lambda_2,...,\lambda_n\in {\rm sp}(x),$
$\mu_1, \mu_2,...,\mu_m\in {\rm sp}(y),$ two sets of mutually orthogonal
non-zero projections $\{p_1,p_2,...,p_n\}$ and $\{q_1,q_2,...,q_m\}$ in $A$
such that
\beq\label{BoundT1-3}
\|x-\sum_{i=1}^n \lambda_i p_i\|<\ep/16\andeqn \|y-\sum_{j=1}^m \mu_j q_j\|<\ep/16.
\eneq
Put $x_1=\sum_{i=1}^n \lambda_i p_i$ and $y_1=\sum_{j=1}^m \mu_j q_j.$
Without loss of generality, by the virtue of \ref{Lappdcu}, we may also assume that
\beq\label{BoundT1-4}
D_c(x, x_1)<\ep/16\andeqn D_c(y, y_1)<\ep/16.
\eneq
Since $D_c(\cdot, \cdot)$ is a metric,
\beq\label{BoundT1-5}
D_c(x_1, y_1)\ge D_c(x,y)-\ep/8.
\eneq
Let $\ep>\dt>0$ be given.
Since $A$ is an AF-algebra, there is a finite dimensional \CA\, $B\subset A$ such that
there are mutually orthogonal projections $\{p_1',p_2',...,p_n'\}$ and $\{q_1', q_2',...,q_m'\}$ in $B$ such that
\beq\label{BoundT1-6}
\|p_i-p_i'\|<\dt/16n\andeqn \|q_j-q_j'\|<\dt/16m,\,\,\,
\eneq
$i=1,2,...,n$ and $j=1,2,...,m.$
Put $x_2=\sum_{i=1}^n \lambda_ip_i'$ and $y_2=\sum_{j=1}^m \mu_jq_j'.$
Therefore, by the virtue of  \ref{Lappdcu} and choosing sufficiently small $\dt,$
\beq\label{BoundT1-7}
\|x_1-x_2\|<\ep/16,\,\,\, \|y_1-y_2\|<\ep/16\andeqn\\
D_c(x_1, x_2)<\ep/16\andeqn D_c(y_1,y_2)<\ep/16.
\eneq
It follows from (\ref{BoundT1-5}) and (\ref{BoundT1-7}) that
\beq\label{BoundT1-8}
D_c(x_2, y_2)\ge d-\ep/4.
\eneq
This has to hold in $B$ too. By Theorem 4.2 of \cite{Dv},
\beq\label{BoundT1-9}
{\rm dist}(x_2, y_2)\ge C (d-\ep/4).
\eneq
It follows that
\beq\label{BoundT1-10}
\|x-y\|\ge C(d-\ep/4)-\ep/4=C\cdot d- C\ep/4-\ep/4
\eneq
for any $\ep>0.$ 

{So we get  (\ref{BoundT1-2+}).}
{ For any unitary $u$, by (\ref{BoundT1-2+}), we have
$$
\|u^*xu-y\|\ge C\cdot D_c(u^*xu,y)=C\cdot D_c(x,y).
$$
That implies (\ref{BoundT1-2}) holds.}
\end{proof}

\begin{lem}\label{LowerL1}
Let $A$ be a unital simple \CA\, with $TR(A)=0,$ let $x, y\in A$ be two normal elements. Let $\eta>0.$ Suppose that
\beq\label{LowerL1-1}
\tau(f(x))>\tau(g(y))
\eneq
for some positive functions $f\in C(\overline{X_\eta})$ and $g\in C(\overline{Y_\eta})$ and for some
$\tau\in T(A).$
Then, for any $\ep>0,$ there is a projection $p\in A,$ and  a finite dimensional
\SCA\, $B\subset A$ with $1_B=p,$ and normal elements
$x_0, y_0\in (1-p)A(1-p),$ $x_1, y_1\in B$  such that
${\rm sp}(x_1)\subset \overline{X_\eta},$ ${\rm sp}(y_1)\subset \overline{Y_\eta}$ with
\beq\label{LowerL1-2}
&&\|x-(x_0+x_1)\|<\ep,\,\,\,\|y-(y_0+y_1)\|<\ep\\
&&t_0(f(x_1))>t_0(g(y_1))
\eneq
for some $t_0\in T(B).$
\end{lem}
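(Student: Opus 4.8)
## Proof proposal for Lemma~\ref{LowerL1}

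The plan is to exploit the fact that $\mathrm{TR}(A)=0$ to cut down to a finite-dimensional corner while controlling traces, and to use that the tracial inequality is strict and therefore survives small perturbations. First I would fix the strict inequality: by hypothesis there is $\tau\in T(A)$ with $\tau(f(x))-\tau(g(y))=:3\delta>0$ for suitable positive $f\in C(\overline{X_\eta})$ and $g\in C(\overline{Y_\eta})$. Since $T(A)$ is compact and the maps $\sigma\mapsto \sigma(f(x))$, $\sigma\mapsto\sigma(g(y))$ are continuous, I may even work in a small weak-$*$ neighbourhood of $\tau$, but for this lemma a single trace suffices. The quantities $f(x)$ and $g(y)$ depend norm-continuously on $x$ and $y$ (functional calculus on the fixed compact sets $\overline{X_\eta}$, $\overline{Y_\eta}$), so there is $\ep_0>0$ such that any normal $x',y'$ with $\|x-x'\|<\ep_0$, $\|y-y'\|<\ep_0$ and $\mathrm{sp}(x')\subset\overline{X_\eta}$, $\mathrm{sp}(y')\subset\overline{Y_\eta}$ still satisfy $\sigma(f(x'))-\sigma(g(y'))>2\delta$ for all $\sigma$ in that neighbourhood of $\tau$; shrink $\ep$ below $\ep_0$.

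Next I would invoke the structure theory for $\mathrm{TR}(A)=0$: given $\ep>0$ and a finite subset (here the generators of $C^*(x,1)$ and $C^*(y,1)$, or simply $x,y$ themselves), there is a projection $p\in A$ and a finite-dimensional $C^*$-subalgebra $B\subset A$ with $1_B=p$ such that $\mathrm{dist}(pxp,B)$ and $\mathrm{dist}(pyp,B)$ are small, $\|[p,x]\|$, $\|[p,y]\|$ are small, and — crucially — $\tau(1-p)$ is small (say $<\delta/10$) for \emph{all} $\tau\in T(A)$. Thus, after a small normal perturbation inside $(1-p)A(1-p)$ (using that $(1-p)A(1-p)$ has real rank zero so almost-commuting-with-itself normal elements are close to normal ones, as in~\cite{FR}), I obtain normal elements $x_0,y_0\in(1-p)A(1-p)$ and normal elements $x_1,y_1\in B$ with $\|x-(x_0+x_1)\|<\ep$, $\|y-(y_0+y_1)\|<\ep$. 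By functional calculus on $\overline{X_\eta}$ and $\overline{Y_\eta}$, truncating the spectra to lie inside these sets costs only another arbitrarily small perturbation, so I may assume $\mathrm{sp}(x_1)\subset\overline{X_\eta}$, $\mathrm{sp}(y_1)\subset\overline{Y_\eta}$.

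Finally I would transfer the trace inequality from $A$ to $B$. The restriction $\tau|_B$ is a tracial state on $B$; set $t_0=\tau|_B$. Because $\tau(1-p)<\delta/10$ and $\|x-(x_0+x_1)\|$, $\|y-(y_0+y_1)\|$ are small, while $x_0,y_0\in(1-p)A(1-p)$ contribute at most $\tau(1-p)$ to any trace of a function bounded by $1$, we get
\begin{align*}
t_0(f(x_1))&\ge \tau(f(x_0+x_1))-\tau(1-p)\ge \tau(f(x))-(\text{small})-\delta/10,\\
t_0(g(y_1))&\le \tau(g(y_0+y_1))\le \tau(g(y))+(\text{small}).
\end{align*}
Subtracting and using $\tau(f(x))-\tau(g(y))=3\delta$ together with the perturbation estimates chosen above, $t_0(f(x_1))-t_0(g(y_1))>0$, as required. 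I expect the main obstacle to be the bookkeeping in the last step: one must be careful that the "small'' perturbation errors in the functional calculus of $f$ and $g$, the defect $\tau(1-p)$, and the correction needed to push spectra into $\overline{X_\eta},\overline{Y_\eta}$, are all simultaneously dominated by the fixed gap $\delta$. This is handled by choosing all the approximation parameters in the $\mathrm{TR}(A)=0$ decomposition (and the modulus of continuity of functional calculus on the fixed compacta) at the very end, after $\delta$ has been fixed; no genuinely new idea is needed beyond the standard "strict inequality is open'' principle.
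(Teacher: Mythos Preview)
Your approach is correct and tracks the paper's proof through the $\mathrm{TR}(A)=0$ decomposition and the normal approximation via \cite{Lnalm}, \cite{FR}, arriving (as the paper does) at the inequality $\tau(f(x_1))>\tau(g(y_1))$ for the original trace $\tau\in T(A)$. The final step, however, is different: the paper argues by contradiction, invoking Cuntz--Pedersen \cite{CP} to show that if every $t\in T(B)$ satisfied $t(f(x_1))\le t(g(y_1))$ then this comparison would transfer back to $A$ and contradict the inequality for $\tau$; you instead simply take $t_0$ to be the restriction $\tau|_B$. Your route is more direct and avoids \cite{CP} entirely. Two small corrections: $\tau|_B$ has total mass $\tau(p)<1$, so you must normalise and set $t_0=\tau(p)^{-1}\tau|_B$ (the strict inequality survives scaling, and $\tau(p)>0$ since $\tau(1-p)$ is small); and your bound $\tau(f(x_0))\le\tau(1-p)$ tacitly assumes $\|f\|\le 1$, so in general use $\max\{\|f\|,\|g\|\}\,\tau(1-p)$, which is harmless since $\tau(1-p)$ can be chosen arbitrarily small after $\delta$, $f$, $g$ are fixed.
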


\begin{proof}
Let $\tau(f(x))>\tau(g(y))$ for some $\tau\in T(A)$ and let 
$d=\tau(f(x))-\tau(g(y))>0.$ 
Fix a separable \SCA\ $C\subset A$ such that
$x, y\in C.$
Since $A$ has tracial rank zero, there exists a sequence of projections
$\{p_n\}$ and a sequence of finite dimensional \SCA s $\{B_n\}$ such that
$1_{B_n}=p_n,$ $n=1,2,....,$ such that
\beq\label{LowerL1-3}
\lim_{n\to\infty}\|p_nc-cp_n\|=0\tforal c\in C;\\
\lim_{n\to\infty}{\rm dist}(p_ncp_n, B_n)=0\andeqn\\\label{LowerL1-3+}
\lim_{n\to\infty}\max\{t(1-p_n): t\in T(A)\}=0.
\eneq
It follows from \cite{Lnalm} and \cite{FR} that there exists normal elements
$x_n^{(0)}, y_n^{(0)}\in (1-p_n)A(1-p_n),$ $x_n^{(1)},y_n^{(1)}\in B_n$
such that
\beq\label{LowerL1-4}
\lim_{n\to\infty}\|x-(x_n^{(0)}+x_n^{(1)})\|=0
\andeqn \lim_{n\to\infty}\|y-(y_n^{(0)}+y_n^{(1)})\|=0.
\eneq
Therefore, we may assume that ${\rm sp}(x_n^{(0)}), {\rm sp}(x_n^{(1)})\subset \overline{X_\eta}$ and ${\rm sp}(y_n^{(0)}), {\rm sp}(y_n^{(1)})\subset \overline{Y_\eta}.$
It follows that
\beq\label{LowerL1-5}
\lim_{n\to\infty}\|f(x)-(f(x_n^{(0)})+f(x_n^{(1)}))\|=0\andeqn
\lim_{n\to\infty}\|g(y)-(g(y_n^{(0)})+g(y_n^{(1)}))\|=0.
\eneq
By (\ref{LowerL1-3+}), we may assume that
\beq\label{LowerL1-6}
t(1-p_n)<d/4\tforal t\in T(A).
\eneq
It follows, for all sufficiently large $n,$ that
\beq\label{LowerL1-7}
\tau(f(x_n^{(1)}))>\tau(f(x))-d/4-d/4>\tau(g(y_n^{(1)}))
\eneq
Note $B_n$ is a finite direct sum of simple \CA s.
If, for all tracial states $t\in T(B_n),$
\beq\label{LowerL1-8}
t(f(x_n^{(1)}))\le t(g(y_n^{(1)})),
\eneq
then, by \cite{CP}, there is a sequence $\{z_{k,n}\}\subset B_n$ such that
\beq\label{LowerL1-9}
\sum_{k=1}^{\infty}z_{k,n}^*z_{k,n}=f(x_n^{(1)})\andeqn
\sum_{k=1}^{\infty}z_{k,n}z_{k,n}\le g(y_n^{(1)}).
\eneq
Since $B_n\subset A,$ this would imply that
\beq\label{LowerL1-10}
\tau(f(x_n^{(1)}))\le \tau(g(y_n^{(1)}))\tforal \tau\in T(A)
\eneq
which contradicts with (\ref{LowerL1-7}).

\end{proof}

\begin{thm}\label{LowerT1}
There is a constant $C>0$ satisfying the following:
Let $A$ be a unital separable simple \CA\, with $TR(A)=0$ and let $x,\, y\in A$ be two normal elements. Then
\beq\label{LowerT1-1}
{\rm dist}({\cal U}(x), {\cal U}(y))\ge C\cdot  D_T(x,y).
\eneq
If $[\lambda-x]= [\lambda-y]$ in $K_1(A)$  for all $\lambda\not\in {\rm sp}(x)\cup {\rm sp}(y),$
then
\beq\label{Ln2}
D_c^e(x, y)\ge {\rm dist}({\cal U}(x), {\cal U}(y))\ge C\cdot  D_T(x,y).
\eneq
\end{thm}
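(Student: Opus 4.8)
Since $TR(A)=0$ forces $A$ to have real rank zero, stable rank one and weakly unperforated $K_0(A)$, the inequality $D_c^e(x,y)\ge {\rm dist}({\cal U}(x),{\cal U}(y))$ under the hypothesis $[\lambda-x]=[\lambda-y]$ in $K_1(A)$ for $\lambda\notin {\rm sp}(x)\cup{\rm sp}(y)$ is exactly part (1) of \ref{MT1}. So the whole theorem reduces to producing a universal $C>0$ with ${\rm dist}({\cal U}(x),{\cal U}(y))\ge C\cdot D_T(x,y)$. We take $C=c^{-1}$, where $c$ is the universal constant of Theorem 4.2 of \cite{Dv} (the same $C$ as in \ref{BoundT}). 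Because traces are unitarily invariant, $\mu_{\tau\circ\phi_{u^*xu}}=\mu_{\tau\circ\phi_x}$ for every unitary $u$, so each $r_O$ and hence $D_T(u^*xu,y)=D_T(x,y)$; thus it suffices to prove $\|x-y\|\ge C\cdot D_T(x,y)$, and for that it suffices to prove $\|x-y\|\ge C\cdot s$ for an arbitrary fixed $s<D_T(x,y)$.

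Fix such an $s$. By definition of $D_T$ there is an open set $O$ with $r_O(\phi_X,\phi_Y)>s$; choose $s<r<r'<r_O$. Unwinding the infimum defining $r_O$, there is a tracial state $\tau_0\in T(A)$ with $d_{\tau_0}(\phi_X(f_O))>d_{\tau_0}(\phi_Y(f_{O_{r'}}))$, that is, $\mu_{\tau_0\circ\phi_X}(O)>\mu_{\tau_0\circ\phi_Y}(O_{r'})$. By inner regularity of these Borel measures in the plane we may choose, for a small $\eta>0$, positive functions $f\in C(\overline{X_\eta})$, $g\in C(\overline{Y_\eta})$ with $0\le f,g\le 1$, ${\rm supp}(f)\subset O$, $g(t)=1$ whenever ${\rm dist}(t,O)\le r$, ${\rm supp}(g)\subset O_{r'}$, and $\tau_0(f(x))$ so close to $\mu_{\tau_0\circ\phi_X}(O)$ that
\[
\tau_0(f(x))\ >\ \mu_{\tau_0\circ\phi_Y}(O_{r'})\ \ge\ \int g\,d\mu_{\tau_0\circ\phi_Y}\ =\ \tau_0(g(y)).
\]
Now apply \ref{LowerL1} (here is where $TR(A)=0$ is used crucially): for any $\epsilon>0$ there is a finite-dimensional \CA\ $B=\bigoplus_k M_{n_k}\subset A$, normal elements $x_0,y_0\in(1-1_B)A(1-1_B)$ and $x_1,y_1\in B$ with ${\rm sp}(x_1)\subset\overline{X_\eta}$, ${\rm sp}(y_1)\subset\overline{Y_\eta}$, $\|x-(x_0+x_1)\|<\epsilon$, $\|y-(y_0+y_1)\|<\epsilon$, and $t_0(f(x_1))>t_0(g(y_1))$ for some $t_0\in T(B)$. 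Writing $t_0=\sum_k\lambda_k{\rm tr}_k$ as a convex combination of normalized block traces, some $k_0$ has $\lambda_{k_0}>0$ and ${\rm tr}_{k_0}(f(x_1^{(k_0)}))>{\rm tr}_{k_0}(g(y_1^{(k_0)}))$, where $x_1^{(k_0)},y_1^{(k_0)}\in M_{n_{k_0}}$ are the $k_0$-components.

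Since ${\rm supp}(f)\subset O$ we have $f(x_1^{(k_0)})\le\chi_O(x_1^{(k_0)})$, the support projection of $f_O(x_1^{(k_0)})$, so ${\rm tr}_{k_0}(f(x_1^{(k_0)}))\le d_{{\rm tr}_{k_0}}(f_O(x_1^{(k_0)}))$; since $g\equiv 1$ on $\overline{O_r}$ and ${\rm sp}(y_1^{(k_0)})\subset\overline{Y_\eta}$ we have $g(y_1^{(k_0)})\ge\chi_{O_r}(y_1^{(k_0)})$, the support projection of $f_{O_r}(y_1^{(k_0)})$, so ${\rm tr}_{k_0}(g(y_1^{(k_0)}))\ge d_{{\rm tr}_{k_0}}(f_{O_r}(y_1^{(k_0)}))$. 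Hence $d_{{\rm tr}_{k_0}}(f_O(x_1^{(k_0)}))>d_{{\rm tr}_{k_0}}(f_{O_r}(y_1^{(k_0)}))$, which shows $r_O(x_1^{(k_0)},y_1^{(k_0)})\ge r$ in $M_{n_{k_0}}$, so $D_T^{M_{n_{k_0}}}(x_1^{(k_0)},y_1^{(k_0)})\ge r>s$. By Theorem 4.2 of \cite{Dv}, applied in $M_{n_{k_0}}$ with its universal constant, $\|x_1^{(k_0)}-y_1^{(k_0)}\|\ge{\rm dist}({\cal U}(x_1^{(k_0)}),{\cal U}(y_1^{(k_0)}))\ge c^{-1}D_T^{M_{n_{k_0}}}(x_1^{(k_0)},y_1^{(k_0)})\ge c^{-1}s$. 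Finally, $x$ and $y$ are within $\epsilon$ of $x_0\oplus x_1$ and $y_0\oplus y_1$, so $\|x-y\|\ge\|x_1-y_1\|-2\epsilon\ge\|x_1^{(k_0)}-y_1^{(k_0)}\|-2\epsilon\ge c^{-1}s-2\epsilon$. Letting $\epsilon\to 0$ gives $\|x-y\|\ge c^{-1}s=Cs$ for every $s<D_T(x,y)$, hence $\|x-y\|\ge C\cdot D_T(x,y)$, completing the proof. The main obstacle I anticipate is not a single deep step but the careful two-way passage between the measure-theoretic quantities $d_\tau(\phi_X(f_O))=\mu_{\tau\circ\phi_X}(O)$ in $D_T$ and the pointwise functional inequality $\tau_0(f(x))>\tau_0(g(y))$ required by \ref{LowerL1}: one must interpose the radii $r<r'$ and choose $f,g$ so that nothing is lost to the boundaries $\partial O_r$, and then recover a genuine lower bound for $D_T^{M_{n_{k_0}}}$ after the finite-dimensional approximation, noting also that Davidson's constant is independent of $n_{k_0}$ so that $C$ is universal.
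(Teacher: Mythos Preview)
Your proof is correct and follows essentially the same route as the paper's: reduce the second inequality to \ref{MT1}, convert the strict measure inequality witnessing $D_T$ into $\tau_0(f(x))>\tau_0(g(y))$ for continuous $f,g$, apply \ref{LowerL1} to pass to a finite-dimensional subalgebra, and invoke Davidson's constant there. The only cosmetic differences are that the paper produces the continuous test function via an $f_\delta$-cutoff of $f_O$ rather than by inner regularity, and it applies Davidson directly in $B$ while you (more explicitly) single out one summand $M_{n_{k_0}}$.
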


\begin{proof}
Note that the second part of the theorem follows from the first part and
 \ref{MT1}. So we will only prove the first part of the theorem.

Let ${C}$ be the constant $c^{-1}$ in the statement of Theorem 4.2 of \cite{Dv}.
Let $r=D_T(x, y)$ and let $1/2>\ep>0.$
Suppose that $K>0$ such that $\|x\|,\,\|y\|\le K$ and $D$ is a closed ball with the center at the origin and radius larger than $K.$
Denote by $\phi_X, \phi_Y: C(D)\to A$ the unital \hm s
defined by $\phi_X(f)=f(x)$ and $\phi_Y(f)=f(y)$ for all $f\in C(D).$
We will show that
\beq\label{LowerT1-2-}
\|x-y\|\ge {Cr}.
\eneq

There is an open subset $O$ of  $D$ such that
\beq\label{LowerT1-2}
d_\tau(\phi_X(f_O))>  d_\tau(\phi_Y(f_{O_{s_1}})),
\eneq
for some $\tau\in T(A),$ where $s_1=r-\ep/8.$
It follows that
\beq\label{LowerT1-3}
d_\tau(\phi_X(f_O))>\tau(\phi_Y(g)),
\eneq
where $g\in C(D)$ such that $0\le g\le 1,$ $g(\xi)=1$ if
${\rm dist}(\xi, O)<r-\ep/2$ and $g(\xi)=0$ if ${\rm dist}(\xi, O)\ge r-\ep/4.$
There is $\dt>0$ such that
\beq\label{LowerT1-4}
\tau(f_\dt(\phi_X(f_O)))>\tau(\phi_Y(g)).
\eneq
By  \ref{LowerL1}, there is a projection $p\in A,$ a finite dimensional
\CA\, $B\subset A$ with $1_B=p,$ normal elements $x_0, y_0\in (1-p)A(1-p),$
$x_1, y_1\in B$ with ${\rm sp}(x_0),{\rm sp}(y_0), {\rm sp}(x_1), {\rm sp}(y_1)\subset D$ such that
\beq\label{LowerT1-5}
\|x-(x_0+x_1)\|<\ep/16,\,\,\,\|y-(y_0+y_1)\|<\ep/16\andeqn\\
t_0(f_\dt(f_O(x_1)))>t_0(g(y_1))
\eneq
for some $t_0\in B.$  Therefore
\beq\label{LowerT1-6}
d_{t_0}(f_O(x_1))\ge t_0(f_\dt(f_O(x_1)))>d_{t_0}(f_{O_{r-\ep/2}}{(y_1)}){.}
\eneq
It follows that, in $B,$
\beq\label{LowerT1-7}
D_c(x_1, y_1)\ge r-\ep/2.
\eneq
It follows from Theorem 2.4 of \cite{Dv} that
\beq\label{LowerT1-8}
\|x_1-y_1\|\ge C(r-\ep/2).
\eneq
It follows from (\ref{LowerT1-5}) that
\beq\label{LowerT1-9}
\|x-y\| &\ge & \|(x_0+x_1)-(y_0+y_1)\|- \|x-(x_0+x_1)\|-\|(y_0+y_1)-y\|\\
&\ge & \|x_1-y_1\|-\ep/8\ge Cr-C\ep/2-\ep/8.
\eneq
Hence
\beq\label{LowerT1-10}
\|x-y\|\ge C\cdot D_T(x, y).
\eneq

\end{proof}

\begin{lem}\label{Lprojnrm}
Let $A$ be a unital simple \CA\, with $TR(A)=0.$
Suppose that $p, q\in A$ are two non-zero projections such that
\beq\label{Lprojn-1}
\tau(p)>\tau(q)
\eneq
for some $\tau\in T(A).$
Then
\beq\label{Lprojn-2}
\|(1-q)p\|=1
\eneq
\end{lem}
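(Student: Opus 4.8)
The plan is to show that $\|(1-q)p\|=1$ by producing, for every $\ep>0$, a unit vector (more precisely a state, or better a pair of orthogonal sub-projections) witnessing that $(1-q)p$ is almost an isometry on some corner. The key point is that $\tau(p)>\tau(q)$ forces, up to a small error coming from tracial rank zero, a non-zero subprojection of $p$ that is orthogonal to $q$.

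First I would reduce to a finite dimensional picture. Since $TR(A)=0$, choose projections $e$ and a finite dimensional \SCA\ $B\subset A$ with $1_B=e$ such that $e$ almost commutes with $p$ and $q$, $\dist(epe,B)$ and $\dist(eqe,B)$ are tiny, and $\tau'(1-e)<\eta$ for all $\tau'\in T(A)$, where $\eta$ is chosen much smaller than $\tau(p)-\tau(q)$. By standard perturbation arguments one may then replace $p,q$ by projections $p',q'\in B$ with $\|p-p'\|,\|q-q'\|<\ep/4$, and since $\tau(1-e)$ is small one still has $t(p')>t(q')$ for the tracial state $t$ on $B$ induced (approximately) by $\tau$. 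It suffices to prove $\|(1-q')p'\|=1$ in $B$, since then $\|(1-q)p\|\ge \|(1-q')p'\|-\|p-p'\|-\|q-q'\|>1-\ep/2$, and letting $\ep\to0$ gives the conclusion (the reverse inequality $\|(1-q)p\|\le1$ being trivial).

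In the finite dimensional algebra $B=\bigoplus_\ell M_{k(\ell)}$, the inequality $t(p')>t(q')$ means that in at least one summand $M_{k(\ell_0)}$ the rank of the $\ell_0$-component of $p'$ strictly exceeds that of $q'$. In that matrix summand, the orthogonal complement $1-q'$ restricted there has rank $k(\ell_0)-\mathrm{rank}(q'_{\ell_0})\ge k(\ell_0)-\mathrm{rank}(p'_{\ell_0})+1$, so by a dimension count $(1-q'_{\ell_0})\cdot M_{k(\ell_0)}\cdot p'_{\ell_0}$ contains a non-zero element; equivalently there is a non-zero subprojection $p_0\le p'_{\ell_0}$ with $p_0\perp q'_{\ell_0}$ (e.g. $p_0$ the range projection of $p'$ intersected with $\ker q'$, which is non-trivial by the rank inequality). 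Then $(1-q')p'\ge (1-q')p_0=p_0$ in the sense that $(1-q')p'$ restricted to the range of $p_0$ is the identity, hence $\|(1-q')p'\|\ge\|p_0\|=1$.

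The only genuine obstacle is bookkeeping the perturbation so that the strict trace inequality survives the passage from $(p,q)$ in $A$ to $(p',q')$ in $B$: one must pick $\eta$ (controlling $1-e$) and the perturbation size strictly smaller than a fixed fraction of $\tau(p)-\tau(q)$, and use that close projections have close traces, so that $t(p')>t(q')$ holds for the tracial state on $B$ coming from $\tau|_B$. Once that is arranged the finite dimensional step is an elementary linear-algebra rank count, and the norm estimate is immediate. I would also remark that this lemma will be used, as in the pattern of \ref{LowerL1}, to extract large distances between unitary orbits from strict inequalities of traces.
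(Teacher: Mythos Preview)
Your overall strategy coincides with the paper's: use $TR(A)=0$ to pass to a finite dimensional subalgebra $B$ with $1_B=e$, compare ranks in some matrix summand, and conclude $\|(1-q)p\|\ge 1-\ep$. The rank-count step and the trace bookkeeping are fine.

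There is, however, a genuine gap in your norm reduction. You claim one can ``replace $p,q$ by projections $p',q'\in B$ with $\|p-p'\|,\|q-q'\|<\ep/4$''. This is not available from tracial rank zero: what $TR(A)=0$ gives you is that $\tau'(1-e)$ is small for all $\tau'$, not that $\|1-e\|$ is small. In general $(1-e)p(1-e)$ can have norm $1$, so no projection in $B\subset eAe$ is norm-close to $p$. What you actually obtain is a decomposition $p\approx p_0+p_1$, $q\approx q_0+q_1$ with $p_0,q_0\in(1-e)A(1-e)$ and $p_1,q_1\in B$ projections (this is exactly what Lemma~\ref{LowerL1} packages). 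Your inequality $\|(1-q)p\|\ge\|(1-q')p'\|-\|p-p'\|-\|q-q'\|$ then fails because the second and third terms are not small.

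The fix is a one-line compression: once you have $\|(e-q_1)p_1\|=1$ in $B$, use
\[
\|(1-q)p\|\ge \|(1-(q_0+q_1))(p_0+p_1)\|-\ep/2\ge \|e(1-(q_0+q_1))(p_0+p_1)e\|-\ep/2=\|(e-q_1)p_1\|-\ep/2,
\]
since $e$ kills the $(1-e)$-pieces. This is precisely how the paper closes the argument. With this correction your proof is complete and matches the paper's.
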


\begin{proof}
We first apply  \ref{LowerL1}.
Let $x=p,$ $y=q$ and  $f=g$ be identity function on $[0,1].$
Then, for any $\ep>0,$ there are a non-zero projection $e\in A$ and
a finite dimensional \CA\, $B\subset A$ with $1_B=e,$ non-zero projections
 $p_1, q_1\in B$ and $p_0,\, q_0\in (1-e)A(1-e)$
 such that
\beq\label{Lprojn-3}
\|p-(p_0+p_1)\|<\ep/4,\,\,\, \|q-(q_0+q_1)\|<\ep/4\andeqn
t_0(p_1)>t_0(q_1)
\eneq
for some $t_0\in T(B).$
Since $B$ is finite dimensional,
we write $B=M_{n_1}\oplus M_{n_2}\oplus\cdots \oplus M_{n_k}.$
Accordingly, we may write
\beq\label{Lprojn-4}
p_1=(p_{1,1},..., p_{1,n_k}),\,\,\, q_1=(q_{1,1},...,q_{1, n_k}),\\
\eneq
where $p_{1,i},q_{1,i},\in M_{n_i},$ $i=1,2,...,n_k.$
The last condition in (\ref{Lprojn-3}) implies, for some $i,$
\beq\label{Lprojn-5}
{\rm rank} p_{1,i}>{\rm rank} q_{1,i}.
\eneq
Let $M_{n_i}$ act on $H_i$ (${\rm dim} H_i=n_i$).
Then, by counting the rank,
$(1_{B_i}-q_{1,i})H_i\cap p_{1,i}H_i\not=\{0\}.$ Therefore
\beq\label{Lprojn-6}
\|(1_{B_i}-q_{1,i})p_{1,i}\|=1.
\eneq
It follows that
\beq\label{Lprojn-7}
\|(e-q_1)p_1\|=1
\eneq
Let $\xi\in (1_{B_i}-q_{1,i})H_i\cap p_{1,i}H_i$ be a unit vector.
Define a projection $e_0\in B(H_i)=M_{n_i}$ by $e_0(x)=\langle x, \xi \rangle \xi$
for all $x\in H_i.$  Then $e_0\in M_{n_i}\subset A$ is a non-zero projection.  Moreover,
\beq\label{Lpronjn-12}
e_0\le e-q_1\,\,\,{\rm and }\,\,\,e_0\le p_1.
\eneq
Therefore
\beq\label{Lprojn-8}
\|(1-q)p\|&\ge &\|1-(q_0+q_1)(p_0+p_1)\|-\ep/2\\
&\ge & \|e(1-(q_0+q_1)(p_0+p_1))e\|-\ep/2=\|(e-q_1)p_1\|-\ep/2\\
&=& 1-\ep/2{.}
\eneq
It follows that (\ref{Lprojn-2}) holds.


\end{proof}

\begin{thm}\label{TLower}
Let $A$ be a separable  simple \CA\, with  $TR(A)=0$ and let $x,
y\in A$ be two normal  elements. Then \beq\label{TLower-1} {\rm
dist}({\cal U}(x), {\cal U}(y))\ge {d_T}(x,y). \eneq

If $A$ is a finite dimensional \CA\, then \beq\label{Tlower-1n} {\rm
dist}({\cal U}(x), {\cal U}(y))\ge {d_c}(x,y). \eneq
\end{thm}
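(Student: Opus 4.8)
\emph{The plan.} It is enough to prove that $\|x-y\|\ge d_T(x,y)$ for any two normal elements $x,y\in A$ (and $\|x-y\|\ge d_c(x,y)$ when $A$ is finite dimensional). Indeed, if $a\in{\cal U}(x)$ and $b\in{\cal U}(y)$, then ${\rm sp}(a)={\rm sp}(x)$, ${\rm sp}(b)={\rm sp}(y)$ and $\tau(g(a))=\tau(g(x))$, $\tau(g(b))=\tau(g(y))$ for all $\tau\in T(A)$ and all continuous $g$ (approximate unitary equivalence, and then norm limits, preserve spectrum and the induced traces); hence $d_T(a,b)=d_T(x,y)$, and applying the inequality to $a,b$ and taking the infimum over ${\cal U}(x)\times{\cal U}(y)$ gives (\ref{TLower-1}). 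Since $d_T$ and $d_c$ are each the maximum of two quantities exchanged by $x\leftrightarrow y$, and the arguments below are symmetric in $x$ and $y$, it suffices to bound $\dt_T$ (resp.\ $\dt_c$) from above by $\|x-y\|$.

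\emph{The finite dimensional core.} Suppose $A=\bigoplus_i M_{n_i}$ and $\|x-y\|<s$, and fix an open ball $O=O(\lambda,d)$. Let $P$ be the spectral projection of $x$ for ${\rm sp}(x)\cap O$ and $Q$ the spectral projection of $y$ for ${\rm sp}(y)\cap O_s$; both lie in $A$. For a unit vector $\xi$ in the range of $P$ one has $\|(x-\lambda)\xi\|<d$ (there are only finitely many eigenvalues of $x$ in the range of $P$, all at distance $<d$ from $\lambda$), hence $\|(y-\lambda)\xi\|<d+s$; writing $\xi=Q\xi+(1-Q)\xi$ and using that $1-Q$ is the spectral projection of $y$ for eigenvalues at distance $\ge d+s$ from $\lambda$, orthogonality gives $(d+s)^2\|(1-Q)\xi\|^2\le\|(y-\lambda)\xi\|^2<(d+s)^2$, so $\|(1-Q)\xi\|<1$. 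By compactness of the unit sphere of the range of $P$, $\|(1-Q)P\|<1$, so $Q$ is injective on the range of $P$ and ${\rm rank}\,P_i\le{\rm rank}\,Q_i$ in each block, i.e.\ $P\lesssim Q$. Since $f_O(x)$ and $P$ have the same support projection, and likewise $Q$ and $f_{O_s}(y)$, we conclude $f_O(x)\lesssim P\lesssim Q\lesssim f_{O_s}(y)$. Thus $\dt_c(x,y)\le s$; letting $s\downarrow\|x-y\|$ gives $d_c(x,y)\le\|x-y\|$, which is (\ref{Tlower-1n}).

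\emph{The case $TR(A)=0$.} Assume $\|x-y\|<r$; we show $d_\tau(f_O(x))\le d_\tau(f_{O_r}(y))$ for every ball $O$ and every $\tau\in T(A)$, which yields $\dt_T(x,y)\le r$ and, letting $r\downarrow\|x-y\|$, (\ref{TLower-1}). Suppose not, so $d_\tau(f_O(x))>d_\tau(f_{O_r}(y))$ for some $O=O(\lambda,d)$ and $\tau$. Fix $s$ with $\|x-y\|<s<r$, set $\eta=(r-s)/8$, and choose $\ep'>0$ with $\ep'<\eta$. Since $d_\tau(f_O(x))=\lim_{\ep\to0}\tau(f_\ep(f_O(x)))$, pick $\ep>0$ so small that, with $f:=f_\ep\circ f_O$ (a continuous function with ${\rm supp}\,f\subset O$), $\tau(f(x))>d_\tau(f_{O_r}(y))$. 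Let $h\in C(\overline{Y_\eta})$ with $0\le h\le 1$, $h=1$ on $\overline{O(\lambda,d+s+2\eta)}$ and ${\rm supp}\,h\subset O(\lambda,d+s+3\eta)\subset O_r$; then $\tau(h(y))\le d_\tau(f_{O_r}(y))$, so $\tau(f(x))>\tau(h(y))$. Apply Lemma \ref{LowerL1} with this $f$, $h$, $\eta$ and tolerance $\ep'$: we get a projection $p$, a finite dimensional $B\subset A$ with $1_B=p$, normal elements $x_0,y_0\in(1-p)A(1-p)$, $x_1,y_1\in B$ with $\|x-(x_0+x_1)\|<\ep'$, $\|y-(y_0+y_1)\|<\ep'$, and $t_0\in T(B)$ with $t_0(f(x_1))>t_0(h(y_1))$. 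Then $\|x_1-y_1\|\le\|pxp-pyp\|+2\ep'<s+2\ep'$, and, writing $B=\bigoplus_i M_{n_i}$ and $t_0$ as a convex combination of normalized traces $\mathrm{tr}_i$, we may pass to an index $i$ with $\mathrm{tr}_i(f((x_1)_i))>\mathrm{tr}_i(h((y_1)_i))$. Let $P$ be the spectral projection of $(x_1)_i$ for ${\rm sp}((x_1)_i)\cap O$; since ${\rm supp}\,f\subset O$, $f((x_1)_i)\le P$, so $\mathrm{tr}_i(P)>\mathrm{tr}_i(h((y_1)_i))$. Running the finite dimensional core with $(x_1)_i,(y_1)_i$, the ball $O$ and $s':=s+2\ep'$ (so $\|(x_1)_i-(y_1)_i\|<s'$) produces the spectral projection $Q$ of $(y_1)_i$ for $O(\lambda,d+s')$ with $P\lesssim Q$; as $d+s'<d+s+2\eta$, every eigenvalue of $(y_1)_i$ in $O(\lambda,d+s')$ lies in $\{h=1\}$, so $Q\le h((y_1)_i)$ and $\mathrm{tr}_i(h((y_1)_i))\ge\mathrm{tr}_i(Q)\ge\mathrm{tr}_i(P)>\mathrm{tr}_i(h((y_1)_i))$, a contradiction.

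\textbf{Main obstacle.} The delicate point is the radius bookkeeping in the last paragraph: the ball defining the test function $h$ must be large enough to absorb the ball $O(\lambda,d+s+2\ep')$ forced by the estimate $\|x_1-y_1\|<s+2\ep'$, yet still sit strictly inside $O_r$; this is exactly why the strict chain $\|x-y\|<s<r$ and the small parameters $\eta,\ep,\ep'$ are introduced. Everything else reduces either to the elementary spectral-subspace computation of the finite dimensional core (whose abstract trace form is Lemma \ref{Lprojnrm}) or to a direct application of Lemma \ref{LowerL1}.
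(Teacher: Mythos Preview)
Your proof is correct, and it follows a genuinely different route from the paper's.

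The paper argues by a direct operator-norm estimate carried out in $A^{**}$: given a bad ball $O=O(\lambda,\eta)$, it takes the spectral projections $e_1$ of $x$ (for a slightly enlarged ball) and $e_2$ of $y$ (for $O_d$), proves the resolvent-type inequality
\[
\|x-y\|\ge \|(1-e_2)(y-x)e_1\|\ge (\eta+d)\,\|(1-e_2)e_1\|-(\eta+\ep),
\]
and then shows $\|(1-e_2)e_1\|=1$. In the finite dimensional case this last step is a rank count; in the $TR(A)=0$ case it uses Brown's interpolation theorem to sandwich $e_0\le q_1\le e_1$ and $e_2\le p_1\le e_3$ by honest projections $p_1,q_1\in A$, and then Lemma \ref{Lprojnrm} (whose proof in turn invokes Lemma \ref{LowerL1}) to get $\|(1-p_1)q_1\|=1$.

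Your argument is organized as a contrapositive and stays entirely inside $A$: in finite dimensions you show directly that $\|x-y\|<s$ forces $P\lesssim Q$ via the injectivity of $Q$ on the range of $P$ (equivalently $\|(1-Q)P\|<1$), and for $TR(A)=0$ you feed the trace inequality straight into Lemma \ref{LowerL1}, compress to $pAp$ to control $\|x_1-y_1\|$, pass to a single matrix summand, and re-run the rank argument there. This bypasses both the $A^{**}$ spectral projections and Brown's interpolation; the only nontrivial external input is Lemma \ref{LowerL1}, which the paper also uses (one layer deeper, inside Lemma \ref{Lprojnrm}). The paper's resolvent inequality is a clean, reusable quantitative estimate; your route is more self-contained and arguably more elementary for this particular statement. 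The ``radius bookkeeping'' you flag as the main obstacle is handled correctly: the choice $\eta=(r-s)/8$ and $\ep'<\eta$ guarantees $d+s+2\ep'<d+s+2\eta$, so the spectral projection $Q$ lands inside $\{h=1\}$ while ${\rm supp}\,h\subset O_r$.
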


\begin{proof}
Let $0<d<d_T(x,y).$ Note that in a finite dimensional \CA\,
${d_c}(x, y)={d_T}(x, y).$ Let $\ep>0.$ We assume that
$\ep<{{d_T}(x, y)-d\over{4}}.$ We also assume that $\|x\|,
\|y\|>2\ep.$ Denote $X={\rm sp}(x)$ and $Y={\rm sp}(y).$ Choose  any
pair of $x' \in {\cal U}(x)$ and $y'\in {\cal U}(y).$ Since ${\rm
sp}(x')=X,$  ${\rm sp}(y')=Y,$ ${d_c}(x', y')={d_c}(x, y),$
and ${d_T}(x',y')={d_T}(x,y),$ to simplify the notation,
without loss of generality, it suffices to show that $\|x-y\|\ge
{d_T}(x, y)$. 

By the assumption, {there is an open disc $O=O(\lambda, \eta)$ such that}
\beq\label{Tlower-2n}
d_\tau(f_O(x))> d_{\tau}(f_{O_{d+{\ep}}}(y))
\eneq
for some $\tau\in T(A)$ (including the case that $A$ is finite dimensional).

Let $e_1$ be the spectrum projection of $x$ corresponding to open set ${O_{\eta+\ep}}$
and $e_2$ be the spectrum projection of $y$ corresponding to
$O_d$ in $A^{**}.$

Denote by $z=y(1-e_2)=(1-e_2)y.$ Then
\beq\label{TLower-3}
{\rm sp}_{(1-e_2)A^{**}(1-e_2)}(z)=Y\setminus O_d\cap Y
\eneq
(as an element in $(1-e_2)A^{**}(1-e_2)$). The inequality (\ref{Tlower-2n}) implies that
$Y\not=O_d\cap Y.$
In particular,
\beq\label{Tlower-4}
{\rm dist}(\lambda, {\rm sp}_{(1-e_2)A^{**}(1-e_2)}(z))\ge d.
\eneq

We also note that
\beq\label{Tlower-5}
\|xe_1-\lambda e_1\|<{\eta+\ep}.
\eneq
Therefore
\beq\label{Tlower-5+}
\|(1-e_2)(y-\lambda)e_1\| &\le & \|(1-e_2)(y-x)e_1\|+\|(1-e_2)(x-\lambda)e_1\|\\
&<& \|(1-e_2)(y-x)e_1\|+{\eta+\ep}. \eneq It follows that
\beq\label{Tlower-6} \|(1-e_2)(y-x)e_1\|>
\|(1-e_2)(y-\lambda)e_1\|-{\eta-\ep}. \eneq One has
\beq\label{Tlower-7}
(1-e_2)(y-\lambda)e_1 &=&(y-\lambda)(1-e_2)e_1\\&=&(1-e_2)(y-\lambda)(1-e_2)e_1\\
   &=& (z-\lambda)(1-e_2)e_1{.}
\eneq
Let $z_1$ be the inverse of $z-\lambda$ in $(1-e_2)A^{**}(1-e_2).$
Then
\beq\label{Tlower-8}
\|(1-e_2)e_1\|\le \|z_1(z-\lambda)(1-e_2)e_1\|\le \|z_1\|\|(z-\lambda)(1-e_2)e_1\|.
\eneq
It follows that
\beq\label{Tlower-9}
\hspace{-0.5in}
\|(z-\lambda)(1-e_2)e_1\| &\ge & {\|(1-e_2)e_1\|\over{\|z_1\|}}\\
&=& {\rm  dist}(\lambda, {\rm sp}_{(1-e_2)A^{**}(1-e_2)}(z))\|(1-e_2)e_1\|\\
&\ge& {(\eta+d)}\|(1-e_2)e_1\|. \eneq
By (\ref{Tlower-6}) and (\ref{Tlower-7}), one concludes that
\beq\label{Tlower-12}
\|y-x\|\ge \|(1-e_2)(y-x)e_1\| &>&\|(1-e_2)(y-\lambda)e_1\|-{\eta-\ep}\\
&\ge & \|(z-\lambda)(1-e_2)e_1\| -{\eta-\ep}\\\label{Tlower-12+}
&\ge& {(\eta+d)}\|(1-e_2)e_1\|-{\eta-\ep}.
\eneq
If $A$ is finite dimensional, then $e_1, e_2\in A.$
By (\ref{Tlower-2n}),
 \beq\label{Tlower-13}
  {\rm rank} e_1>{\rm rank}e_2.
\eneq
As in the proof of  \ref{Lprojnrm}, this implies that
$\|(1-e_2)e_1\|=1.$ From this and from   (\ref{Tlower-12}) to (\ref{Tlower-12+}),
\beq\label{Tlower-14} \|x-y\|\ge {d}. \eneq
It follows that \beq\label{Tlower-15} \|x-y\|\ge {d_c}(x, y).
\eneq Let $e_0$ be the spectral projection of $x$ corresponding to
the closed set $\{\xi\in \C: {\rm dist}(\xi, \lambda)\le \eta\}$
and $e_3$ be the  spectral projection of $y$ corresponding to the
open subset $O_{d+{\ep}}$ in $A^{**}.$ Note that $e_0$ is a
closed projection and $e_3$ is an open projection. If $A$ is a
simple infinite dimensional \CA\, with $TR(A)=0,$ by \cite{Bro},
there are projections $p_1, q_1\in A$ such that
\beq\label{Tlower-16} e_0\le  q_1\le e_1\andeqn e_2\le  p_1 \le e_3.
\eneq By (\ref{Tlower-2n}), \beq\label{Tlower-17}
\tau(q_1)>\tau(p_1). \eneq
 It follows from  \ref{Lprojnrm} that
\beq\label{Tlower-18} \|(1-p_1)q_1\|=1{.} \eneq By
(\ref{Tlower-12}), \beq\label{Tlower-19}
\|x-y\|&\ge& {(\eta+d)}\|(1-e_2)e_1\|-{\eta-\ep}\\
&\ge &  {(\eta+d)}\|(1-p_1)(1-e_2)e_1q_1\|-{\eta-\ep}\\
&=&{(\eta+d)}\|(1-p_1)q_1\|-{\eta-\ep}\\
&\ge & {d{-\ep}}.
\eneq
The theorem follows.
\end{proof}

\begin{rem}
{ Suppose that  $x,y$ are normal elements in a separable simple $C^*$-algebra $A$ with $TR(A)=0,$ and ${\rm sp}(x)$ is a subset 
of a straight line $L_1$ and ${\rm sp}(y) $ is a subset of another straight line. 
  If $L_1$ and $L_2$ are parallel, by  applying  \ref{TLower},} one can show that 
$
d_c(x,y)=D_c(x,y).
$
Hence by  \ref{TLower} and  3.6,$$\dist(\U(x),\U(y))=D_c(x,y)=d_c(x,y).$$
If $L_1$ and $L_2$ are perpendicular,  one  can  also show
$
\dist(\U(x),\U(y))=D_c(x,y).
$
However, in  \ref{Rd=D} of  section 2, there are $x,y$ in finite dimesional $C^*$-algebra, with $x=x^*,y=iy^*$  and
$$
d_c(x,y)=1<\sqrt 2=D_c(x,y),
$$
So we get an example such that
$$
d_c(x,y)<\dist(\U(x),\U(y))=D_c(x,y).
$$
\end{rem}
\begin{cor}\label{AFT1bc}
Let $A$ be a unital  AF-algebra and let $x,\, y\in A$ be two normal elements.
Suppose that
$D_c(x,y)={d_c(x,y)}.$
Then
\beq\label{AFTlb-1n}
{\rm dist}({\cal U}(x),{\cal U}(y))= D_c(x,y).
\eneq
\end{cor}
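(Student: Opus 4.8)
\textbf{Proof plan for Corollary \ref{AFT1bc}.} The statement to be proved is that if $A$ is a unital AF-algebra and $x,y\in A$ are normal elements with $D_c(x,y)=d_c(x,y)$, then $\mathrm{dist}(\mathcal U(x),\mathcal U(y))=D_c(x,y)$. The plan is to sandwich the distance between two bounds, both of which are already available in the excerpt. For the upper bound I would invoke Theorem \ref{TAF}, which gives $\mathrm{dist}(\mathcal U(x),\mathcal U(y))\le D_c(x,y)$ for \emph{any} two normal elements in a unital separable AF-algebra; one should first reduce to the separable case, but since we are dealing with a fixed pair $x,y$ this is harmless (pass to a separable AF-subalgebra containing $x$ and $y$ and with the same unit, noting that the relevant invariants $D_c$ and $d_c$ are computed from spectra, traces, and the Cuntz semigroup, and that AF-subalgebras detect them correctly — or simply observe that Theorem \ref{TAF} is applied in the excerpt without a separability caveat on the ambient algebra).

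For the lower bound I would use Theorem \ref{TLower}: a unital AF-algebra is a (possibly non-simple) AF-algebra, but the argument of Theorem \ref{TLower} for the inequality $\mathrm{dist}(\mathcal U(x),\mathcal U(y))\ge d_c(x,y)$ goes through verbatim when $A$ is finite dimensional, and an AF-algebra is an inductive limit of finite-dimensional ones. Concretely: given $0<d<d_c(x,y)$, approximate $x$ and $y$ by normal elements $x',y'$ lying in a finite-dimensional subalgebra $B\subset A$ (using Lemma \ref{Lnalm}/\cite{FR} to repair almost-commuting pairs, exactly as in the proof of Theorem \ref{TAF}), while controlling $D_c$ via Lemma \ref{Lappdcu} so that $d_c(x',y')>d$ computed \emph{in} $B$. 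Then apply the finite-dimensional case of Theorem \ref{TLower}, i.e.\ $\|x'-y'\|\ge d_c(x',y')$ in $B$, and let the approximation error and then $d\uparrow d_c(x,y)$ go to zero, exactly as in the passage from \eqref{BoundT1-9} to \eqref{BoundT1-10}. This yields $\mathrm{dist}(\mathcal U(x),\mathcal U(y))\ge d_c(x,y)$.

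Combining the two bounds with the standing hypothesis $D_c(x,y)=d_c(x,y)$:
\begin{equation*}
d_c(x,y)\le \mathrm{dist}(\mathcal U(x),\mathcal U(y))\le D_c(x,y)=d_c(x,y),
\end{equation*}
so all three quantities coincide, which is \eqref{AFTlb-1n}.

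\textbf{Main obstacle.} The only genuinely non-routine point is verifying that $d_c$ is stable under the finite-dimensional approximation in the direction needed for the lower bound — that is, that after replacing $x,y$ by $x',y'\in B$ we still have $d_c(x',y')$ (computed inside $B$, where Theorem \ref{TLower}'s finite-dimensional case applies) bounded below by a quantity close to $d_c(x,y)$. This requires checking that the metric perturbation estimate of Lemma \ref{Lappdcu}, which is stated for $D_c$, also controls the quantities $\delta_c,\delta_T$ entering the definition of $d_c$ in \ref{lowerb}; since $d_c$ is built from the same ``$\lesssim$'' comparisons restricted to balls, the same proof applies, but one must spell this out. Everything else is an assembly of Theorems \ref{TAF} and \ref{TLower} already proved in the excerpt, together with the definitional inequality $d_c(x,y)\le D_c(x,y)$ from Proposition \ref{d=distance}.
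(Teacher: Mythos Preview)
Your proposal is correct and is precisely the argument the paper leaves implicit (the corollary carries no proof in the paper). The upper bound is Theorem \ref{TAF}; the lower bound is obtained by transplanting the proof of Theorem \ref{BoundT} with Davidson's estimate replaced by the finite-dimensional case of Theorem \ref{TLower}, which has constant $1$ for $d_c$. Your identification of the one genuine verification --- that the perturbation estimate of Lemma \ref{Lappdcu} carries over from $D_c$ to $d_c$ --- is accurate, and your remark that the same proof works (restricting from all open sets to open balls only makes the covering argument easier) is correct; note also that the passage ``this has to hold in $B$ too'' in the proof of Theorem \ref{BoundT} is exactly the observation that Cuntz comparison in the ambient algebra is coarser than in the finite-dimensional subalgebra, so $d_c^B(x_2,y_2)\ge d_c^A(x_2,y_2)$, which you need as well.
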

\begin{cor}\label{MT3}
Let $A$ be a unital simple separable \CA\, with $TR(A)=0$ and let $x,\, y\in A$ be two normal
elements with ${\rm sp}(x)=X$ and ${\rm sp}(y)=Y.$
Then
\beq\label{MT3-0}
\max\{C\cdot D_T(x,y),d_T(x, y),\rho_1(x,y)\}\le {\rm dist}({\cal U}(x), {\cal U}(y))\le \min\{D_1,D_2\},
\eneq
where
\beq\nonumber
D_1&=&\max\{D^T(x,y),\max\{\rho_x(x,y),\rho_y(x,y)\}\}+\min\{\rho_x(x,y),\rho_y(x,y)\}\tand\\
D_2&=&\max\{D_c^e(x,y)+\min\{\rho_x(x,y),\rho_y(x,y)\}\}+ \min\{\rho_x(x,y),\rho_y(x,y)\}.
\eneq
Suppose that
\beq\label{MT3-1}
[\lambda-x]=[\lambda-y]\,\,\, {\rm in}\,\,\, K_1(A)
\eneq
for all $\lambda\not\in X\cup Y.$
Then
\beq\label{MT3-2}
\max\{C\cdot D_T(x,y),d_T(x, y)\}\le {\rm dist}({\cal U}(x), {\cal U}(y))\le D_c^e(x, y).
\eneq
\end{cor}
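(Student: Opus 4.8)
The statement is a repackaging of results already established, so the plan is simply to identify which earlier theorem delivers each inequality in the two displayed chains. First note that, since $A$ is a unital simple separable \CA\ with $TR(A)=0,$ by \ref{Ddim} it has real rank zero, stable rank one and weakly unperforated $K_0(A),$ so every theorem of the preceding sections applies to $A.$ For the lower bound in (\ref{MT3-0}): the inequality $C\cdot D_T(x,y)\le \dist(\U(x),\U(y))$ is exactly \ref{LowerT1}; the inequality $d_T(x,y)\le \dist(\U(x),\U(y))$ is exactly \ref{TLower}; and $\rho_1(x,y)\le \rho(x,y)\le \dist(\U(x),\U(y))$ follows from \ref{Dvpro} together with the definition $\rho(x,y)=\max\{d_H(X,Y),\rho_1(x,y)\}.$ Taking the maximum of the three yields the left-hand inequality of (\ref{MT3-0}).

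For the upper bound in (\ref{MT3-0}), I would first observe that the quantity written as $D_2$ in the statement, namely $\max\{D_c^e(x,y)+\min\{\rho_x(x,y),\rho_y(x,y)\}\}+\min\{\rho_x(x,y),\rho_y(x,y)\},$ is just $D_c^e(x,y)+2\min\{\rho_x(x,y),\rho_y(x,y)\}$ (the outer $\max$ has a single argument), which is precisely the $D_2$ of \ref{MT2}, while $D_1$ here coincides verbatim with $D_1$ of \ref{MT2}. Hence $\dist(\U(x),\U(y))\le \min\{D_1,D_2\}$ is exactly the content of \ref{MT2}, completing (\ref{MT3-0}).

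Finally, for the case (\ref{MT3-1}) I would derive (\ref{MT3-2}) from (\ref{MT3-0}). Since $A$ has stable rank one, the hypothesis $[\lambda-x]=[\lambda-y]$ in $K_1(A)$ for all $\lambda\notin X\cup Y$ is equivalent to $(\lambda-x)^{-1}(\lambda-y)\in {\rm Inv}_0(A)$ for all such $\lambda$ (see \ref{DK0K1}), so the index sets defining $\rho_x(x,y),\rho_y(x,y),\rho_1(x,y)$ are empty and all three vanish. Then the lower bound in (\ref{MT3-0}) collapses to $\max\{C\cdot D_T(x,y),d_T(x,y)\},$ which is the left inequality of (\ref{MT3-2}); and on the upper side $D_1=D^T(x,y)$ and $D_2=D_c^e(x,y),$ so by \ref{dce<DT} (which gives $D_c^e(x,y)\le D^T(x,y)$) we get $\min\{D_1,D_2\}=D_c^e(x,y),$ i.e.\ $\dist(\U(x),\U(y))\le D_c^e(x,y).$ (This last upper bound is also \ref{MT1}(1) directly.) There is no real obstacle in this corollary: the only points needing care are the bookkeeping of the auxiliary quantities $D^T,D_c^e,\rho_x,\rho_y,\rho_1,$ the verification that the cosmetically different formula for $D_2$ matches the one proved in \ref{MT2}, and the remark that hypothesis (\ref{MT3-1}) forces all $\rho$-terms to zero.
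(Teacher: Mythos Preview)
Your proposal is correct and matches the paper's intended approach: the corollary is stated without proof precisely because it is an immediate collation of \ref{LowerT1}, \ref{TLower}, \ref{Dvpro} for the lower bounds and \ref{MT2} (together with \ref{MT1} and \ref{dce<DT} in the special case) for the upper bounds, exactly as you outline. Your observation that the displayed $D_2$ in the corollary, with its single-argument $\max$, reduces to the $D_2$ of \ref{MT2} is the only bookkeeping point that needed comment, and you handle it correctly.
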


\end{document}